\newcommand{\zz}{\ensuremath{\mathbb{Z}}}
\newcommand{\nn}{\ensuremath{\mathbb{N}}}
\newcommand{\bMod}{\ensuremath{\textbf{Mod}}}
\newcommand{\bPMod}{\ensuremath{\textbf{PMod}}}
\newcommand{\Fl}{\ensuremath{\text{Fl}}}
\newcommand{\id}{\ensuremath{\text{id}}}
\newcommand{\Sub}{\ensuremath{\text{Sub}}}
\newcommand{\Supp}{\ensuremath{\text{Supp}}}
\newcommand{\im}{\operatorname{im}}
\newcommand{\Tor}{\operatorname{Tor}}
\newcommand{\Span}{\ensuremath{\text{span}}}
\theoremstyle{definition}
\newmdtheoremenv{frm-thm}{Theorem}
\newmdtheoremenv{frm-def}{Definition}
\newmdtheoremenv{frm-lem}{Lemma}
\newtheorem{definition}{Definition}
\newtheorem{example}{Example}
\newtheorem{proof techniques}{Proof Techniques}
\newtheorem{lemma}{Lemma}
\newtheorem{corollary}{Corollary}
\newtheorem{note}{Note}
\newtheorem{theorem}{Theorem}
\newtheorem{proposition}{Proposition}
\newtheorem*{theorem*}{Theorem}
\begin{document}

\title{The Grothendieck Ring of Certain Non-Noetherian Group-Graded Algebras via $K$-Series}
\author{Nathaniel Gallup}
\date{}
\maketitle


\begin{abstract}
With the goal of computing the Grothendieck group of certain multigraded infinite polynomial rings and the $K$-series of infinite matrix Schubert spaces, we introduce a new type of $\Gamma$-graded $k$-algebra (which we call a PDCF algebra) and a new type of graded module (a BDF module) over said algebra. Since infinite polynomial rings are not Noetherian and the modules of interest are not finitely generated, we compensate by requiring certain finiteness properties of the grading group $\Gamma$ and of the grading itself. If $R$ is a PDCF $\Gamma$-graded $k$-algebra, we prove that every projective BDF $\Gamma$-graded $R$-module is free, and that when the graded maximal ideal of $R$ is generated by a regular sequence, a BDF analog of the Hilbert Syzygy Theorem holds: every BDF $R$-module has a free resolution of BDF $R$-modules which, though not finite, has the property that each graded piece is eventually zero. We use this to show that the Grothendieck group of projective BDF $R$-modules (defined similarly to $K_0(R)$) is isomorphic to the Grothendieck group of all BDF $R$-modules (defined similarly to $G_0(R)$). We describe this Grothendieck group explicitly by using $K$-series to give an isomorphism with a certain space of formal Laurent series. Finally we give a BDF version of Serre's formula for the product in the Grothendieck group, making it into a ring. 
\end{abstract}


\section{Introduction}

In \cite{gallup2021well} we introduced a type of infinite-dimensional flag space $\Fl_\nn$ which is a $k$-functor (i.e. a functor from the category of $k$-algebras to the category of sets, for $k$ a field). This functor is covered disjointly by Schubert subfunctors $X^\circ_\sigma$ which are indexed by permutations $\sigma$ of $\nn$, and which are quotients $\text{sh}(B \sigma B)/B$ of (the fppf-sheafification of) the coset spaces $B \sigma B$. The affine analogs of these spaces, called matrix Schubert spaces, are quotients of a Cox ring with infinitely many columns (i.e. a polynomial ring $k[x_{i, j}]_n$ where $n_1, n_2, \ldots$ is an increasing sequence of natural numbers, and $i , j \in \nn$ are such that $i \leq n_j$) by an ideal $\mathfrak{d}_\sigma$ generated by certain minors. There is a torus action of $\zz^{(\nn)}$ (the abelian group of $\nn$-indexed integer sequences with finitely many nonzero entries) on the $k$-functor represented by $k[x_{i , j}]_n$ which corresponds to a multigrading of $k[x_{i , j}]_n$ by $\zz^{(\nn)}$ and since the ideal $\mathfrak{d}_\sigma$ is also multigraded, it corresponds to an equivariant sheaf on this $k$-functor. This makes $k[x_{i , j}]_n / \mathfrak{d}_\sigma$ into a $\zz^{(\nn)}$-graded $k[x_{i , j}]_n$-module. The goal of this paper is to compute the $K$-theory (i.e. Grothendieck group) of this multigraded polynomial ring. A forthcoming paper will then compute the $K$-series of $k[x_{i , j}]_n/ \mathfrak{d}_\sigma$.

Much of the classical theory in the case of finite-dimensional flag varieties (see \cite{miller2005combinatorial} for a thorough treatment) uses the fact that a polynomial ring in finitely many variables over a field is Noetherian and hence the matrix Schubert variety $k[x_{i , j} \mid 1 \leq i , j \leq n] / \mathfrak{d}_\sigma$ (with $\sigma \in S_n$) is finitely generated. But of course polynomial rings in infinitely many variables are not Noetherian and the ideal $\mathfrak{d}_\sigma$ in this case is not finitely generated. Miraculously, however, the $\zz^{(\nn)}$-grading we care about has some other finiteness properties which still allow us to define a well-behaved graded Grothendieck group containing these modules. In particular, every graded piece of $k[x_{i , j}]_n$ is finite-dimensional (such rings and modules are called \emph{finite-dimensionally graded}), the $0$th graded piece of $k[x_{i , j}]_n$ is just $k$ (such graded rings are called \emph{connected}), the graded support of $k[x_{i , j}]_n$ is $\zz_{\geq 0}^{(\nn)}$ which is a pointed submonoid of $\zz^{(\nn)}$, and for every $\alpha \in \zz^{(\nn)}$, there are only finitely many $\beta \in \zz^{(\nn)}$ which are in the support of $k[x_{i , j}]_n$ and satisfy $\beta_i \leq \alpha_i$ for every $i \in \nn$ (we call this property \emph{downward finite}). Finally, the graded maximal ideal $\bigoplus_{\alpha > 0} (k[x_{i , j}]_n)_\alpha$ is generated by the regular sequence consisting of all of the variables $x_{i , j}$. 

It turns out that these properties are all that is needed define a Grothendieck group of a (not necessarily Noetherian) graded $k$-algebra which is an integral domain. Specifically, if $\Gamma$ is an abelian group and $R$ is such a $\Gamma$-graded $k$-algebra with support $Q \subseteq \Gamma$, then we say that $R$ is \emph{pointed} if $q + q' = 0$ implies $q = q' = 0$ for $q , q' \in Q$, \emph{downward finite} if for all $g \in \Gamma$ the set $\{ h \in Q \mid h \leq_Q g \}$ is finite (the partial order on $\Gamma$ is defined by setting $Q$ to be the nonnegative elements), \emph{connected} if $R_0 = k$, and \emph{finite-dimensionally graded} if $\dim_k(R_g)$ is finite for all $g \in \Gamma$ (this condition is needed to define Hilbert series and $K$-series and is automatic if $R$ is finitely generated as a $k$-algebra). We call algebras satisfying these conditions \emph{PDCF $\Gamma$-graded $k$-algebras}. Furthermore if $M$ is a (not necessarily finitely generated) $\Gamma$-graded $R$-module, we say that it is \emph{finitely bounded below} if its $\Gamma$-support is bounded below by finitely many elements (the purpose of this condition is to guarantee closure of our category under tensor product), and we call such a module $M$ a \emph{BDF $\Gamma$-graded $R$-module} if additionally its support is downward finite and it is also finite-dimensionally graded (these conditions are automatic for finitely generated modules). 

BDF $\Gamma$-graded $R$-modules over a PDCF $\Gamma$-graded $k$-algebra share a shocking number of properties with finitely generated $\zz$-graded modules over a Noetherian $\zz_{\geq 0}$-graded $k$-algebra. Preliminarily, the category $R\text{-}\bMod^{\text{BDF}}_\Gamma$ of such modules is abelian and closed under tensor product in the category of all $R$-modules. Furthermore, recall that the Quillen–Suslin theorem (\cite{quillen1976projective}, \cite{suslin1976projective}) says that every finitely-generated projective module over a polynomial ring is free. If one is satisfied with just finitely generated \textbf{graded} modules over a positively graded ring, then the proof that all such projective modules are free is substantially easier (see \cite{minamoto2011structure}, Section 2). We use a similar technique to that found in the latter reference to prove the following analogous result for BDF modules. 

\begin{theorem*}
Every projective BDF $\Gamma$-graded $R$-module is free. 
\end{theorem*}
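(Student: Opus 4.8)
The plan is to mimic the standard argument for finitely generated graded projective modules over a connected positively graded ring (as in the reference to Minamoto cited above), the real work being to check that every construction stays inside the category of BDF modules. The skeleton is: establish a graded Nakayama lemma; use it to lift a homogeneous basis of $P/\fm P$ to a homogeneous generating set of $P$, producing a surjection $\pi \colon F \twoheadrightarrow P$ from a graded free module $F$; split this surjection using projectivity of $P$; and finally show the kernel vanishes, again by Nakayama.

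First I would prove a graded Nakayama lemma: if $M$ is a BDF $\Gamma$-graded $R$-module with $M = \fm M$, where $\fm = \bigoplus_{g > 0} R_g$ is the graded maximal ideal, then $M = 0$. Suppose $M \neq 0$. Because $M$ is finitely bounded below and downward finite, its support has a minimal element: pick any $\gamma \in \Supp(M)$; the set $\{h \in \Supp(M) \mid h \leq \gamma\}$ is finite and nonempty, hence has a minimal element $\delta$, and pointedness of $Q$ makes $\delta$ minimal in all of $\Supp(M)$. Then $(\fm M)_\delta = \sum_{q > 0} R_q M_{\delta - q}$, and every term vanishes since $\delta - q < \delta$ forces $M_{\delta - q} = 0$; thus $(\fm M)_\delta = 0 \neq M_\delta$, contradicting $M = \fm M$.

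The technical heart — and the step I expect to be the main obstacle — is constructing a graded free cover \emph{inside} the BDF category, since $F$ will be an infinite direct sum of shifts and so the finiteness conditions that were automatic in the finitely generated classical setting must now be verified by hand. The quotient $\overline{P} := P/\fm P$ is a $\Gamma$-graded $k$-vector space with each piece $\overline{P}_\gamma$ finite-dimensional of dimension $n_\gamma$; choosing a homogeneous basis and lifting it to homogeneous elements of $P$, I set $F := \bigoplus_\gamma R(-\gamma)^{n_\gamma}$. I would then check directly that $F$ is BDF: since $\Supp(\overline{P}) \subseteq \Supp(P)$ inherits finite boundedness below and downward finiteness, and since $\Supp(F) \subseteq \Supp(\overline{P}) + Q$, the support of $F$ is bounded below by the same finite set; each $F_\mu = \bigoplus_{\gamma \leq \mu} R_{\mu - \gamma}^{n_\gamma}$ is finite-dimensional (only finitely many $\gamma \leq \mu$ lie in $\Supp(\overline{P})$ by downward finiteness, and each contributes a finite-dimensional $R_{\mu - \gamma}$); and downward finiteness of $F$ follows from that of each shifted copy of $R$ combined with downward finiteness of $\Supp(\overline{P})$.

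Finally, the lifted elements generate $P$: writing $P'$ for the submodule they generate, $P = P' + \fm P$ gives $P/P' = \fm(P/P')$, and since $P/P'$ is BDF (the category being abelian), Nakayama yields $P' = P$, so $\pi \colon F \twoheadrightarrow P$ is surjective and by construction induces an isomorphism $\overline{\pi} \colon \overline{F} \xrightarrow{\sim} \overline{P}$. Let $K = \ker \pi$, a BDF module as a submodule of $F$. Projectivity of $P$ splits the sequence $0 \to K \to F \to P \to 0$, so applying $- \otimes_R k$ (equivalently, quotienting by $\fm$) preserves the splitting and gives $\overline{F} \cong \overline{P} \oplus \overline{K}$; as $\overline{\pi}$ is already an isomorphism, $\overline{K} = 0$, i.e. $K = \fm K$. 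One more application of Nakayama gives $K = 0$, whence $\pi$ is an isomorphism and $P \cong F$ is free.
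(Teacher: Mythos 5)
Your proof is correct and follows essentially the same route as the paper: a BDF graded Nakayama lemma proved via a minimal element of the (downward finite, hence well-founded) support, a homogeneous generating family lifted from $P/R_+P$ giving a graded surjection from a free module that you verify is BDF, a splitting from projectivity, and Nakayama again to kill the kernel. The only local difference is the last step: the paper, noting that the splitting $F = K \oplus P'$ need not be graded, avoids tensoring and instead runs an element-chase using $K \subseteq R_+F$ to conclude $K = R_+K$, whereas you apply $-\otimes_R k$ to the split sequence and use that $\overline{\pi}$ is an isomorphism --- equally valid, since additive functors preserve split exactness even for ungraded splittings.
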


Recall the Hilbert Syzygy Theorem states that every finitely generated module over a Noetherian regular local ring or over a polynomial ring in finitely many variables has a finite free resolution. This is of course not true for BDF modules, but using the existence of minimal free resolutions of BDF modules, we adapt the proof given in \cite{eisenbud2013commutative} (which uses $\Tor$) to obtain the following analog.

\begin{theorem*}[BDF Hilbert Syzygy Theorem]
If $R$ is a PDCF $\Gamma$-graded $k$-algebra with graded maximal ideal generated by a regular sequence, then every BDF $\Gamma$-graded $R$-module has a resolution of BDF $\Gamma$-graded free modules with the property that each graded piece is eventually zero. 
\end{theorem*}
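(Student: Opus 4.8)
The plan is to mimic Eisenbud's $\Tor$-theoretic proof of the Hilbert Syzygy Theorem, replacing the finite Koszul complex by the Koszul complex on the (possibly infinite) generating regular sequence and reading off the desired ``eventually zero on each graded piece'' condition from a degreewise vanishing statement for $\Tor$. Let $\fm = \bigoplus_{\alpha > 0} R_\alpha$ be the graded maximal ideal, generated by a regular sequence $x_1, x_2, \ldots$ (finite or infinite) with $\deg x_j \in Q \setminus \{0\}$, and write $k = R/\fm$, a BDF module concentrated in degree $0$. Given a BDF module $M$, I would first build a \emph{minimal} free resolution $F_\bullet \twoheadrightarrow M$ by BDF free modules: using connectedness and finite-dimensional gradedness, graded Nakayama lets me lift a homogeneous $k$-basis of $M/\fm M$ to a minimal homogeneous generating set, giving a surjection $F_0 \twoheadrightarrow M$ with $F_0 = \bigoplus_j R(-\gamma_j)$; since $R\text{-}\bMod^{\text{BDF}}_\Gamma$ is abelian the kernel is again BDF, and iterating produces $F_\bullet$ with each differential landing in $\fm F_{i-1}$. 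The finiteness conditions in the definition of BDF are exactly what guarantees that each such $F_i$ is itself finitely bounded below, downward finite and finite-dimensionally graded.

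By minimality, $F_\bullet \otimes_R k$ has zero differentials, so $\Tor_i^R(M, k) \cong F_i \otimes_R k = F_i / \fm F_i$; in particular $F_i$ has a homogeneous generator in degree $\gamma$ precisely when $\Tor_i^R(M,k)_\gamma \neq 0$. Thus the assertion ``each graded piece of $F_i$ is eventually zero'' reduces, for fixed $\alpha \in \Gamma$, to two facts: (i) for each generator degree $\gamma \le \alpha$ there is an index bound beyond which $\Tor_i^R(M,k)_\gamma = 0$, and (ii) only finitely many generator degrees $\gamma$ satisfy $\gamma \le \alpha$. Taking the maximum of the finitely many index bounds from (i) over the finitely many degrees from (ii) then yields an $N$ with $(F_i)_\alpha = \bigoplus_\gamma R_{\alpha - \gamma} = 0$ for all $i > N$.

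For (i) I would compute $\Tor$ using the Koszul complex $K_\bullet$ on $x_1, x_2, \ldots$, namely $K_p = \bigwedge^p\!\big(\bigoplus_j R e_j\big)$ with $\deg e_j = \deg x_j$. Even in the infinite case $K_\bullet$ is a free resolution of $k$: it is the filtered colimit of the finite Koszul complexes $K_\bullet(x_1, \ldots, x_n)$, each acyclic in positive degrees because $x_1, \ldots, x_n$ is a regular sequence, and since filtered colimits are exact one gets $H_0(K_\bullet) = \mathrm{colim}_n R/(x_1, \ldots, x_n) = k$ and $H_{>0}(K_\bullet) = 0$. Hence $\Tor_i^R(M,k) = H_i(K_\bullet \otimes_R M)$, and in degree $\gamma$ we have $(K_p \otimes_R M)_\gamma = \bigoplus_{j_1 < \cdots < j_p} M_{\gamma - (\deg x_{j_1} + \cdots + \deg x_{j_p})}$. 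A nonzero summand forces $\beta := \gamma - \sum_\ell \deg x_{j_\ell} \in \Supp(M)$, and the partial sums $0 < \deg x_{j_1} < \deg x_{j_1} + \deg x_{j_2} < \cdots$ form a strictly increasing chain (strict by pointedness) inside $\{h \in Q \mid h \le \gamma - \beta\}$, a set finite by downward finiteness whose maximal chain length bounds $p$. Since $\Supp(M)$ is downward finite the set $\{\beta \in \Supp(M) \mid \beta \le \gamma\}$ is finite, so taking the maximum of these chain-length bounds over it produces a single $N(\gamma)$ with $(K_p \otimes_R M)_\gamma = 0$, hence $\Tor_i^R(M,k)_\gamma = 0$, for all $i > N(\gamma)$. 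This degreewise truncation of the Koszul complex, forced by pointedness together with downward finiteness, is the crux of the argument.

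Finally, for (ii), a nonzero $\Tor_i^R(M,k)_\gamma$ again forces $\gamma \in \Supp(M) + Q$, so $\gamma \ge m_s$ for one of the finitely many lower bounds $m_1, \ldots, m_r$ of $\Supp(M)$; the generator degrees $\gamma \le \alpha$ therefore lie in $\bigcup_s \{h \mid m_s \le h \le \alpha\}$, and each set $\{h \mid m_s \le h \le \alpha\} \subseteq m_s + \{h' \in Q \mid h' \le \alpha - m_s\}$ is finite by downward finiteness. This establishes (ii) and completes the reduction. The main obstacle I anticipate is not any single estimate but the bookkeeping required to keep every constructed object inside the BDF category and to confirm that the infinite Koszul complex legitimately computes $\Tor$ degree by degree; once the colimit acyclicity and the pointed/downward-finite chain bound are in place, the conclusion follows by assembling the finite maxima above.
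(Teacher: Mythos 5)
Your proposal is correct and follows essentially the same route as the paper's proof: a minimal BDF free resolution $F_\bullet$ of $M$, the infinite Koszul complex on the regular sequence as a $\Gamma$-finite free resolution of $k$ (your filtered-colimit acyclicity argument is the same in spirit as the paper's explicit reduction to finite Koszul subcomplexes), degreewise vanishing of $(K_p \otimes_R M)_\gamma$ for large $p$ forced by pointedness plus downward finiteness, and minimality to convert $\Tor$-vanishing into vanishing of $(F_i)_\alpha$. The one step you fold into ``bookkeeping''---that $\Tor_i^R(M,k)$ computed from $F_\bullet \otimes_R k$ agrees \emph{degree by degree} with $H_i(K_\bullet \otimes_R M)$---is exactly what the paper isolates and proves as a separate proposition (graded symmetry of $\Tor$, via a homogeneous zigzag through the double complex), so in a full write-up you should verify that this balance isomorphism is graded rather than citing the ungraded statement.
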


Classically, there are two common constructions of the Grothendieck group of a Noetherian ring. In the first, $K_0(R)$ is defined to be the Grothendieck group completion of the set of isomorphism classes of finitely generated projective $R$-modules which is a commutative monoid under direct sum. In the second, $G_0(R)$ is defined to be the free abelian group on the set of isomorphism classes of \textbf{all} finitely generated $R$-modules, modulo the relation $[M] - [N] + [L] = 0$ whenever there is a short exact sequence $0 \to M \to N \to L \to 0$. Then there is an obvious map $K_0(R) \to G_0(R)$, and when $R$ is either Noetherian regular local or a polynomial ring in finitely many variables, then the existence of a finite projective resolution gives an inverse to this map (see \cite{weibel2013k}). 

We make analogous definitions for BDF modules and prove the analogous result. Specifically we define $K^{\text{BDF}}_{\Gamma , 0}(R)$ to be the Grothendieck group completion of the set of isomorphism classes of BDF $\Gamma$-graded projective $R$-modules, and $G^{\text{BDF}}_{\Gamma , 0}(R)$ to be the free abelian group on the set of isomorphism classes of \textbf{all} BDF $\Gamma$-graded $R$-modules, modulo the same relation as before. We then prove:

\begin{theorem*}
If $R$ is a PDCF $\Gamma$-graded $k$-algebra with graded maximal ideal generated by a regular sequence, the map $K^{\text{BDF}}_{\Gamma , 0}(R) \to G^{\text{BDF}}_{\Gamma , 0}(R)$ is an isomorphism. 
\end{theorem*}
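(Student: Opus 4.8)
The plan is to build an explicit inverse to the natural map $\phi \colon K^{\text{BDF}}_{\Gamma,0}(R) \to G^{\text{BDF}}_{\Gamma,0}(R)$, $[P]_K \mapsto [P]_G$. This $\phi$ is well defined because every internal direct sum yields a split short exact sequence, so the direct-sum relation defining $K^{\text{BDF}}_{\Gamma,0}(R)$ is a special case of the short-exact-sequence relation defining $G^{\text{BDF}}_{\Gamma,0}(R)$. The inverse will be a BDF Euler characteristic $\psi \colon G^{\text{BDF}}_{\Gamma,0}(R) \to K^{\text{BDF}}_{\Gamma,0}(R)$, built from the resolutions supplied by the BDF Hilbert Syzygy Theorem.

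To each BDF module $M$ I would attach a minimal free resolution $\cdots \to P_1 \to P_0 \to M \to 0$ by BDF free modules, which has every graded piece eventually zero. The point of eventual vanishing is that it turns the even and odd aggregates $E := \bigoplus_{i \text{ even}} P_i$ and $O := \bigoplus_{i \text{ odd}} P_i$ into honest BDF modules: in each degree $g$ one has $E_g = \bigoplus_{i \text{ even}} (P_i)_g$, a finite direct sum of finite-dimensional spaces, so $E$ and $O$ are finite-dimensionally graded, while minimality (the differentials have entries in the graded maximal ideal, forcing generator degrees to strictly increase) combined with downward finiteness of $R$ yields the downward-finite and finitely-bounded-below properties. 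Since each $P_i$ is free, $E$ and $O$ are free as well, and I set $\psi([M]_G) := [E]_K - [O]_K$.

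I would then verify that $\psi$ is well defined and additive. For well-definedness, minimal BDF free resolutions are unique up to isomorphism of complexes, so $E$ and $O$ are determined up to isomorphism; a non-minimal BDF free resolution differs from the minimal one by trivial summands $\cdots \to 0 \to Q \xrightarrow{\id} Q \to 0 \to \cdots$, each contributing $[Q]_K - [Q]_K = 0$, so the formula is resolution-independent. For additivity along a short exact sequence $0 \to M' \to M \to M'' \to 0$, the horseshoe lemma produces a BDF free resolution $P_\bullet$ of $M$ with $P_i \cong P'_i \oplus P''_i$ sitting in a short exact sequence of complexes; the relations $[P_i]_K = [P'_i]_K + [P''_i]_K$ and the compatibility of the even/odd aggregation with direct sums give $\psi([M]_G) = \psi([M']_G) + \psi([M'']_G)$, so $\psi$ descends to $G^{\text{BDF}}_{\Gamma,0}(R)$.

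Finally I would check that $\phi$ and $\psi$ are mutually inverse. The composite $\psi \circ \phi$ is the identity because a BDF projective $P$ is free by the first theorem and hence is its own minimal resolution, so $\psi(\phi([P]_K)) = [P]_K$. The composite $\phi \circ \psi$ sends $[M]_G$ to $[E]_G - [O]_G$, and I must show this equals $[M]_G$. Splitting the resolution into the short exact sequences $0 \to Z_i \to P_i \to Z_{i-1} \to 0$ on the syzygy modules $Z_i$ (with $Z_{-1} = M$) and reassembling yields $[E]_G - [O]_G - [M]_G = [E']_G - [O']_G - [Z_1]_G$, where $E', O'$ are the aggregates of the resolution of the second syzygy $Z_1$; that is, the defect $D(M) := [E]_G - [O]_G - [M]_G$ satisfies $D(M) = D(Z_1)$, and iterating gives $D(M) = D(Z_{2m-1})$ for all $m$. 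I expect this last step to be the main obstacle: the resolutions here are genuinely infinite, so the identity $[M]_G = [E]_G - [O]_G$ cannot be obtained by finite telescoping, and the crux is to exploit eventual vanishing — equivalently the escape-to-infinity of the higher syzygy supports — to kill the defect. Concretely, the degreewise computation $\sum_i (-1)^i \dim_k (P_i)_g = \dim_k M_g$ (valid since the complex is finite and exact in each fixed degree) shows $D(M)$ is invisible to graded dimensions, while $D(M) = D(Z_{2m-1})$ places it arbitrarily deep in the support filtration of $G^{\text{BDF}}_{\Gamma,0}(R)$; establishing that this group is complete for that filtration, so that classes are detected degreewise, is precisely where the PDCF hypotheses of pointedness, downward finiteness, and connectedness must all be combined to force $D(M) = 0$.
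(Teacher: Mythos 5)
Your construction founders exactly where you predict it will: at the identity $[M]_G = [E]_G - [O]_G$ in $G^{\text{BDF}}_{\Gamma,0}(R)$, and the route you sketch for closing this gap cannot work as stated. Your telescoping shows the defect $D(M) = [E]_G - [O]_G - [M]_G$ lies in arbitrarily deep terms $Z^G_U$ of the support filtration, and you propose to conclude $D(M) = 0$ from separatedness of that filtration (``classes are detected degreewise''). But separatedness of the filtration on $G^{\text{BDF}}_{\Gamma,0}(R)$ is not available to you: in the paper it is deduced only \emph{after} the fact, from the isomorphism $G^{\text{BDF}}_{\Gamma,0}(R) \cong \zz[[Q]]\{\Gamma\}$ --- that is, from the very theorem you are proving. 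Equivalently, ``classes in $G$ are detected by graded dimensions'' is the injectivity of the $K$-series homomorphism $\Psi : G^{\text{BDF}}_{\Gamma,0}(R) \to \zz[[Q]]\{\Gamma\}$, and in view of Theorem \ref{thm: isomorphism of projective grothendieck group and group ring} that injectivity is equivalent to the theorem itself; invoking it here is circular unless you supply an independent proof, which you do not. (Your other ingredients --- uniqueness of minimal BDF resolutions up to trivial summands, and horseshoe additivity --- are also asserted rather than proved in this non-Noetherian, infinite-resolution setting, but they are secondary: the paper's proof never needs a resolution-based Euler characteristic to be well defined at all.)

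The paper fills the gap with a purely algebraic trick (Lemma \ref{lem: expression of every BDF module as a sum of frees}) that needs no topology and no limiting argument. Given a $\Gamma$-finite free resolution $F_\bullet \to M$ with differentials $\varphi_n$, consider the single map $\varphi = \bigoplus_n \varphi_{2n} : \bigoplus_n F_{2n} \to M \oplus \bigoplus_n F_{2n+1}$. Its kernel is $\bigoplus_n \ker\varphi_{2n}$, while its cokernel is $\bigoplus_n F_{2n+1}/\im\varphi_{2n+2} = \bigoplus_n F_{2n+1}/\ker\varphi_{2n+1} \cong \bigoplus_n \im\varphi_{2n+1} = \bigoplus_n \ker\varphi_{2n}$ --- the \emph{same} module. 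All four terms of the resulting exact sequence are BDF by Lemma \ref{lem: closure under direct sums of terms from Gamma-finite sequences}, so in $G^{\text{BDF}}_{\Gamma,0}(R)$ the outer terms cancel, yielding $[M] = [\bigoplus_n F_{2n}] - [\bigoplus_n F_{2n+1}]$ outright; this gives surjectivity of $\Phi$ from the mere existence of one $\Gamma$-finite resolution (Theorem \ref{thm: BDF modules have gamma finite free resolutions}). Injectivity is then handled not by your $\psi$ but by the left inverse $\Theta \circ \Psi$, where $\Psi$ is well defined directly from additivity of $K$-series over short exact sequences (Proposition \ref{prop: additivity of Hilbert series}) and $\Theta$ is the isomorphism of Theorem \ref{thm: isomorphism of projective grothendieck group and group ring}. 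If you want to salvage your outline, replace the completeness argument by this swindle; as it stands, the crux of your proof is missing.
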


Recall that homotopy invariance tells us that the Grothendieck ring $G_0(S)$ of a polynomial ring $S = k[x_1, \ldots, x_n]$ in finitely many variables is isomorphic to $\zz$. If one is interested instead in graded modules over $S$ with the standard $\zz$-grading, then the map $G_{\zz ,0}(S) \to \zz[t , t^{-1}]$ given by sending $[M]$ to its $K$-polynomial is an isomorphism, where the latter denotes all Laurent polynomials. More generally, if $S$ is $\zz^{d}$-multigraded with graded support $Q$, the same $K$-polynomial map $G_{\zz^d ,0}(S) \to \zz[Q][\zz^d]$ is again an isomorphism, where the latter denotes all Laurent polynomials in variables indexed by $\zz^d$ (see \cite{miller2005combinatorial} Chapter 8). We give the following analogous result for BDF modules, with $\zz[Q][\zz^d]$ replaced by $\zz[[Q]]\{ \Gamma \}$, the formal Laruent series with variables indexed by $\Gamma$ still with finitely bounded below, but now also with downward finite, support. Note that in the Noetherian case, $K$-\textbf{polynomials} are honest Laurent polynomials, but because our modules are not finitely generated, we must pass to $K$-\textbf{series}. 

\begin{theorem*}
The map $K^{\text{BDF}}_{\Gamma , 0}(R) \to \zz[[Q]]\{ \Gamma \}$, which sends $[F]$ to its $K$-series, is an isomorphism. 
\end{theorem*}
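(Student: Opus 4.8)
The plan is to leverage the two structural facts already available, namely that every projective BDF module is free and that a free BDF module is classified up to isomorphism by its multiplicities, in order to factor the $K$-series map as an isomorphism recording multiplicity data followed by multiplication by the Hilbert series $H_R$ of $R$; the whole map is then an isomorphism once I show that multiplication by $H_R$ is invertible. First I would dispatch well-definedness. For a BDF module $M$ set $K(M) = \sum_{g \in \Gamma} \dim_k(M_g)\, t^g$. The three defining properties of BDF modules — finite-dimensionally graded, downward finite support, finitely bounded below support — say exactly that $K(M)$ is an integer series whose $\Gamma$-support is downward finite and finitely bounded below, i.e. $K(M) \in \zz[[Q]]\{\Gamma\}$. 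Since $\dim_k$ is additive on direct sums, $K(M \oplus N) = K(M) + K(N)$, so $K$ is a monoid homomorphism on isomorphism classes of free BDF modules and descends to a group homomorphism $K^{\text{BDF}}_{\Gamma , 0}(R) \to \zz[[Q]]\{\Gamma\}$.

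Next I would reduce to free modules and identify the source group. By the projective-implies-free theorem every class is $[F] - [F']$ with $F, F'$ free, and a free BDF module has the form $F = \bigoplus_{\alpha \in \Gamma} R(-\alpha)^{\oplus b_\alpha}$. Since $R$ is connected, $F/\fm F = \bigoplus_\alpha k(-\alpha)^{b_\alpha}$, so the multiplicities are recovered as $b_\alpha = \dim_k (F/\fm F)_\alpha$ and the multiplicity series $B_F = \sum_\alpha b_\alpha t^\alpha$ is an isomorphism invariant. The BDF conditions on $F$ translate precisely into $B_F \in \zz_{\geq 0}[[Q]]\{\Gamma\}$, the nonnegative cone, and conversely every such series arises from a free BDF module. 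Thus $F \mapsto B_F$ is an isomorphism of monoids from free BDF modules under $\oplus$ onto $(\zz_{\geq 0}[[Q]]\{\Gamma\}, +)$. Because every $f \in \zz[[Q]]\{\Gamma\}$ splits as $f^+ - f^-$ with $f^\pm$ in the nonnegative cone (the positive and negative parts have support contained in that of $f$, hence remain downward finite and finitely bounded below), the group completion is all of $\zz[[Q]]\{\Gamma\}$, yielding an isomorphism $K^{\text{BDF}}_{\Gamma , 0}(R) \xrightarrow{\sim} \zz[[Q]]\{\Gamma\}$ sending $[F] - [F']$ to $B_F - B_{F'}$.

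Finally I would relate this to the $K$-series. Since $K(R(-\alpha)) = t^\alpha H_R$ where $H_R = K(R) = \sum_{q \in Q}\dim_k(R_q)\, t^q$ is the Hilbert series, additivity gives $K(F) = B_F \cdot H_R$, using the $\zz[[Q]]$-module structure on $\zz[[Q]]\{\Gamma\}$ that is part of its construction. Hence the $K$-series map is the composite of the isomorphism $[F] - [F'] \mapsto B_F - B_{F'}$ with multiplication by $H_R$. The main obstacle is therefore to show that multiplication by $H_R$ is an automorphism of $\zz[[Q]]\{\Gamma\}$, which I reduce to the claim that $H_R$ is a unit in $\zz[[Q]]$. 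This is exactly where connectedness and the pointed, downward-finite structure of $Q$ enter: connectedness $R_0 = k$ forces $H_R = 1 + \sum_{q > 0}\dim_k(R_q)\, t^q$ to have constant term $1$, while pointedness and downward finiteness guarantee that the geometric-series inverse $\sum_{n \geq 0}(1 - H_R)^n$ is a well-defined element of $\zz[[Q]]$, each of its coefficients being a finite sum.

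Granting this key lemma, multiplication by the unit $H_R$ has inverse multiplication by $H_R^{-1}$ and is thus bijective on the $\zz[[Q]]$-module $\zz[[Q]]\{\Gamma\}$, so the $K$-series map is a composite of two isomorphisms and hence an isomorphism. I expect the genuinely nonformal point to be the invertibility of $H_R$; the remaining delicacy is purely bookkeeping, namely verifying that the finiteness conditions defining BDF free modules match the conditions defining $\zz_{\geq 0}[[Q]]\{\Gamma\}$ on the nose, and that the series products stay within $\zz[[Q]]\{\Gamma\}$, both of which follow from the downward-finite and finitely-bounded-below hypotheses.
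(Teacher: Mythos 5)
Your proposal is correct and proves the theorem; it shares the paper's skeleton but differs in execution at the key technical step. Both arguments rest on the same three facts: free BDF modules are classified by the multiplicity series of their shifts, every series in the nonnegative cone of $\zz[[Q]]\{\Gamma\}$ is realized by a free BDF module, and $\mathcal{H}_\Gamma(R)$ is a unit. Where you differ: (i) you recover the multiplicities from $F \otimes_R k \cong \bigoplus_\alpha k(-\alpha)^{\oplus b_\alpha}$ and verify $\mathcal{H}_\Gamma(F) = B_F \cdot \mathcal{H}_\Gamma(R)$ coefficient by coefficient, whereas the paper computes $\mathcal{K}_\Gamma(\bigoplus_i R(-g_i)) = \sum_i \textbf{t}^{g_i}$ as a convergent series in the $Z$-adic linear topology (Lemma \ref{lem: k series distribute over free direct sums}, Lemma \ref{lem: sum of linear terms}, Proposition \ref{prop: k series of a free module}); your route is more elementary and needs no topology for this theorem, while the paper's topological machinery is not wasted since it is reused later for convergence in $G^{\text{BDF}}_{\Gamma , 0}(R)$ and the BDF Serre formula; (ii) you invert $\mathcal{H}_\Gamma(R)$ by the geometric series $\sum_{n \geq 0}(1 - \mathcal{H}_\Gamma(R))^n$, which is valid because pointedness and downward finiteness force any decomposition $g = q_1 + \cdots + q_n$ with all $q_i >_Q 0$ to produce $n$ distinct elements $\leq_Q g$, so each coefficient is a finite sum and receives contributions from only finitely many $n$; the paper instead builds the inverse by well-founded recursion on $Q$; (iii) your packaging of injectivity and surjectivity as a monoid isomorphism onto a cancelative cone followed by group completion is a tidier statement of exactly what the paper checks by hand.

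Two alignment issues, neither fatal. First, what you call $K(M)$ is the paper's Hilbert series $\mathcal{H}_\Gamma(M)$; the paper's $K$-series is $\mathcal{K}_\Gamma(M) = \mathcal{H}_\Gamma(M)\, \mathcal{H}_\Gamma(R)^{-1}$, so for a free module $\mathcal{K}_\Gamma(F) = B_F$ on the nose. Thus the map in the theorem is precisely your multiplicity map $[F] - [F'] \mapsto B_F - B_{F'}$, and your second paragraph alone already proves the theorem; your final step (composing with multiplication by $\mathcal{H}_\Gamma(R)$) describes the Hilbert-series map instead. Since multiplication by the unit $\mathcal{H}_\Gamma(R)$ is an automorphism of $\zz[[Q]]\{\Gamma\}$, the mix-up is harmless, but as written your factorization points the wrong way. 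Second, ``additivity'' alone does not justify $\mathcal{H}_\Gamma(F) = B_F \cdot \mathcal{H}_\Gamma(R)$ for an infinite direct sum: you need the routine observation that for fixed $g$ only finitely many $\alpha \in \Supp_\Gamma(B_F)$ satisfy $\alpha \leq_Q g$, so that $\dim_k F_g = \sum_\alpha b_\alpha \dim_k R_{g - \alpha}$ is a finite sum agreeing with the coefficient of $\textbf{t}^g$ in the product.
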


Classically, the tensor product makes $K_0(R)$ into a ring (called the Grothendieck ring) and the corresponding product in $G_0(R)$ is given by Serre's formula $[M] \cdot [N] = \sum_{i = 0}^\infty (-1)^i [\Tor_i(M , N)]$ (see \cite{serre2000local}), the latter sum begin finite by the Hilbert Syzygy Theorem. For BDF modules, the tensor product does still make $K^{\text{BDF}}_{\Gamma , 0}(R)$ into a ring, but in order to interpret Serre's formula, we must make sense of an in infinite sum of elements of $G^{\text{BDF}}_{\Gamma , 0}(R)$. To that end we define a certain directed filtration of subgroups on $G^{\text{BDF}}_{\Gamma , 0}(R)$ making it into a linear topological group, and show that the sum $\sum_{i = 0}^\infty (-1)^i [\Tor_i(M , N)]$ converges. Finally we prove the following theorem:

\begin{theorem*}
The isomorphisms $\zz[[Q]]\{ \Gamma \} \cong K^{\text{BDF}}_{\Gamma , 0}(R) \cong G^{\text{BDF}}_{\Gamma , 0}(R)$ are ring isomorphisms with the product in the first given by the usual product of formal Laurent series, in the second given by the tensor product, and in the third given by Serre's formula. 
\end{theorem*}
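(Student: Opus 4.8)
The plan is to leverage the two group isomorphisms already in hand---the $K$-series map $\psi\colon K^{\text{BDF}}_{\Gamma,0}(R)\to\zz[[Q]]\{\Gamma\}$ and the comparison map $\phi\colon K^{\text{BDF}}_{\Gamma,0}(R)\to G^{\text{BDF}}_{\Gamma,0}(R)$---and to promote each to a ring isomorphism separately; the remaining statement $\zz[[Q]]\{\Gamma\}\cong G^{\text{BDF}}_{\Gamma,0}(R)$ then follows formally by composing $\psi$ with $\phi^{-1}$. Since $K^{\text{BDF}}_{\Gamma,0}(R)$ is by definition the group completion of the monoid of isomorphism classes of projective BDF modules under $\oplus$, and since every such module is free, each element is a difference $[F]-[F']$ of classes of free modules. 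Because the tensor product on $K^{\text{BDF}}_{\Gamma,0}(R)$ and the formal product on $\zz[[Q]]\{\Gamma\}$ are both biadditive, and $\phi,\psi$ are additive, it suffices in each case to verify multiplicativity on classes of free modules.

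For $\psi$ I would first record the base case: the $K$-series of a shifted copy $R(\alpha)$ is a single monomial $t^{\alpha}$ (the sign convention is immaterial), and the canonical isomorphism $R(\alpha)\otimes_R R(\beta)\cong R(\alpha+\beta)$ gives $\psi([R(\alpha)]\cdot[R(\beta)])=t^{\alpha+\beta}=t^{\alpha}t^{\beta}$. For general free modules $F=\bigoplus_i R(\alpha_i)$ and $F'=\bigoplus_j R(\beta_j)$ one has $F\otimes_R F'\cong\bigoplus_{i,j}R(\alpha_i+\beta_j)$, which lies in the category again by closure under tensor product and whose $K$-series is the double sum $\sum_{i,j}t^{\alpha_i+\beta_j}$. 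The only genuine point to check is that this double sum is precisely the Cauchy product of $\sum_i t^{\alpha_i}$ and $\sum_j t^{\beta_j}$ in $\zz[[Q]]\{\Gamma\}$; this is where downward-finiteness of the supports is essential, since it guarantees that each graded coefficient of the product is a finite sum and that the reindexing is legitimate.

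For $\phi$ the crucial observation is that free modules are flat, so for free $F,F'$ all higher $\Tor$ vanishes and Serre's formula collapses to the single term $[F]\cdot_{\text{Serre}}[F']=\sum_i(-1)^i[\Tor_i(F,F')]=[F\otimes_R F']$ in $G^{\text{BDF}}_{\Gamma,0}(R)$, while $\phi([F]\cdot[F'])=\phi([F\otimes_R F'])=[F\otimes_R F']$ as well; hence the two products agree on free classes. To pass to arbitrary elements I would invoke biadditivity of the Serre product---which follows from the long exact sequence of $\Tor$ together with additivity of the Euler characteristic in the topological group $G^{\text{BDF}}_{\Gamma,0}(R)$---and continuity of both products with respect to the filtration topology. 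The route I expect to be cleanest to write avoids pre-assuming biadditivity altogether: resolve $M$ and $N$ by free BDF resolutions $F_\bullet$ and $F'_\bullet$, so that $[M]=\sum_p(-1)^p[F_p]$ and $[N]=\sum_q(-1)^q[F'_q]$ converge, and identify the transported product $\sum_{p,q}(-1)^{p+q}[F_p\otimes_R F'_q]$ with $\sum_i(-1)^i[\Tor_i(M,N)]$ via the total complex of the double complex $F_\bullet\otimes_R F'_\bullet$, using that $\Tor$ is balanced and that the alternating sum of the homology of a complex equals the alternating sum of its terms in the Grothendieck group.

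I expect the main obstacle to be entirely the convergence bookkeeping in this last step: the double complex $F_\bullet\otimes_R F'_\bullet$ is doubly infinite, so the identification of its total-complex Euler characteristic with $\sum_i(-1)^i[\Tor_i(M,N)]$ must be carried out one graded piece at a time, and one must check that in each fixed degree $g\in\Gamma$ only finitely many triples $(p,q,i)$ contribute. This finiteness is again a consequence of downward-finiteness together with the fact, furnished by the BDF Hilbert Syzygy Theorem, that each graded strand of a free resolution is eventually zero. Once this is in place the classical identity applies degreewise and the comparison is immediate; associativity and the remaining ring axioms on $G^{\text{BDF}}_{\Gamma,0}(R)$ are then inherited through the isomorphism rather than verified by hand.
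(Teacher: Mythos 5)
Your proposal is correct, and its overall architecture is the same as the paper's: prove $\mathcal{K}_\Gamma(F\otimes_R F')=\mathcal{K}_\Gamma(F)\,\mathcal{K}_\Gamma(F')$ on free modules by decomposing into shifts and matching the Cauchy product (downward finiteness making the coefficientwise counts finite; this is Lemma~\ref{lem: sum of bdf families is bdf} plus Proposition~\ref{prop: k series of a free module} in the paper), transport the ring structure through the group isomorphisms so that well-definedness of both products is automatic rather than checked by hand, and interpret Serre's formula as a convergent series in the $Z^G$-adic topology on $G^{\text{BDF}}_{\Gamma,0}(R)$. The only genuine divergence is in identifying the Serre sum with the transported tensor product: you form the total complex $T_n=\bigoplus_{p+q=n}F_p\otimes_R F'_q$ and invoke balanced $\Tor$ together with the Euler-characteristic identity $\sum_n(-1)^n[T_n]=\sum_n(-1)^n[H_n(T_\bullet)]$, whereas the paper never totalizes; it augments the double complex by $M$ and $N$, observes that the direct sums of the even and of the odd columns are free resolutions of $\bigoplus_i F_{2i}\otimes_R N$ and $\bigoplus_i F_{2i+1}\otimes_R N$ respectively, and applies its convergence lemma (Lemma~\ref{lem: convergence of sequences in G}) twice to the $\Gamma$-finite complex $F_\bullet\otimes_R N$. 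Both routes rest on the same two pillars, namely the $\Gamma$-finiteness of the tensor double complex (Lemma~\ref{lem: tensor product of complexes is gamma finite}) and the Euler-characteristic convergence lemma, and your degreewise-finiteness bookkeeping is exactly what those lemmas encode. The extra debt your route incurs is that $H_n(\mathrm{Tot}(F_\bullet\otimes_R F'_\bullet))\cong\Tor_n(M,N)$ must be an isomorphism of \emph{graded} modules, since classes in $G^{\text{BDF}}_{\Gamma,0}(R)$ remember the grading; the paper's only graded-$\Tor$ comparison is the symmetry isomorphism of Proposition~\ref{prop: symmetry of graded tor}, proved by a zigzag rather than by totalization, so you would need to supply a graded balancing argument---routine for this first-quadrant double complex, as every map in the standard proof is degree-preserving, but not quotable from the paper. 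What the paper's column-summation route buys is that it stays entirely within machinery already established; what yours buys is a more conceptual one-line identification once graded balancing is in place.
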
 


\section{Background}

In this section we fix notation and give some necessary background on graded rings and graded modules. See also \cite{hazrat2016graded} and \cite{nastasescu2004methods} for excellent treatments of this material (and much more). 


\subsection{Families}

In this paper, a \emph{family} of elements of a set $X$ is a function $a : I \to X$ where $I$ is an index set. We denote the image of $i \in I$ under $a$ by $a_i$ and, to distinguish families from sets, we denote this family by $(a_i \mid i \in I)$. 


\subsection{Graded Rings}

Throughout this paper \textbf{$k$ will denote a field} and \textbf{$\Gamma$ will denote an abelian group}. A \emph{$\Gamma$-grading} of a commutative $k$-algebra $R$ is a function which assigns to every element $g \in \Gamma$ a $k$-submodule $R_g \subseteq R$ such that $R = \bigoplus_{g \in \Gamma} R_g$ and for all $g , h \in \Gamma$ we have $R_g R_h \subseteq R_{g + h}$.  A \emph{$\Gamma$-graded $k$-algebra} is a $k$-algebra $R$ together with a $\Gamma$-grading of $R$. Hence we can write any element $r \in R$ as a sum $r = \sum_{g \in \Gamma} r_g$ where $r_g \in R_g$ and only finitely many of the $r_g$'s are nonzero. Sine we will never put a topology on $R$ and therefore cannot take sums with more than finitely many nonzero terms, we will often fail to mention the nonzero condition explicitly. Throughout this paper \textbf{$R$ will denote a $\Gamma$-graded $k$-algebra} which, for reasons explained in Section \ref{sec: finite dimensionally graded rings and modules}, \textbf{we will always assume is an integral domain} (unless otherwise stated). 

\begin{example}
When $\Gamma = \zz$ we obtain the usual notion of a graded ring. 
\end{example}

\begin{proposition}\label{prop: R0 is a subalgebra}
$R_0$ is a subalgebra of $R$. 
\end{proposition}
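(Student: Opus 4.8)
The plan is to verify that $R_0$ satisfies the three defining properties of a $k$-subalgebra: it contains the multiplicative identity, and it is closed under both the ring multiplication and the $k$-module operations (addition and scalar multiplication). The statement is elementary and follows directly from the grading axioms, so the ``proof'' is really just a careful unpacking of definitions rather than a substantive argument.

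First I would observe that $R_0$ is by hypothesis a $k$-submodule of $R$ (this is built into the definition of a $\Gamma$-grading, which assigns to each $g \in \Gamma$ a $k$-submodule $R_g$), so closure under addition and under the $k$-action comes for free. Next I would check closure under multiplication: the compatibility axiom $R_g R_h \subseteq R_{g+h}$ specializes, with $g = h = 0$, to $R_0 R_0 \subseteq R_{0+0} = R_0$, since $0 + 0 = 0$ in the abelian group $\Gamma$. Thus the product of two elements of $R_0$ again lies in $R_0$.

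The only remaining point is that $1 \in R_0$, and this is the step I would treat most carefully, since it is the one place where something must be argued rather than read off. Writing $1 = \sum_{g \in \Gamma} e_g$ with $e_g \in R_g$ and only finitely many $e_g$ nonzero, I would multiply by an arbitrary homogeneous element $r \in R_h$. On one hand $1 \cdot r = r \in R_h$; on the other hand $e_g r \in R_{g+h}$, so comparing the $R_h$-components (using that the sum $R = \bigoplus_g R_g$ is direct) forces $e_0 r = r$ and $e_g r = 0$ for $g \neq 0$. Since every element of $R$ is a sum of homogeneous pieces, this shows $e_0$ acts as the identity on all of $R$, whence $e_0 = 1$ by uniqueness of the identity, and $1 = e_0 \in R_0$. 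I do not anticipate any genuine obstacle here; the mild subtlety is simply making the identity-decomposition argument precise using directness of the grading decomposition, and one should note it does not even require $R$ to be a domain.
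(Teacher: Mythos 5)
Your proof is correct and takes essentially the same approach as the paper: write $1 = \sum_{g} e_g$ in homogeneous components and use directness of the decomposition $R = \bigoplus_g R_g$ to show the degree-zero component acts as the multiplicative identity, hence equals $1$. The only cosmetic difference is that you compare components of $1 \cdot r$ for an arbitrary homogeneous $r$ and then invoke uniqueness of the identity, whereas the paper applies the same comparison to the components $r_h$ of $1$ itself and concludes $r_0 = 1$ by a direct computation.
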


\begin{proof}
By definition $R_0$ is a $k$-subspace and we also have $R_0 \cdot R_0 \subseteq R_0$ hence $R_0$ is closed under multiplication. Furthermore we can write $1 = \sum_{g \in \Gamma} r_g$ where $r_g \in R_g$ and so for any $h \in \Gamma$ we have $r_h = r_h \cdot 1 = \sum_{g \in \Gamma} r_h r_g$. The LHS is in $R_h$, so the RHS must also be by the uniqueness of the direct sum. But $r_h r_g \in R_{h + g}$ and the latter is equal to $R_h$ if and only if $g = 0$. Therefore $r_g r_h = 0$ for all $g \neq 0$ and $r_0 r_h = r_h$. In particular we have
\begin{align*}
r_0 = r_0 \cdot 1 = \sum_{g \in \Gamma} r_0 r_g = \sum_{g \in \Gamma} r_g = 1. 
\end{align*}
Hence $1 \in R_0$ and so $R_0$ is a subalgebra as desired. 
\end{proof}


\subsection{Graded Modules}

A \emph{$\Gamma$-grading} of an $R$-module $M$ is a function which assigns to every element $g \in \Gamma$ a $k$-submodule $M_g \subseteq M$ such that $M = \bigoplus_{g \in \Gamma} M_g$ and for all $g , h \in \Gamma$ we have $R_g M_h \subseteq M_{g + h}$. In particular $R$ is itself a $\Gamma$-graded $R$-module. Elements of $M$ which are in $M_g$ for some $g \in \Gamma$ are called \emph{homogeneous of degree $g$} and if $m \in M_g$ we write $\deg(m)= g$. 

One can shift the grading of any graded module: if $M$ is a $\Gamma$-graded $R$-module and $g \in \Gamma$, then we denote by $M(g)$ the $\Gamma$-graded $R$-module which is equal to $M$ as an $R$-module but whose grading is given by $M(g)_h = M_{g + h}$. 

\begin{proposition}\label{prop: equivalent conditions of graded submodule}
Let $M$ be a $\Gamma$-graded $R$-module. The following are equivalent for a submodule $N \subseteq M$. 

\begin{enumerate}

\item $N$ is generated by homogeneous elements. 

\item $N = \bigoplus_{g \in \Gamma} N \cap M_g$.

\item For any $n \in N$ if we write $n = \sum_{g \in \Gamma} m_g$ where $m_g \in M_g$ then $m_g \in N$ for all $g \in \Gamma$. 

\end{enumerate}

\end{proposition}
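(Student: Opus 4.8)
The plan is to establish the three equivalences by proving a cycle of implications, say $(1) \Rightarrow (3) \Rightarrow (2) \Rightarrow (1)$, exploiting at each stage the uniqueness of the homogeneous decomposition coming from the internal direct sum $M = \bigoplus_{g \in \Gamma} M_g$. No domain or finiteness hypotheses are needed; this is purely a statement about the grading structure.

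For $(1) \Rightarrow (3)$, which I expect to be the only implication requiring any care, I would start with a generating set $S$ of homogeneous elements and an arbitrary $n \in N$, writing $n = \sum_i r_i s_i$ as a finite $R$-linear combination of elements $s_i \in S$. Decomposing each coefficient $r_i = \sum_h (r_i)_h$ into its homogeneous pieces $(r_i)_h \in R_h$ and using $R_h M_{h'} \subseteq M_{h + h'}$, each term $(r_i)_h s_i$ is homogeneous of degree $h + \deg(s_i)$ and lies in $N$ because $N$ is a submodule containing $s_i$. Collecting these terms by total degree and invoking the uniqueness of the decomposition of $n$ in $M$, the degree-$g$ component $m_g$ of $n$ must equal the sum of those $(r_i)_h s_i$ with $h + \deg(s_i) = g$; since each such term lies in $N$, so does $m_g$.

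The remaining two implications are essentially formal. For $(3) \Rightarrow (2)$, condition $(3)$ says precisely that every $n \in N$ has all of its homogeneous components $m_g$ back in $N$, hence in $N \cap M_g$; this gives $N \subseteq \sum_g (N \cap M_g)$, and since the sum of the $M_g$ is direct in $M$ the sum $\sum_g (N \cap M_g)$ is also direct, while the reverse containment $\bigoplus_g (N \cap M_g) \subseteq N$ is immediate from $N \cap M_g \subseteq N$. For $(2) \Rightarrow (1)$, a submodule equal to $\bigoplus_g (N \cap M_g)$ is by construction generated by the homogeneous elements it contains, namely the elements of the various $N \cap M_g$.

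The only subtle point throughout is the repeated appeal to the fact that the internal direct sum $M = \bigoplus_g M_g$ forces the homogeneous components of any element to be uniquely determined: this is what lets me identify the abstractly-constructed degree-$g$ terms in $(1) \Rightarrow (3)$ with the canonical components $m_g$, and what guarantees directness of $\sum_g (N \cap M_g)$ in $(3) \Rightarrow (2)$. Once this uniqueness is used cleanly, each step reduces to bookkeeping on homogeneous degrees.
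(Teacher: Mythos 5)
Your proof is correct and takes essentially the same approach as the paper: a cycle of implications resting entirely on the uniqueness of homogeneous components in the internal direct sum $M = \bigoplus_{g \in \Gamma} M_g$. The only difference is cosmetic --- you run the cycle $(1) \Rightarrow (3) \Rightarrow (2) \Rightarrow (1)$ while the paper runs $(1) \Rightarrow (2) \Rightarrow (3) \Rightarrow (1)$, and your $(1) \Rightarrow (3)$ step spells out in full the coefficient-decomposition argument (writing $n = \sum_i r_i s_i$ and expanding each $r_i$ into homogeneous pieces) that the paper's $(1) \Rightarrow (2)$ asserts only implicitly.
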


\begin{proof}
((1) $\implies$ (2)). By hypothesis the family $(N \cap M_g \mid g \in \Gamma)$ is independent, and if $N$ is generated by homogeneous elements, then every element of $N$ can be written as a sum of homogeneous elements of $N$, so this family spans as well. 

((2) $\implies$ (3)). If we have $n = \sum_{g \in \Gamma} m_g$ where $m_g \in M_g$, then because the family $(N \cap M_g \mid g \in \Gamma)$ spans $M$, we can also write $n = \sum_{g \in \Gamma} n_g$ where $n_g \in N \cap M_g$. Subtraction yields $0 = \sum_{g \in \Gamma} m_g - n_g$ with $m_g - n_g \in M_g$. But because the family $(M_g \mid g \in \Gamma)$ is independent, it follows that $m_g = n_g \in N \cap M_g$ as desired. 

((3) $\implies$ (1)). Given $n \in N$, since $M = \bigoplus_{g \in \Gamma} M_g$, we can write $n = \sum_{g \in \Gamma} m_g$ where $m_g \in M_g$. Then by hypothesis we have $m_g \in N \cap M_g$, so $N$ is generated by the homogeneous elements $N \cap M_g$ for $g \in \Gamma$. 
\end{proof}

A submodule which satisfies the conditions of Proposition \ref{prop: equivalent conditions of graded submodule} is called \emph{$\Gamma$-graded} or just \emph{graded} for short. An ideal of $R$ is \emph{$\Gamma$-graded} if it is as a submodule of $R$. We now collect a technical results that we will need later. 

\begin{proposition}
Let $M$ be a $\Gamma$-graded $R$-module, and $N \subseteq M$ a graded submodule. Then there is a $\Gamma$-grading of $M / N$ given by $M / N = \bigoplus_{g \in \Gamma} (M_g + N) / N$, where we identify $(M_g + N) / N$ with the submodule of $M / N$ spanned by elements of the form $m_g + N$ for $m_g \in M_g$. 
\end{proposition}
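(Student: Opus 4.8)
The plan is to check the three defining properties of a $\Gamma$-grading on the quotient $M/N$: that the proposed components $(M_g + N)/N$ span $M/N$, that their sum is internally direct, and that they are compatible with the ring grading in the sense that $R_g \cdot (M_h + N)/N \subseteq (M_{g+h} + N)/N$ for all $g, h \in \Gamma$. Throughout I would identify $(M_g + N)/N$ with its image in $M/N$, namely the submodule spanned by the classes $m_g + N$ with $m_g \in M_g$; this identification is just the content of the third isomorphism theorem and needs no separate argument.

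Spanning is immediate: given any class $m + N \in M/N$, I would use the grading $M = \bigoplus_{g} M_g$ to write $m = \sum_g m_g$ with $m_g \in M_g$ (finitely many nonzero), so that $m + N = \sum_g (m_g + N)$ exhibits it as a finite sum of elements of the proposed components. The crux of the argument — and the only step that genuinely invokes the hypothesis that $N$ is a \emph{graded} submodule rather than an arbitrary one — is directness. Suppose a finite sum $\sum_g (m_g + N)$ vanishes in $M/N$ with $m_g \in M_g$; this is equivalent to saying $\sum_g m_g \in N$. Since $\sum_g m_g$ is precisely the homogeneous decomposition of an element of $N$, condition (3) of Proposition \ref{prop: equivalent conditions of graded submodule} forces each $m_g \in N$, whence $m_g + N = 0$ for every $g$, giving independence. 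I expect this to be the main point of the proof, though it is short once the earlier characterization of graded submodules is in hand; note that without gradedness of $N$ this implication simply fails.

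Compatibility with the $R$-action then follows directly from the module grading axiom $R_g M_h \subseteq M_{g+h}$: for $r \in R_g$ and a representative $m_h \in M_h$ one computes $r \cdot (m_h + N) = r m_h + N$ with $r m_h \in M_{g+h}$, so the product lands in $(M_{g+h} + N)/N$. Having verified spanning, directness, and compatibility, I would conclude that $M/N = \bigoplus_g (M_g + N)/N$ is a $\Gamma$-grading, completing the proof.
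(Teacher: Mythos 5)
Your proof is correct and follows essentially the same route as the paper's: both verify spanning, reduce directness to the statement $\sum_g m_g \in N \implies m_g \in N$ (i.e., condition (3) characterizing graded submodules), and note that compatibility with the $R$-action is immediate from $R_g M_h \subseteq M_{g+h}$. The only cosmetic difference is that you cite the earlier proposition explicitly where the paper just says ``because $N$ is graded.''
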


\begin{proof}
Given $g, h \in \Gamma$, it is clear that $(M_h + N) / N$ is a $k$-subspace of $M / N$ and that $R_g \cdot (M_g + N) / N \subseteq (M_{g + h} + N) / N$. Furthermore it is also obvious that $M / N = \sum_{g \in \Gamma} (M_g + N) / N$. To show that the family $( (M_g + N) / N \mid g \in \Gamma)$ of $k$-subspaces of $M / N$ is independent, we compute:
\begin{equation*}
0 = \sum_{g \in \Gamma} m_g + N = \left[\sum_{g \in \Gamma} m_g \right] + N \implies \sum_{g \in \Gamma} m_g \in N
\end{equation*}
Because $N$ is graded, for all $g \in \Gamma$ it follows that $m_g \in N$ and hence $m_g + N = 0$ as desired. 
\end{proof}

\begin{proposition}
Let $M$ be a $\Gamma$-graded $R$-module, and $I \subseteq R$ a graded ideal. Then $IM$ is a graded submodule of $M$. 
\end{proposition}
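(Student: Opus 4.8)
The plan is to verify condition (1) of Proposition \ref{prop: equivalent conditions of graded submodule}, namely that $IM$ is generated by homogeneous elements; this is the lightest of the three equivalent characterizations to check directly, and it meshes well with the generator-based description of $IM$ as the set of finite sums $\sum_j a_j m_j$ with $a_j \in I$ and $m_j \in M$.

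First I would record the elementary fact that a product of homogeneous elements is homogeneous: if $a \in I \cap R_g$ and $m \in M_h$, then the grading axiom $R_g M_h \subseteq M_{g + h}$ gives $am \in M_{g + h}$, so $am$ is homogeneous of degree $g + h$ and of course lies in $IM$. The collection of all such products will serve as the candidate homogeneous generating set.

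Next I would show that every element of $IM$ is a (say $k$-linear) combination of these homogeneous products. Given $\sum_j a_j m_j \in IM$, decompose $a_j = \sum_g (a_j)_g$ into its homogeneous components in $R$ and $m_j = \sum_h (m_j)_h$ into its homogeneous components in $M$. The crucial point — and the only place the graded hypothesis on $I$ is genuinely used — is that since $I$ is a graded ideal, condition (3) of Proposition \ref{prop: equivalent conditions of graded submodule} guarantees each $(a_j)_g$ again lies in $I$. Expanding bilinearly, $a_j m_j = \sum_{g, h} (a_j)_g (m_j)_h$, and by the previous paragraph each summand is a homogeneous element of $IM$.

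Therefore $IM$ is generated by homogeneous elements, and Proposition \ref{prop: equivalent conditions of graded submodule} immediately yields that $IM$ is a graded submodule. There is no serious obstacle here; the one subtlety worth flagging is that one must not take for granted that the homogeneous components of an element of $I$ remain in $I$ — that is exactly what gradedness of $I$ provides, and it is precisely what makes the homogeneous expansion of each $a_j m_j$ land back inside $IM$.
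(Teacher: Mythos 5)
Your proof is correct and follows essentially the same route as the paper's: expand each $a_j m_j$ bilinearly into products of homogeneous components, use gradedness of $I$ to keep the components $(a_j)_g$ inside $I$, and conclude via the generated-by-homogeneous-elements criterion. If anything, you are slightly more careful than the paper in flagging exactly where gradedness of $I$ is used.
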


\begin{proof}
Given $x = \sum_{j = 1}^\ell i_j m_j \in IM$, because $I$ is graded, we can write $i_j = \sum_{g \in \Gamma} (i_j)_g$ and because $M$ is graded we can write $m_j = \sum_{g \in \Gamma} (m_j)_g$. Then we can write $x = \sum_{j = 1}^\ell \sum_{g, h \in \Gamma} (i_j)_g (m_j)_h$ and $(i_j)_g (m_j)_g \in IM \cap M_{g + h}$, hence $IM$ is generated by homogeneous elements of the form $im$ where $i \in I$ is homogeneous and $m \in M$ is homogeneous, so it is a graded submodule. 
\end{proof}


\subsection{Homomorphisms of Graded Modules}

If $M$ and $N$ are two $\Gamma$-graded $R$-modules we say that an $R$-module homomorphism $\varphi: M \to N$ is \emph{$\Gamma$-graded} (or just \emph{graded}) if for all $g \in \Gamma$ we have $\varphi(M_g) \subseteq N_g$.

\begin{proposition}\label{prop: ker and im are graded}
Let $M, N$ be $\Gamma$-graded $R$-modules and $\varphi: M \to N$ be a graded $R$-module homomorphism. Then $\ker \varphi$ is a graded submodule of $M$ and $\im \varphi$ is a graded submodule of $N$. Furthermore if $y_g \in (\im \varphi)_g$ for some $g \in \Gamma$ then there exists $x_g \in M_g$ such that $\varphi(x_g) = y_g$. 
\end{proposition}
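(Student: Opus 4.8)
The plan is to handle the three assertions in turn, in each case invoking the equivalent characterizations of graded submodules from Proposition~\ref{prop: equivalent conditions of graded submodule} and repeatedly exploiting the independence of the direct sum decomposition $N = \bigoplus_{g \in \Gamma} N_g$.

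First I would show $\ker \varphi$ is graded by verifying condition~(3). Given $n \in \ker \varphi$, write $n = \sum_{g \in \Gamma} m_g$ with $m_g \in M_g$. Applying $\varphi$ and using that it is graded gives $0 = \varphi(n) = \sum_{g \in \Gamma} \varphi(m_g)$ with each $\varphi(m_g) \in N_g$; the independence of the family $(N_g \mid g \in \Gamma)$ then forces $\varphi(m_g) = 0$, so that each homogeneous component $m_g$ already lies in $\ker \varphi$.

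Next, for $\im \varphi$ I would verify condition~(1). Any element of $\im \varphi$ has the form $\varphi(m)$ for some $m = \sum_{g \in \Gamma} m_g \in M$, and gradedness of $\varphi$ gives $\varphi(m) = \sum_{g \in \Gamma} \varphi(m_g)$, expressing it as a sum of the homogeneous elements $\varphi(m_g) \in N_g$. Hence $\im \varphi$ is generated by homogeneous elements and is therefore a graded submodule of $N$.

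The only part requiring genuine care is the final claim. Here I would first note that, since $\im \varphi$ is graded, condition~(2) identifies $(\im \varphi)_g$ with $\im \varphi \cap N_g$, so any $y_g \in (\im \varphi)_g$ lies in $N_g$ and admits some preimage $x \in M$ which is \emph{not} a priori homogeneous. Decomposing $x = \sum_{h \in \Gamma} x_h$ with $x_h \in M_h$ and applying the graded map yields $y_g = \sum_{h \in \Gamma} \varphi(x_h)$ with each $\varphi(x_h) \in N_h$. Comparing graded components via the independence of the direct sum, every term with $h \neq g$ must vanish while $\varphi(x_g) = y_g$, so the degree-$g$ part of an arbitrary preimage already furnishes the required homogeneous preimage $x_g \in M_g$. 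I expect this last comparison-of-components step to be the crux: the subtlety is precisely that one cannot assume the chosen preimage is homogeneous, and must instead extract the correct graded piece after the fact, leveraging that the other components map into complementary summands.
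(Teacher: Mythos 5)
Your proposal is correct and follows essentially the same approach as the paper: decompose into homogeneous components and use independence of the family $(N_g \mid g \in \Gamma)$ to compare graded pieces. The only cosmetic difference is that you verify condition~(1) of Proposition~\ref{prop: equivalent conditions of graded submodule} for $\im \varphi$ where the paper verifies condition~(3), but your argument for the final claim then reproduces exactly that condition-(3) computation (extracting the degree-$g$ component of an arbitrary preimage), so the substance is identical.
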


\begin{proof}
If $x \in \ker \varphi$ and we write $x = \sum_{g \in \Gamma} x_g$ then we have $0 = \varphi(x) = \sum_{g \in \Gamma} \varphi(x_g)$ and since $\varphi$ is graded, we have $\varphi(x_g) \in N_g$. Because the family $(N_g \mid g \in \Gamma)$ is independent, it follows that $\varphi(x_g) = 0$, and hence $x_g \in \ker f$ as desired. 

If $y \in \im \varphi$ and we write $y = \sum_{g \in \Gamma} y_g$, there exists some $x \in M$ such that $\varphi(x) = y$. We can also write $x = \sum_{g \in \Gamma} x_g$ and then we have $\sum_{g \in \Gamma} y_a = y = \varphi(x) = \sum_{g \in \Gamma} \varphi(x_g)$. Since $\varphi$ is graded, we have $\varphi(x_g) \in N_g$, so because the family of subspaces $(N_g \mid g \in \Gamma)$ is independent, it follows that $y_g = \varphi(x_g)$ for all $g \in \Gamma$ so $y_g \in \im \varphi$ as desired.

Taking $y = y_g$ in the previous paragraph, we obtain that $y_g = \varphi(x_g)$ and $\varphi(x_h) = 0$ for $h \neq g$ proving the final statement. 
\end{proof}


\subsection{Multigraded Polynomial Rings}

In this paper we will mainly be concerned with $\Gamma$-graded polynomial rings $S := k[ x_i \mid i \in I ]$ such that the monomials $x^\alpha$ (where $\alpha \in \zz_{\geq 0}^{(I)}$) are all homogeneous (a \emph{$\Gamma$-multigrading}). Note that the degree of a monomial in the multigraded setting is completely determined by the degrees of the variables themselves. 

\begin{example}
The main example we will be interested in throughout the paper involves the following polynomial ring which can be thought of as a Cox ring (see \cite{hu2000mori}) with infinitely many columns in the variable matrix. Suppose that $n_1, n_2, \ldots$ is any sequence of natural numbers. We denote by $k[x_{i, j}]_n$ the ring of polynomials in the variables $x_{i, j}$ where $i , j \in \nn$ and $i \leq n_j$. We picture this as a matrix $X$ with $(i , j)$ entry equal to $x_{i , j}$ whose $j$th column has zeros after the $(n_j , j)$ entry. 

Define a $\zz^{(\nn)}$-multigrading on $k[x_{i, j} \mid i , j \in \nn ]_n$ by $\deg(x_{i , j}) = e_j$, where we denote the standard basis on $\zz^{( \nn)}$ by $(e_i \mid i \in \nn)$. We denote the elements of $\zz^{(\nn)}$ by $\vec{a} = (a_1, a_2, \ldots)$ and recall that the parentheses in the notation ``$\nn$'' require that $\vec{a}$ has only finitely many nonzero entries. 
\end{example}


\section{Finite Dimensionally Graded Rings and Modules}\label{sec: finite dimensionally graded rings and modules}

In this paper we will study polynomial rings in infinitely many variables which, we note, are not finitely generated as algebras over $k$. Because of this we will need to place other finiteness conditions on our rings and modules. In particular we will study the Grothendieck group of $R$ using the Hilbert series of a $\Gamma$-graded $R$-module $M$ and will therefore need $M_g$ to be a finite-dimensional $k$-vector space for all $g \in \Gamma$. We call such a $\Gamma$-graded $R$-module \emph{finite-dimensionally graded} (some authors call this \emph{modest}). We certainly want $R$ to be a finite-dimensionally graded module over itself and in this section we give a combination of (typically easy to check) conditions on $R$ that will guarantee this.

If $M$ is a $\Gamma$-graded $R$-module, we define the \emph{$\Gamma$-support} (or just support) of $M$ to be the set $\Supp_\Gamma(M) = \{ g \in \Gamma \mid M_g \neq 0 \}$. In the case when $M = R$, we define the \emph{image monoid} $Q$ of the grading to be the submonoid of $\Gamma$ generated by the set $\Supp_\Gamma(R)$. Note that by Proposition \ref{prop: R0 is a subalgebra}, $1 \in R_0$, hence $0 \in \Supp_\Gamma(R)$. However if $R$ is \textbf{not} a domain, it is not always true that $\Supp_\Gamma(R)$ is closed under addition, as the following example shows.

\begin{example}
Give $k[x, y]$ the coarse $\zz^2$-grading with $\deg(x) = (1, 0)$ and $\deg(y) = (0 , 1)$. Then the ideal $(x y)$ is graded (it is generated by $xy$ which has degree $(1 , 1)$) and hence the quotient $k[x , y] / (xy)$ is also $\zz^2$-graded. However we have that $\left( k[x , y] / (xy) \right)_{(1 , 0)} = \Span_k( x + (xy) )$ and $\left( k[x , y] / (xy) \right)_{(0 , 1)} = \Span_k( y + (xy) )$ while $\left( k[x , y] / (xy) \right)_{(1 , 1)} = 0$ and hence $\Supp_\Gamma(R)$ is not closed under addition. 
\end{example}

\begin{proposition}\label{prop: domain implies image is monoid}
If $R$ is a $\Gamma$-graded $k$-algebra which is an \textbf{integral domain} then the image monoid $Q$ of the grading is just equal to $\Supp_\Gamma(R)$. 
\end{proposition}

\begin{proof}
We show that when $R$ is a domain, $\Supp_\Gamma(R)$ is a submonoid of $\Gamma$. It follows from Proposition \ref{prop: R0 is a subalgebra} that $0 \in \Supp_\Gamma(R)$. Given $g , h \in \Supp_\Gamma(R)$, by definition there exist nonzero $r \in R_g$ and $s \in R_h$, and hence $rs \in R_{g + h}$ but because $R$ is a domain $rs \neq 0$ so $g + h \in \Supp_\Gamma(R)$. 
\end{proof}

Proposition \ref{prop: domain implies image is monoid} is the reason we requite $R$ to be a domain throughout the paper. 


\subsection{Conditions on the Image Monoid}

We now discuss the finiteness conditions we require on the image monoid of $R$, and begin with a brief summary of some necessary monoid terminology. Recall that a \emph{monoid} consists of a set $Q$ together with a binary operation on $Q$ which is associative and has a (necessarily unique) identity. The monoid is called \emph{commutative} if the binary operation is commutative. In this paper all monoids will be commutative, hence we use ``$+$'' to denote the binary operation and ``$0$'' to denote the identity. 

A monoid $Q$ is called \emph{cancelative} if $p + u = q + u$ implies $p = q$ for all $p , q, u \in Q$. Any abelian group has an underlying commutative monoid and if $Q$ is a submonoid of an abelian group $\Gamma$, it is automatically cancelative since every element of $Q$ has an inverse in $\Gamma$. Conversely any cancelative commutative monoid can be embedded as a submonoid of an abelian group (see \cite{weibel2013k} Chapter II Section 1). 

\begin{definition}
A monoid $Q$ is called \emph{pointed} if $p + q = 0$ implies $p = q = 0$ for all $p , q \in Q$. 
\end{definition}

\begin{proposition}\label{prop: partial order on pointed cancelative commutative monoids}
Let $Q$ be a pointed submonoid of an abelian group $\Gamma$. Define a relation $\leq_Q$ on $\Gamma$ by $g \leq_Q h$ if and only if there exists $q \in Q$ such that $g + q = h$. Then $\leq_Q$ is a partial order. 
\end{proposition}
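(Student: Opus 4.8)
The plan is to verify the three defining axioms of a partial order on $\Gamma$ — reflexivity, antisymmetry, and transitivity — directly from the definition of $\leq_Q$. Each axiom corresponds to one of the monoid-theoretic hypotheses in play: reflexivity uses that $Q$ contains the identity, transitivity uses that $Q$ is closed under the operation, and antisymmetry is where the hypothesis that $Q$ is \emph{pointed} (together with the fact that $\Gamma$ is a group) does the work.

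First I would dispatch the two easy axioms. For reflexivity, since $Q$ is a submonoid we have $0 \in Q$, and the equation $g + 0 = g$ exhibits $g \leq_Q g$ for every $g \in \Gamma$. For transitivity, suppose $g \leq_Q h$ and $h \leq_Q j$, so that there exist $q, q' \in Q$ with $g + q = h$ and $h + q' = j$. Adding these relations gives $g + (q + q') = j$, and because $Q$ is closed under addition we have $q + q' \in Q$, whence $g \leq_Q j$.

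The main obstacle — and the only point at which pointedness is invoked — is antisymmetry. Suppose both $g \leq_Q h$ and $h \leq_Q g$, so there exist $q, q' \in Q$ with $g + q = h$ and $h + q' = g$. Substituting the first into the second yields $g + q + q' = g$. Here I would use that $\Gamma$ is an \textbf{abelian group}: adding $-g$ to both sides cancels $g$ and gives $q + q' = 0$. Since $Q$ is pointed, this forces $q = q' = 0$, and therefore $h = g + q = g$. This is the crux of the argument, since in a general (non-pointed) submonoid one could have $q + q' = 0$ with $q, q'$ nonzero, and $\leq_Q$ would only be a preorder rather than a genuine partial order. Having checked all three axioms, the proof is complete.
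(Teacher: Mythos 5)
Your proof is correct and follows essentially the same route as the paper's: verify the three axioms directly, with reflexivity from $0 \in Q$, transitivity from closure of $Q$ under addition, and antisymmetry by cancelling $g$ to get $q + q' = 0$ and then invoking pointedness. The only cosmetic difference is that the paper phrases the cancellation step via cancelativity of $Q$ (which it earlier notes is automatic for submonoids of a group), whereas you cancel directly in $\Gamma$ by adding $-g$; these are the same argument.
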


\begin{proof}
To check reflexivity, note that $g = g + 0$ and $0 \in Q$, so $g \leq_Q g$. For antisymmetry note that if $g \leq_Q h$ and $h \leq_Q g$ then there exists $p , q \in Q$ such that $b = a + p$ and $a = b + q$. Then we have $a = a + p + q$ so by cancelativity we have $0 = p + q$ and by pointedness of $Q$ we have $p= q = 0$, hence $g = h$. Finally for transitivity, if $g \leq_Q h$ and $h \leq_Q u$ then there exist $p , q \in Q$ such that $h = g + p$ and $u = b + q$, hence we have $u = g + (p + q)$ and since $Q$ is closed under addition, $p + q \in Q$, hence $g \leq_Q u$ as desired. 
\end{proof}

Note that for any $q \in Q$, we have $0 \leq_Q q$. Furthermore, if $Q$ is a pointed submonoid of an abelian group $\Gamma$ and $g \leq_Q h$ then $g + q = h$ for some $q \in Q$ and hence for any $u \in \Gamma$ we have $g + u + q = h + u$, so $g + u \leq_Q h + u$. Thus $(\Gamma , \leq_Q)$ is a partially ordered group.

\begin{definition}
We say that a subset $U$ of a poset $(P , \leq)$ is \emph{downward finite} if for all $p \in P$ the set $\{ u \in U \mid u \leq p \}$ is finite. 
If $\Gamma$ is an abelian group and $Q \subseteq \Gamma$ is a pointed submonoid, we say that a subset $U \subseteq \Gamma$ is \emph{downward finite} if it is with respect to the partial order $\leq_Q$.  
\end{definition}

\begin{proposition}
Let $\Gamma$ be an abelian group and $Q \subseteq \Gamma$ a pointed submonoid. Then $Q$ itself is downward finite if and only if for all $p \in Q$ the set $\{ q \in Q \mid q \leq_Q p \}$ is finite. 
\end{proposition}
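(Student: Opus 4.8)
The plan is to prove the two directions of the biconditional separately, with the forward direction being essentially definitional and the converse resting on a single structural observation about how the order $\leq_Q$ interacts with closure of $Q$ under addition. First, I would unwind the definition of downward finiteness applied to the subset $U = Q$ of the poset $(\Gamma, \leq_Q)$: to say that $Q$ is downward finite is precisely to say that for \emph{every} $g \in \Gamma$ the set $\{ q \in Q \mid q \leq_Q g \}$ is finite. Specializing this requirement to those $g$ that happen to lie in $Q$ immediately yields the right-hand condition, since $Q \subseteq \Gamma$; so the forward implication is trivial.

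The substantive direction is the converse, where I assume finiteness of $\{ q \in Q \mid q \leq_Q p \}$ only for $p \in Q$ and must deduce it for an arbitrary $g \in \Gamma$. The crucial step, which is really the entire content of the proposition, is the observation that the set $\{ q \in Q \mid q \leq_Q g \}$ can be nonempty only when $g$ itself belongs to $Q$. Indeed, if this set contains some $q_0$, then by the definition of $\leq_Q$ there is a $q' \in Q$ with $q_0 + q' = g$, and since $Q$ is closed under addition we conclude $g = q_0 + q' \in Q$.

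Given this, I would finish by a case split on $g$. If $g \notin Q$, then by the observation the set $\{ q \in Q \mid q \leq_Q g \}$ is empty, hence finite; if $g \in Q$, then its finiteness is exactly the hypothesis applied to $p = g$. In either case the set is finite, so $Q$ is downward finite. I expect the only genuine subtlety to be recognizing that the poset underlying the definition of downward finiteness is all of $(\Gamma, \leq_Q)$ rather than $Q$ alone, so that the apparent gap between ``for all $p \in Q$'' and ``for all $g \in \Gamma$'' is what must be bridged; that gap is closed precisely by the submonoid-closure argument above, after which no estimation or calculation remains.
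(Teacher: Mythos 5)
Your proposal is correct and uses essentially the same argument as the paper: the forward direction is immediate, and the converse hinges on the observation that if some $q \in Q$ satisfies $q \leq_Q g$, then $g = q + q' \in Q$ by closure of $Q$ under addition, so the set $\{ q \in Q \mid q \leq_Q g \}$ is either empty or covered by the hypothesis. The paper phrases the case split as empty versus nonempty rather than $g \notin Q$ versus $g \in Q$, but this is the identical argument.
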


\begin{proof}
The forward direction is clear. For the reverse, given $g \in \Gamma$ if $\{ q \in Q \mid q \leq_Q g \}$ is empty it is certainly finite and if it is nonempty then there exists $q \in Q$ such that $q \leq_Q g$ so by definition there is some $q' \in Q$ with $q + q' = g$ which implies $g \in Q$, hence by hypothesis the set $\{ q \in Q \mid q \leq_Q g \}$ is finite. 
\end{proof}

\begin{definition}
We say that a $\Gamma$-graded $k$-algebra $R$ is \emph{connected} if $R_0 = k$. 
\end{definition}

\begin{theorem}\label{thm: conditions to be pdcf}
Let $R$ be a connected $\Gamma$-graded $k$-algebra with image monoid $Q$ which is pointed and downward finite. Let $S \subseteq R$ be a set of homogeneous elements which generate $R$ as a $k$-algebra. If $S_g := R_g \cap S$ is finite for all $g \in \Gamma$, then $R$ is finite-dimensionally graded. 
\end{theorem}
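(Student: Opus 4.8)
The plan is to bound $\dim_k(R_g)$ for an arbitrary fixed $g \in \Gamma$ by exhibiting an explicit finite spanning set of $R_g$. Since $S$ generates $R$ as a $k$-algebra, $R$ is spanned as a $k$-vector space by monomials $s_1 s_2 \cdots s_m$ with each $s_i \in S$; each such monomial is homogeneous of degree $\deg(s_1) + \cdots + \deg(s_m)$ because the $s_i$ are homogeneous. Projecting onto the degree-$g$ component, it follows that $R_g$ is spanned by those monomials whose factor degrees sum to $g$. Thus it suffices to bound both the set of generators that can occur in such a monomial and the number of factors $m$.

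First I would dispose of degree-zero generators. If $s \in S$ has $\deg(s) = 0$, then $s \in R_0 = k$ by connectedness, so any degree-zero factor is a scalar that may be absorbed into the coefficient; hence $R_g$ is already spanned by monomials all of whose factors lie in $S' := \{ s \in S \mid \deg(s) \neq 0 \}$. Next I would control which generators appear. Each $s_i$ is a nonzero homogeneous element, so $\deg(s_i) \in \Supp_\Gamma(R) = Q$ (using Proposition \ref{prop: domain implies image is monoid} and the standing hypothesis that $R$ is a domain). Writing $g = \deg(s_i) + \sum_{j \neq i} \deg(s_j)$ with $\sum_{j \neq i} \deg(s_j) \in Q$ shows $\deg(s_i) \leq_Q g$, so every factor lies in the finite set $T_g := \bigcup_{h \in D_g} S_h$, where $D_g := \{ q \in Q \mid q \leq_Q g \}$ is finite by downward finiteness and each $S_h$ is finite by hypothesis.

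The crucial step is bounding the length $m$. Consider the partial sums $p_0 = 0$ and $p_i = \deg(s_1) + \cdots + \deg(s_i)$, so $p_m = g$. Each $p_i \in Q$ and $p_{i-1} \leq_Q p_i$, so the whole chain lies in $D_g$; moreover $p_i = p_{i-1} + \deg(s_i)$ with $\deg(s_i) \neq 0$, so cancelativity in $\Gamma$ gives $p_{i-1} \neq p_i$, and together with $p_{i-1} \leq_Q p_i$ and the antisymmetry of $\leq_Q$ from Proposition \ref{prop: partial order on pointed cancelative commutative monoids} this yields the strict inequality $p_{i-1} <_Q p_i$. Hence $0 = p_0 <_Q p_1 <_Q \cdots <_Q p_m = g$ is a strictly increasing chain in the finite poset $D_g$, forcing $m \leq |D_g|$. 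Consequently $R_g$ is spanned by the finitely many monomials of length at most $|D_g|$ in the finite set $T_g$ (at most $\sum_{m=0}^{|D_g|} |T_g|^m$ of them), so $\dim_k(R_g) < \infty$. I expect this length bound to be the only substantive point: it is precisely where pointedness (via the antisymmetry of $\leq_Q$) and downward finiteness combine, whereas the reduction to finitely many admissible generators and the handling of degree-zero factors via connectedness are routine bookkeeping.
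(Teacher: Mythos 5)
Your proof is correct, and its skeleton matches the paper's own argument: span $R_g$ by homogeneous monomials in the generators, observe that every factor of a degree-$g$ monomial has degree $\leq_Q g$, so that downward finiteness of $Q$ together with finiteness of each $S_p$ confines all factors to the finite set $T_g$, and conclude that $\dim_k(R_g) < \infty$. Where you genuinely go beyond the paper is the last step: the paper's proof stops at ``the admissible generators form a finite set'' and immediately concludes finite-dimensionality, leaving implicit the fact that only finitely many monomials in those finitely many generators can have degree exactly $g$ (a priori, arbitrarily long products or arbitrarily high powers could all land in degree $g$; this is exactly what happens when pointedness fails, e.g.\ for $k[x]$ graded by $\zz/2\zz$ with $\deg x = 1$, where $x, x^3, x^5, \ldots$ all sit in degree $1$). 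Your chain argument --- the partial sums $0 = p_0 <_Q p_1 <_Q \cdots <_Q p_m = g$ are pairwise distinct elements of the finite set $D_g$, whence $m \leq |D_g|$ --- supplies precisely this missing bound, and correctly identifies it as the point where pointedness (through antisymmetry of $\leq_Q$, Proposition \ref{prop: partial order on pointed cancelative commutative monoids}) and downward finiteness interact. Your reduction absorbing degree-zero generators into scalars via connectedness is likewise a detail the paper passes over silently, and it is needed for the length bound to apply. Two small remarks: the appeal to Proposition \ref{prop: domain implies image is monoid} is unnecessary, since $\deg(s_i) \in \Supp_\Gamma(R) \subseteq Q$ and the membership of the partial sums in $Q$ follow from the definition of the image monoid as the submonoid \emph{generated by} the support, so your argument does not actually require $R$ to be a domain; and your count $\sum_{m=0}^{|D_g|} |T_g|^m$ counts words rather than commutative monomials, which is harmless since it is an overcount.
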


\begin{proof}
Given $q \in Q$, every $r \in R_q$ can be written as a polynomial in finitely many of the elements of $S$, say $s_1, \ldots, s_n$, with coefficients in $k$. Because each element of $S$ is homogeneous, each monomial in this polynomial is also homogeneous, hence we can write $r = f_1(s_1, \ldots, s_n) + \ldots + f_m(s_1, \ldots, s_n)$ where $f_i(s_1, \ldots, s_n)$ is a $k$-linear combination of monomials each of which lies in $R_{q_i}$ for $q_1, \ldots, q_n \in Q$ distinct. We may assume without loss of generality that $q_1 = q$. By independence of the family $(R_g : g \in \Gamma)$ of $k$-subspaces of $R$, we must have $r = f_1(s_1, \ldots, s_n)$ and $f_i(s_1,\ldots, s_n) = 0$ for all $i > 1$. Suppose that $s_i \in R_{q_i}$. Given any $1 \leq i \leq n$, we may assume that $s_i$ appears with nonzero exponent in at least one of the monomials in the expansion $r = f_1(s_1, \ldots, s_n)$ (otherwise we can simply leave it out of the list $s_1, \ldots, s_n$ at the beginning). Then if $\gamma s_1^{\ell_1} \ldots s_n^{\ell_n}$ for $\gamma \in k$ is such a monomial (i.e. with $\ell_i > 0$), we have $q = \ell_1 q_1 + \ldots + \ell_n q_n$, and hence $\ell_i q_i \leq_Q q$, so since $\ell_i > 0$, we have $q_i \leq_Q q$. 

This implies that the monomials $s_1^{\ell_1} \ldots s_n^{\ell_n}$ with $q = \ell_1 q_1 + \ldots + \ell_n q_n$ and $s_i \in \bigcup_{p <_Q q} S_p$ form a $k$-spanning set for $R_q$. Since $Q$ is downward finite, the set $\{ p \in Q \mid p <_Q q \}$ is finite, and by hypothesis $S_p$ is finite for all $p \in Q$, so $\dim_k(R_q)$ is finite. 
\end{proof}

\begin{definition}
We call a $\Gamma$-graded $k$-algebra $R$ that is connected and finite-dimensionally graded and whose image monoid is pointed and downward finite a \emph{PDCF $\Gamma$-graded $k$-algebra}.
\end{definition}

\begin{example}\label{exam: k is pdcf}
Let $R$ be a PDCF $\Gamma$-graded $k$-algebra with support monoid $Q$. We make $k$ into a $\Gamma$-graded $k$-algebra by defining $k_0 = k$ and $k_g = 0$ for all $g \neq 0$. Then $k$ is PDCF because $\Supp_\Gamma(k) = \{ 0 \}$ which is obviously a pointed and downward finite submonoid of $\Gamma$, $k_0 = k$ so $k$ is connected, and $\dim_k(k_g)$ is either $0$ or $1$ so $k$ is finite-dimensionally graded.
\end{example}

\subsection{Multigradings on the Main Example}

\begin{example}
We now check that the polynomial ring $k[x_{i, j} \mid i , j \in \nn]_n$ together with the  $\zz^{(\nn)}$-grading given by $\deg(x_{i , j}) =  e_i$ is a PDCF $\zz^{(\nn)}$-graded $k$-algebra. The grading is clearly connected. The image monoid of this grading is $\zz_{\geq 0}^{( \nn)}$ which is obviously pointed and also downward finite because given $\vec{u} \in \zz_{\geq 0}^{( \nn)}$, we have that $\vec{g} \leq_{\zz_{\geq 0}^{( \nn)}} \vec{u}$ if and only if $g_i \leq u_i$ for all $i \in \nn$, and since $\vec{u}$ has only finitely many nonzero entries, there are only finitely many such $\vec{g}$ in $\zz_{\geq 0}^{( \nn)}$. Therefore the desired result follows from Theorem \ref{thm: conditions to be pdcf}. 
\end{example}


\section{Finiteness Conditions on Graded Modules}

Having defined, in the previous section, the finiteness conditions on $k$-algebra $R$ that we will need, we now turn to finiteness conditions on modules. These will be important to guarantee that the resulting category of modules is closed under tensor product. We begin with the conditions on the support. 

\begin{definition}
We say that a subset $U$ of a poset $(P,\leq)$ is \emph{finitely bounded below} if there exists $p_1, \ldots, p_n \in P$ such that for all $u \in U$ there exists $1 \leq i \leq n$ such that $p_i \leq u$. In this case we say that $U$ is \emph{bounded below by $\{p_1, \ldots, p_n \}$}. If $Q$ is a pointed submonoid of an abelian group $\Gamma$, then we say that $U \subseteq \Gamma$ is \emph{finitely bounded below} if it is with respect to the partial order $\leq_Q$. 
\end{definition}

\begin{note}
In Section 8.2 of \cite{miller2005combinatorial}, for $Q$ a submonoid of an abelian group $\Gamma$, a Laurent series $a = \sum_{g \in \Gamma} a_g \textbf{t}^g$ is said to be \emph{supported on finitely many translates of $Q$} if the support of $a$ is contained in $(h_1 + Q) \cup \ldots \cup (h_n + Q)$ for some $h_1, \ldots, h_n \in \Gamma$. As long as $Q$ is pointed, this is equivalent to the support of $a$ being a finitely bounded below subset of $\Gamma$ with respect to $\leq_Q$. We use the latter terminology because we often refer to finitely bounded below subsets in a context broader than the support of a series or module. 
\end{note}

\begin{example}
Consider the abelian group $\Gamma = \zz^\nn$ with pointed submonoid $Q = \zz_{\geq 0}^\nn$. The subset $\zz_{\geq 0}^\nn$ is finitely bounded below, since $(0 , 0 , \ldots )$ is a lower bound, but not downward finite since the set of elements less than $(1, 1, \ldots )$ contains all of the standard basis vectors $e_i = (0 , \ldots, 0, 1 , 0, \ldots )$. 
\end{example}

\begin{example}
Consider the abelian group $\zz^{(\nn)}$ with pointed submonoid $Q = \zz_{\geq 0}^{(\nn)}$. The subset $U = \{ e_i - e_{i + 1} \mid i \in \nn \}$ is downward finite because given any $(n_1, n_2, \ldots ) \in \zz^{(\nn)}$ there exists $\ell \in \nn$ such that $n_i = 0$ for all $i \geq \ell$ and then it cannot be that $e_i - e_{i + 1} \leq_Q (n_1, n_2, \ldots )$ for $i \geq \ell$. But $U$ is not finitely bounded below since if $(m_1, m_2, \ldots ) \in \zz^{(\nn)}$ were smaller than infinitely many elements of $U$, infinitely many of the $m_i$'s would have to be negative. 
\end{example}

If $R$ is a PDCF $\Gamma$-graded $k$-algebra and $M$ a $\Gamma$-graded $R$-module, we say that $M$ is \emph{finitely bounded below} or that it is \emph{downward finite} if $\Supp_\Gamma(M) \subseteq \Gamma$ satisfies the corresponding property. 

\begin{definition}
If $M$ is finitely bounded below, downward finite, and finite-dimensionally graded, we call it a \emph{BDF} $\Gamma$-graded $R$-module. We denote by $R\text{-}\textbf{Mod}_\Gamma^{\text{BDF}}$ the full subcategory of the category of $\Gamma$-graded $R$-modules consisting of the BDF modules. 
\end{definition}

\begin{note}
In Theorem 8.20 of \cite{miller2005combinatorial} it is proved that in the multigraded case, finitely generated modules are finitely bounded below. It is easy to see in the more general case of modules over a PDCF $\Gamma$-graded $k$-algebra that finitely generated modules are in fact BDF. 
\end{note}

In the rest of this section, we will prove various results about the category $R\text{-}\textbf{Mod}_\Gamma^{\text{BDF}}$, and we begin by showing that it contains the ring $R$ itself. 

\begin{proposition}
If $R$ is a PDCF $\Gamma$-graded $k$-algebra then $R$ is a BDF $\Gamma$-graded $R$-module. 
\end{proposition}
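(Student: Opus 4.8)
The plan is to verify directly that $\Supp_\Gamma(R)$ satisfies the three defining properties of a BDF module: finite boundedness below, downward finiteness, and finite-dimensionality of each graded piece. The key observation that makes all three tractable is that, since $R$ is an integral domain (our standing assumption), Proposition \ref{prop: domain implies image is monoid} identifies $\Supp_\Gamma(R)$ with the image monoid $Q$ itself. Thus every condition on the support of $R$ becomes a condition on $Q$, which is precisely where the PDCF hypotheses live.

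First I would dispense with finite-dimensionality: this is immediate, since \emph{finite-dimensionally graded} is one of the defining conditions of a PDCF algebra, so $\dim_k(R_g)$ is finite for all $g \in \Gamma$ by hypothesis. Next, downward finiteness of the module $R$ means exactly that $\Supp_\Gamma(R) = Q$ is downward finite as a subset of $(\Gamma, \leq_Q)$, and this too is part of the PDCF hypothesis, so nothing further is required.

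The only condition needing a (very short) argument is finite boundedness below. Here I would exhibit the single element $0 \in \Gamma$ as a lower bound for all of $Q$: for any $q \in Q$ we have $q = 0 + q$ with $q \in Q$, so $0 \leq_Q q$, as already observed following Proposition \ref{prop: partial order on pointed cancelative commutative monoids}. Hence $\Supp_\Gamma(R) = Q$ is bounded below by the one-element set $\{ 0 \}$, and is therefore finitely bounded below.

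Since none of the three verifications involves any genuine difficulty, there is no real obstacle here; the content of the statement is entirely front-loaded into the PDCF definition and into the domain hypothesis (via Proposition \ref{prop: domain implies image is monoid}). The only point deserving a moment's care is that the lower bound is taken to be $0$ in the ambient group $\Gamma$, rather than requiring it to be one of the elements of the support, which is exactly what the definition of finitely bounded below permits.
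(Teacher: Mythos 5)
Your proposal is correct and follows essentially the same argument as the paper: downward finiteness and finite-dimensional grading are read off directly from the PDCF hypotheses (with $\Supp_\Gamma(R) = Q$, as the paper also uses), and finite boundedness below comes from observing that $0$ is a lower bound for all of $Q$. The only cosmetic difference is that you explicitly cite Proposition \ref{prop: domain implies image is monoid} for the identification $\Supp_\Gamma(R) = Q$, which the paper leaves implicit.
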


\begin{proof}
By definition of PDCF, $Q= \Supp_\Gamma(R)$ is downward finite. Furthermore for any $q \in Q$, $0 \leq_Q q$ and therefore $Q$ is finitely bounded below. Again by definition of PDCF, $R$ is a finite-dimensionally graded $k$-algebra, hence it is a finite-dimensionally graded $R$-module. 
\end{proof}

Next we show that $R\text{-}\textbf{Mod}_\Gamma^{\text{BDF}}$ is closed under shifting of the grading.

\begin{proposition} \label{prop: shifts of BDF are BDF}
If $R$ is a PDCF $\Gamma$-graded $k$-algebra and $M$ is a BDF $\Gamma$-graded $R$-module and $g \in \Gamma$ then $M(g)$ is also a BDF $\Gamma$-graded $R$-module. 
\end{proposition}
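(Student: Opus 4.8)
The plan is to observe that the support of the shifted module $M(g)$ is simply a translate of $\Supp_\Gamma(M)$, and that all three defining properties of a BDF module are insensitive to such a translation; the finite-dimensionality is immediate, while the two support conditions transfer because the order $\leq_Q$ is translation invariant (recall that $(\Gamma, \leq_Q)$ is a partially ordered group, as noted after Proposition \ref{prop: partial order on pointed cancelative commutative monoids}).

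First I would compute the support. Since $M(g)_h = M_{g+h}$ by definition, we have $M(g)_h \neq 0$ if and only if $g + h \in \Supp_\Gamma(M)$, so
\[
\Supp_\Gamma(M(g)) = \{ s - g \mid s \in \Supp_\Gamma(M) \} = \Supp_\Gamma(M) - g.
\]
Finite-dimensionality is then trivial: $\dim_k\big(M(g)_h\big) = \dim_k\big(M_{g+h}\big)$, which is finite because $M$ is finite-dimensionally graded.

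For the remaining two conditions I would invoke translation invariance of $\leq_Q$, i.e.\ that $a \leq_Q b$ if and only if $a + u \leq_Q b + u$ for every $u \in \Gamma$. If $\Supp_\Gamma(M)$ is bounded below by $\{p_1, \ldots, p_n\}$, then I claim $\Supp_\Gamma(M(g))$ is bounded below by $\{p_1 - g, \ldots, p_n - g\}$: given $u = s - g$ with $s \in \Supp_\Gamma(M)$, pick $i$ with $p_i \leq_Q s$, so that $p_i - g \leq_Q s - g = u$. For downward finiteness, fix $h \in \Gamma$; translation by $g$ furnishes a bijection
\[
\{ u \in \Supp_\Gamma(M(g)) \mid u \leq_Q h \} \;\longleftrightarrow\; \{ s \in \Supp_\Gamma(M) \mid s \leq_Q h + g \},
\]
sending $u \mapsto u + g$, and the right-hand set is finite since $M$ is downward finite, so the left-hand set is finite as well.

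There is no serious obstacle here: the entire argument reduces to the single structural fact that $\leq_Q$ is preserved under translation, and the only point requiring a moment's care is making sure downward finiteness is checked against the correct shifted bound $h + g$ rather than $h$. Everything else is a direct unwinding of the definitions of $M(g)$ and of the BDF conditions.
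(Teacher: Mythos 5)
Your proof is correct and follows essentially the same route as the paper's: compute $\Supp_\Gamma(M(g)) = \Supp_\Gamma(M) - g$, translate the finite set of lower bounds by $-g$, match the downward-finiteness condition at $h$ against the paper's set $\{ u \in \Supp_\Gamma(M) \mid u \leq_Q h + g \}$, and observe finite-dimensionality directly from $M(g)_h = M_{g+h}$. The only cosmetic difference is that you make the translation invariance of $\leq_Q$ explicit, which the paper uses implicitly.
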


\begin{proof}
Note that $\Supp_\Gamma(M(g)) = \Supp_\Gamma(M) - g$. Therefore if $\Supp_\Gamma(M)$ is bounded below by $\{g_1, \ldots, g_n \}$, $\Supp_\Gamma(M(g))$ is bounded below by $\{g_1 - g, \ldots, g_n - g \}$. Similarly if $h \in \Gamma$, then because $\Supp_\Gamma(M)$ is downward finite, the set $\{ u \in \Supp_\Gamma(M) \mid u \leq_Q h + g \}$ is finite, hence the translation  $\{ u \in \Supp_\Gamma(M) \mid u \leq_Q h + g \} - g$ is also finite, but this set is equal to $\{ v \in \Supp_\Gamma(M(g)) \mid v \leq_Q h \}$, which is therefore also finite. Finally given any $h \in \Gamma$, $M(g)_h = M_{g + h}$ which is a finite dimensional $k$-vector space by hypothesis.
\end{proof}

Finally we show that $R\text{-}\textbf{Mod}_\Gamma^{\text{BDF}}$ is an abelian category, preceded by several necessary lemmas. 

\begin{lemma}\label{lem: closure of fbb and df sets under taking subsets}
Let $(P , \leq)$ be a poset. 
\begin{enumerate}

\item If $U \subseteq P$ is finitely bounded below and $U' \subseteq U$ then $U'$ is also finitely bounded below. 

\item If $U \subseteq P$ is downward finite and $U' \subseteq U$ then $U'$ is also downward finite. 

\end{enumerate}
\end{lemma}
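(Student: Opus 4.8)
The plan is to observe that both parts follow immediately from the definitions by exploiting the direction of the inclusion $U' \subseteq U$; no new bounding data or finiteness input is needed beyond what $U$ already supplies. The only point requiring attention is to confirm, in each case, that the inclusion points the right way.

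For part (1), I would take a finite set $\{p_1, \ldots, p_n\}$ witnessing that $U$ is finitely bounded below, and argue that this same set bounds $U'$ below. The bounded-below condition is a statement universally quantified over the elements of the set in question: for every $u \in U$ there is some $i$ with $p_i \leq u$. Since $U' \subseteq U$, every $u' \in U'$ is in particular an element of $U$, so the witness $p_i$ supplied for it as an element of $U$ serves equally well for it as an element of $U'$. Thus passing to a subset only weakens the universal quantifier, and the same finite bounding set continues to work.

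For part (2), I would fix an arbitrary $p \in P$ and compare the two downward sets. Because $U' \subseteq U$, we have the inclusion $\{u' \in U' \mid u' \leq p\} \subseteq \{u \in U \mid u \leq p\}$. The right-hand set is finite by the downward finiteness of $U$, and any subset of a finite set is finite, so the left-hand set is finite as well. Since $p$ was arbitrary, $U'$ is downward finite.

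I expect there to be essentially no obstacle: both conditions are monotone in the favorable direction under passing to subsets, and the sole thing to verify is that the inclusion goes the right way in each case — that the bounding witnesses for $U$ transfer to $U'$ in part (1), and that the downward sets for $U'$ sit \emph{inside} those for $U$ rather than the reverse in part (2). Note that the analogous statements for \emph{super}sets would fail, which is exactly why checking the direction of the inclusion is the one substantive point.
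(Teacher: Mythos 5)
Your proof is correct and follows exactly the same route as the paper's: in part (1) you reuse the bounding set $\{p_1,\ldots,p_n\}$ for $U$ as a bounding set for $U'$, and in part (2) you observe that $\{u' \in U' \mid u' \leq p\}$ is a subset of the finite set $\{u \in U \mid u \leq p\}$. There is nothing to add; both arguments match the paper's proof step for step.
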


\begin{proof}
\
\begin{enumerate}

\item If $U \subseteq P$ is finitely bounded below then there exists $p_1, \ldots, p_n$ such that for all $u \in U$ there exists $1 \leq i \leq n$ with $p_i \leq u$. As $U' \subseteq U$, it is true the for all $u' \in U'$ there exists $1 \leq i \leq n$ such that $p_i \leq u'$. 

\item Given $p \in P$, we have that the set $\{ u \in U \mid u \leq p \}$ is finite, but because $U' \subseteq U$ it follows that $\{ u' \in U' \mid u' \leq p \}$ is a subset of the aforementioned set and is therefore also finite. 

\end{enumerate}
\end{proof}

\begin{lemma}\label{lem: graded submodules and quotients of BDF are BDF}
If $R$ is a PDCF $\Gamma$-graded $k$-algebra, $M$ is a BDF $R$-module, and $N \subseteq M$ is a graded submodule, then $N$ and $M / N$ are also BDF $R$-modules. 
\end{lemma}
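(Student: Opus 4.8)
The plan is to observe that the three defining conditions of a BDF module---finitely bounded below support, downward finite support, and finite-dimensional graded pieces---all propagate automatically to graded submodules and quotients, essentially because in each case the relevant support can only shrink. The key technical input will be Lemma \ref{lem: closure of fbb and df sets under taking subsets}, which guarantees that subsets of finitely bounded below (resp.\ downward finite) subsets of a poset inherit the same property; here the ambient poset is $(\Gamma, \leq_Q)$.

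First, for the submodule $N$, I would invoke Proposition \ref{prop: equivalent conditions of graded submodule} to write $N = \bigoplus_{g \in \Gamma} (N \cap M_g)$, so that the graded piece is $N_g = N \cap M_g$. This immediately yields $\Supp_\Gamma(N) \subseteq \Supp_\Gamma(M)$, since $N_g \neq 0$ forces $M_g \neq 0$. Applying Lemma \ref{lem: closure of fbb and df sets under taking subsets} to the poset $(\Gamma, \leq_Q)$ then shows that $\Supp_\Gamma(N)$ is both finitely bounded below and downward finite. Finally, each $N_g = N \cap M_g$ is a $k$-subspace of the finite-dimensional space $M_g$, hence itself finite-dimensional, so $N$ is BDF.

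Next, for the quotient $M/N$, I would use the grading established earlier, namely $(M/N)_g = (M_g + N)/N$, together with the second isomorphism theorem to identify $(M/N)_g \cong M_g / (M_g \cap N) = M_g / N_g$. This again gives $\Supp_\Gamma(M/N) \subseteq \Supp_\Gamma(M)$, so the same appeal to Lemma \ref{lem: closure of fbb and df sets under taking subsets} settles the finitely bounded below and downward finite conditions, while $(M/N)_g$, being a quotient of the finite-dimensional space $M_g$, is finite-dimensional. Hence $M/N$ is BDF as well.

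I do not expect any genuine obstacle: the entire statement reduces to the single observation that passing to a graded submodule or graded quotient can only shrink the support, after which the subset closure lemma does all the work. The only point demanding a moment of care is verifying that $M_g \cap N = N_g$ for the submodule---that is, that the grading on $N$ coming from the grading of $M$ agrees with its intrinsic grading---which is precisely the content of the equivalence in Proposition \ref{prop: equivalent conditions of graded submodule}.
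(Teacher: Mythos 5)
Your proposal is correct and follows essentially the same route as the paper's proof: both identify $N_g = N \cap M_g$ and $(M/N)_g \cong M_g/N_g$ (the paper via an explicit surjection $M_g \to (M_g+N)/N$ with kernel $N_g$, you via the second isomorphism theorem, which is the same argument), and both then reduce all three BDF conditions to the containments $\Supp_\Gamma(N), \Supp_\Gamma(M/N) \subseteq \Supp_\Gamma(M)$ together with Lemma \ref{lem: closure of fbb and df sets under taking subsets}. No gaps.
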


\begin{proof}
For any $g \in \Gamma$, $N_g = N \cap M_g$ is a subspace of $M_g$. Therefore $\dim_k(N_g) \leq \dim_k(M_g)$, so since $M$ is finite-dimensionally graded, so is $N$. Furthermore $\Supp_\Gamma(N) \subseteq \Supp_\Gamma(M)$, so by Lemma \ref{lem: closure of fbb and df sets under taking subsets}, $N$ is finitely bounded below and downward finite. 

Next, for any $g \in \Gamma$, there is a surjective $k$-linear map from $M_g$ to $(M / N)_g = (M_g + N)/N$ which sends $m_g \mapsto m_g + N$, the kernel of which is clearly $N_g$. Therefore $(M / N)_g$ is isomorphic as a $k$-vector space to $M_g / N_g$, which is a quotient of a finite-dimensional vector space. Hence $M / N$ is finite-dimensionally graded. Furthermore, this implies that $\Supp_\Gamma(M / N) \subseteq \Supp_\Gamma(M)$, so again by Lemma \ref{lem: closure of fbb and df sets under taking subsets}, $M / N$ is finitely bounded below and downward finite. 
\end{proof}

\begin{theorem}
Let $R$ be a PDCF $\Gamma$-graded $k$-algebra. Then $R\text{-}\textbf{Mod}_{\Gamma}^{\text{BDF}}$ is an abelian category (see \cite{stacks-project} for the definition of an abelian category).
\end{theorem}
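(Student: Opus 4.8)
The plan is to realize $R\text{-}\textbf{Mod}_{\Gamma}^{\text{BDF}}$ as a full subcategory of the category $R\text{-}\textbf{Mod}_{\Gamma}$ of \emph{all} $\Gamma$-graded $R$-modules, which is a standard abelian category (see \cite{hazrat2016graded} or \cite{nastasescu2004methods}), and then to invoke the general principle that a full subcategory of an abelian category which contains a zero object and is closed under finite biproducts, kernels, and cokernels is itself abelian, with the inclusion functor exact. In this way the entire argument reduces to checking these three closure properties, all of which are consequences of lemmas already established.

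First I would note that the zero module is trivially BDF, providing a zero object. For closure under finite biproducts, I would observe that the biproduct in $R\text{-}\textbf{Mod}_{\Gamma}$ is the direct sum $M \oplus N$ with grading $(M \oplus N)_g = M_g \oplus N_g$. Finite-dimensional gradedness is immediate since each graded piece is a finite direct sum of finite-dimensional spaces, and since $\Supp_\Gamma(M \oplus N) = \Supp_\Gamma(M) \cup \Supp_\Gamma(N)$, the remaining two conditions follow from the easy observation that a finite union of finitely bounded below (resp.\ downward finite) subsets is again finitely bounded below (resp.\ downward finite): in the first case one concatenates the finite lists of lower bounds, and in the second the relevant down-sets are finite unions of finite sets.

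The heart of the matter is closure under kernels and cokernels. Given a graded homomorphism $\varphi \colon M \to N$ between BDF modules, Proposition \ref{prop: ker and im are graded} shows that $\ker \varphi$ is a graded submodule of $M$ and $\im \varphi$ is a graded submodule of $N$; consequently $\ker \varphi$ is BDF and the cokernel $N / \im \varphi$ is BDF, both by Lemma \ref{lem: graded submodules and quotients of BDF are BDF}. Because the subcategory is full, these objects, equipped with the inclusion and projection computed in $R\text{-}\textbf{Mod}_{\Gamma}$, satisfy the universal properties of kernel and cokernel within $R\text{-}\textbf{Mod}_{\Gamma}^{\text{BDF}}$ as well; that is, kernels and cokernels are computed exactly as in the ambient category.

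I expect the only genuinely conceptual step to be the final transfer of the abelian axioms from $R\text{-}\textbf{Mod}_{\Gamma}$ to the subcategory. Concretely, for each morphism $\varphi$ one must check that the canonical map from the coimage to the image is an isomorphism \emph{in the subcategory}. Since coimage and image are built from the kernels and cokernels just shown to coincide with those of $R\text{-}\textbf{Mod}_{\Gamma}$, this canonical map is literally the one arising in the ambient abelian category, where it is an isomorphism; as it is an isomorphism between BDF modules and the inclusion is full, it is an isomorphism in $R\text{-}\textbf{Mod}_{\Gamma}^{\text{BDF}}$ as well. Combined with the additive structure inherited from $R\text{-}\textbf{Mod}_{\Gamma}$ (the Hom-groups and their bilinear composition restrict verbatim, while the zero object and biproducts were produced above), this completes the verification that $R\text{-}\textbf{Mod}_{\Gamma}^{\text{BDF}}$ is abelian.
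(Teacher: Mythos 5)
Your proposal is correct and follows essentially the same route as the paper's own proof: both realize $R\text{-}\textbf{Mod}_{\Gamma}^{\text{BDF}}$ as a full subcategory of the abelian category of all $\Gamma$-graded $R$-modules, verify the zero object and closure under finite direct sums, obtain kernels and cokernels from Proposition \ref{prop: ker and im are graded} together with Lemma \ref{lem: graded submodules and quotients of BDF are BDF}, and transfer the coimage-to-image isomorphism from the ambient category via fullness. The only cosmetic difference is that you state the general full-subcategory principle explicitly, whereas the paper carries out the same verification inline.
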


\begin{proof}
The category of $\Gamma$-graded $R$-modules is an abelian category (see \cite{nastasescu2004methods} Section 2.2). Since $R\text{-}\textbf{Mod}_{\Gamma}^{\text{BDF}}$ is a full subcategory, it is a pre-additive, meaning its sets of morphisms are abelian groups and composition is bilinear. The zero module with the unique grading is clearly BDF. Finally finite direct sums of BDF modules are BDF since the support of a direct sum of graded modules is a union of the supports and finite union preserves being finitely bounded below and downward finite. Therefore $R\text{-}\textbf{Mod}_{\Gamma}^{\text{BDF}}$ is an additive category. 

Now if $M , N$ are BDF $\Gamma$-graded $R$-modules and $\varphi : M \to N$ is a $\Gamma$-graded $R$-module homomorphism, by Proposition \ref{prop: ker and im are graded}, $\ker \varphi \subseteq M$ and $\im \varphi \subseteq N$ are graded submodules, therefore by Lemma \ref{lem: graded submodules and quotients of BDF are BDF}, $\ker \varphi$ and $\operatorname{coker} \varphi$ are BDF $\Gamma$-graded $R$-modules. Thus $R\text{-}\textbf{Mod}_{\Gamma}^{\text{BDF}}$ is closed under kernels and cokernels and hence is a pre-abelian category. Finally, the natural map $M / \ker \varphi \to \im \varphi$ is an isomorphism in the category of $\Gamma$-graded $R$-modules (because this category is abelian) and because $R\text{-}\textbf{Mod}_{\Gamma}^{\text{BDF}}$ is a full subcategory, the inverse of this isomorphism is also in $R\text{-}\textbf{Mod}_{\Gamma}^{\text{BDF}}$, so abelianity follows. 
\end{proof}

\begin{note}
We will henceforth assume that \textbf{$R$ denotes a PDCF $\Gamma$-graded $k$-algebra that is a domain} unless otherwise stated. 
\end{note}

\begin{example}
We now show that if $\mathfrak{d}$ is any ideal of $k[x_{i , j}]_n$ generated by minors of $X$ then the quotient $k[x_{i , j}]_n / \mathfrak{d}$ is a BDF $\zz^{(\nn)}$-graded $R$-module. First of all, any generating minor of $\mathfrak{d}$ is an alternating sum of monomials which each have a degree of the form $\vec{a}$ with $a_i \in \{ 0 , 1 \}$ in the $\zz^{(\nn)}$ grading. Therefore each minor is homogeneous, so $\mathfrak{d}$ is a $\zz^{(\nn)}$-graded ideal. Hence $\mathfrak{d}$ is a graded submodule of the BDF module $k[x_{i , j}]_n$, so by Lemma \ref{lem: graded submodules and quotients of BDF are BDF}, $\mathfrak{d}$ and $k[x_{i , j}]_n / \mathfrak{d}$ are also BDF modules. 
\end{example}



\section{Tensor Product of BDF Modules}

The goal of this section is to show that $R\text{-}\bMod_\Gamma^\text{BDF}$ is not just an abelian category but is closed in the category of $R$-modules under tensor product. This will be important for two reasons in future sections: the first is that it will be needed to define minimal free resolutions of BDF modules and the second is that it will give us a product on the Grothendieck group.


\subsection{Hilbert Series}

In order to show that the support of the tensor product of two BDF modules is again finitely bounded below and downward finite, we need to give the definition of the Hilbert series of a graded module, and prove that the product of two such series is well-defined.

Given an abelian group $\Gamma$, we define $\zz^\Gamma$ to be the set of functions $a : \Gamma \to \zz$ and we identify $a$ with its generating function $a(\textbf{t}) = \sum_{g \in \Gamma} a_g \textbf{t}^g$. We would like to define the product of two such series by the following formula. 
\begin{equation}\label{eq: multiplication of power series with finitely bounded downward finite support}
\left( \sum_{g \in \Gamma} a_g \textbf{t}^g \right) \left( \sum_{h \in \Gamma} b_h \textbf{t}^h \right) = \left( \sum_{u \in \Gamma} c_u \textbf{t}^u \right)
\end{equation}
where $c_u = \sum_{g+ h = u} a_g b_h$, but this is not well-defined in general since the sum defining $c_u$ could be infinite. We restrict to a subgroup of $\zz^\Gamma$ on which this multiplication is actually well-defined. 

\begin{definition}
Define $\zz[[Q]]\{\Gamma\}$ to be the subset of elements $a(\textbf{t}) = \sum_{g \in \Gamma} a_g \textbf{t}^g \in \zz^\Gamma$ such that $\Supp_\Gamma(a) := \{ g \in \Gamma \mid a_g \neq 0 \}$ is downward finite and finitely bounded below. 
\end{definition}

We think of elements of $\zz[[Q]]\{\Gamma\}$ as analogs of formal Laurent series. We now show that multiplication of two such series is well-defined and that $\zz[[Q]]\{ \Gamma \}$ is closed under this product, but first we need several lemmas.

\begin{lemma}\label{lem: finitely bounded below and downward finite sets have the finite sum property}
Let $V, W \subseteq \Gamma$ be finitely bounded below and downward finite. Given any $u \in \Gamma$, the sets $U = \{ (g , h) \mid g \in V, h \in W,  g + h \leq_Q u \} \subseteq \Gamma \times \Gamma$, $U_1 = \{ g \in V \mid \exists h \in W, g + h \leq_Q u\}$, and $U_2 = \{ h \in W \mid \exists g \in V, g + h \leq_Q u\}$ are all finite. 
\end{lemma}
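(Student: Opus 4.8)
The plan is to reduce the finiteness of the product set $U$ to the finiteness of its two coordinate projections $U_1$ and $U_2$, since every pair $(g,h) \in U$ has $g \in U_1$ and $h \in U_2$, so that $U \subseteq U_1 \times U_2$. Thus it suffices to show $U_1$ and $U_2$ are finite, and by symmetry I will focus on $U_1$.

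First I would use the hypothesis that $W$ is finitely bounded below: fix lower bounds $w_1, \ldots, w_n \in \Gamma$ so that every $h \in W$ satisfies $w_j \leq_Q h$ for some $1 \leq j \leq n$. The key step is to eliminate the existential quantifier over $h$ in the definition of $U_1$ and replace it by a bound on $g$ alone. Given $g \in U_1$, choose $h \in W$ with $g + h \leq_Q u$ and then an index $j$ with $w_j \leq_Q h$. Since $(\Gamma, \leq_Q)$ is a partially ordered group, adding $g$ to $w_j \leq_Q h$ gives $g + w_j \leq_Q g + h$, and transitivity with $g + h \leq_Q u$ yields $g + w_j \leq_Q u$, i.e. $g \leq_Q u - w_j$. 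Hence
\[
U_1 \subseteq \bigcup_{j=1}^n \{ g \in V \mid g \leq_Q u - w_j \}.
\]

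Now I would invoke downward finiteness of $V$: each set $\{ g \in V \mid g \leq_Q u - w_j \}$ is finite, so $U_1$ is contained in a finite union of finite sets and is therefore finite. The identical argument with the roles of $V$ and $W$ exchanged (using lower bounds $v_1, \ldots, v_m$ for $V$ together with downward finiteness of $W$) shows $U_2$ is finite. Finally $U \subseteq U_1 \times U_2$ is a subset of a finite set and hence finite.

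The only subtle point — and the step I would take the most care with — is the translation-invariance manipulation that converts $g + h \leq_Q u$ into $g \leq_Q u - w_j$; this is exactly where the partially ordered \emph{group} structure (rather than merely the poset structure) of $(\Gamma, \leq_Q)$ is used, and where pointedness of $Q$ enters implicitly through the fact that $\leq_Q$ is a genuine partial order in the first place. Everything else is routine bookkeeping with finite unions, so I do not anticipate any real obstacle beyond keeping the quantifiers straight.
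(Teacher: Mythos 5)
Your proposal is correct and follows essentially the same argument as the paper: bound each projection of $U$ by using the finite lower bounds of one set to translate the condition $g + h \leq_Q u$ into finitely many conditions of the form $g \leq_Q u - w_j$, then invoke downward finiteness of the other set, and finish with $U \subseteq U_1 \times U_2$. The paper merely presents the symmetric case ($U_2$ first, using lower bounds of $V$), but the reasoning is identical.
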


\begin{proof}
Consider the projection functions $\pi_1, \pi_2 : U \to \Gamma$ which send $(g , h)$ to $g$ and $h$ respectively. Given any $h \in \pi_2(U)$, there exists $g \in V$ such that $g + h \leq_Q u$ and since $V$ is finitely bounded below, there exists $1 \leq i \leq n$ such that $g_i \leq_Q g$, which gives $g_i + h \leq_Q g + h \leq_Q u$ and hence that $h \leq_Q u - g_i$. Since $W$ is downward finite, there are only finitely many elements of $\{ h \in W \mid h \leq_Q u - g_i \}$ and since there are only finitely many $i$'s, it follows that $\pi_2(U)$ is finite. Note that $\pi_2(U) = U_2$. A similar argument shows that $\pi_1(U) = U_1$ is finite. But $U \subseteq \pi_1(U) \times \pi_2(U)$ and since the latter is finite, so is the former.
\end{proof}

\begin{lemma}\label{lem: sum of fbb df is too}
If $V, W \subseteq \Gamma$ are finitely bounded below and downward finite, then $V + W$ is as well. 
\end{lemma}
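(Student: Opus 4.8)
The plan is to verify the two defining properties of $V + W$ separately: first that it is finitely bounded below, and then that it is downward finite. The downward-finiteness will fall out almost immediately from the preceding Lemma \ref{lem: finitely bounded below and downward finite sets have the finite sum property}, so the only genuinely new argument is the (easy) bounded-below claim, and I expect no serious obstacle overall.

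For the finitely-bounded-below part, I would start from lower-bounding sets: suppose $V$ is bounded below by $\{ v_1, \ldots, v_m \}$ and $W$ is bounded below by $\{ w_1, \ldots, w_n \}$. I claim that the finite set $\{ v_i + w_j \mid 1 \leq i \leq m, \ 1 \leq j \leq n \}$ bounds $V + W$ below. Indeed, take any element of $V + W$ and write it as $v + w$ with $v \in V$ and $w \in W$; choose $i$ with $v_i \leq_Q v$ and $j$ with $w_j \leq_Q w$. Since $(\Gamma , \leq_Q)$ is a partially ordered group (as noted immediately after Proposition \ref{prop: partial order on pointed cancelative commutative monoids}), adding $w_j$ to the first inequality and $v$ to the second and applying transitivity gives $v_i + w_j \leq_Q v + w_j \leq_Q v + w$, which is exactly what is required.

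For downward-finiteness, fix $g \in \Gamma$; the goal is to show that $\{ u \in V + W \mid u \leq_Q g \}$ is finite. Any such $u$ can be written as $u = v + w$ with $v \in V$ and $w \in W$, and then the pair $(v , w)$ lies in the set $U = \{ (v , w) \mid v \in V, w \in W, v + w \leq_Q g \}$, which is finite by Lemma \ref{lem: finitely bounded below and downward finite sets have the finite sum property} (applied with the distinguished element taken to be $g$). Thus the addition map $U \to \Gamma$ sending $(v , w) \mapsto v + w$ surjects onto $\{ u \in V + W \mid u \leq_Q g \}$, and since $U$ is finite, so is its image.

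The only point that requires a little care is bookkeeping: an element of the Minkowski sum $V + W$ may admit several representations $v + w$, so one should argue via surjectivity from the finite index set $U$ rather than attempting to set up a bijection between $U$ and the downward slice. Because the substantive finiteness input has already been packaged into Lemma \ref{lem: finitely bounded below and downward finite sets have the finite sum property}, this lemma is essentially a direct corollary together with the order-compatibility of addition in $\Gamma$.
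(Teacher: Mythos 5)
Your proof is correct and follows essentially the same route as the paper's: the lower-bounding set $\{ v_i + w_j \}$ for the bounded-below part, and for downward finiteness the observation that the downward slice of $V + W$ is the image under addition of the finite set from Lemma \ref{lem: finitely bounded below and downward finite sets have the finite sum property}. Your explicit justification of $v_i + w_j \leq_Q v + w$ via translation-invariance of $\leq_Q$, and the remark about arguing by surjectivity rather than bijectivity, only make explicit what the paper leaves implicit.
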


\begin{proof}
Since $V$ and $W$ are finitely bounded below by hypothesis, there are subsets $\{ g_1, \ldots g_n \}$, $\{ h_1, \ldots, h_m \}$ such that every element of $V$ is larger than an element of the former, and every element of $W$ is larger than an element of the latter. Hence given $v + w \in V + W$ there exists $1 \leq i \leq n$ and $1 \leq j \leq m$ such that $g_i \leq_Q v$ and $h_j \leq_Q w$. Adding these inequalities together yields $g_i + h_j \leq_Q v + w$ and hence every element of $V + W$ is larger than one of the elements of the set $\{ g_i + h_j \mid 1 \leq i \leq n , 1 \leq j \leq m \}$. 

Given any $u \in \Gamma$, Lemma \ref{lem: finitely bounded below and downward finite sets have the finite sum property} implies that the set $U = \{ (g , h) \mid g \in V, h \in W,  g + h \leq_Q u \}$ is finite, but the set $\{ g \in V + W \mid g \leq_Q u \}$ is the image of $U$ under the map $(g , h) \mapsto g + h$, so this set is finite too.
\end{proof}

If $a , b \in \zz[[Q]] \{ \Gamma \}$, by Lemma \ref{lem: finitely bounded below and downward finite sets have the finite sum property}, all but finitely many of the terms in the sum $c_u = \sum_{g + h = u} a_g b_h$ from Equation \ref{eq: multiplication of power series with finitely bounded downward finite support} are zero, so the product of two elements of $\zz[[Q]]\{ \Gamma \}$ exists in $\zz^\Gamma$. We now show that this product is actually again in $\zz[[Q]]\{ \Gamma \}$. 

\begin{lemma} \label{lem: product of BDF hilbert series is BDF}
Given $a , b \in \zz[[Q]]\{ \Gamma \}$, we have that $ab \in \zz[[Q]]\{ \Gamma \}$. 
\end{lemma}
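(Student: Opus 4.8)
The plan is to reduce the entire statement to a fact about supports, since the discussion preceding the lemma already establishes that the product $ab$ is well-defined as an element of $\zz^\Gamma$ (via the finite-sum property of Lemma \ref{lem: finitely bounded below and downward finite sets have the finite sum property}). First I would observe that the support of the product is contained in the sumset of the individual supports: if the coefficient $c_u = \sum_{g + h = u} a_g b_h$ is nonzero, then at least one summand $a_g b_h$ must be nonzero, which forces $g \in \Supp_\Gamma(a)$ and $h \in \Supp_\Gamma(b)$ with $g + h = u$. Hence
\begin{equation*}
\Supp_\Gamma(ab) \subseteq \Supp_\Gamma(a) + \Supp_\Gamma(b).
\end{equation*}

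Next I would invoke Lemma \ref{lem: sum of fbb df is too}: since $a, b \in \zz[[Q]]\{ \Gamma \}$, both $\Supp_\Gamma(a)$ and $\Supp_\Gamma(b)$ are by definition finitely bounded below and downward finite, and that lemma guarantees their sumset $\Supp_\Gamma(a) + \Supp_\Gamma(b)$ enjoys both properties as well. Finally, because $\Supp_\Gamma(ab)$ is a subset of this sumset, Lemma \ref{lem: closure of fbb and df sets under taking subsets} shows that $\Supp_\Gamma(ab)$ inherits being finitely bounded below and downward finite. By the definition of $\zz[[Q]]\{ \Gamma \}$, this is exactly what it means for $ab$ to lie in $\zz[[Q]]\{ \Gamma \}$, completing the argument.

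The proof is essentially bookkeeping: the genuine content has already been extracted into the two support-level lemmas, so no real obstacle remains. The only point requiring a moment's care is the support containment itself, which I would state explicitly rather than wave at. The subtlety worth noting is that cancellation among the $a_g b_h$ could in principle shrink $\Supp_\Gamma(ab)$ strictly below the full sumset; but this only helps, since passing to a subset preserves both finiteness properties by Lemma \ref{lem: closure of fbb and df sets under taking subsets}. Thus the work lies entirely in stating the chain of one inclusion followed by two lemma applications precisely, and there is no hard step to surmount.
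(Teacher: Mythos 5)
Your proof is correct and follows exactly the same route as the paper's: establish the containment $\Supp_\Gamma(ab) \subseteq \Supp_\Gamma(a) + \Supp_\Gamma(b)$, then apply Lemma \ref{lem: sum of fbb df is too} and Lemma \ref{lem: closure of fbb and df sets under taking subsets}. Your remark about possible cancellation shrinking the support is a nice explicit touch, but otherwise the two arguments are identical.
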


\begin{proof}
If $u \in \Supp_\Gamma(ab)$ then there must exist $g \in \Supp_\Gamma(a)$ and $h \in \Supp_\Gamma(b)$ such that $g + h = u$, i.e. $\Supp_\Gamma(ab) \subseteq  \Supp_\Gamma(a) + \Supp_\Gamma(b)$. By Lemma \ref{lem: sum of fbb df is too}, because $\Supp_\Gamma(a)$ and $\Supp_\Gamma(b)$ are finitely bounded below and downward finite, $\Supp_\Gamma(a) + \Supp_\Gamma(b)$ is too. Hence by Lemma \ref{lem: closure of fbb and df sets under taking subsets}, $\Supp_\Gamma(ab)$ is also finitely bounded below and downward finite. 
\end{proof}

It is easy to see that the multiplication defined by Equation \ref{eq: multiplication of power series with finitely bounded downward finite support} makes $\zz[[Q]]\{ \Gamma \}$ into a commutative ring with multiplicative identity equal to $1 = \textbf{t}^0$. We can now define Hilbert functions of BDF modules. 

\begin{definition}
If $M$ is a finite-dimensionally $\Gamma$-graded $R$-module, the \emph{Hilbert function} of $M$ is the function $\Gamma \to \nn$ given by $g \mapsto \dim_k(M_g)$. The associated generating function $\mathcal{H}_\Gamma(M) = \sum_{g \in \Gamma} \dim_k(M_g) \textbf{t}^g$ is called the \emph{Hilbert series} of $M$. By definition $\mathcal{H}_\Gamma(M)$ is an element of $\zz^\Gamma$. Note that if $M$ is a BDF $\Gamma$-graded $R$-module then $\mathcal{H}_\Gamma(M) \in \zz[[Q]]\{\Gamma\}$. 
\end{definition}


\subsection{Tensor Product of BDF Modules}

We are now in a position to define the $\Gamma$ grading on the tensor product of two BDF $\Gamma$-graded $R$-modules and show that the result is still BDF. 

\begin{proposition}\label{prop: tensor product grading}
Let $M$ and $N$ be BDF $\Gamma$-graded $R$-modules. Then $M \otimes_R N$ is a $\Gamma$-graded BDF $R$-module with $(M \otimes_R N)_u = \Span_k( x \otimes y \in M \otimes_R N \mid x \in M_g, y \in N_h , g + h = u )$ for $u \in \Gamma$. 
\end{proposition}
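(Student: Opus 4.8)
The plan is to establish the claim in three stages: first, construct the $\Gamma$-grading on $M \otimes_R N$ as described (showing the proposed decomposition is genuine, i.e. direct); second, verify the compatibility condition $R_g (M\otimes_R N)_h \subseteq (M \otimes_R N)_{g+h}$; and third, check the three BDF conditions (finite-dimensionally graded, finitely bounded below, downward finite). The first two stages are the standard graded-tensor-product construction, while I expect the third stage — and in particular the finite-dimensionality of each graded piece — to be where the Hilbert series machinery just developed does the real work.

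For the grading itself, I would define $(M \otimes_R N)_u$ by the given span and first argue that $M \otimes_R N = \sum_{u \in \Gamma} (M \otimes_R N)_u$. Since $M = \bigoplus_g M_g$ and $N = \bigoplus_h N_h$, every simple tensor $x \otimes y$ decomposes as $\sum_{g,h} x_g \otimes y_h$ with $x_g \otimes y_h \in (M \otimes_R N)_{g+h}$, so the pieces span. The subtle point is that the sum is \emph{direct}; here I would use the fact that $R_0 = k$ (connectedness) so that the tensor product is over a $k$-algebra, and construct, for each $u$, a $k$-linear projection $M \otimes_R N \to (M\otimes_R N)_u$ via the balanced map $(x,y) \mapsto \sum_{g+h=u} x_g \otimes y_h$ (which is $R$-balanced because $R$ acts homogeneously), thereby exhibiting the decomposition as a direct sum. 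The containment $R_g \cdot (M \otimes_R N)_h \subseteq (M \otimes_R N)_{g+h}$ then follows directly from $R_g M_{h'} \subseteq M_{g+h'}$ applied in the first factor.

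\textbf{The main obstacle} is showing $M \otimes_R N$ is finite-dimensionally graded, i.e. that each $(M \otimes_R N)_u$ is a finite-dimensional $k$-vector space, since a priori the span defining it involves infinitely many pairs $(g,h)$ with $g+h=u$. This is exactly where I would invoke the Hilbert series results: $(M \otimes_R N)_u$ is a quotient of $\bigoplus_{g+h=u} (M_g \otimes_k N_h)$, and by Lemma \ref{lem: finitely bounded below and downward finite sets have the finite sum property} (applied with $V = \Supp_\Gamma(M)$, $W = \Supp_\Gamma(N)$, and the target $u$ itself, noting $u \leq_Q u$), only finitely many pairs $(g,h)$ with $g \in \Supp_\Gamma(M)$, $h \in \Supp_\Gamma(N)$, $g+h = u$ contribute, and each $M_g \otimes_k N_h$ is finite-dimensional because $M$ and $N$ are finite-dimensionally graded. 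Hence the spanning set is finite-dimensional, so $(M \otimes_R N)_u$ is too.

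For the remaining two conditions I would argue via the support. Since $\Supp_\Gamma(M \otimes_R N) \subseteq \Supp_\Gamma(M) + \Supp_\Gamma(N)$, Lemma \ref{lem: sum of fbb df is too} shows the latter is finitely bounded below and downward finite, and then Lemma \ref{lem: closure of fbb and df sets under taking subsets} transfers both properties to the subset $\Supp_\Gamma(M \otimes_R N)$. Combining all three verified conditions gives that $M \otimes_R N$ is a BDF $\Gamma$-graded $R$-module, completing the proof. I would remark in passing that this is precisely the place where the finitely-bounded-below hypothesis is essential — it is what guarantees (via the finite sum property) closure of the category under tensor product, and motivated the definition in the first place.
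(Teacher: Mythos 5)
Your proposal is correct, and it reaches the grading by a genuinely different mechanism than the paper. The paper never argues inside $M \otimes_R N$ directly: it grades $M \otimes_k N = \bigoplus_{g , h \in \Gamma} M_g \otimes_k N_h$, checks that the subspace $L$ spanned by the elements $rx \otimes y - x \otimes ry$ (all factors homogeneous) is a graded subspace, proves $(M \otimes_k N)/L \cong M \otimes_R N$ by exhibiting mutually inverse maps, and then transfers the quotient grading across that isomorphism. You instead stay inside $M \otimes_R N$ and build, for each $u \in \Gamma$, a projection $\pi_u$ induced by the $R$-balanced map $(x , y) \mapsto \sum_{g + h = u} x_g \otimes y_h$; your balancedness claim does hold, since $(rx)_{g'} = \sum_{a + g = g'} r_a x_g$ gives $\pi_u(rx , y) = \sum_{a + g + h = u} (r_a x_g) \otimes y_h = \sum_{a + g + h = u} x_g \otimes (r_a y_h) = \pi_u(x , ry)$, and applying $\pi_u$ to a relation $\sum_v z_v = 0$ with $z_v \in (M \otimes_R N)_v$ kills all cross terms and yields $z_u = 0$, so the sum is direct. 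On the BDF side the two arguments are essentially the same mathematics: both rest on Lemmas \ref{lem: finitely bounded below and downward finite sets have the finite sum property}, \ref{lem: sum of fbb df is too} and \ref{lem: closure of fbb and df sets under taking subsets}; the paper packages the dimension count as the Hilbert series identity $\mathcal{H}_\Gamma(M \otimes_k N) = \mathcal{H}_\Gamma(M) \mathcal{H}_\Gamma(N)$ and invokes Lemma \ref{lem: product of BDF hilbert series is BDF} before passing to the quotient, whereas you argue degree by degree that $(M \otimes_R N)_u$ is a $k$-linear quotient of the finite-dimensional space $\bigoplus_{g + h = u} M_g \otimes_k N_h$ and handle the support containment $\Supp_\Gamma(M \otimes_R N) \subseteq \Supp_\Gamma(M) + \Supp_\Gamma(N)$ directly. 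What each buys: your route avoids constructing the auxiliary module $(M \otimes_k N)/L$ and verifying the isomorphism, at the modest cost of checking well-definedness of the projections; the paper's route obtains the product formula for Hilbert series as a byproduct, which fits naturally with the series calculus it has just set up. One small correction: it is not connectedness ($R_0 = k$) that makes $M \otimes_R N$ a $k$-vector space and the projections $\pi_u$ $k$-linear---that already follows from $R$ being a $k$-algebra, which is a standing hypothesis; connectedness plays no role in this proposition.
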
 

\begin{proof}
First we show that $M \otimes_R N = \bigoplus_{u \in \Gamma} (M \otimes_R N)_u$. Note that as $k$-vector spaces we have $M = \bigoplus_{g \in \Gamma} M_g$ and $N = \bigoplus_{h \in \Gamma} N_h$. Therefore because tensor product commutes with direct sum, we have $M \otimes_k N = \bigoplus_{g , h \in \Gamma} M_g \otimes_k N_h = \bigoplus_{u \in \Gamma} \left[ \bigoplus_{g + h = u} M_g \otimes_k N_h \right] $. Therefore $M \otimes_k N$ is a $\Gamma$-graded $k$-vector space with $(M \otimes_k N )_u = \bigoplus_{g + h = u} M_g \otimes_k N_h$. Let $L$ be the $k$-subspace of $M \otimes_k N$ spanned by elements of the form $r x \otimes y - x \otimes r y$ where $r \in R$ and $x \in M$, $y \in N$ are all homogeneous. There is a $k$-linear map $\varphi: M \otimes_k N \to M \otimes_R N$ sending $x \otimes y \mapsto x \otimes y$ and clearly $L$ is contained in the kernel of $\varphi$, so we obtain a $k$-linear map $\overline{\varphi} : (M \otimes_k N) / L \to M \otimes_R N$. Conversely we define a map $\psi': M \times N \to (M \otimes_k N) / L$ by sending $(x, y) \mapsto \overline{x \otimes y}$. This is $R$-balanced because given $r = \sum_{g \in \Gamma} r_g \in R$, $x = \sum_{h \in \Gamma} x_h \in M$, and $y = \sum_{u \in \Gamma} y_u \in N$, we have that 
\begin{equation*}
\psi'(r x , y) = \overline{(rx) \otimes y} = \overline{\sum_{g , h , u \in \Gamma} r_g x_h \otimes y_u} = \overline{\sum_{g , h , u \in \Gamma} x_h \otimes r_g y_u} = \overline{x \otimes (ry)} = \psi'( x , r y).
\end{equation*}
We therefore obtain a map $\psi: M \otimes_R N \to (M \otimes_k N) /L$ which is clearly the inverse of $\overline{\varphi}$, hence $\overline{\varphi}$ is an isomorphism. 

Now if $r \in R_g$, $x \in M_h$, and $y \in N_u$, then $r x \otimes y \in M_{g + h} \otimes_k N_u$ and $x \otimes r y \in M_{g} \otimes_k N_{h + u}$, and therefore $r x \otimes y - x \otimes r y \in (M \otimes_k N )_{g + h + u}$, hence $L$ is spanned by homogeneous elements and is therefore a graded submodule. Thus $(M \otimes_k N )/ L$ is a $\Gamma$-graded $k$-vector space. Using the $k$-linear isomorphism $(M \otimes_k N ) / L \cong M \otimes_R N$ from above and the fact that image of $(M \otimes_k N )_u$ under $\varphi$ is exactly $(M \otimes_R N)_u = \Span_k( x \otimes y \in M \otimes_R N \mid x\in M_g, y \in N_h , g + h = u )$, we obtain that $M \otimes_R N = \bigoplus_{u \in \Gamma} (M \otimes_R N )_u$ so indeed $M \otimes_R N$ is a $\Gamma$-graded $R$-module. 

Now we use the Hilbert series machinery developed above to show that $M \otimes_R N$ is a BDF $\Gamma$-graded $R$-module. We begin with $M \otimes_k N$. By definition, for any $u \in \Gamma$, $(M \otimes_k N )_u = \bigoplus_{g + h = u} M_g \otimes_k N_h$ and by Lemma \ref{lem: finitely bounded below and downward finite sets have the finite sum property} since $\Supp_\Gamma(M)$ and $\Supp_\Gamma(N)$ are finitely bounded below and downward finite, the set $\{ (g , h ) \in \Supp_\Gamma(M) \times \Supp_\Gamma(N) \mid g + h = u \}$ is finite, and since $M$ and $N$ are finite-dimensionally graded, $M_g$ and $N_h$ are finite-dimensional vector spaces, so $(M \otimes_k N )_u$ is a finite-dimensional vector space of dimension $\sum_{g + h = u} \dim_k(M_g) \dim_k(N_h)$. Hence $M \otimes_k N$ is a finite-dimensionally graded $k$-vector space with Hilbert series given by $\mathcal{H}_\Gamma(M \otimes_k N) = \mathcal{H}_\Gamma(M) \mathcal{H}_\Gamma(N)$. By Lemma \ref{lem: product of BDF hilbert series is BDF} we have that $\mathcal{H}_\Gamma(M \otimes_k N) \in \zz[[Q]] \{ \Gamma \}$, and hence $\Supp_\Gamma(M \otimes_k N)$ is finitely bounded below and downward finite. Since $M \otimes_R N$ is a quotient of $M \otimes_k N$ by a graded subspace, it follows that $M \otimes_R N$ is finite-dimensionally graded too, and that $\Supp_\Gamma(M \otimes_R N)$ is finitely bounded below and downward finite as well.
\end{proof}

\begin{note}\label{not: tensor product of lots of BDF modules is BDF}
A similar proof to that of Proposition \ref{prop: tensor product grading} shows that if $R$ is a PDCF $\Gamma$-graded $k$-algebra and $M_1, \ldots, M_n$ are BDF $\Gamma$-graded $R$-modules, then if we define $(M_1 \otimes_R \ldots \otimes_R M_n)_g = \Span_k(x_1 \otimes \ldots \otimes x_n \mid x_i \in M_{h_i}, \sum_{i= 1}^n h_i = g )$ for $g \in \Gamma$, then $M_1 \otimes_R \ldots \otimes_R M_n$ is a BDF $\Gamma$-graded $R$-module.
\end{note}

Finally we record the fact that the tensor product of two graded maps is graded, which will be useful later. 

\begin{proposition}\label{prop: tensor product of graded maps is graded}
Let $M, M', N, N'$ be BDF $\Gamma$-graded $R$-modules, and let $\varphi : M \to M'$, $\psi: N \to N'$ be graded $R$-linear maps. Then $\varphi \otimes \psi : M \otimes_R N \to M' \otimes_R N'$ is also graded. 
\end{proposition}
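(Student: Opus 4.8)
The plan is to verify the defining condition of a graded map directly, using the explicit description of the grading on the tensor product supplied by Proposition \ref{prop: tensor product grading}. Recall that $\varphi \otimes \psi$ is the unique $R$-linear map determined on simple tensors by $(\varphi \otimes \psi)(x \otimes y) = \varphi(x) \otimes \psi(y)$; it exists because $(x , y) \mapsto \varphi(x) \otimes \psi(y)$ is $R$-balanced, which in turn uses that $\varphi$ and $\psi$ are $R$-linear. To prove the proposition I must show that $(\varphi \otimes \psi)\big( (M \otimes_R N)_u \big) \subseteq (M' \otimes_R N')_u$ for every $u \in \Gamma$.

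First I would fix $u \in \Gamma$ and recall from Proposition \ref{prop: tensor product grading} that $(M \otimes_R N)_u = \Span_k( x \otimes y \mid x \in M_g , y \in N_h , g + h = u )$. Since $\varphi \otimes \psi$ is $k$-linear, it suffices to check the containment on these spanning simple tensors. For such an $x \otimes y$ I would compute $(\varphi \otimes \psi)(x \otimes y) = \varphi(x) \otimes \psi(y)$, and then invoke the gradedness of $\varphi$ and $\psi$ to conclude that $\varphi(x) \in M'_g$ and $\psi(y) \in N'_h$. Because $g + h = u$, the element $\varphi(x) \otimes \psi(y)$ is precisely one of the generators of $(M' \otimes_R N')_u$ appearing in the analogous description from Proposition \ref{prop: tensor product grading}, so it lies in that graded piece. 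Taking $k$-linear combinations then yields the desired containment for all of $(M \otimes_R N)_u$.

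There is essentially no hard step here: the only two points that require any care are that $\varphi \otimes \psi$ is well-defined (handled by the universal property of $\otimes_R$ together with $R$-linearity of the two factors) and that it suffices to test the grading condition on the $k$-spanning simple tensors rather than on all of $(M \otimes_R N)_u$ (which follows from $k$-linearity of $\varphi \otimes \psi$). Everything else is immediate from the formula for the grading in Proposition \ref{prop: tensor product grading}. I note that the BDF hypotheses on $M, M', N, N'$ are not actually needed for this statement beyond guaranteeing that the tensor products in question carry the gradings described in Proposition \ref{prop: tensor product grading}; the argument is purely about homogeneity.
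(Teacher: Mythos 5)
Your proposal is correct and follows essentially the same route as the paper's proof: both check the grading condition on the $k$-spanning simple tensors $x \otimes y$ supplied by Proposition \ref{prop: tensor product grading} and use gradedness of $\varphi$ and $\psi$ to land in $(M' \otimes_R N')_u$. Your added remarks on well-definedness of $\varphi \otimes \psi$ and the dispensability of the BDF hypotheses are accurate but do not change the substance of the argument.
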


\begin{proof}
By Proposition \ref{prop: tensor product grading}, $(M \otimes_R N)_{u}$ is spanned by elements of the form $x \otimes y$ where $\deg(x) + \deg(y) = u$. Given $x \otimes y \in M \otimes_R N$ with $x \in M_g$, $y \in N_h$ and $g + h = u$, since $\varphi$ and $\psi$ are graded, we have that $\varphi(x) \in M'_g$ and $\psi(y) \in N'_h$, and hence $[\varphi \otimes \psi](x \otimes y) = \varphi(x) \otimes \psi(y) \in (M' \otimes_R N')_{u}$. Hence indeed $\varphi \otimes \psi$ sends $(M \otimes_R N)_{u}$ to $(M' \otimes_R N')_{u}$ as desired. 
\end{proof}


\section{Free and Projective BDF Modules}

In this section we'll show that every projective BDF $\Gamma$-graded $R$-module is in fact free. We begin by describing $\Gamma$-graded free modules. To build these, we will want to take infinite direct sums of shifted copies of $R$, but we must place restrictions on such direct sums to insure that the resulting free module is still BDF. The following definition gives the necessary restriction. 

\begin{definition}
Let $Q$ be a pointed submonoid of an abelian group $\Gamma$. A family of elements $(g_i \mid i \in I)$ of $\Gamma$ is called a \emph{BDF family} if the set $\{ g_i \mid i \in I \} \subseteq \Gamma$ is downward finite and finitely bounded below and for each $g \in \Gamma$, the set $\{ i \in I \mid g_i = g \}$ is finite. If $M$ is a BDF $\Gamma$-graded $R$-module, a family of homogenous elements $(m_i \mid i \in I)$ of $M$ is called a \emph{BDF family} if each $m_i$ is nonzero and $( \deg(m_i) \mid i \in I)$ is a BDF family of elements of $\Gamma$. 
\end{definition}

We now show that the direct sum of a BDF family of shifted copies of $R$ is again BDF. 

\begin{proposition}\label{prop: certain direct sums of BDF modules are BDF}
If $(g_i \mid i \in I)$ is a BDF family of elements of $\Gamma$ then the free module $F = \bigoplus_{i \in I} R(-g_i)$ is a BDF $\Gamma$-graded $R$-module.
\end{proposition}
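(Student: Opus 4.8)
The plan is to reduce all three defining conditions of a BDF module to statements about the support and graded pieces of $F$, each computed explicitly. First I would determine the support. Since $R(-g_i)_h = R_{h-g_i}$ and the grading of a direct sum is the direct sum of the gradings, we have $F_h = \bigoplus_{i \in I} R_{h-g_i}$, so $h \in \Supp_\Gamma(F)$ precisely when $h - g_i \in \Supp_\Gamma(R) = Q$ for some $i$ (using that $R$ is a domain and Proposition \ref{prop: domain implies image is monoid}). Hence $\Supp_\Gamma(F) = \{ g_i \mid i \in I \} + Q$, the sumset of the set of degrees occurring in the family with the image monoid.

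Next I would dispatch the finitely-bounded-below and downward-finite conditions simultaneously via the sumset description. By hypothesis $\{ g_i \mid i \in I \}$ is finitely bounded below and downward finite, while $Q$ is finitely bounded below (by $\{0\}$, since $0 \leq_Q q$ for every $q \in Q$) and downward finite (as $R$ is PDCF). Therefore Lemma \ref{lem: sum of fbb df is too} applies directly to $\{ g_i \mid i \in I \} + Q = \Supp_\Gamma(F)$, showing at once that $\Supp_\Gamma(F)$ is both finitely bounded below and downward finite.

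The remaining condition, finite-dimensional gradedness, is where the third clause of the BDF-family definition—that each fixed degree is attained by only finitely many indices—is essential. For fixed $h$, the summand $R_{h-g_i}$ is nonzero only when $g_i \leq_Q h$. Downward finiteness of the \emph{set} $\{ g_i \mid i \in I \}$ guarantees only finitely many distinct values $g_i$ satisfy $g_i \leq_Q h$, and the multiplicity clause then guarantees that each such value is realized by only finitely many $i \in I$; combining these, only finitely many indices contribute. Each contributing $R_{h-g_i}$ is finite-dimensional because $R$ is finite-dimensionally graded, so $\dim_k F_h = \sum_{i : g_i \leq_Q h} \dim_k R_{h-g_i}$ is a finite sum of finite numbers.

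The only genuinely delicate point—and the main obstacle, such as it is—is recognizing that downward finiteness of $\{ g_i \mid i \in I \}$ bounds only the number of distinct degrees below $h$, not the number of indices, so one must separately invoke the per-degree finiteness clause to preclude infinitely many summands sharing the same shifted degree. Once that is noted, finite-dimensionality follows immediately, and all three BDF conditions are established.
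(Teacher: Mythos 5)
Your proof is correct, but it organizes the support arguments differently from the paper. For the finitely-bounded-below and downward-finite conditions, you first establish the sumset identity $\Supp_\Gamma(F) = \{ g_i \mid i \in I \} + Q$ (valid since $R$ is a domain, so $Q = \Supp_\Gamma(R)$ by Proposition \ref{prop: domain implies image is monoid}) and then invoke Lemma \ref{lem: sum of fbb df is too} once, noting that $Q$ is finitely bounded below by $\{0\}$ and downward finite by the PDCF hypothesis. The paper instead works with the union decomposition $\Supp_\Gamma(F) = \bigcup_{i \in I} \Supp_\Gamma(R(-g_i))$ and gives direct elementwise arguments: it transfers the finite lower bounds of $\{ g_i \mid i \in I\}$ to $\Supp_\Gamma(F)$ by hand, and for downward finiteness it reduces to finitely many indices via $g_i \leq_Q g$ and then cites Proposition \ref{prop: shifts of BDF are BDF} for each shifted copy. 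Your route is shorter and more modular, reusing a lemma the paper otherwise deploys only for tensor products; the paper's route is self-contained and does not need the exact sumset equality, only the one containment used in each step. For finite-dimensional gradedness your argument coincides with the paper's: nonzero summands $R_{h - g_i}$ force $g_i \leq_Q h$, downward finiteness bounds the distinct degrees, and the per-degree finiteness clause of a BDF family bounds the indices realizing each degree, a distinction you are right to flag as the one point where the multiplicity clause is indispensable.
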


\begin{proof}
First note that $\Supp_\Gamma(F) = \bigcup_{i \in I} \Supp_\Gamma( R(-g_i) )$. By hypothesis the set $\{ g_i \mid i \in I \}$ is finitely bounded below, say by $\{ h_1, \ldots, h_n \}$, and we claim that $\Supp_\Gamma(F)$ is also bounded below by this set. Then given $u \in \Supp_\Gamma(F)$, there exists $i \in I$ such that $u \in \Supp_\Gamma( R(-g_i) ) = \Supp_\Gamma(R) + g_i$ and hence we have that $u - g_i \in \Supp_\Gamma(R)$ so $0 \leq_Q u - g_i$, which gives $g_i \leq_Q u$. But because $\{ g_i \mid i \in I \}$ is bounded below by $\{ h_1 \ldots, h_n \}$, then there exists $1 \leq j \leq n$ such that $h_j \leq_Q g_i$, and hence $h_j \leq_Q u$, as desired. 

We now claim that $\Supp_\Gamma(F)$ is downward finite. Given $g \in \Gamma$, we have that $\{ u \in \Supp_\Gamma(F) \mid u \leq_Q g \} = \bigcup_{i \in I } \{ u \in \Supp_\Gamma( R(-g_i) ) \mid u \leq_Q g\}$. But if $u \in \Supp_\Gamma( R(-g_i) )$, then as above $g_i \leq_Q u$ so if we also have $u \leq_Q g$, it follows that $g_i \leq_Q g$. However by hypothesis there are only finitely many $g_i$ with $g_i \leq_Q g$, say $g_{i_1} \ldots, g_{i_n}$. Hence we have that $\{ u \in \Supp_\Gamma(F) \mid u \leq_Q g \} = \bigcup_{\ell = 1}^n \{ u \in \Supp_\Gamma( R(-g_{i_\ell}) ) \mid u \leq_Q g\}$. Since $\Supp_\Gamma(R)$ is downward finite, by Proposition \ref{prop: shifts of BDF are BDF}, $\Supp_\Gamma( R(-g_{i_\ell}) )$ is also downward finite, and hence $\{ u \in \Supp_\Gamma(F) \mid u \leq_Q g \}$ is a finite union of finite sets. 

Finally we claim that $F$ is finite dimensionally graded. By definition we have that $F_h = \bigoplus_{i \in I} R(-g_i)_h = \bigoplus_{i \in I} R_{h - g_i}$. By definition of $Q = \Supp_\Gamma(R)$, $R_{h - g_j} = 0$ unless $h - g_j \geq_Q 0$ which is equivalent to $g_j \leq_Q h$. Since the set $\{ g_i \mid i \in I \}$ is downward finite, there are only finitely many such $g_j$. By hypothesis, for each of these $g_j$, there are only finitely many $i \in I$ such that $g_i = g_j$. Therefore the direct sum $\bigoplus_{i \in I} R_{h - g_i}$ is in fact a finite direct sum of finite-dimensional $k$-vector spaces as desired. 
\end{proof}

For each $i \in I$, we denote the image of $1 \in R(g_i)$ under the inclusion map $R(g_i) \hookrightarrow F$ by $e_i$ so that $(e_i \mid i \in I)$ is a homogeneous basis for $F$. Note that if we forget the grading, $F = \bigoplus_{i \in I} R(g_i)$ is equal to the free $R$-module $R^{(I)}$. 

In order to show that projective BDF modules are free, we will need to use the fact that for every BDF $\Gamma$-graded $R$-module $M$, there exists a free BDF $\Gamma$-graded $R$-module $F$ and a graded surjection $F \to M$ such that the images of a homogeneous basis for $F$ are a \textbf{minimal} BDF $R$-spanning family for $M$. That such a map exists for a given BDF family in $M$ is easy to see, as the following lemma shows.  

\begin{lemma}\label{lem: BDF surjection from a free module}
Given a BDF $\Gamma$-graded $R$-module $M$ and a BDF family $(x_i \mid i \in I)$ of elements of $M$, there exists a BDF $\Gamma$-graded free $R$-module $F = \bigoplus_{i \in I} R(-g_i)$ and a graded $R$-linear map $\varphi : F \to M$ such that the image of the basis $(e_i \mid i \in I)$ of $F$ is $(x_i \mid i \in I)$. In particular if $(x_i \mid i \in I)$ $R$-spans $M$ then this map is surjective.
\end{lemma}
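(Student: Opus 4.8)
The plan is to construct $F$ explicitly from the given BDF family and define $\varphi$ by sending basis elements to the $x_i$. First I would set $g_i := \deg(x_i)$ for each $i \in I$; since $(x_i \mid i \in I)$ is a BDF family of elements of $M$, by definition the family $(g_i \mid i \in I)$ of degrees is a BDF family of elements of $\Gamma$. This is exactly the hypothesis needed to invoke Proposition \ref{prop: certain direct sums of BDF modules are BDF}, which guarantees that $F = \bigoplus_{i \in I} R(-g_i)$ is a BDF $\Gamma$-graded $R$-module. Recall from the discussion after that proposition that $e_i$ denotes the image of $1 \in R(-g_i)$ in $F$, and that $(e_i \mid i \in I)$ is a homogeneous $R$-basis for $F$, with $\deg(e_i) = g_i$ since $1 \in R_0 = R(-g_i)_{g_i}$.

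Next I would define the map. Because $(e_i \mid i \in I)$ is a basis of the free module $F$ (which, forgetting the grading, is just $R^{(I)}$), there is a unique $R$-linear map $\varphi : F \to M$ determined by $\varphi(e_i) = x_i$ for all $i \in I$; concretely $\varphi\left( \sum_{i \in I} r_i e_i \right) = \sum_{i \in I} r_i x_i$, which makes sense since only finitely many $r_i$ are nonzero. The one substantive point to verify is that $\varphi$ is graded, i.e. that $\varphi(F_h) \subseteq M_h$ for all $h \in \Gamma$. Here I would use that $F_h = \bigoplus_{i \in I} R_{h - g_i} e_i$, so a homogeneous element of $F_h$ is a sum of terms $r_i e_i$ with $r_i \in R_{h - g_i}$; then $\varphi(r_i e_i) = r_i x_i$ and since $r_i \in R_{h - g_i}$ and $x_i \in M_{g_i}$, we get $r_i x_i \in M_{(h - g_i) + g_i} = M_h$, as required.

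Finally, the last sentence is essentially immediate: by construction the image of the basis $(e_i \mid i \in I)$ under $\varphi$ is exactly $(x_i \mid i \in I)$, so $\im \varphi$ contains all the $x_i$ and hence the submodule they generate. Thus if $(x_i \mid i \in I)$ $R$-spans $M$ then $\im \varphi = M$ and $\varphi$ is surjective.

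I do not expect any serious obstacle here; this lemma is really just an assembly of Proposition \ref{prop: certain direct sums of BDF modules are BDF} (to see $F$ is BDF) together with the universal property of free modules on a basis. The only place requiring a small check is the gradedness of $\varphi$, and even that follows directly from the grading shift convention $R(-g_i)_h = R_{h - g_i}$ combined with $R_g M_h \subseteq M_{g+h}$; everything else is formal.
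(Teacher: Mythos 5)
Your proposal is correct and follows essentially the same route as the paper's proof: take $g_i = \deg(x_i)$, invoke Proposition \ref{prop: certain direct sums of BDF modules are BDF} to see that $F = \bigoplus_{i \in I} R(-g_i)$ is BDF, and define $\varphi(e_i) = x_i$, which is graded since $\deg(e_i) = g_i = \deg(x_i)$. Your explicit verification that $\varphi(F_h) \subseteq M_h$ via $R_{h-g_i} M_{g_i} \subseteq M_h$ is just a spelled-out version of the one-line gradedness claim in the paper, so there is nothing to add.
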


\begin{proof}
For any $i \in I$, let $g_i = \deg(x_i)$. Because $(x_i \mid i \in I)$ is a BDF family, Proposition \ref{prop: certain direct sums of BDF modules are BDF} implies that $F = \bigoplus_{i \in I} R(-g_i)$ is a $\Gamma$-graded free $R$-module. Then we can define an $R$-linear map $\varphi: F \to M$ by sending $e_i \mapsto x_i$ which is graded because $\deg(e_i) = g_i = \deg(x_i)$. 
\end{proof}

We must now prove the existence of a minimal BDF $R$-spanning family in any BDF $\Gamma$-graded $R$-module $M$. Unsurprisingly first we need a BDF $\Gamma$-graded version of Nakayama's Lemma, preceded by a necessary lemma about the unique graded maximal ideal of $R$ and the graded residue field. 

\begin{lemma}\label{lem: unique graded maximal ideal}
The set $R_+ := \bigoplus_{q >_Q 0} R_q$ is the unique graded maximal ideal of $R$ and $R / R_+ \cong k$ in the category of PDCF $\Gamma$-graded $k$-algebras ($k$ is a PDCF $\Gamma$-graded $k$-algebra as in Example \ref{exam: k is pdcf}). 
\end{lemma}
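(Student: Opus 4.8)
The plan is to exploit the decomposition $R = R_0 \oplus R_+$ that comes from connectedness together with pointedness of $Q$, and then to identify the quotient $R/R_+$ with $k$ as a graded $k$-algebra. Throughout I use that, since $R$ is a PDCF domain, $Q = \Supp_\Gamma(R)$ is a pointed submonoid of $\Gamma$, so $g + q \in Q$ whenever $g, q \in Q$.

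First I would record the direct-sum decomposition. Since $R$ is connected, $R_0 = k$; and since $0 \leq_Q q$ for every $q \in Q$, while $q >_Q 0$ means exactly $q \in Q$ with $q \neq 0$, the support splits as $Q = \{0\} \sqcup \{\, q \in Q \mid q >_Q 0 \,\}$. Hence, as graded $k$-vector spaces, $R = R_0 \oplus R_+ = k \oplus R_+$; in particular $1 \notin R_+$, so $R_+$ is a proper $k$-subspace. Next I would verify that $R_+$ is a graded ideal. It is a sum of homogeneous components, hence graded, so it suffices to check $R_g \cdot R_q \subseteq R_+$ for homogeneous $r \in R_g$ (nonzero, so $g \in Q$) and $q >_Q 0$. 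Then $rq \in R_{g+q}$, and $g + q \in Q$; pointedness of $Q$ forces $g + q \neq 0$ because $q \neq 0$, so $g + q >_Q 0$ and $R_{g+q} \subseteq R_+$. Extending $R$-bilinearly gives $R \cdot R_+ \subseteq R_+$.

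Then I would compute the quotient using the $\Gamma$-grading on $R/R_+$ established earlier, namely $(R/R_+)_g = (R_g + R_+)/R_+$. For $g \neq 0$ this is $0$ (as $R_g \subseteq R_+$ there), while for $g = 0$ it equals $(R_0 + R_+)/R_+ \cong R_0 = k$, since $R_0 \cap R_+ = 0$. Thus $R/R_+$ is supported in degree $0$ with degree-$0$ piece $k$, which is precisely the PDCF $\Gamma$-graded structure on $k$ from Example \ref{exam: k is pdcf}; this yields $R/R_+ \cong k$ in the category of PDCF $\Gamma$-graded $k$-algebras. Since $k$ is a field, $R_+$ is in fact a maximal ideal.

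Finally, for uniqueness I would show that every proper graded ideal $I$ is contained in $R_+$. Writing $I = \bigoplus_g (I \cap R_g)$, if $I \not\subseteq R_+$ then $I \cap R_g \not\subseteq R_+$ for some $g$; since $R_g \subseteq R_+$ for all $g \neq 0$, this forces $I \cap R_0 \neq 0$, i.e. $I$ contains a nonzero element of $R_0 = k$, which is a unit, so $I = R$ — contradicting properness. Hence every proper graded ideal lies in $R_+$, and since $R_+$ is itself proper it is the unique maximal graded ideal. The whole argument is light on computation; the only point that genuinely uses the structure is the ideal check, where it is pointedness of $Q$ that keeps products of positive-degree elements in strictly positive degree, and connectedness that confines the homogeneous units to $k$. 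I expect that interplay to be the conceptual crux, with everything else being bookkeeping.
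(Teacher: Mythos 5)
Your proof is correct and takes essentially the same route as the paper: where you verify directly that $R_+$ is a graded ideal (pointedness keeping products of positive-degree elements in positive degree) and compute $R/R_+ \cong k$ degreewise, the paper packages the identical facts by exhibiting $R_+$ as the kernel of the degree-zero projection $(-)_0 : R \to k$, whose multiplicativity is exactly your pointedness step. The uniqueness argument — a proper graded ideal must meet $R_0 = k$ trivially, hence lie in $R_+$ — is word-for-word the paper's.
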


\begin{proof}
Define a function $(-)_0 : R \to k$ by sending $r = \sum_{q \in Q} r_q \mapsto r_0$. This is a $\Gamma$-graded $k$-linear map, and it respects multiplication because if $r = \sum_{p \in Q} r_p$ and $s = \sum_{q \in Q} s_q$ then $(rs)_0 = \sum_{p + q = 0} r_p s_q$ and since $Q$ is pointed, if $p + q = 0$ then $p = 0$ and $q = 0$, hence $(rs)_0 = r_0 s_0$. Therefore $(-)_0$ is a graded surjective $k$-algebra homomorphism whose kernel is clearly $R_+$, so $R_+$ is a graded maximal ideal. Furthermore suppose that $I = \bigoplus_{q \in Q} R_q \cap I$ is a graded ideal of $R$. If $R_0 \cap I \neq 0$ then $I$ contains a unit and so is equal to $R$. Hence if $I$ is proper, it must be that $R_0 \cap I = 0$ and hence $I \subseteq R_+$. It follows that $R_+$ is the unique graded maximal ideal of $R$.

\end{proof}

The previous lemma shows that we can view PDCF $\Gamma$-graded $k$-algebras much like local rings and this analogy will be very advantageous. In particular it gives our BDF analog of the graded version of Nakayama's Lemma, preceded by a lemma about the well-foundedness of downward finite sets. 

\begin{lemma}\label{lem: downward finite implies well-founded}
If a subset $U$ of a poset $(P , \leq)$ is downward finite then $(U, \leq)$ is well-founded. 
\end{lemma}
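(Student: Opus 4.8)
The plan is to prove well-foundedness in the form that every nonempty subset of $U$ has a minimal element, since this is the version needed for the well-founded inductions in the Nakayama-type arguments to follow. The key observation is that downward finiteness lets us replace any nonempty subset by a \emph{finite} nonempty subset without losing minimal elements, and finite nonempty posets trivially have minimal elements.

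In detail, I would first adopt the characterization of well-foundedness as: every nonempty $V \subseteq U$ has an element $m$ such that no $w \in V$ satisfies $w < m$. Given such a $V$, I would fix any $v_0 \in V$ and pass to the set $V_0 := \{ v \in V \mid v \leq v_0 \}$. Since $V_0 \subseteq \{ u \in U \mid u \leq v_0 \}$, and the latter is finite by downward finiteness, $V_0$ is a finite set; it is also nonempty, as it contains $v_0$. Hence $V_0$, as a finite nonempty poset, has a minimal element $m$.

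Next I would verify that $m$ is in fact minimal in all of $V$, not merely in $V_0$. If some $w \in V$ satisfied $w < m$, then combining with $m \leq v_0$ gives $w < v_0$, so $w \in V_0$; this contradicts the minimality of $m$ in $V_0$. Therefore $m$ is minimal in $V$, and since $V$ was an arbitrary nonempty subset, $(U, \leq)$ is well-founded.

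The only genuinely delicate point is the relationship between the two standard formulations of well-foundedness — ``every nonempty subset has a minimal element'' versus ``there is no infinite strictly descending chain $u_1 > u_2 > \cdots$'' — whose equivalence in general requires the axiom of dependent choice. The minimal-element route above sidesteps this entirely, invoking only the elementary fact that a finite nonempty poset has a minimal element (provable by induction on cardinality, with no appeal to choice). If one instead prefers the descending-chain formulation, the argument is even shorter: an infinite strict descent $u_1 > u_2 > \cdots$ would produce infinitely many distinct elements all lying below $u_1$, contradicting downward finiteness at $p = u_1$. I would lead with the subset formulation, however, since that is precisely the form the subsequent inductions consume.
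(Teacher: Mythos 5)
Your proof is correct, but it takes a different route from the paper. The paper argues by contraposition using the descending-chain formulation: if $(U,\leq)$ were not well-founded there would be an infinite strictly descending chain $u_1 > u_2 > \cdots$, and all of its (necessarily distinct) terms lie in $\{u \in U \mid u \leq u_1\}$, contradicting downward finiteness at $u_1$ — essentially the two-line alternative you mention at the end. Your main argument instead establishes the minimal-element formulation directly: restrict a nonempty $V \subseteq U$ to the finite nonempty set $V_0 = \{v \in V \mid v \leq v_0\}$, take a minimal element there, and check it remains minimal in $V$. The trade-off is exactly as you say: the paper's argument is shorter but tacitly relies on the equivalence of the two formulations (the passage from ``not well-founded'' to ``there exists an infinite descending chain'' uses dependent choice, unless one takes the chain condition as the definition), whereas your argument is choice-free and produces precisely the form the paper consumes downstream — the minimal element in the graded Nakayama lemma and the well-founded recursion used to invert $\mathcal{H}_\Gamma(R)$. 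One small point of rigor in your favor: your verification that $m$ is minimal in all of $V$ (via $w < m \leq v_0 \Rightarrow w \in V_0$) is the step most proofs of this kind elide, and you included it.
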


\begin{proof}
Suppose that $U$ is not well-founded. Then there exists a strictly infinite descending chain $u_1 > u_2 > \ldots$ in $U$ and so clearly the set $\{ u \in U \mid u \leq u_1 \}$ is not finite, a contradiction. 
\end{proof}

\begin{lemma}[BDF $\Gamma$-graded Nakayama's Lemma]
If $R$ is a PDCF $\Gamma$-graded $k$-algebra, $M$ is a BDF $\Gamma$-graded $R$-module, and $R_+ M = M$, then $M = 0$. 
\end{lemma}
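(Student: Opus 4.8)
The plan is to mimic the classical proof of graded Nakayama's Lemma, using the partial order $\leq_Q$ in place of the usual degree ordering on $\zz$. The key structural input is that the support of a BDF module is downward finite, which by Lemma \ref{lem: downward finite implies well-founded} makes it well-founded; this is precisely what replaces the ``smallest degree'' argument available in the positively $\zz$-graded case, where the total order on degrees makes such a minimum automatic.

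First I would argue by contradiction: suppose $M \neq 0$, so that $\Supp_\Gamma(M)$ is a nonempty subset of $\Gamma$. Since $M$ is BDF, its support is downward finite and hence well-founded by Lemma \ref{lem: downward finite implies well-founded}, so I may choose an element $d \in \Supp_\Gamma(M)$ that is minimal with respect to $\leq_Q$; thus $M_d \neq 0$ while $M_h = 0$ for every $h <_Q d$. (If one prefers to avoid invoking well-foundedness abstractly, downward finiteness produces such a $d$ concretely: pick any $g_0 \in \Supp_\Gamma(M)$, note that $\{ g \in \Supp_\Gamma(M) \mid g \leq_Q g_0 \}$ is finite and nonempty, and take a minimal element of this finite set, which is automatically minimal in the whole support.)

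Next I would compute the degree-$d$ piece of $R_+ M$. Recall $R_+ = \bigoplus_{q >_Q 0} R_q$ is a graded ideal, so $R_+ M$ is a graded submodule spanned by homogeneous products $a m$ with $a \in R_q$, $q >_Q 0$, and $m \in M_h$ homogeneous, such a product having degree $q + h$. For a contribution to land in degree $d$ we need $q + h = d$ with $q >_Q 0$; by cancellativity and pointedness of $Q$ this forces $h <_Q d$ (in particular $h \neq d$), and then minimality of $d$ gives $m \in M_h = 0$. Hence every homogeneous degree-$d$ contribution to $R_+ M$ vanishes, i.e. $(R_+ M)_d = 0$. The hypothesis $R_+ M = M$ then yields $M_d = (R_+ M)_d = 0$, contradicting $d \in \Supp_\Gamma(M)$, so $M = 0$.

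The only delicate point, and the step I expect to require the most care, is the production of the $\leq_Q$-minimal support element: unlike the $\zz$-graded setting there is no ambient total order, so the argument genuinely depends on the downward-finiteness (equivalently well-foundedness) built into the BDF condition, and without it the statement would fail. A secondary point to handle carefully is the extraction of $(R_+ M)_d$, where I must use pointedness to rule out the possibility $h = d$, since otherwise a degree-$d$ element $m$ could itself appear in a product of degree $d$ and the descent argument would collapse.
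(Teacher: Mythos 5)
Your proposal is correct and follows essentially the same route as the paper: both arguments use downward finiteness of $\Supp_\Gamma(M)$ (via Lemma \ref{lem: downward finite implies well-founded}) to extract a $\leq_Q$-minimal support element, and then observe that every homogeneous contribution to $R_+ M$ has degree strictly $\leq_Q$-above some element of $\Supp_\Gamma(M)$, so the minimal degree cannot survive in $R_+ M = M$. The only cosmetic difference is that you phrase the contradiction degree-by-degree, computing $(R_+M)_d = 0$, whereas the paper phrases it at the level of supports; these are the same argument.
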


\begin{proof}
Suppose that $M \neq 0$. Then $\Supp_\Gamma(M)$ is non-empty and downward finite, so it has a minimal element with respect to $\leq_Q$ by Lemma \ref{lem: downward finite implies well-founded}, call it $h$. Note that $R_+ M = \sum_{q >_Q 0, g \in \Supp_\Gamma(M)} R_q M_g$, but $R_q M_g \subseteq M_{g + q}$ and since $q >_Q 0$ it follows that $g + q >_Q g$. Hence every element of $\Supp_\Gamma(R_+ M)$ must be strictly larger than some element of $\Supp_\Gamma(M)$. In particular, $h \notin \Supp_\Gamma(R_+ M)$, which shows that $R_+ M \neq M$. 
\end{proof}

We need one more technical lemma before we can discuss minimal $R$-spanning families. It describes how tensoring a BDF module $M$ with $k \cong R / R_+$ as an $R$-module is the same as quotienting $M$ by $R_+ M$. The graded $k$-algebra homomorphism $(-)_0: R \to k$ from the proof of Lemma \ref{lem: unique graded maximal ideal} makes $k$ into a $\Gamma$-graded $R$-module with multiplication $R \times k \to k$ given by sending $r , \gamma \mapsto r_0 \gamma$. Then $k$ is in fact a BDF $\Gamma$-graded $R$-module because $\Supp_\Gamma(k) = \{ 0 \}$ is clearly finitely bounded below and downward finite. 

\begin{lemma}\label{lem: iso between tensoring with k and modding out by M plus}
For any BDF $\Gamma$-graded $R$-module $M$ there is a graded $R$-linear isomorphism $M \otimes_R k \cong M / R_+ M$. 
\end{lemma}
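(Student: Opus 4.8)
The plan is to write down two explicit, mutually inverse graded $R$-linear maps, following the classical identification $M \otimes_R (R/I) \cong M/IM$ applied to $I = R_+$, but keeping careful track of the $R$-module structure on $k$. Recall from Lemma \ref{lem: unique graded maximal ideal} that $k$ becomes a $\Gamma$-graded $R$-module via the projection $(-)_0 : R \to k$, so that the action of $r \in R$ on $\gamma \in k$ is $r \cdot \gamma = r_0 \gamma$; this is the one subtlety to watch throughout.

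First I would construct the forward map $\phi : M \otimes_R k \to M/R_+ M$ using the universal property of the tensor product. Define an $R$-balanced map $M \times k \to M/R_+M$ by $(m , \gamma) \mapsto \overline{\gamma m}$, where on the right $\gamma \in k = R_0$ acts on $m \in M$ through the inclusion $R_0 \subseteq R$ (Proposition \ref{prop: R0 is a subalgebra}) and the bar denotes the class modulo $R_+ M$. Biadditivity is clear, and balancedness reduces to checking $\overline{\gamma (rm)} = \overline{(r_0 \gamma) m}$ for $r \in R$, i.e.\ that $\gamma (r - r_0) m \in R_+ M$; this holds because $r - r_0 \in R_+$ and $R_+ M$ is a submodule. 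Hence $\phi$ exists and is $R$-linear.

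Next I would construct the inverse $\psi : M / R_+ M \to M \otimes_R k$. The $R$-linear map $M \to M \otimes_R k$, $m \mapsto m \otimes 1$, annihilates $R_+ M$: for a homogeneous $q \in R_+$ and $m \in M$ we have $qm \otimes 1 = m \otimes (q \cdot 1) = m \otimes 0 = 0$, since $q \cdot 1 = q_0 = 0$. Therefore it descends to a well-defined $R$-linear map $\psi$. Checking $\phi \psi = \mathrm{id}$ and $\psi \phi = \mathrm{id}$ is then a one-line computation on the classes $\overline{m}$ and the generators $m \otimes \gamma$ respectively, using $\gamma \cdot 1 = \gamma_0 = \gamma$ for $\gamma \in k$.

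Finally I would verify that both maps are graded, and here I would lean on Proposition \ref{prop: tensor product grading}: since $k$ is concentrated in degree $0$ (Example \ref{exam: k is pdcf}), that proposition gives $(M \otimes_R k)_u = \Span_k(m \otimes \gamma \mid m \in M_u,\ \gamma \in k)$, while $(M/R_+M)_u = (M_u + R_+M)/R_+M$. With these descriptions, $\phi$ sends $m \otimes \gamma$ (with $m \in M_u$) to $\overline{\gamma m} \in (M/R_+M)_u$ because $\gamma m \in M_u$, and $\psi$ sends $\overline{m}$ (with $m \in M_u$) to $m \otimes 1 \in (M \otimes_R k)_u$; so both preserve degree. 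I do not expect a serious obstacle: the only place demanding care is the consistent use of the $(-)_0$ action of $R$ on $k$ when verifying that $\phi$ is well-defined and that $\psi$ kills $R_+ M$. Once the degree-$0$ concentration of $k$ is invoked, gradedness is immediate.
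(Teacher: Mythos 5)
Your proposal is correct and follows essentially the same route as the paper's proof: both construct the forward map from the $R$-balanced map $(m,\gamma) \mapsto \gamma\overline{m}$ (the balancedness check via $r - r_0 \in R_+$ is just a rephrasing of the paper's computation), both obtain the inverse by showing $m \mapsto m \otimes 1$ kills $R_+ M$ using $q \cdot 1 = q_0 = 0$ for $q \in R_+$, and both conclude gradedness from $k$ being concentrated in degree $0$. Your verification of gradedness via Proposition \ref{prop: tensor product grading} is slightly more explicit than the paper's one-line degree count, but the argument is the same.
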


\begin{proof}
First we define a map $\varphi: M \times k \to M / R_+ M$ which sends $x , \gamma \mapsto \gamma \overline{x}$. This map is obviously $k$-bilinear and it is $R$-balanced because given $r = \sum_{q \in Q} r_q \in R$ we have 
\begin{equation*}
\varphi(rx , \gamma ) = \gamma \overline{ \sum_{q \in Q} r_q x} \overset{(1)}{=} \gamma \overline{r_0 x} = \gamma r_0 \overline{x} = \varphi(x , r_0 \gamma) = \varphi(x , r \gamma)
\end{equation*}
where (1) follows because $\sum_{q \in Q} r_q x = r_0 x + \sum_{q > 0} r_q x$ and the latter term is in $R_+ M$. Hence we obtain an $R$-linear map $\tilde{\varphi} : M \otimes_R k \to M / R_+ M$. The degree of $x \otimes \gamma$ is $\deg(x) + \deg(\gamma) =  \deg(x) = \deg(\overline{x})$ so this map is indeed graded. On the other hand we define a map $\psi : M \to M \otimes_R k$ by sending $x \mapsto x \otimes 1$ which is clearly $R$-linear and the kernel of $\psi$ contains $R_+ M$ since given $r x$ where $r \in R_q$ for $q > 0$ and $x \in M_g$ we have $\psi(rx) = rx \otimes 1 = x \otimes r \cdot 1$, but $r \cdot 1 = 0$ since $r \in R_+$, hence $\psi(rx) = 0$. Since $R_+ M$ is spanned by elements of the same form as $rx$, it follows that $\ker \psi \subseteq R_+ M$. Hence we obtain a map $\overline{\psi} : M / R_+ M \to M \otimes_R k$. The maps $\tilde{\varphi}$ and $\overline{\psi}$ are obviously inverses of each other.
\end{proof}

We are now finally in a position to discuss minimal $R$-spanning families. The following proposition describes them in detail. 

\begin{proposition}\label{prop: k-gen for quotient gives r-gen for module}
Let $M$ be a BDF $\Gamma$-graded $R$-module, $(x_i \mid i \in I)$ be a BDF family of elements of $M$ with $\deg(x_i) = g_i$, $F = \bigoplus_{i \in I} R(-g_i)$, and $\varphi: F \to M$ be the $R$-linear map sending $e_i \mapsto x_i$. 

\begin{enumerate}

\item The following are equivalent:

\begin{enumerate}

\item $(\overline{x_i} \mid i \in I)$ $k$-spans $M / R_+ M$.

\item $(x_i \mid i \in I)$ $R$-spans $M$. 

\item $\im \varphi = M$. 

\end{enumerate}

\item If $(\overline{x}_i \mid i \in I)$ is $k$-independent then $\ker \varphi \subseteq R_+ F$. 

\item The following are equivalent:

\begin{enumerate}

\item $(\overline{x_i} \mid i \in I)$ is a $k$-basis of $M / R_+ M$.

\item $(x_i \mid i \in I)$ is a minimal $R$-spanning family for $M$ in the sense that no proper subfamily $(x_j \mid j \in J)$ with $J \subsetneq I$ $R$-spans $M$.

\item $(x_i \mid i \in I)$ $R$-spans $M$ and $\ker \varphi \subseteq R_+ F$. 

\end{enumerate}

\end{enumerate}
\end{proposition}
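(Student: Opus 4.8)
The plan is to treat this as the graded, BDF version of the standard fact for modules over a local (or connected graded) ring: generation is detected modulo the maximal ideal, and minimal generating families correspond to bases of the vector space $M/R_+M$. The two engines are the BDF $\Gamma$-graded Nakayama Lemma and an elementary description of $R_+F$. Since $F = \bigoplus_{i \in I} R(-g_i)$ is free on the homogeneous basis $(e_i \mid i \in I)$, I would first record that $R_+F = \bigoplus_{i \in I} R_+\, e_i$, so that $F/R_+F \cong \bigoplus_{i \in I} k\,\overline{e_i}$ is a $k$-vector space on the classes $\overline{e_i}$, and that the induced map $\overline{\varphi} : F/R_+F \to M/R_+M$ sends $\overline{e_i} \mapsto \overline{x_i}$ (using Lemma \ref{lem: iso between tensoring with k and modding out by M plus} to identify $- \otimes_R k$ with $-/R_+(-)$). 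Throughout I also use that each $r \in R$ decomposes uniquely as $r = r_0 + r_+$ with $r_0 \in R_0 = k$ and $r_+ \in R_+$, which is possible because $R$ is connected and its support lies in $Q$.

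For Part (1), the implications (b) $\iff$ (c) are immediate, since $\im \varphi$ is exactly the submodule $R$-spanned by the $x_i$, and (b) $\implies$ (a) is just reduction mod $R_+M$. The substantive implication is (a) $\implies$ (b): letting $N = \im \varphi$, hypothesis (a) gives $M = N + R_+M$, so $R_+(M/N) = M/N$; since $N$ is graded (the $x_i$ are homogeneous), $M/N$ is again BDF by Lemma \ref{lem: graded submodules and quotients of BDF are BDF}, and the BDF $\Gamma$-graded Nakayama Lemma forces $M/N = 0$, i.e. $N = M$. For Part (2), I would take $v = \sum_i r_i e_i \in \ker \varphi$ and push its class forward along $\overline{\varphi}$: the class of $v$ in $F/R_+F$ is $\sum_i (r_i)_0\,\overline{e_i}$, whose image is $\sum_i (r_i)_0\,\overline{x_i} = \overline{\varphi(v)} = 0$. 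The assumed $k$-independence of $(\overline{x_i})$ then forces every $(r_i)_0 = 0$, so $v \in R_+F$.

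For Part (3), I would bootstrap entirely from Parts (1) and (2). The implication (a) $\implies$ (c) is immediate: a basis spans, so (1) gives $R$-spanning, and a basis is independent, so (2) gives $\ker \varphi \subseteq R_+F$. For (c) $\implies$ (a), spanning and (1) give that $(\overline{x_i})$ $k$-spans $M/R_+M$; to get independence, from $\sum_i c_i \overline{x_i} = 0$ with $c_i \in k$ I set $v = \sum_i c_i e_i$, note $\varphi(v) \in R_+M = \varphi(R_+F)$ using surjectivity of $\varphi$, subtract a preimage $w \in R_+F$ so that $v - w \in \ker \varphi \subseteq R_+F$, conclude $v \in R_+F$, and read off all $c_i = 0$ in $F/R_+F$. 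For the equivalence with minimality (b), I use that a subfamily of a BDF family is again BDF (by Lemma \ref{lem: closure of fbb and df sets under taking subsets} for the support, and because multiplicities only shrink), so Part (1) applies verbatim to any subfamily: then (a) $\implies$ (b), because deleting a basis vector destroys $k$-spanning of $M/R_+M$ and hence, by (1), $R$-spanning of $M$; and (b) $\implies$ (a), because any nontrivial $k$-dependence among the $\overline{x_i}$ would let me delete one index while retaining $k$-spanning, hence $R$-spanning, contradicting minimality.

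The main obstacle is not any single deep step but the discipline of staying inside the BDF category so that the two key tools remain available: I must verify that $N = \im \varphi$ and $M/N$ are BDF before invoking Nakayama in Part (1), and that every subfamily of $(x_i \mid i \in I)$ is still a BDF family before reapplying Part (1) in Part (3). The identity $\varphi(R_+F) = R_+M$ used in (c) $\implies$ (a) also quietly relies on $\varphi$ being surjective, guaranteed by the spanning hypothesis; keeping track of which hypotheses supply surjectivity is the one place where the bookkeeping can go wrong.
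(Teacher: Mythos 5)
Your proof is correct, and its skeleton matches the paper's: the BDF $\Gamma$-graded Nakayama Lemma drives part (1), the degree-zero decomposition $r = r_0 + r_+$ drives part (2), and part (3) is bootstrapped from (1) and (2). The differences are in execution, and they are worth noting. In (1), (a) $\implies$ (b), the paper forms the cokernel $C$ of $\varphi$, tensors the exact sequence $F \to M \to C \to 0$ with $k$, and uses right-exactness together with the identification $C \otimes_R k \cong C/R_+C$ (Lemma \ref{lem: iso between tensoring with k and modding out by M plus}) to conclude $R_+C = C$ before invoking Nakayama; you instead observe directly that $k$-spanning gives $M = \im \varphi + R_+M$, hence $R_+(M/\im\varphi) = M/\im\varphi$, which is more elementary and skips the tensor-functor detour (the paper's formulation has the side benefit of rehearsing the $-\otimes_R k$ viewpoint it reuses later for minimal free resolutions and $\Tor$). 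In (3), (b) $\implies$ (a), the paper extracts from the $k$-spanning family $(\overline{x_i} \mid i \in I)$ a subfamily whose residues form a basis and then invokes minimality, while you delete a single index appearing in a putative nontrivial dependence; your version avoids the Zorn-type basis-extraction step. Finally, you explicitly verify that a subfamily of a BDF family is again BDF before reapplying (1) to it, and that $\im\varphi$ is graded (so Lemma \ref{lem: graded submodules and quotients of BDF are BDF} and Nakayama apply) --- points the paper uses silently --- so your write-up is, if anything, slightly more careful about staying inside the BDF category.
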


\begin{proof}
\

\begin{enumerate}

\item (b) $\iff$ (c) is obvious. To prove (a) $\implies$ (c), suppose that $(\overline{_i} \mid i \in I)$ $k$-spans $M / R_+ M$. Let $C$ be the cokernel of $\varphi$ so that we obtain an exact sequence $F \to M \to C \to 0$. Since the functor $- \otimes_R k$ is right exact, we obtain an exact sequence $F \otimes_R k \to M \otimes_R k \to C \otimes_R k \to 0$. We claim that $\varphi \otimes \id$ is surjective. Indeed, by the proof of Lemma \ref{lem: iso between tensoring with k and modding out by M plus}, under the isomorphism $M / R_+ M \cong M \otimes_R k$, the element $\overline{x_i}$ corresponds to $x_i \otimes 1$. Hence $M \otimes_R k$ is $k$-spanned (and hence $R$-spanned) by the family $(x_i \otimes 1 \mid i \in I)$. But $[\varphi \otimes 1](e_i \otimes 1) = x_i \otimes 1$ and therefore each of the elements of this spanning family is in the image of $\varphi \otimes 1$ and surjectivty follows. By exactness we obtain that $C \otimes_R k = 0$, so Lemma \ref{lem: iso between tensoring with k and modding out by M plus} implies that $C / R_+ C = 0$, which gives that $R_+ C = C$, and by $\Gamma$-graded Nakayama's Lemma we obtain that $C = 0$, implying that $\varphi : F \to M$ is surjective. 

(b) $\implies$ (a). Now suppose that $(x_i \mid i \in I)$ generates $M$ as an $R$-module. Given any $\overline{x} \in M / R_+ M$ we can write $x = \sum_{i \in I} r_i x_i$ where all but finitely many of the $r_i$'s are zero. Then $\overline{x} = \sum_{i \in I} \overline{r_i x_i} = \sum_{i \in I} (r_i)_0 \overline{x_i}$ where the latter equality follows since if we write $r_i = \sum_{q \in Q} (r_i)_q$ with $(r_i)_q \in R_q$ then $\overline{r_i x_i} = \overline{ (r_i)_0 x_i}  + \overline{\sum_{q > 0} (r_i)_q x_i} = \overline{ (r_i)_0 x_i}$, this time the final equality holding because $\sum_{q > 0} (r_i)_q x_i \in R_+ M$. But $(r_i)_0 \in k$ so indeed $(\overline{x_i} \mid i \in I)$ $k$-spans $M / R_+ M$. 

\item Given $y \in \ker \varphi$, we can write $y = \sum_{j = 1}^n r_j e_{i_j}$ for $r_j \in R$. Applying $\varphi$ yields $0 = \sum_{j = 1}^n r_j x_{i_j}$ and taking residues modulo $R_+ M$ yields $0 = \sum_{j = 1}^n (r_j)_0 \overline{x}_{i_j}$. Since $(\overline{x}_i \mid i \in I)$ is $k$-independent, it follows that $(r_j)_0 = 0$ and hence that $r_j \in R_+$. Therefore $y \in R_+ F$.

\item (a) $\implies$ (b). Suppose that $(\overline{x_i} \mid i \in I)$ is a $k$-basis of $M / R_+ M$. Then by part (1) $(x_i \mid i \in I)$ generates $M$ as an $R$-module and if a subfamily $(x_j \mid j \in J)$ with $J \subseteq I$ $R$-generates $M$, then again by the same result, $(\overline{x_j} \mid j \in J)$ is a $k$-basis of $M / R_+ M$ which is only possible if $J = I$. 

(b) $\implies$ (a). Conversely, if $(x_i \mid i \in I)$ minimally generates $M$ as an $R$-module, then by the same corollary $(\overline{x_i} \mid i \in I)$ $k$-spans $M / R_+ M$. Therefore there is a subfamily $(\overline{x_j} \mid j \in J)$ which is a $k$-basis of $M / R_+ M$. Again by part (1), $(x_j \mid j \in J)$ $R$-generates $M$ so by minimality $I = J$ and hence $(\overline{x_i} \mid i \in I)$ is a $k$-basis of $M / R_+ M$.

(a) $\implies$ (c) follows from parts (1) and (2) of this proposition. 

(c) $\implies$ (a). That $(\overline{x}_i \mid i \in I)$ $k$-spans $M / R_+ M$ follows from part (1). To show it is $k$-independent, given $\gamma_i \in k$ for $i \in I$ with all but finitely many of the $\gamma_i$'s equal to zero such that $\sum_{i \in I} \gamma_i \overline{x}_i = 0$ in $M / R_+ M$, it follows that $\sum_{i \in I} \gamma_i x_i \in R_+ M$. Since $(x_i \mid i \in I)$ $R$-spans $M$, we can write $\sum_{i \in I} \gamma_i x_i = \sum_{i \in I} r_i x_i$ with $r_i \in R_+$ and all but finitely many of the $r_i$'s equal to zero. Then $\sum_{i \in I} \gamma_i e_i - \sum_{i \in I} r_i e_i \in \ker \varphi \subseteq R_+ F$, hence $\sum_{i \in I} \gamma_i e_i \in R_+ F$, so we can write $\sum_{i \in I} \gamma_i e_i = \sum_{i \in I} s_i e_i$ with $s_i \in R_+$. By independence of the $e_i$'s it follows that $\gamma_i \in R_+$, and since $R_+ \cap k = 0$, that $\gamma_i = 0$ for each $i \in I$. 

\end{enumerate}
\end{proof}

\begin{corollary}\label{cor: existence of a minimal R spanning set}
There exists a minimal $R$-spanning family for any BDF $\Gamma$-graded $R$-module $M$. 
\end{corollary}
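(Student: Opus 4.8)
The plan is to reduce the existence of a minimal $R$-spanning family to Proposition \ref{prop: k-gen for quotient gives r-gen for module}(3), using the graded residue object $M / R_+ M$ as an intermediary. The key observation is that part (3) of that proposition characterizes minimal $R$-spanning families as lifts of homogeneous $k$-bases of $M / R_+ M$, so the task becomes: build such a basis, lift it, and check that the lift is an honest BDF family (so that the proposition applies).

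First I would note that $R_+ M$ is a graded submodule of $M$ (as $R_+$ is a graded ideal), so by Lemma \ref{lem: graded submodules and quotients of BDF are BDF} the quotient $M / R_+ M$ is again a BDF $\Gamma$-graded $R$-module. Since $R_+$ annihilates $M / R_+ M$, it is effectively a $\Gamma$-graded $k$-vector space, and being finite-dimensionally graded, each graded piece $(M / R_+ M)_g$ is a finite-dimensional $k$-vector space. I would then choose, for each $g \in \Gamma$, a $k$-basis of $(M / R_+ M)_g$ and let $(\overline{x_i} \mid i \in I)$ be the union of these bases; this is by construction a homogeneous $k$-basis of $M / R_+ M$. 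Using that the quotient map $M \to M / R_+ M$ is a graded surjection, the last statement of Proposition \ref{prop: ker and im are graded} lets me lift each $\overline{x_i}$ to a homogeneous element $x_i \in M$ of the same degree $g_i = \deg(\overline{x_i})$.

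Next I would verify that $(x_i \mid i \in I)$ is a BDF family of elements of $M$. Each $x_i$ is nonzero because its image $\overline{x_i}$ is. The degrees $g_i$ all lie in $\Supp_\Gamma(M / R_+ M)$, which is finitely bounded below and downward finite because $M / R_+ M$ is BDF; by Lemma \ref{lem: closure of fbb and df sets under taking subsets} the set $\{ g_i \mid i \in I \}$ inherits both finiteness properties. Finally, for any fixed $g \in \Gamma$, the number of indices $i$ with $g_i = g$ is exactly $\dim_k\bigl( (M / R_+ M)_g \bigr)$, which is finite since $M / R_+ M$ is finite-dimensionally graded. Thus all three defining conditions of a BDF family hold.

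With $(x_i \mid i \in I)$ known to be a BDF family, Proposition \ref{prop: k-gen for quotient gives r-gen for module} applies to it directly, and since condition (a) of part (3) holds — namely $(\overline{x_i} \mid i \in I)$ is a $k$-basis of $M / R_+ M$ — the equivalent condition (b) gives that $(x_i \mid i \in I)$ is a minimal $R$-spanning family for $M$, as desired. The only step requiring genuine care is the BDF-family verification, since all three finiteness constraints must be confirmed simultaneously for the lifted family; everything else is a direct appeal to the preceding results.
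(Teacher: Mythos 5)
Your proof is correct and follows essentially the same route as the paper: choose a homogeneous $k$-basis of $M/R_+M$, verify it forms a BDF family using the finiteness properties of the BDF quotient, and invoke Proposition \ref{prop: k-gen for quotient gives r-gen for module}(3). You simply make explicit two steps the paper leaves implicit (that $M/R_+M$ is BDF via Lemma \ref{lem: graded submodules and quotients of BDF are BDF}, and the homogeneous lifting via Proposition \ref{prop: ker and im are graded}), which is a welcome clarification rather than a deviation.
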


\begin{proof}
Choose a homogeneous $k$-basis $(\overline{x_i} \mid i \in I)$ for $M /R_+ M$. Since $M / R_+ M$ is a BDF module, for each $g \in \Gamma$, $(M / R_+ M)_g$ is a finite-dimensional vector space, hence $\{ i\in I \mid \deg(x_i)  = g \}$ is finite. Also $\{ g \in \Gamma \mid \exists i \in I , \deg(x_i) = g \} = \Supp_\Gamma(M / R_+ M)$ which is finitely bounded below and downward finite, hence $(\overline{x_i} \mid i \in I)$ is a BDF family. By Proposition \ref{prop: k-gen for quotient gives r-gen for module}, $(x_i \mid i \in I)$ is a minimal $R$-spanning set for $M$. 
\end{proof}

Finally we are able to use the existence of a minimal $R$-spanning family to show that all projective BDF modules are free. 

\begin{theorem}\label{thm: projective implies free}
If $R$ is a PDCF $\Gamma$-graded $k$-algebra and $P$ is BDF $\Gamma$-graded $R$-module which is projective as an $R$-module then $P$ free and in fact isomorphic to $\bigoplus_{i \in I} R(-g_i)$ for some BDF family $(g_i \mid i \in I)$. 
\end{theorem}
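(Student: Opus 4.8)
The plan is to present $P$ as a quotient of a free module on a \emph{minimal} set of generators and then exploit projectivity together with graded Nakayama to force the kernel to vanish. First I would apply Corollary \ref{cor: existence of a minimal R spanning set} to choose a minimal $R$-spanning family $(x_i \mid i \in I)$ of $P$, and set $g_i = \deg(x_i)$; this family is automatically BDF. Letting $F = \bigoplus_{i \in I} R(-g_i)$, which is a BDF $\Gamma$-graded free module by Proposition \ref{prop: certain direct sums of BDF modules are BDF}, and letting $\varphi : F \to M$... more precisely $\varphi : F \to P$ be the graded $R$-linear map sending $e_i \mapsto x_i$, Proposition \ref{prop: k-gen for quotient gives r-gen for module}(3) tells me that $\varphi$ is surjective and that $K := \ker \varphi \subseteq R_+ F$. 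Note that $K$ is a graded submodule of $F$ by Proposition \ref{prop: ker and im are graded}, hence $K$ is itself BDF by Lemma \ref{lem: graded submodules and quotients of BDF are BDF}.

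Next I would invoke projectivity: since $P$ is projective and $\varphi$ is surjective, the short exact sequence $0 \to K \to F \xrightarrow{\varphi} P \to 0$ splits as $R$-modules, so there is an $R$-linear section $s : P \to F$ with $\varphi \circ s = \id_P$. Writing $P' = s(P)$, this gives an internal direct sum $F = K \oplus P'$ of $R$-submodules, whence $R_+ F = R_+ K + R_+ P'$ with $R_+ K \subseteq K$ and $R_+ P' \subseteq P'$.

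The crux is to deduce $K = R_+ K$. Given $x \in K$, the inclusion $K \subseteq R_+ F$ lets me write $x = u + v$ with $u \in R_+ K \subseteq K$ and $v \in R_+ P' \subseteq P'$. Then $v = x - u \in K \cap P' = 0$, so $x = u \in R_+ K$; hence $K \subseteq R_+ K$ and therefore $K = R_+ K$. Applying the BDF $\Gamma$-graded Nakayama's Lemma to the BDF module $K$ yields $K = 0$, so $\varphi$ is an isomorphism and $P \cong F = \bigoplus_{i \in I} R(-g_i)$ is free on the BDF family $(g_i \mid i \in I)$, as claimed.

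I expect the main subtlety to be the interaction between the splitting, which projectivity provides only as an \emph{ungraded} $R$-module decomposition, and the graded structure needed for Nakayama: the section $s$ need not be graded, so $P'$ is merely an ungraded $R$-submodule, and one must check that the identity $K = R_+ K$ extracted from the decomposition is nonetheless a genuine equality of graded submodules, which it is since $K$ is graded regardless of how the equality was derived. A tidy alternative that avoids manipulating the ungraded summand is to apply the right-exact functor $- \otimes_R k$ to the \emph{split} sequence, obtaining a split short exact sequence $0 \to K \otimes_R k \to F \otimes_R k \to P \otimes_R k \to 0$ in which the induced map $\overline{\varphi}$ is an isomorphism because it carries the $k$-basis $(\overline{e_i})$ of $F/R_+F$ to the $k$-basis $(\overline{x_i})$ of $P/R_+P$; then $K \otimes_R k \cong K / R_+ K = 0$ by Lemma \ref{lem: iso between tensoring with k and modding out by M plus}, and Nakayama again gives $K = 0$.
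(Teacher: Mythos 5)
Your proof is correct and follows essentially the same route as the paper's: a minimal $R$-spanning family giving a graded surjection $\varphi : F \to P$ with $K := \ker\varphi \subseteq R_+F$, a splitting $F = K \oplus P'$ from projectivity, the direct-sum argument forcing $K \subseteq R_+K$, and graded Nakayama to conclude $K = 0$. Your explicit check that $K$ is BDF (needed to invoke Nakayama) and your closing alternative via $- \otimes_R k$ are both sound refinements of details the paper leaves implicit.
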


\begin{proof}
By Proposition \ref{prop: k-gen for quotient gives r-gen for module} and Corollary \ref{cor: existence of a minimal R spanning set}, there exists a homogeneous minimal $R$-spanning family $( z_i \mid i \in I )$ of $P$ which is a BDF family and a BDF free $R$-module $F = \bigoplus_{i \in I} R(-g_i)$ where $g_i = \deg(z_i)$ and a graded surjective $R$-linear map $\varphi: F \to P$ with $K := \ker \varphi \subseteq R_+ F$. So we have a short exact sequence of BDF $\Gamma$-graded $R$-modules $0 \to K \to F \to P \to 0$ which splits because $P$ is projective. Therefore we obtain $F = K \oplus P'$ where $P'$ is a (not necessarily graded) submodule of $F$ isomorphic to $P$. Then we have that $K \subseteq R_+ F = R_+ ( K \oplus P') = R_+ K \oplus R_+P'$. Hence, given any $x \in K$, we can write $x = y + z$ where $y \in R_+ K$ and $z \in R_+ P'$. But subtraction yields $z = x - y \in K$, and since $z \in R_+ P' \subseteq P'$, and $P' \cap K = \{0\}$, it must be that $z = 0$ and $x = y \in R_+ K$. Hence $K \subseteq R_+K$. By $\Gamma$-graded Nakayama's Lemma we have that $K = 0$ and hence that $\varphi$ is an isomorphism, so $P$ is isomorphic to $F = \bigoplus_{i \in I} F(-g_i)$ as desired.
\end{proof}

One consequence of Theorem \ref{thm: projective implies free} is that ``$\Gamma$-graded projective'' is the same as ``$\Gamma$-graded and projective'' and ``$\Gamma$-graded free'' is the same as ``$\Gamma$-graded and free'', as the following corollary states precisely.

\begin{corollary}

\

\begin{enumerate}

\item If $P$ is a BDF $\Gamma$-graded $R$-module which is projective as an $R$-module then $P$ is projective in the category $R\text{-}\bMod_\Gamma^\text{BDF}$. 
 
\item If $F$ is a BDF $\Gamma$-graded $R$-module which is free as an $R$-module then $F \cong \bigoplus_{i \in I} R(-g_i)$ for some BDF family $(g_i \mid i \in I)$, i.e. $F$ has a homogeneous basis. 

\end{enumerate}
\end{corollary}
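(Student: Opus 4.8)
The plan is to deduce both parts directly from Theorem \ref{thm: projective implies free}, which already does the heavy lifting; the only genuine content beyond that theorem is the (standard, but grading-sensitive) fact that a free graded module is projective in the graded category.

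For part (2), I would observe that a free $R$-module is in particular projective as an $R$-module. Since $F$ is assumed to be BDF and free, it is thus BDF and projective in the category of all $R$-modules, so Theorem \ref{thm: projective implies free} applies verbatim and produces an isomorphism $F \cong \bigoplus_{i \in I} R(-g_i)$ for some BDF family $(g_i \mid i \in I)$. This is immediate and requires no further argument.

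For part (1), I would first apply Theorem \ref{thm: projective implies free} to write $P \cong \bigoplus_{i \in I} R(-g_i)$ with homogeneous basis $(e_i \mid i \in I)$ satisfying $\deg(e_i) = g_i$. It then remains to verify that such a free BDF module is projective as an object of $R\text{-}\bMod_\Gamma^\text{BDF}$. Concretely, given a graded surjection $\pi : M \to N$ of BDF modules and a graded $R$-linear map $f : P \to N$, I must produce a graded lift $\tilde{\varphi} : P \to M$ with $\pi \circ \tilde{\varphi} = f$. Since $f$ is graded, each $f(e_i)$ lies in $N_{g_i}$; since $\pi$ is a graded surjection, the final clause of Proposition \ref{prop: ker and im are graded} guarantees a preimage $m_i \in M_{g_i}$ with $\pi(m_i) = f(e_i)$. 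Because $P$ is free on $(e_i \mid i \in I)$, the assignment $e_i \mapsto m_i$ extends uniquely to an $R$-linear map $\tilde{\varphi} : P \to M$, which is graded since $\deg(\tilde{\varphi}(e_i)) = \deg(m_i) = g_i = \deg(e_i)$, and which satisfies $\pi \circ \tilde{\varphi} = f$ on the basis and hence everywhere. As all three modules are BDF and $\tilde{\varphi}$ is a graded $R$-linear map, it is a morphism in $R\text{-}\bMod_\Gamma^\text{BDF}$, establishing the required lifting property.

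The main (and only mildly nontrivial) obstacle is the lifting step in part (1): one must select each preimage $m_i$ in the correct graded piece $M_{g_i}$, rather than as an arbitrary element of $\pi^{-1}(f(e_i))$, so that the resulting lift is again a graded morphism and therefore lives in the BDF category. This is exactly what the final statement of Proposition \ref{prop: ker and im are graded} supplies; everything else is the universal property of free modules.
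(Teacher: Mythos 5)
Your proof is correct and takes essentially the same route as the paper, which states this corollary without proof as an immediate consequence of Theorem \ref{thm: projective implies free}. Your part (2) is exactly the intended one-line deduction, and your part (1) simply makes explicit the standard graded lifting argument (choosing preimages in the correct graded pieces via the final clause of Proposition \ref{prop: ker and im are graded}) that the paper leaves implicit.
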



\subsection{Minimal Free Resolutions of BDF Modules}

We conclude this section with an application of the theory of minimal spanning sets we just developed to the existence of minimal free resolutions. These are free resolutions which map homogeneous bases onto minimal generating sets, and therefore, in some sense have the smallest possibly syzygy modules. They will be of importance to us when proving the BDF Hilbert Syzygy Theorem. 

\begin{lemma}\label{lem: minimal free resolution equiv cond}
Let $M$ be a BDF $\Gamma$-graded $R$-module and consider the following free resolution in the category $R\text{-}\bMod_\Gamma^\text{BDF}$. 
\begin{equation*}
\ldots \to F_2 \overset{\varphi_2}{\longrightarrow} F_1 \overset{\varphi_1}{\longrightarrow}  F_0 \overset{\varphi_0}{\longrightarrow} M \to 0
\end{equation*}
The following are equivalent:
\begin{enumerate}

\item $\im \varphi_n \subseteq R_+ F_{n - 1}$ for all $n \geq 1$. 

\item The maps in the complex 
\begin{equation*}
\ldots \to F_2 \otimes_R k \xrightarrow{\varphi_2 \otimes \id} F_1 \otimes_R k \xrightarrow{\varphi_1 \otimes \id}  F_0 \otimes_R k
\end{equation*}
are all $0$. 

\item For every $n \geq 0$, $\varphi_n$ maps every homogeneous basis of $F_n$ onto a minimal generating set for $\im \varphi_{n}$. 
 
 \end{enumerate}
 \end{lemma}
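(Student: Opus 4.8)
The plan is to prove the two equivalences $(1)\iff(2)$ and $(1)\iff(3)$, treating $(1)$ as the hub, since it is the most elementary condition to compare against. Both equivalences reduce to applying the functor $-\otimes_R k$ and unwinding what it does to the maps $\varphi_n$, so the whole argument rests on Lemma \ref{lem: iso between tensoring with k and modding out by M plus} and Proposition \ref{prop: k-gen for quotient gives r-gen for module}.

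First I would establish $(1)\iff(2)$ using the naturality of the isomorphism from Lemma \ref{lem: iso between tensoring with k and modding out by M plus}. Recall from its proof that the isomorphism $M\otimes_R k \cong M/R_+M$ identifies $x\otimes 1$ with $\overline{x}$; applied to a graded map $f\colon M\to M'$, a short diagram chase shows that $f\otimes\id$ is carried to the induced map $\overline{f}\colon M/R_+M\to M'/R_+M'$, $\overline{x}\mapsto\overline{f(x)}$. In particular each map $\varphi_n\otimes\id$ in the complex of $(2)$ corresponds to $\overline{\varphi_n}\colon F_n/R_+F_n\to F_{n-1}/R_+F_{n-1}$. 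Now $\overline{\varphi_n}=0$ precisely when $\varphi_n(x)\in R_+F_{n-1}$ for every $x\in F_n$, i.e. when $\im\varphi_n\subseteq R_+F_{n-1}$. Ranging over $n\geq 1$, this says exactly that all the maps of $(2)$ vanish iff $(1)$ holds.

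Next I would prove $(1)\iff(3)$ by relating condition $(3)$ at level $n$ to condition $(1)$ at level $n+1$. Fix $n\geq 0$ (writing $F_{-1}:=M$ and $\varphi_0\colon F_0\to M$), set $N_n:=\im\varphi_n$, and use exactness of the resolution, which gives $\ker\varphi_n=\im\varphi_{n+1}$. The corestriction $\varphi_n\colon F_n\twoheadrightarrow N_n$ is a graded surjection whose kernel, as a subset of $F_n$, is $\im\varphi_{n+1}$. For any homogeneous basis $(e_i\mid i\in I)$ of $F_n$, with $g_i:=\deg(e_i)$, the family $(\varphi_n(e_i)\mid i\in I)$ $R$-spans $N_n$, so Proposition \ref{prop: k-gen for quotient gives r-gen for module}(3) (the equivalence of (b) and (c) there, applied to $N_n$ and the surjection $\varphi_n$) tells us that $(\varphi_n(e_i))$ is a minimal $R$-spanning family of $N_n$ if and only if $\ker\varphi_n\subseteq R_+F_n$, that is, $\im\varphi_{n+1}\subseteq R_+F_n$. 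Since the latter condition is intrinsic to $\varphi_n$ and independent of the chosen basis, ``some homogeneous basis maps to a minimal generating set'' and ``every homogeneous basis maps to a minimal generating set'' coincide, which is why the universal quantifier in $(3)$ causes no trouble. Letting $n$ range over $n\geq 0$ then yields that $(3)$ is equivalent to $\bigl(\im\varphi_{n+1}\subseteq R_+F_n\text{ for all }n\geq 0\bigr)$, which is precisely $(1)$ after reindexing.

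The one point requiring care --- and the main obstacle --- is the hypothesis of Proposition \ref{prop: k-gen for quotient gives r-gen for module} that $(\varphi_n(e_i))$ be a BDF family, which in particular forbids zero entries. If some $\varphi_n(e_{i_0})=0$, then $(\varphi_n(e_i))$ is certainly not a minimal generating family (the entry $\varphi_n(e_{i_0})$ can be discarded), so $(3)$ fails at level $n$; on the other hand $e_{i_0}\in\ker\varphi_n$ while $e_{i_0}\notin R_+F_n$, so $\ker\varphi_n\not\subseteq R_+F_n$ and $(1)$ fails at level $n+1$. Thus the desired equivalence persists even in this degenerate case, and once it is dispensed with the argument goes through verbatim. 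Assembling $(1)\iff(2)$ and $(1)\iff(3)$ completes the proof.
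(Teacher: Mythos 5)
Your proof is correct and follows essentially the same route as the paper's: $(1)\iff(2)$ via the natural isomorphism $F_n \otimes_R k \cong F_n/R_+F_n$ of Lemma \ref{lem: iso between tensoring with k and modding out by M plus}, and $(1)\iff(3)$ via the equivalence (b)$\iff$(c) in part (3) of Proposition \ref{prop: k-gen for quotient gives r-gen for module} together with exactness $\ker\varphi_n = \im\varphi_{n+1}$. Your extra treatment of the degenerate case where some $\varphi_n(e_{i_0})=0$ (so that the image family fails the nonzero requirement in the definition of a BDF family) is a point the paper's proof silently glosses over, and it is handled correctly.
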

 
 \begin{proof}
(1) $\iff$ (2). First of all, for any any $n \geq 1$ the map $\varphi_n : F_n \to F_{n - 1}$ induces a map $\overline{\varphi}_n : F_n / R_+ F_n \to F_{n - 1} / R_+ F_{n - 1}$. Let $\psi_n : F_{n } \otimes_R k \to F_{n } / R_+ F_{n}$ denote the isomorphism described in Lemma \ref{lem: iso between tensoring with k and modding out by M plus}. Then the following diagram commutes. 

\begin{center}
\begin{tikzcd}
F_n \otimes_R k \arrow[r, "\varphi_n \otimes \id"] \arrow[d , "\psi_n" '] & F_{n - 1} \otimes_R k \arrow[d , "\psi_{n - 1}"] \\ 
F_n / R_+ F_n \arrow[r, "\overline{\varphi}_n" '] & F_{n -1} / R_+ F_{n - 1} 
\end{tikzcd}
\end{center}

Since $\psi_n$ and $\psi_{n - 1}$ are isomorphisms, the top map is zero if and only if the bottom map is zero. But the bottom map is zero if and only if $\im \varphi_n \subseteq R_+ F_{n - 1}$, as desired. 

 (1) $\iff$ (3). Given $n \geq 0$, let $(e_i \mid i \in I)$ denote a homogeneous basis for $F_n$ and $(x_i \mid i \in I)$ denote the image of this basis under $\varphi_n$. Then $(x_i \mid i \in I)$ is an $R$-spanning family for $\im \varphi_n$. By Proposition \ref{prop: k-gen for quotient gives r-gen for module}, this is a minimal $R$-spanning family for $\im \varphi_n$ if and only if $\ker \varphi_n \subseteq R_+ F$, but because the complex is exact, we have $\ker \varphi_n = \im \varphi_{n + 1}$, so the result follows. 
 \end{proof}
 
 \begin{definition}
A free resolution satisfying the three equivalent conditions of Lemma \ref{lem: minimal free resolution equiv cond} is called \emph{minimal}. 
\end{definition}

We now show that minimal free resolutions of BDF modules exist. 

\begin{proposition}\label{prop: existence of minimal gamma graded free resolutions}
Every BDF $\Gamma$-graded $R$-module $M$ has a minimal free resolution in the category $R\text{-}\bMod_\Gamma^\text{BDF}$. 
\end{proposition}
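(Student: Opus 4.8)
The plan is to build the resolution one step at a time, repeatedly choosing minimal $R$-spanning families and passing to syzygies, while staying inside $R\text{-}\bMod_\Gamma^\text{BDF}$ at every stage. Concretely, I would set $M_0 := M$ and proceed by induction on $n$. Given a BDF module $M_n$, Corollary \ref{cor: existence of a minimal R spanning set} furnishes a minimal $R$-spanning family, which is automatically a BDF family, so Lemma \ref{lem: BDF surjection from a free module} produces a BDF free module $F_n = \bigoplus_{i \in I_n} R(-\deg x_i^{(n)})$ together with a graded surjection $\pi_n : F_n \to M_n$ carrying a homogeneous basis onto this family. By the equivalence (b) $\iff$ (c) in part (3) of Proposition \ref{prop: k-gen for quotient gives r-gen for module}, minimality of the spanning family yields $\ker \pi_n \subseteq R_+ F_n$.

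Next I would define the syzygy module $M_{n+1} := \ker \pi_n$. Since $M_{n+1}$ is a graded submodule of the BDF module $F_n$, Lemma \ref{lem: graded submodules and quotients of BDF are BDF} guarantees that $M_{n+1}$ is again BDF, so the induction continues. Splicing the maps together, I set $\varphi_0 := \pi_0 : F_0 \to M$ and, for $n \geq 1$, let $\varphi_n : F_n \to F_{n-1}$ be the composite of $\pi_n : F_n \twoheadrightarrow M_n$ with the inclusion $M_n = \ker \varphi_{n-1} \hookrightarrow F_{n-1}$. By construction $\im \varphi_n = M_n = \ker \varphi_{n-1}$, so the complex $\cdots \to F_1 \to F_0 \to M \to 0$ is exact, hence a free resolution in $R\text{-}\bMod_\Gamma^\text{BDF}$.

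Finally I would check minimality via condition (1) of Lemma \ref{lem: minimal free resolution equiv cond}. Because the inclusion $M_{n-1} \hookrightarrow F_{n-2}$ is injective, one has $\ker \varphi_{n-1} = \ker \pi_{n-1}$, and therefore for every $n \geq 1$
\[
\im \varphi_n = \ker \varphi_{n-1} = \ker \pi_{n-1} \subseteq R_+ F_{n-1}.
\]
This is exactly condition (1), so the resolution is minimal.

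The construction is essentially forced, so the real content is only that it never leaves the BDF category. This is precisely where the finiteness hypotheses do their work: Corollary \ref{cor: existence of a minimal R spanning set} needs each $M_n$ to be finite-dimensionally graded with downward-finite, finitely-bounded-below support, so that a homogeneous $k$-basis of $M_n / R_+ M_n$ lifts to a genuine BDF spanning family, and Lemma \ref{lem: graded submodules and quotients of BDF are BDF} keeps every syzygy BDF. I expect the only subtlety to be the bookkeeping — confirming that the module whose minimal spanning family produces $\varphi_n$ is exactly $\ker \varphi_{n-1}$, and that these identifications make the minimality condition $\im \varphi_n \subseteq R_+ F_{n-1}$ fall out of $\ker \pi_{n-1} \subseteq R_+ F_{n-1}$ — rather than any genuine difficulty.
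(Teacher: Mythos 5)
Your proof is correct and follows essentially the same route as the paper's own argument: take a minimal BDF spanning family (Corollary \ref{cor: existence of a minimal R spanning set}), surject from a BDF free module with kernel contained in $R_+ F$ (Lemma \ref{lem: BDF surjection from a free module} and Proposition \ref{prop: k-gen for quotient gives r-gen for module}), recurse on the kernel, and conclude minimality via condition (1) of Lemma \ref{lem: minimal free resolution equiv cond}. Your write-up even makes explicit the step the paper leaves implicit, namely that each syzygy module is again BDF by Lemma \ref{lem: graded submodules and quotients of BDF are BDF}, so the induction is justified.
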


\begin{proof}
First of all, by Corollary \ref{cor: existence of a minimal R spanning set} there exists a homogeneous minimal $R$-generating set $( x_i \mid i \in I )$ of $M$. By Lemma \ref{lem: BDF surjection from a free module} there exists a BDF free $R$-module $F_0 = \bigoplus_{i \in I} F(-g_i)$ and a graded surjective $R$-linear map $\varphi_0 : F_0 \to M$ and by Proposition \ref{prop: k-gen for quotient gives r-gen for module} we have that $\ker \varphi_0 \subseteq R_+ F_0$. Replacing $M$ with $\ker \varphi_0$ we repeat this construction to recursively define an exact sequence 
\begin{equation*}
\ldots \to F_2 \overset{\varphi_2}{\longrightarrow} F_1 \overset{\varphi_1}{\longrightarrow}  F_0 \to M \to 0
\end{equation*}
with the property that $\im \varphi_n = \ker \varphi_{n - 1} \subseteq R_+ F_{n - 1}$ for all $n \geq 1$, hence by Lemma \ref{lem: minimal free resolution equiv cond} the resolution is minimal. 
\end{proof}


\section{$\Gamma$-finite Sequences of BDF Modules and the Koszul Complex}

In the following section we will prove the BDF Hilbert Syzygy Theorem which states that every BDF module has $\Gamma$-finite free resolution, i.e. a free resolution such that each graded piece is eventually zero. In this section we discuss the notion of $\Gamma$-finite sequence of BDF modules and we show that the Koszul complex associated to a BDF family of elements of $R$ is a $\Gamma$-finite chain complex which is exact if the family is a regular sequence. The Koszul complex will be used in the proof of the BDF Hilbert Syzygy Theorem as well. 

\begin{definition}
A sequence $\ldots \to M_2 \to M_1 \to M_0 \to 0$ of objects of $R\text{-}\bMod_\Gamma^\text{BDF}$ is called \emph{$\Gamma$-finite} if (GF1) $\bigcup_{n = 0}^\infty \Supp_\Gamma(M_n)$ is finitely bounded below and downward finite and (GF2) for all $g \in \Gamma$ the sequence of $k$-vector spaces $\ldots \to (M_2)_g \to (M_1)_g \to (M_0)_g \to 0$ is eventually $0$.
\end{definition}


\subsection{Exterior Powers of BDF Modules}

The Koszul complex requires graded exterior powers of BDF modules, which we now describe. 

\begin{proposition}
If $M$ is a BDF $\Gamma$-graded $R$-module, the submodule $L$ of $M^{\otimes n}$ spanned by elements of the form $x_1 \otimes \ldots \otimes x_n$ with $x_i = x_j$ for some $i \neq j$ is equal to the submodule $N$ spanned by elements of the form $x_1 \otimes \ldots \otimes x_n$ with $x_i$ homogeneous and $x_i = x_j$ for some $i \neq j$ together with elements of the form $x_1 \otimes \ldots \otimes x_j \otimes \ldots \otimes x_\ell \otimes \ldots \otimes x_n + x_1 \otimes \ldots \otimes x_\ell \otimes \ldots \otimes x_j \otimes \ldots \otimes x_n$ with $x_i$ homogeneous. 
\end{proposition}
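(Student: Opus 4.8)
The plan is to prove the two inclusions $N \subseteq L$ and $L \subseteq N$ separately. Both $L$ and $N$ are submodules of $M^{\otimes n}$, so in each direction it suffices to check that the listed spanning elements of one submodule lie in the other.

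For the inclusion $N \subseteq L$, the first family of generators of $N$ (tensors of homogeneous elements with a repeated entry) lie in $L$ immediately, since the generators of $L$ carry no homogeneity requirement. For the second family I would use a polarization trick. Fix homogeneous $a, b \in M$ placed in positions $p \neq q$, with the remaining entries some fixed homogeneous elements, and let $S$ be the tensor obtained by placing $a + b$ in both positions $p$ and $q$. Since $S$ has a repeated entry, $S \in L$. Expanding $S$ by multilinearity yields $T_{aa} + T_{ab} + T_{ba} + T_{bb}$, where $T_{ab}$ carries $a$ in position $p$ and $b$ in position $q$, and so on. The diagonal terms $T_{aa}$ and $T_{bb}$ each have a repeated homogeneous entry, hence lie in $L$, so the symmetrization element $T_{ab} + T_{ba} = S - T_{aa} - T_{bb}$ lies in $L$ as well. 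As this is exactly the second type of generator of $N$, we conclude $N \subseteq L$.

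For the reverse inclusion $L \subseteq N$, which is the main work, I would take a spanning element $x_1 \otimes \ldots \otimes x_n$ of $L$ and fix a single pair $i \neq j$ with $x_i = x_j$, the other entries being arbitrary. Writing each factor in its homogeneous decomposition $x_\ell = \sum_g (x_\ell)_g$ and expanding by multilinearity gives a finite sum of tensors of homogeneous elements $(x_1)_{g_1} \otimes \ldots \otimes (x_n)_{g_n}$. Because $x_i = x_j$, the homogeneous components appearing in slots $i$ and $j$ are drawn from the same family. Grouping the expansion by the degrees $g_\ell$ for $\ell \neq i, j$ and summing over the pair $(g_i, g_j)$, the diagonal terms $g_i = g_j$ are tensors of homogeneous elements with a repeated entry, i.e. generators of $N$ of the first type, while the off-diagonal terms pair up: the term with $(g_i, g_j) = (a, b)$ and the term with the values swapped to $(b, a)$ sum to a symmetrization element in homogeneous entries, a generator of $N$ of the second type. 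Hence the entire expansion, and therefore $x_1 \otimes \ldots \otimes x_n$, lies in $N$.

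The main obstacle is the bookkeeping in this second inclusion: one must expand all factors into homogeneous pieces simultaneously and then reorganize the resulting finite double sum over slots $i$ and $j$ into a diagonal part and an off-diagonal part, so that each resulting piece is recognizably one of the two generating types of $N$. The only subtlety is to fix one repeated pair $i \neq j$ at the outset and to observe that the pairing of off-diagonal terms is well-defined precisely because the components in slots $i$ and $j$ come from the single element $x_i = x_j$.
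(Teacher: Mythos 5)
Your proof is correct. There is nothing in the paper to compare it against step by step, because the paper's entire ``proof'' of this proposition is the citation ``See \cite{hazrat2016graded} Section 4.1''; your argument supplies the missing content, and it is the standard one. The direction $N \subseteq L$ rests on the polarization identity: with homogeneous $a, b$ in the two distinguished slots, placing $a+b$ in both slots gives an element of $L$, and subtracting the two diagonal terms $T_{aa}, T_{bb}$ (which also have repeated entries) exhibits the symmetrizer $T_{ab} + T_{ba}$ as an element of $L$. The direction $L \subseteq N$ expands an arbitrary generator of $L$ (with $x_i = x_j$ for one fixed pair $i \neq j$) into homogeneous components and regroups the resulting finite sum into diagonal terms ($g_i = g_j$), which are first-type generators of $N$, and off-diagonal pairs $\{(a,b), (b,a)\}$, which sum to second-type generators; the pairing is well defined exactly because the components occupying slots $i$ and $j$ are drawn from the single element $x_i = x_j$, as you note. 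Both steps are valid over any commutative ring --- in particular no division by $2$ is needed, so there is no hidden characteristic assumption. What your self-contained argument buys over the paper's citation is transparency: it makes explicit that $N$ is generated by homogeneous elements (the fact actually used afterward, in showing that $\bigwedge^n M$ inherits a BDF grading) and that the grading enters the argument only through the homogeneous decomposition of each tensor factor.
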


\begin{proof}
See \cite{hazrat2016graded} Section 4.1. 
\end{proof}

\begin{corollary}\label{cor: exterior power is BDF}
Let $M$ be a BDF $\Gamma$-graded $R$-module. The quotient $\bigwedge^n M = M^{\otimes n} / L$ is a BDF $\Gamma$-graded $R$-module and for any $g \in \Gamma$, $\left(\bigwedge^n M \right)_g$ is $k$-spanned by elements of the form $x_1 \wedge \ldots \wedge x_n$ where $x_i \in M_{h_i}$ and $\sum_{i= 1}^n h_i = g$.
\end{corollary}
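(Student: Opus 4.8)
The plan is to deduce the corollary directly from the preceding proposition together with Note \ref{not: tensor product of lots of BDF modules is BDF} and Lemma \ref{lem: graded submodules and quotients of BDF are BDF}. The essential content---that the relation submodule $L$ is generated by homogeneous elements---has already been established in the preceding proposition, so the remaining work is largely bookkeeping about gradings.

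First I would observe that by the preceding proposition we have $L = N$, and $N$ is by its very definition generated by homogeneous elements of $M^{\otimes n}$: both listed families of generators consist of homogeneous elements, since each $x_i$ appearing in them is taken homogeneous (and a sum of two elements of the same degree is again homogeneous). Hence by Proposition \ref{prop: equivalent conditions of graded submodule}, $L$ is a graded submodule of $M^{\otimes n}$.

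Next, by Note \ref{not: tensor product of lots of BDF modules is BDF}, $M^{\otimes n} = M \otimes_R \cdots \otimes_R M$ is itself a BDF $\Gamma$-graded $R$-module, with graded pieces $(M^{\otimes n})_g = \Span_k(x_1 \otimes \cdots \otimes x_n \mid x_i \in M_{h_i}, \sum_{i=1}^n h_i = g)$. Since $L$ is a graded submodule, Lemma \ref{lem: graded submodules and quotients of BDF are BDF} immediately gives that the quotient $\bigwedge^n M = M^{\otimes n}/L$ is again a BDF $\Gamma$-graded $R$-module, which settles the first claim.

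Finally, for the description of the graded pieces, I would invoke the proposition giving the induced grading on a quotient by a graded submodule, which yields $\left(\bigwedge^n M\right)_g = ((M^{\otimes n})_g + L)/L$. Applying the quotient map to the spanning set of $(M^{\otimes n})_g$ recorded above, and noting that it sends $x_1 \otimes \cdots \otimes x_n$ to $x_1 \wedge \cdots \wedge x_n$, shows that $\left(\bigwedge^n M\right)_g$ is $k$-spanned by the elements $x_1 \wedge \cdots \wedge x_n$ with $x_i \in M_{h_i}$ and $\sum_{i=1}^n h_i = g$, as claimed. The only point requiring any care is confirming that the grading $\bigwedge^n M$ inherits as a quotient coincides with the one described in the statement; this is exactly the content of the quotient-grading proposition, so no genuine obstacle remains---the real work was the identification $L = N$ carried out in the preceding proposition.
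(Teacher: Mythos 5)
Your proof is correct and follows essentially the same route as the paper's: the paper's (much terser) proof likewise invokes Note \ref{not: tensor product of lots of BDF modules is BDF} to get that $M^{\otimes n}$ is BDF with the stated graded pieces and then passes to the quotient, leaving implicit exactly the steps you spell out (that $L = N$ is generated by homogeneous elements and hence graded, and that Lemma \ref{lem: graded submodules and quotients of BDF are BDF} then gives the BDF property of the quotient). Your version simply makes those implicit dependencies explicit; there is no substantive difference.
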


\begin{proof}
First of all, by Note \ref{not: tensor product of lots of BDF modules is BDF}, $M^{\otimes n}$ is a BDF $\Gamma$-graded $R$-module and $(M^{\otimes n})_g$ is $k$-spanned by elements of the form $x_1 \otimes \ldots \otimes x_n$ where $x_i \in M_{h_i}$ and $\sum_{i= 1}^n h_i = g$. Therefore, in the quotient the elements $x_1 \wedge \ldots \wedge x_k$ $k$-span $\left( \bigwedge^n M \right)_g$. 
\end{proof}

\begin{proposition}\label{prop: homog basis for exterior power}
Let $F$ be a BDF $\Gamma$-graded $R$-module with homogeneous $R$-basis $(e_i \mid i\in I)$. Fix any total order on $I$. Then $(e_{i_1} \wedge \ldots \wedge e_{i_n} \mid i_1 < \ldots< i_n )$ is a homogeneous $R$-basis for $\bigwedge^n F$ and $\deg(e_{i_1} \wedge \ldots \wedge e_{i_n}) = \deg(e_{i_1}) + \ldots + \deg(e_{i_n})$. Hence $\bigwedge^n F$ is also a free BDF $\Gamma$-graded $R$-module. 
\end{proposition}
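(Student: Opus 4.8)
The plan is to separate the purely module-theoretic content (that the increasing wedges form an $R$-basis) from the graded content (the degree formula and the BDF conclusion), obtaining the former by reducing to the classical theory of exterior powers of free modules and the latter directly from Corollary~\ref{cor: exterior power is BDF}.

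First I would observe that, by the Proposition preceding Corollary~\ref{cor: exterior power is BDF}, the submodule $L \subseteq F^{\otimes n}$ used to define $\bigwedge^n F$ coincides with the submodule spanned by \emph{all} tensors $x_1 \otimes \ldots \otimes x_n$ having $x_i = x_j$ for some $i \neq j$, with no homogeneity assumed. Consequently the underlying $R$-module of $\bigwedge^n F$ is exactly the ordinary $n$-th exterior power of the ordinary free $R$-module underlying $F$, which is $R^{(I)}$ (as noted after Proposition~\ref{prop: certain direct sums of BDF modules are BDF}). This lets me import the classical fact that, over any commutative ring and for $I$ of arbitrary cardinality, $\bigwedge^n R^{(I)}$ is free with $R$-basis $(e_{i_1} \wedge \ldots \wedge e_{i_n} \mid i_1 < \ldots < i_n)$, which simultaneously gives both spanning and $R$-linear independence of the proposed basis.

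Next I would supply the graded content. The degree formula is immediate from Corollary~\ref{cor: exterior power is BDF}: since $\left(\bigwedge^n F\right)_g$ is $k$-spanned by wedges $x_1 \wedge \ldots \wedge x_n$ with $x_\ell \in F_{h_\ell}$ and $\sum_\ell h_\ell = g$, taking each $x_\ell = e_{i_\ell}$ shows that $e_{i_1} \wedge \ldots \wedge e_{i_n}$ is homogeneous of degree $g_{i_1} + \ldots + g_{i_n}$, where $g_i = \deg(e_i)$. (If one prefers not to cite the classical statement for spanning, the same corollary re-derives it: a general wedge of basis vectors can be reordered into strictly increasing index order by anticommutativity, and any repeated index makes the wedge vanish.) For the free-BDF conclusion, Corollary~\ref{cor: exterior power is BDF} already tells us $\bigwedge^n F$ is a BDF module; in particular it is finite-dimensionally graded, so for each $g \in \Gamma$ only finitely many increasing multi-indices $J$ satisfy $\sum_{i \in J} g_i = g$, and its support is finitely bounded below and downward finite. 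Hence the degrees of the proposed basis elements form a BDF family, and by part (2) of the Corollary to Theorem~\ref{thm: projective implies free} we may write $\bigwedge^n F \cong \bigoplus_J R(-h_J)$ with $h_J = \sum_{i \in J} g_i$, exhibiting it as a free BDF $\Gamma$-graded $R$-module.

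The main obstacle is the $R$-linear independence of the increasing wedges, since spanning and the grading are essentially read off from Corollary~\ref{cor: exterior power is BDF}. I would discharge it through the reduction to the ungraded exterior power described above. Should a self-contained argument be wanted (the subtlety being that $I$ may be infinite and $R$ is not a field), for each fixed increasing multi-index $J = (j_1 < \ldots < j_n)$ one builds an alternating $R$-multilinear map $F^{\times n} \to R$ from the coordinate functionals $e_j^*$ — which are well-defined on $R^{(I)}$ precisely because its elements have finite support — via the determinant $\det\bigl(e_{j_a}^*(y_b)\bigr)_{a,b}$; this map sends $e_J \mapsto 1$ and every other increasing monomial to $0$, so it extracts the coefficient of $e_J$ from any finite $R$-combination and forces all coefficients to vanish, yielding independence regardless of the size of $I$.
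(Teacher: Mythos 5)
Your proof is correct and takes essentially the same route as the paper's, which likewise rests on the well-known fact that the increasing wedges form an $R$-basis of the (ungraded) exterior power of $R^{(I)}$ and reads off homogeneity and the degree formula from the image of the homogeneous tensors $e_{i_1} \otimes \ldots \otimes e_{i_n}$ under the graded quotient map $F^{\otimes n} \to F^{\otimes n}/L$. Your elaborations --- making the identification with the ungraded exterior power explicit via the proposition preceding Corollary~\ref{cor: exterior power is BDF}, the coordinate-functional determinant argument for independence when $I$ is infinite, and the explicit verification that the basis degrees form a BDF family so that $\bigwedge^n F \cong \bigoplus_J R(-h_J)$ --- are all sound fillings-in of steps the paper treats as known or immediate.
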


\begin{proof}
That $(e_{i_1} \wedge \ldots \wedge e_{i_n} \mid i_1 < \ldots< i_n )$ is an $R$-basis for $\bigwedge^n F$ is well-known, and because $e_i$ is homogeneous for all $i \in I$, it follows that $e_{i_1} \otimes \ldots \otimes e_{i_n}$ is homogeneous in $F^{\otimes n}$ with $\deg(e_{i_1} \otimes \ldots \otimes e_{i_n}) = \deg(e_{i_1}) + \ldots + \deg(e_{i_n})$, and hence by definition of the quotient that $e_{i_1} \wedge \ldots \wedge e_{i_n}$ is homogeneous in $\bigwedge^n F$ and that $\deg(e_{i_1} \wedge \ldots \wedge e_{i_n}) = \deg(e_{i_1}) + \ldots + \deg(e_{i_n})$. 
\end{proof}


\subsection{The Koszul Complex}

\begin{definition}
The \emph{Koszul complex} associated to a BDF family $(r_n \mid n \in \nn)$ in a PDCF $\Gamma$-graded $k$-algebra $R$, denoted by $K(r_1, r_2, \ldots )$, is the following sequence of modules:
\begin{align*}
\ldots \to \bigwedge^3 F \overset{d_3}{\longrightarrow} \bigwedge^2 F \overset{d_2}{\longrightarrow} F \overset{d_1}{\longrightarrow} R \to 0
\end{align*}
Here $F = \bigoplus_{i \in \nn} R(-g_i)$ for $g_i = \deg(r_i)$ and we define $d_1(e_i) = r_i$ for all $i \in \nn$ and $d_n$ is the $R$-linear map defined on the homogeneous $R$-basis $( e_{i_1} \wedge \ldots \wedge e_{i_n} \mid i_1 < \ldots < i_n )$ for $\bigwedge^n F$ by
\begin{equation*}
e_{i_1} \wedge \ldots \wedge e_{i_n} \mapsto \sum_{\ell = 1}^n (-1)^\ell d_1(e_{i_\ell}) \cdot e_{i_1} \wedge \ldots \wedge \hat{e}_{i_\ell} \wedge \ldots \wedge e_{i_n}.
\end{equation*}
\end{definition}

\begin{proposition}
The Koszul complex associated to a BDF sequence $r_1, r_2, \ldots$ of elements of $R$ is a graded chain complex of BDF $\Gamma$-graded $R$-modules. 
\end{proposition}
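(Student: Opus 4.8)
The plan is to verify the three ingredients packaged into the statement: that each term $\bigwedge^n F$ is a BDF $\Gamma$-graded $R$-module, that each differential $d_n$ is a graded $R$-linear map, and that $d_{n-1} \circ d_n = 0$ so that we genuinely have a chain complex. The BDF-ness of the terms comes essentially for free from the machinery already in place. Since $(r_i \mid i \in \nn)$ is a BDF family in $R$, the family $(g_i = \deg(r_i) \mid i \in \nn)$ is by definition a BDF family of elements of $\Gamma$, so Proposition \ref{prop: certain direct sums of BDF modules are BDF} shows that $F = \bigoplus_{i \in \nn} R(-g_i)$ is a BDF $\Gamma$-graded $R$-module. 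Proposition \ref{prop: homog basis for exterior power} (equivalently Corollary \ref{cor: exterior power is BDF}) then shows that each $\bigwedge^n F$ is again a free BDF $\Gamma$-graded $R$-module, with the homogeneous $R$-basis $(e_{i_1} \wedge \ldots \wedge e_{i_n} \mid i_1 < \ldots < i_n)$.

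Next I would check gradedness of $d_n$ by a degree count on basis elements. By Proposition \ref{prop: homog basis for exterior power} the basis element $e_{i_1} \wedge \ldots \wedge e_{i_n}$ is homogeneous of degree $g_{i_1} + \ldots + g_{i_n}$. In the defining formula each summand is $(-1)^\ell r_{i_\ell} \cdot (e_{i_1} \wedge \ldots \wedge \hat{e}_{i_\ell} \wedge \ldots \wedge e_{i_n})$; since $r_{i_\ell}$ is homogeneous of degree $g_{i_\ell}$ and the surviving wedge has degree $(g_{i_1} + \ldots + g_{i_n}) - g_{i_\ell}$, the product lands in degree $g_{i_1} + \ldots + g_{i_n}$. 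Thus every summand, and hence the image $d_n(e_{i_1} \wedge \ldots \wedge e_{i_n})$, lies in the graded piece of the same degree as the basis element, so $d_n$ is graded. (For $d_1$ the computation reads $\deg(d_1(e_i)) = \deg(r_i) = g_i = \deg(e_i)$.) Since each $d_n$ is prescribed on a homogeneous basis and extended $R$-linearly, $R$-linearity is automatic.

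The heart of the argument is showing $d_{n-1} \circ d_n = 0$, which I would carry out by the classical Koszul computation on a basis element $e_{i_1} \wedge \ldots \wedge e_{i_n}$: expanding $d_{n-1}(d_n(\cdots))$ yields a double sum indexed by ordered pairs of distinct deleted positions, and the standard sign bookkeeping — the two orders in which a fixed pair of positions $j < \ell$ can be deleted contribute with opposite signs because the first deletion shifts the relative position of the second — cancels the terms in pairs. The only feature special to our setting is that the index set $\nn$ is infinite, but this causes no difficulty, since the differentials are defined on basis elements and each individual relation $d_{n-1}(d_n(e_{i_1} \wedge \ldots \wedge e_{i_n})) = 0$ involves only the finitely many indices $i_1 < \ldots < i_n$; thus the cancellation is a finite computation and the well-definedness of the maps on the infinite direct sum is never in question. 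Accordingly, I expect the only genuine obstacle to be the sign bookkeeping in $d^2 = 0$, which is standard, while the content that is new relative to the finite-variable Koszul complex is confined to the BDF finiteness conditions and compatibility with the shifted grading, both of which reduce to the cited propositions and the one-line degree count above.
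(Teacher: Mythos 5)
Your proposal is correct and follows essentially the same route as the paper's proof: BDF-ness of $F$ and $\bigwedge^n F$ via Proposition \ref{prop: certain direct sums of BDF modules are BDF}, Corollary \ref{cor: exterior power is BDF}, and Proposition \ref{prop: homog basis for exterior power}, followed by the same degree count on homogeneous basis elements to see that each $d_n$ is graded. The only difference is cosmetic: where you sketch the standard sign-cancellation argument for $d_{n-1} \circ d_n = 0$ (correctly noting that each relation involves only finitely many indices), the paper simply cites this well-known fact from \cite{eisenbud2013commutative}.
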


\begin{proof}
By Lemma \ref{lem: BDF surjection from a free module}, $F$ is a BDF $\Gamma$-graded $R$-module and $d_1$ is a $\Gamma$-graded $R$-linear map. By Corollary \ref{cor: exterior power is BDF}, $\bigwedge^n F$ is also a BDF $\Gamma$-graded $R$-module and by Proposition \ref{prop: homog basis for exterior power}, $(e_{i_1} \wedge \ldots \wedge e_{i_n} \mid i_1 < \ldots < i_n )$ is a homogeneous basis for $\bigwedge^n F$, so $d_n$ is well-defined. This same Lemma also says that $e_{i_1} \wedge \ldots \wedge e_{i_n}$ has degree $g_{i_1} + \ldots + g_{i_n}$. Furthermore, $\deg(d_1(e_{i_\ell})) = \deg(r_{i_\ell}) = g_{i_\ell}$, hence $d_1(e_{i_\ell}) \cdot e_{i_1} \wedge \ldots \wedge \hat{e}_{i_\ell} \wedge \ldots \wedge e_{i_n}$ is homogeneous of degree $g_{i_1} + \ldots + g_{i_n}$, so $d_n(e_{i_1} \wedge \ldots \wedge e_{i_n})$ has degree $g_{i_1} + \ldots + g_{i_n}$ as well, and $d_n$ is graded. It is well-known that the Koszul complex is a chain complex, see e.g. \cite{eisenbud2013commutative} Section 17.4. 
\end{proof}

We now show that the Koszul complex of a BDF family is $\Gamma$-finite, preceded by the following technical lemma. 

\begin{lemma}\label{lem: T is finite}
If $r_1, r_2, \ldots$ is a BDF sequence of elements of $R$ with degrees $\deg(r_i) = g_i$, then for all $u \in \Gamma$, the set $\{ i \in \nn \mid g_i \leq_Q u\}$ is finite. 
\end{lemma}

\begin{proof}
Given $u \in \Gamma$, because $Q$ is downward finite, the set $S := \{ g \in Q \mid g \leq_Q u \}$ is finite. Then we notice that $T := \{ i \in \nn \mid g_i \leq_Q u \} = \bigcup_{g \in S} \{ i \in \nn \mid g_i = g \}$. Since $(r_i \mid i \in \nn)$ is a BDF family each set $\{ i \in \nn \mid g_i = g \}$ is finite, hence we have written $T$ as a finite union of finite sets. 
\end{proof}

\begin{proposition}
The Koszul complex associated to a BDF sequence $r_1, r_2, \ldots$ of elements of $R$ is $\Gamma$-finite. 
\end{proposition}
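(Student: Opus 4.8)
The plan is to verify directly the two conditions (GF1) and (GF2) in the definition of a $\Gamma$-finite sequence, using the explicit homogeneous basis of the exterior powers from Proposition \ref{prop: homog basis for exterior power}. Throughout I would write $M_n := \bigwedge^n F$ for the $n$th term of the Koszul complex, so that $M_0 = R$, $M_1 = F$, and by Proposition \ref{prop: homog basis for exterior power} the module $M_n$ is free with homogeneous basis the wedges $e_{i_1} \wedge \ldots \wedge e_{i_n}$ for $i_1 < \ldots < i_n$, where $e_{i_1} \wedge \ldots \wedge e_{i_n}$ has degree $g_{i_1} + \ldots + g_{i_n}$. Thus $M_n \cong \bigoplus_{i_1 < \ldots < i_n} R\bigl(-(g_{i_1} + \ldots + g_{i_n})\bigr)$, and this identification is what drives both verifications.

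For (GF1), I would first note that each $r_i$ is a nonzero homogeneous element of $R$, so its degree $g_i$ lies in $\Supp_\Gamma(R) = Q$ (here I use that $R$ is a domain, via Proposition \ref{prop: domain implies image is monoid}, so that $Q$ is genuinely the support monoid). Since $Q$ is closed under addition, every basis degree $g_{i_1} + \ldots + g_{i_n}$ again lies in $Q$, and because the support of $M_n$ is the union of the translates $Q + (g_{i_1} + \ldots + g_{i_n})$ over all increasing tuples, each such translate is contained in $Q$. Hence $\Supp_\Gamma(M_n) \subseteq Q$ for every $n$, and therefore $\bigcup_{n=0}^\infty \Supp_\Gamma(M_n) \subseteq Q$. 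As $R$ is PDCF, $Q$ is finitely bounded below (by $0$) and downward finite, so Lemma \ref{lem: closure of fbb and df sets under taking subsets} immediately gives (GF1).

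For (GF2), I would fix $g \in \Gamma$ and compute $(M_n)_g = \bigoplus_{i_1 < \ldots < i_n} R_{g - (g_{i_1} + \ldots + g_{i_n})}$, so that $(M_n)_g \neq 0$ precisely when some increasing tuple satisfies $g_{i_1} + \ldots + g_{i_n} \leq_Q g$. The key step is that any such tuple forces every index $i_j$ into the set $T = \{ i \in \nn \mid g_i \leq_Q g \}$: since the complementary partial sum $\sum_{l \neq j} g_{i_l}$ lies in $Q$, we have $g_{i_j} \leq_Q g_{i_1} + \ldots + g_{i_n}$, and transitivity with $g_{i_1} + \ldots + g_{i_n} \leq_Q g$ yields $g_{i_j} \leq_Q g$. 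By Lemma \ref{lem: T is finite} the set $T$ is finite, and as the $i_j$ are distinct elements of $T$ we must have $n \leq |T|$. Consequently $(M_n)_g = 0$ for all $n > |T|$, which is exactly (GF2).

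The substantive content of the argument is the single observation in (GF2) that a sum of elements of $Q$ dominated by $g$ has each summand dominated by $g$, combined with the finiteness of $T$ supplied by Lemma \ref{lem: T is finite}; everything else is bookkeeping with the free basis. I do not anticipate a genuine obstacle here, and the only point requiring care is to invoke that $R$ is a domain (so that $Q$ is closed under addition) both when placing the basis degrees in $Q$ for (GF1) and when bounding each partial summand in (GF2).
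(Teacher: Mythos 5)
Your proof is correct and takes essentially the same route as the paper's: both establish (GF1) by placing every support inside $Q$ and invoking the PDCF hypotheses, and both establish (GF2) via Lemma \ref{lem: T is finite} together with the pigeonhole observation that more than $|T|$ distinct wedge indices cannot all lie in the finite set $T$, forcing the graded piece to vanish. The only cosmetic difference is that you work throughout with the free decomposition of $\bigwedge^n F$ from Proposition \ref{prop: homog basis for exterior power}, whereas the paper derives $\Supp_\Gamma(\bigwedge^n F) \subseteq Q$ by passing through $F^{\otimes_R n}$ and its graded quotient, and argues (GF2) on homogeneous elements $s \cdot e_{i_1} \wedge \ldots \wedge e_{i_m}$ rather than on graded pieces of the direct sum.
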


\begin{proof}
First of all, note that since $r_1, r_2, \ldots \in R$, we have that $g_1, g_2, \ldots \in Q$ and hence $\Supp_\Gamma(F) \subseteq Q$. Furthermore, recall that the grading on $F^{\otimes_R n}$ is given by $(F^{\otimes_R n})_g = \Span_k( x_1 \otimes \ldots \otimes x_n \mid x_i \in F_{h_i}, \sum_{i = 1}^n h_i = g )$. If $g \notin Q$, then at least one $h_i$ must be not in $Q$, but then $F_{h_i} = 0$, so $x_i = 0$, which implies $x_1 \otimes \ldots \otimes x_n = 0$. Hence $\Supp_\Gamma(F^{\otimes_R n}) \subseteq Q$. Since $\bigwedge^n F$ is a graded quotient of $F^{\otimes_R n}$, it follows that $\Supp_\Gamma(\bigwedge^n F) \subseteq Q$ as well. So we need only check the $\Gamma$-finite condition on elements of $Q$. 

Given $g \in \Gamma$, by Lemma \ref{lem: T is finite}, the set $T = \{ i \in \nn \mid g_i \leq_Q g\}$ is finite. Suppose that $m > |T|$. We claim that $\Supp_\Gamma(\bigwedge^m F) \cap (-\infty , g] = \emptyset$. Indeed, given any $h \leq_Q g$ with $h \in Q$ and any element $x \in (\bigwedge^m F)_h$, $x$ is a sum of elements of $\bigwedge^m F$ of the form $s \cdot e_{i_1} \wedge \ldots \wedge e_{i_m}$ for $s \in R$ homogeneous. But we have that $\deg(s) , g_{i_j} \in Q$, and $\deg(s) + \sum_{j = 1}^m g_{i_j} = h \leq_Q g$, hence for each $1 \leq j \leq m$ we have $i_j \in T$. Since $|T| = m - 1$, it must be that $i_n = i_\ell$ for some $n \neq \ell$. Therefore $e_{i_1} \wedge \ldots \wedge e_{i_m} = 0$, so $x = 0$. 

We have shown that for every $g  \in \Gamma$ there exits some $n \in \nn$ such that for all $m \geq n$, we have $(\bigwedge^m F)_h = 0$ for all $h \leq_Q g$. Furthermore $\bigcup_{n \in \nn} \Supp_\Gamma(\bigwedge^n F) \subseteq Q$, hence the Koszul complex is $\Gamma$-finite. 
\end{proof}

Recall that a family of elements $(r_i \mid i \in \nn)$ of a ring $R$ is said to be \emph{regular} on $R$ if the ideal generated by $r_1, r_2, \ldots$ is not equal to $R$ and for all $n \in \nn$, $r_{n + 1}$ is not a zero divisor on $R/ (r_1, \ldots, r_{n - 1})$. 

\begin{example}
The sequence $(x_{i , j})$ listed in any order in the polynomial ring $k[x_{i , j}]_n$ is clearly a regular sequence since quotienting by any number of the variables just yields a smaller polynomial ring which is a domain. 
\end{example}

\begin{proposition}\label{prop: Koszul complex of a regular sequence is a gamma finite free resolution}
Suppose $r_1, r_2, \ldots$ is a \textbf{regular} BDF sequence of elements of $R$. The Koszul complex is a $\Gamma$-finite BDF free resolution of $R / (r_1, r_2, \ldots )$, i.e. the following complex is exact:
\begin{align*}
\ldots \to \bigwedge^3 F \overset{d_3}{\longrightarrow} \bigwedge^2 F \overset{d_2}{\longrightarrow} F \overset{d_1}{\longrightarrow} R \to R / (r_1, r_2, \ldots ) \to 0
\end{align*}
\end{proposition}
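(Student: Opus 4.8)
The plan is to realize the infinite Koszul complex as a filtered colimit of finite Koszul complexes and then invoke the classical theorem for regular sequences together with the exactness of filtered colimits. The previous proposition already establishes that $K(r_1, r_2, \ldots)$ is a $\Gamma$-finite chain complex of BDF modules, so the only thing remaining is exactness of the augmented complex; since exactness of a complex of graded modules under graded maps coincides with exactness of the underlying complex (Proposition \ref{prop: ker and im are graded}), I may work entirely with the underlying $R$-modules.

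First I would set up the filtration. For each $n \in \nn$ let $F_n = \bigoplus_{i = 1}^n R(-g_i)$, so that $F_n$ is a direct summand of $F_{n+1}$ and $F = \bigcup_n F_n = \varinjlim_n F_n$. Because each inclusion $F_n \hookrightarrow F_{n+1}$ is split, the induced maps $\bigwedge^m F_n \hookrightarrow \bigwedge^m F_{n+1}$ are injective, and since a basis element $e_{i_1} \wedge \ldots \wedge e_{i_m}$ with $i_1 < \ldots < i_m$ lies in $\bigwedge^m F_n$ as soon as $n \geq i_m$, every element of $\bigwedge^m F$ lies in some $\bigwedge^m F_n$. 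Hence $\bigwedge^m F = \varinjlim_n \bigwedge^m F_n$. The Koszul differential $d_m$ restricts on each $\bigwedge^m F_n$ to the finite Koszul differential, because its value on a basis element involves only the $e_{i_\ell}$ with $i_\ell \leq i_m \leq n$. Consequently $K(r_1, r_2, \ldots) = \varinjlim_n K(r_1, \ldots, r_n)$ as complexes of $R$-modules, where $K(r_1, \ldots, r_n)$ denotes the usual finite Koszul complex of $r_1, \ldots, r_n$.

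Next I would apply the classical theorem. Since $(r_1, r_2, \ldots) \neq R$, each truncation satisfies $(r_1, \ldots, r_n) \subsetneq R$, and the regularity hypotheses for the finite sequence $r_1, \ldots, r_n$ are a subset of those for the infinite sequence; thus each $r_1, \ldots, r_n$ is a regular sequence. By the standard result (e.g. \cite{eisenbud2013commutative}, Section 17.4), $K(r_1, \ldots, r_n)$ is a free resolution of $R / (r_1, \ldots, r_n)$, so $H_m(K(r_1, \ldots, r_n)) = 0$ for all $m \geq 1$ while $H_0(K(r_1, \ldots, r_n)) = R / (r_1, \ldots, r_n)$.

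Finally I would pass to the colimit. Filtered colimits are exact in the category of $R$-modules, hence commute with homology, so $H_m(K(r_1, r_2, \ldots)) = \varinjlim_n H_m(K(r_1, \ldots, r_n))$. For $m \geq 1$ every term vanishes, giving exactness in positive degrees; for $m = 0$ the directed system $R / (r_1, \ldots, r_n)$ under the natural quotient maps has colimit $R / \bigcup_n (r_1, \ldots, r_n) = R / (r_1, r_2, \ldots)$, which is exactly the object appearing in the augmentation. This shows the augmented complex is exact. The main obstacle is the bookkeeping needed to verify cleanly that the differentials are compatible with the filtration, so that $K(r_1, r_2, \ldots)$ genuinely equals $\varinjlim_n K(r_1, \ldots, r_n)$; once that identification is secured, exactness of filtered colimits does the rest, and no new analysis of the infinite setting is required beyond the $\Gamma$-finiteness already proved.
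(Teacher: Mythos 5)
Your proof is correct, but it reaches exactness by a genuinely different mechanism than the paper. Both arguments rest on the classical fact that the finite Koszul complex $K(r_1, \ldots, r_m)$ resolves $R/(r_1, \ldots, r_m)$, and both note that truncations of a regular sequence remain regular; the difference is how the finite case is propagated to the infinite one. The paper never forms a colimit: it fixes a degree $u \in \Gamma$, uses the BDF hypothesis on the sequence (via Lemma \ref{lem: T is finite}) to see that only finitely many $g_i$ satisfy $g_i \leq_Q u$, and shows that every homogeneous cycle of degree $u$ lies in the image of the injective graded comparison map $\varphi_n = \bigwedge^n \varphi_1$ from a single finite Koszul complex $K(r_1, \ldots, r_m)$; an element chase through $\varphi$ then transports exactness degree by degree. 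You instead identify $K(r_1, r_2, \ldots)$ with $\varinjlim_n K(r_1, \ldots, r_n)$ and invoke exactness of filtered colimits, so homology commutes with the colimit, with $H_0$ handled by $\varinjlim_n R/(r_1, \ldots, r_n) = R/(r_1, r_2, \ldots)$. Your route is more general and arguably cleaner: it needs neither the grading nor the BDF condition on the sequence for the exactness statement (only that the index set is $\nn$), and it cleanly quarantines the graded bookkeeping in the already-proved $\Gamma$-finiteness propositions. (Your appeal to Proposition \ref{prop: ker and im are graded} is unnecessary, since exactness is by definition a statement about underlying modules, but this is harmless.) What the paper's degreewise argument buys is effective information in the spirit of the rest of the paper: for each $u \in \Gamma$ it exhibits a specific finite stage $m$ whose Koszul complex already computes the degree-$u$ part of the infinite one, which is exactly the kind of control exploited in the $\Gamma$-finiteness and syzygy arguments; as a proof of exactness alone, however, your colimit argument is complete.
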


\begin{proof}
First of all, the image of $d_1$ is $(r_1, r_2, \ldots )$ so the complex is exact at the $R$ and $R / (r_1, r_2, \ldots )$ terms. We must now show that $\ker d_n \subseteq \im d_{n + 1}$ for all $n \in \nn$. Since $d_n$ is a graded $R$-linear map, $\ker d_n$ is a graded submodule hence it suffices to show that $(\ker d_n)_u \subseteq \im d_{n + 1}$ for all $u \in \Gamma$. By Lemma \ref{lem: T is finite} the set $T = \{ i \in \nn \mid g_i \leq_Q u\}$ is finite. Then there exists $m \in \nn$ is such that for all $i \in T$, $i \leq m$. It is clear from the definition of a regular sequence that $r_1, \ldots, r_m$ is also regular on $R$. It is well-known that the Koszul complex $K(r_1, \ldots, r_m)$ is then a free resolution of $R / (r_1, \ldots, r_m)$ (again see \cite{eisenbud2013commutative} 17.4). Explicitly, let $F'= \bigoplus_{i = 1}^m R(-g_i)$ and for $n \in \nn$ define $d_n' : \bigwedge^n F' \to \bigwedge^{n - 1} F'$ as before. Then the following complex is exact:
\begin{align*}
\ldots \to \bigwedge^3 F' \overset{d_3'}{\longrightarrow} \bigwedge^2 F' \overset{d_2'}{\longrightarrow} F' \overset{d_1'}{\longrightarrow} R \to R / (r_1, \ldots, r_m ) \to 0
\end{align*}
We now define a morphism of complexes $\varphi: K(r_1, \ldots, r_m) \to K(r_1, r_2, \ldots)$, i.e. a collection of morphisms $\varphi_n$ such that the following diagram commutes. 

\begin{center}
\begin{tikzcd}
\ldots  \arrow[r] & \bigwedge^3 F'  \arrow[r, "d_3'"] \arrow[d, "\varphi_3"] & \bigwedge^2 F' \arrow[r, "d_2'"] \arrow[d, "\varphi_2"] & F' \arrow[r, "d_1'"] \arrow[d, "\varphi_1"] & R \arrow[r]\arrow[d, "\varphi_0 = \id"] & R / (r_1, \ldots, r_m ) \arrow[r] \arrow[d] & 0
\\ \ldots  \arrow[r] & \bigwedge^3 F  \arrow[r, "d_3"] & \bigwedge^2 F \arrow[r, "d_2"] & F \arrow[r, "d_1"] & R \arrow[r] & R / (r_1, r_2, \ldots ) \arrow[r] & 0
\end{tikzcd}
\end{center}

In fact we define $\varphi_1$ to just be the inclusion and define $\varphi_n = \bigwedge^n \varphi_1$. Then it's clear that the diagram commutes and that $\varphi_n$ is graded and injective because it sends the homogeneous basis elements of $\bigwedge^n F'$ to homogeneous basis elements of $\bigwedge^n F$ of the same degree. Any $x \in (\ker d_n)_u$ can be written as a sum of elements of the form $r \cdot e_{i_1} \wedge \ldots \wedge e_{i_n}$ which are of degree $u$. It follows that $g_{i_j} = \deg(e_{i_j}) \leq_Q u$ for all $1 \leq j \leq n$. Therefore $i_j \in T$ so $i_j \leq m$. Hence $e_{i_1} \wedge \ldots \wedge e_{i_n} \in \im \varphi_n$, hence $x \in \im \varphi_n$ as well, say $x = \varphi_n(y)$. Applying $d_n$ yields $0 = d_n(x) = d_n(\varphi_n(y)) = \varphi_{n - 1}(d'_n(y))$, but since $\varphi_{n - 1}$ is injective, we have that $d'_n(y) = 0$ so $y \in \ker d'_n$. By exactness of the top sequence there exists $z \in \bigwedge^{n + 1} F'$ such that $d_{n + 1}' (z) = y$. Therefore $x = \varphi_n(y) = \varphi_n(d_{n + 1}'(z)) = d_{n + 1} ( \varphi_{n + 1} (z))$, so $x \in \im \varphi_{n + 1}$ as desired. 
\end{proof}


\section{The BDF Hilbert Syzygy Theorem}

In this section we will prove the BDF $\Gamma$-graded analog of the Hilbert Syzygy Theorem, which says that every BDF module has a BDF $\Gamma$-finite free resolution. This will require a BDF $\Gamma$-graded $\Tor$ functor which we now describe.  


\subsection{The $\Gamma$-graded $\Tor$ Functor}

If $M$ is a BDF $\Gamma$-graded $R$-module, by Proposition \ref{prop: existence of minimal gamma graded free resolutions} there exists a BDF $\Gamma$-graded free resolution $F_\bullet$ of $M$ and tensoring $F_\bullet \to 0$ with another BDF module $N$ gives a chain complex $F_\bullet \otimes_R N \to 0$ whose terms are BDF $\Gamma$-graded $R$-modules by Proposition \ref{prop: tensor product grading} and whose maps are graded by Proposition \ref{prop: tensor product of graded maps is graded}. Therefore the kernels and images of these maps are also BDF $\Gamma$-graded $R$-modules, so the homology modules $\Tor^R_i(M, N) := H_i(F_\bullet \to 0)$ of this complex are BDF $\Gamma$-graded $R$-modules as well. It is well-known that if $F'_\bullet$ is another free resolution of $M$, then $H_i(F_\bullet \to 0)$ and $H_i(F'_\bullet \to 0)$ are canonically isomorphic as ungraded $R$-modules as are $\Tor^R_i(M, N)$ and $\Tor^R_i(N, M)$ (see \cite{weibel1994introduction} Section 2.7). However, we will need to know that these isomorphisms are graded. The former is standard, but we check the latter carefully since we could not find a reference.

\begin{proposition}\label{prop: symmetry of graded tor}
If $M$ and $N$ are BDF $\Gamma$-graded $R$-modules, there is a graded isomorphism $\Tor^R_i(M, N) \cong \Tor^R_i(N, M)$. 
\end{proposition}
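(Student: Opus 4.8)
The plan is to show that the standard proof of the symmetry $\Tor_i^R(M,N) \cong \Tor_i^R(N,M)$ goes through in the BDF $\Gamma$-graded setting while preserving the grading at every step. The classical argument computes $\Tor$ via a double complex and shows that both $\Tor_i^R(M,N)$ and $\Tor_i^R(N,M)$ arise as the homology of the total complex of $F_\bullet \otimes_R G_\bullet$, where $F_\bullet \to M$ and $G_\bullet \to N$ are free resolutions. The key point is that all the objects and maps involved here live in the category $R\text{-}\bMod_\Gamma^\text{BDF}$.

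First I would choose, using Proposition \ref{prop: existence of minimal gamma graded free resolutions}, minimal BDF $\Gamma$-graded free resolutions $F_\bullet \to M$ and $G_\bullet \to N$, each term being a free BDF module of the form $\bigoplus R(-g_i)$. Next I would form the double complex $C_{p,q} = F_p \otimes_R G_q$. By Proposition \ref{prop: tensor product grading} each $C_{p,q}$ is a BDF $\Gamma$-graded $R$-module, and by Proposition \ref{prop: tensor product of graded maps is graded} the horizontal differentials $\partial^F \otimes \id$ and vertical differentials $\id \otimes \partial^G$ are graded $R$-linear maps; the sign-twisted total differential is therefore also graded. Because each $F_p$ is free, the functor $F_p \otimes_R (-)$ is exact, so the rows of $C_{\bullet,\bullet}$ compute the homology of $F_p \otimes_R N$ and, taking horizontal homology first, the spectral sequence of the double complex degenerates to give $H_i(\text{Tot}(C)) \cong H_i(F_\bullet \otimes_R N) = \Tor_i^R(M,N)$; symmetrically, filtering the other way gives $H_i(\text{Tot}(C)) \cong H_i(M \otimes_R G_\bullet) = \Tor_i^R(N,M)$. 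The composite of these two isomorphisms is the desired symmetry isomorphism.

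The crucial thing to verify—and the step I expect to be the main obstacle—is that the comparison maps produced by the spectral sequence (or, equivalently, by the augmentation quasi-isomorphisms $\text{Tot}(C) \to F_\bullet \otimes_R N$ and $\text{Tot}(C) \to M \otimes_R G_\bullet$) are \emph{graded} isomorphisms, not merely $R$-linear ones. Since every differential and every augmentation in sight is graded, the induced maps on homology respect the $\Gamma$-grading; and by Lemma \ref{lem: graded submodules and quotients of BDF are BDF} the kernels, images, and homology modules appearing at each stage are again BDF $\Gamma$-graded modules, so passing to the limit stays inside $R\text{-}\bMod_\Gamma^\text{BDF}$. One must also confirm that the total complex is well-defined, i.e. that the direct sums $\bigoplus_{p+q=n} F_p \otimes_R G_q$ make sense as BDF modules; here I would invoke Note \ref{not: tensor product of lots of BDF modules is BDF} together with the $\Gamma$-finiteness that follows from the BDF hypotheses to ensure each graded piece of the total complex is a finite direct sum, so no convergence issues arise. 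The underlying ungraded isomorphism is the standard one from \cite{weibel1994introduction} Section 2.7; the only genuinely new content is the bookkeeping that every map is degree-preserving, which follows formally from Propositions \ref{prop: tensor product grading} and \ref{prop: tensor product of graded maps is graded}.
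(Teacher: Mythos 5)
Your proposal is correct, and it starts from the same object as the paper: the first-quadrant double complex $F_p \otimes_R G_q$ built from two BDF free resolutions, whose terms are BDF by Proposition \ref{prop: tensor product grading} and whose differentials are graded by Proposition \ref{prop: tensor product of graded maps is graded}, with exactness of the non-augmented rows and columns coming from flatness of free modules. Where you genuinely diverge is in how the isomorphism is extracted. The paper never forms the total complex: it runs Weibel's argument at the level of elements, constructing for a homogeneous class $\overline{x} \in \Tor^R_i(M,N)_u$ an explicit zigzag of elements through the double complex terminating in $M \otimes_R G_i$, and the graded content of its proof is that every element of the zigzag can be \emph{chosen} homogeneous of degree $u$; this rests on the final assertion of Proposition \ref{prop: ker and im are graded}, that a homogeneous element of the image of a graded map has a homogeneous preimage of the same degree. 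You instead totalize and invoke the two spectral sequences, or equivalently the augmentation quasi-isomorphisms $\mathrm{Tot}(C) \to F_\bullet \otimes_R N$ and $\mathrm{Tot}(C) \to M \otimes_R G_\bullet$, and you get gradedness formally: a graded chain map induces a graded map on homology, and a bijective graded map is automatically a graded isomorphism since its set-theoretic inverse is again graded. Both routes are sound. Yours buys cleanliness --- no well-definedness check for the zigzag and no homogeneous lifting --- at the cost of importing the acyclic-assembly/spectral-sequence machinery, which the paper never sets up in the graded setting; note that the augmentation quasi-isomorphism formulation is the one to prefer here, since being a quasi-isomorphism can be checked on underlying ungraded modules, whereas identifying $H_i(\mathrm{Tot}(C))$ with $E^2_{i,0}$ via edge maps and then tracking degrees would require redoing the spectral sequence comparison in the graded category. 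Two small remarks: minimality of the resolutions plays no role in either argument (Proposition \ref{prop: existence of minimal gamma graded free resolutions} is only needed for existence), and your worry about the total complex is a non-issue, since in total degree $n$ the sum $\bigoplus_{p+q=n} F_p \otimes_R G_q$ has only the finitely many summands with $p,q \geq 0$ and $p+q=n$, hence is a finite direct sum of BDF modules and lies in $R\text{-}\bMod_\Gamma^{\text{BDF}}$ with no appeal to $\Gamma$-finiteness.
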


\begin{proof}
We just follow the proof for the ungraded case in \cite{weibel1994introduction} and show that the isomorphism constructed there is $\Gamma$-graded. Let $\ldots F_1 \overset{\varphi_1}{\to} F_0 \overset{\varphi_0}{\to} M \to 0$ and $\ldots G_1 \overset{\psi_1}{\to} G_0 \overset{\psi_0}{\to} N \to 0$ be BDF $\Gamma$-graded free resolutions of $M$ and $N$ respectively. We form the double complex in Figure \ref{fig: tenor product of complexes}. 

\begin{figure}[h]
\begin{center}
\begin{tikzcd}
  & \vdots  \arrow[d] && \vdots  \arrow[d] && \vdots \arrow[d]  && \vdots \arrow[d] & 
\\ \ldots \arrow[r]  & F_2 \otimes_R G_2 \arrow[rr, "\varphi_2 \otimes \id_{G_2}"] \arrow[dd, "\id_{F_2} \otimes \psi_2"] && F_1 \otimes_R G_2 \arrow[rr, "\varphi_1 \otimes \id_{G_2}"] \arrow[dd, "\id_{F_1} \otimes \psi_2"] && F_0 \otimes_R G_2 \arrow[rr, "\varphi_0 \otimes \id_{G_2}"] \arrow[dd, "\id_{F_0} \otimes \psi_2"] && M \otimes_R G_2 \arrow[r] \arrow[dd, "\id_{M} \otimes \psi_2"] & 0
\\ & && && && &
\\ \ldots \arrow[r]  & F_2 \otimes_R G_1 \arrow[rr, "\varphi_2 \otimes \id_{G_1}"] \arrow[dd, "\id_{F_2} \otimes \psi_1"] && F_1 \otimes_R G_1 \arrow[rr, "\varphi_1 \otimes \id_{G_1}"] \arrow[dd, "\id_{F_1} \otimes \psi_1"] && F_0 \otimes_R G_1 \arrow[rr, "\varphi_0 \otimes \id_{G_1}"] \arrow[dd, "\id_{F_0} \otimes \psi_1"] && M \otimes_R G_1 \arrow[r] \arrow[dd, "\id_{M} \otimes \psi_1"] & 0
\\ & && && && &
\\ \ldots \arrow[r]  & F_2 \otimes_R G_0 \arrow[rr, "\varphi_2 \otimes \id_{G_0}"] \arrow[dd, "\id_{F_2} \otimes \psi_0"] && F_1 \otimes_R G_0 \arrow[rr, "\varphi_1 \otimes \id_{G_0}"] \arrow[dd, "\id_{F_1} \otimes \psi_0"] && F_0 \otimes_R G_0 \arrow[rr, "\varphi_0 \otimes \id_{G_0}"] \arrow[dd, "\id_{F_0} \otimes \psi_0"] && M \otimes_R G_0 \arrow[r] \arrow[dd] & 0
\\ & && && && &
\\ \ldots \arrow[r] & F_2 \otimes_R N \arrow[rr, "\varphi_2 \otimes \id_N"] \arrow[d] && F_1 \otimes_R N \arrow[rr, "\varphi_1 \otimes \id_N"] \arrow[d] && F_0 \otimes_R N \arrow[rr ] \arrow[d] && 0 \arrow[r] \arrow[d] & 0
\\ & 0 && 0 && 0 && 0 &
\end{tikzcd}
\end{center}
\caption{The Tensor Product of Two Free Resolutions}
\label{fig: tenor product of complexes}
\end{figure}

The homology of the bottom row is $\Tor^R_i(M, N)$ and that of the right-most column is $\Tor^R_i(N, M)$. Furthermore all rows except the bottom one are exact because $G_n$ is free, and hence flat, for all $n \in \nn$ and similarly all columns except the right-most are also exact. The proof in the ungraded case proceeds as follows. Given $\overline{x} \in \Tor^R_i(M, N)$ using exactness of the rows and columns as well as commutativity of the diagram, one constructs a ``zigzag'' sequence of elements $x = x_{i , -1} \in F_i \otimes N$, $x_{i , 0} \in F_i \otimes_R G_0$, $x_{i - 1, 0} \in F_{i - 1} \otimes_R G_0$, $x_{i - 1, 1} \in F_{i -1} \otimes_R G_1$, $x_{i - 2, 1} \in F_{i - 2} \otimes_R G_1$, \ldots $x_{0 , i} \in F_0 \otimes_R G_i$, $x_{-1 , i} \in M \otimes_R G_i$ such that if $j + \ell = i$, then $[\id_{F_j} \otimes \psi_\ell ](x_{j , \ell}) = x_{j , \ell - 1}$ and $[\varphi_j \otimes \id_{G_\ell} ](x_{j , \ell}) = x_{j - 1 , \ell}$. In other words the elements of the zigzag sequence map to each other under the maps in the zigzag. Then the isomorphism $\Tor^R_i(M, N) \to \Tor^R_i(N, M)$ is defined by $\overline{x} \mapsto \overline{x_{-1 , i}}$, and one checks that this is well-defined. 

In the $\Gamma$-graded case, we take $\overline{x} \in \Tor^R_i(M, N)_u = (\ker \varphi_i \otimes \id_N)_u / (\im \varphi_{i + 1} \otimes \id_N)_u$ so that $x \in (\ker \varphi_i \otimes \id_N)_u$. Then because all maps in this complex are graded (Proposition \ref{prop: tensor product of graded maps is graded}), Proposition \ref{prop: ker and im are graded} allows us to choose all terns of the zigzag sequence to be homogeneous of degree $u$. In particular $x_{-1 , i}$ is graded of degree $u$, hence $\overline{ x_{-1 , i} } \in \Tor^R_i(N, M)_u$ so indeed our isomorphism is graded. 
\end{proof}


\subsection{Existence of $\Gamma$-finite free resolutions}
 
\begin{lemma}\label{lem: tensor product of complexes is gamma finite} 
Suppose that $\ldots \to M_1 \to M_0 \to 0$ and $\ldots \to N_1 \to N_0 \to 0$ are $\Gamma$-graded sequences of BDF $\Gamma$-graded $R$-modules. Then the double complex $M_\bullet \otimes_R N_\bullet$ pictured in Figure \ref{fig: tenor product of sequences} is $\Gamma$-finite, meaning that (DGF1) the union of the $\Gamma$-supports is finitely bounded below and downward finite, and (DGF2) for any $g \in \Gamma$ there exists $n \in \nn$ such that $(M_i \otimes_R N_j)_g = 0$ for either $i \geq n$ or $j \geq n$. 
\end{lemma}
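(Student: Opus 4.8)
The plan is to verify the two conditions (DGF1) and (DGF2) separately, reducing each to facts already proved about the total supports $V := \bigcup_{i} \Supp_\Gamma(M_i)$ and $W := \bigcup_{j} \Supp_\Gamma(N_j)$, which by the hypothesis (GF1) applied to each sequence are both finitely bounded below and downward finite. The single workhorse result will be the finite sum property of Lemma \ref{lem: finitely bounded below and downward finite sets have the finite sum property}.

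For (DGF1), I would first recall from the description of the grading in Proposition \ref{prop: tensor product grading} that each homogeneous piece $(M_i \otimes_R N_j)_u$ is $k$-spanned by simple tensors $x \otimes y$ with $\deg(x) + \deg(y) = u$, so that $\Supp_\Gamma(M_i \otimes_R N_j) \subseteq \Supp_\Gamma(M_i) + \Supp_\Gamma(N_j) \subseteq V + W$. Taking the union over all $i$ and $j$ gives $\bigcup_{i,j} \Supp_\Gamma(M_i \otimes_R N_j) \subseteq V + W$. By Lemma \ref{lem: sum of fbb df is too} the set $V + W$ is finitely bounded below and downward finite, and by Lemma \ref{lem: closure of fbb and df sets under taking subsets} so is every subset of it; this is exactly (DGF1).

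For (DGF2), I would fix $g \in \Gamma$ and exploit the same support inclusion in the contrapositive direction: if $(M_i \otimes_R N_j)_g \neq 0$, then there must be a decomposition $g = a + b$ with $a \in \Supp_\Gamma(M_i) \subseteq V$ and $b \in \Supp_\Gamma(N_j) \subseteq W$. I would therefore introduce the sets of admissible coordinates $A := \{ a \in V \mid \exists\, b \in W,\ a + b = g \}$ and $B := \{ b \in W \mid \exists\, a \in V,\ a + b = g \}$. Since $a + b = g \leq_Q g$, both $A$ and $B$ are contained in the projections $U_1, U_2$ of the set $\{(a,b) \mid a\in V,\ b\in W,\ a+b \leq_Q g\}$, which are finite by Lemma \ref{lem: finitely bounded below and downward finite sets have the finite sum property}; hence $A$ and $B$ are finite. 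Now I would apply the pointwise hypothesis (GF2): for each $a \in A$ there is $n_a$ with $(M_i)_a = 0$ for $i \geq n_a$, and since $A$ is finite I may set $N_M := \max_{a \in A} n_a$. For $i \geq N_M$, every potential contributing factor $(M_i)_a$ with $a \in A$ vanishes, so no valid decomposition $g = a+b$ survives and $(M_i \otimes_R N_j)_g = 0$ for all $j$. Defining $N_N$ symmetrically from $B$ and $N_\bullet$, the choice $n := \max(N_M, N_N)$ forces $(M_i \otimes_R N_j)_g = 0$ whenever $i \geq n$ or $j \geq n$, which is (DGF2).

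The main obstacle I anticipate is not any single computation but the bookkeeping in (DGF2): hypothesis (GF2) only guarantees eventual vanishing separately for each fixed degree $a$ (resp.\ $b$), so I must make this vanishing \emph{uniform} over all degrees that can actually contribute to $(M_i \otimes_R N_j)_g$. Establishing that there are only finitely many such degrees is precisely the role of the finiteness of $A$ and $B$, and hence of the finite sum property; once that uniformity is secured the argument closes immediately, with the remainder being the routine support inclusion for tensor products.
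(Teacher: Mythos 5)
Your proposal is correct and follows essentially the same route as the paper's proof: the same support inclusion $\Supp_\Gamma(M_i \otimes_R N_j) \subseteq \Supp_\Gamma(M_i) + \Supp_\Gamma(N_j)$ combined with Lemmas \ref{lem: sum of fbb df is too} and \ref{lem: closure of fbb and df sets under taking subsets} for (DGF1), and the same finite sets of admissible degrees (your $A$, $B$ are the paper's $W_1$, $W_2$, finite by Lemma \ref{lem: finitely bounded below and downward finite sets have the finite sum property}) for (DGF2). Your explicit maximization over the finite sets to get a uniform $n$ is the same step the paper performs implicitly, so there is nothing to add.
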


\begin{proof}
Let $U = \bigcup_{i = 1}^\infty \Supp_\Gamma(M_i)$ and $V = \bigcup_{j = 1}^\infty \Supp_\Gamma(N_j)$. Recall from Proposition \ref{prop: tensor product grading} that $(M_i \otimes_R N_j)_g = \Span_k( x \otimes y \mid \deg(x) + \deg(y) = g )$. Therefore $\Supp_\Gamma(M_i \otimes_R N_j) \subseteq \Supp_\Gamma(M_i) + \Supp_\Gamma(N_j) \subseteq U + V$. Since this is true for all $i , j$, it follows that $\bigcup_{i, j = 1}^\infty \left[ \Supp_\Gamma(M_i \otimes_R N_j) \right] \subseteq U + V$. But by definition of a $\Gamma$-finite sequence, $U$ and $V$ are finitely bounded below and downward finite, so by Lemma \ref{lem: sum of fbb df is too}, $U + V$ is as well. Therefore $\bigcup_{i, j = 1}^\infty \left[ \Supp_\Gamma(M_i \otimes_R N_j) \right]$ is contained in a finitely bounded below and downward finite subset of $\Gamma$, hence by Lemma \ref{lem: closure of fbb and df sets under taking subsets}, the double complex satisfies (DGF1). 

Given $g \in \Gamma$, because $U$ and $V$ are finitely bounded below and downward finite by hypothesis, by Lemma \ref{lem: finitely bounded below and downward finite sets have the finite sum property} the sets $W_1 = \{ u \in U \mid \exists v \in V , u + v = g\}$ and $W_2 = \{ v \in V \mid \exists u \in U , u + v = g\}$ are finite. Hence because $M_\bullet$ and $N_\bullet$ are $\Gamma$-finite, there exists some $n \in \nn$ such that if $i , j\geq n$ then $(M_i)_u = (N_j)_v = 0$ for all $u \in W_1$ and $v \in W_2$. But we have that $(M_i \otimes_R N_j)_g = \Span_k( x \otimes y \mid x \in (M_i)_u, y \in (M_j)_v, u + v = g )$ so supposing that either $i \geq n$ or $j \neq n$, if either $u \notin U$ or $v \notin V$ then $x = 0$ or $y = 0$ so $x \otimes y = 0$ and if $u \in U$ and $v \in V$ then $u + v = g$ so $u \in W_1$ and $v \in W_2$ hence either $x = 0$ or $y = 0$ so $x \otimes y = 0$ anyway. Hence if either $i \geq n$ or $j \geq n$ we have that $(M_i \otimes_R N_j)_g = 0$, so the double complex satisfies (DGF2). 
\end{proof}

\begin{figure}[h]
\begin{center}
\begin{tikzcd}
  & \vdots  \arrow[d] & \vdots  \arrow[d] & \vdots \arrow[d] 
\\ \ldots \arrow[r]  & M_2 \otimes_R N_2 \arrow[r] \arrow[d] & M_1 \otimes_R N_2 \arrow[r] \arrow[d] & M_0 \otimes_R N_2 \arrow[r] \arrow[d] & 0
\\ \ldots \arrow[r]  & M_2 \otimes_R N_1 \arrow[r] \arrow[d] & M_1 \otimes_R N_1 \arrow[r] \arrow[d] & M_0 \otimes_R N_1 \arrow[r] \arrow[d] & 0
\\ \ldots \arrow[r]  & M_2 \otimes_R N_0 \arrow[r] \arrow[d] & M_1 \otimes_R N_0 \arrow[r] \arrow[d] & M_0 \otimes_R N_0 \arrow[r] \arrow[d] & 0
\\ & 0 & 0 & 0 
\end{tikzcd}
\end{center}
\caption{The Tensor Product of two Sequences}
\label{fig: tenor product of sequences}
\end{figure}

\begin{corollary}\label{cor: tensoring a gamma finite resolution gives another gamma finite resolution} 
Let $\ldots \to M_1 \to M_0 \to 0$ be a $\Gamma$-finite chain complex consisting of BDF $\Gamma$-graded $R$-modules and let $N$ also be a BDF $\Gamma$-graded $R$-module. Then $\ldots \to M_1 \otimes_R N \to M_0 \otimes_R N \to 0$ is also $\Gamma$-finite. 
\end{corollary}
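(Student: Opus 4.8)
The plan is to deduce this directly from Lemma \ref{lem: tensor product of complexes is gamma finite} by regarding the single module $N$ as a chain complex concentrated in homological degree zero. First I would set $N_0 = N$ and $N_j = 0$ for all $j \geq 1$, obtaining the sequence $\ldots \to 0 \to 0 \to N \to 0$. This is trivially a $\Gamma$-finite sequence of BDF modules: its total support is just $\Supp_\Gamma(N)$, which is finitely bounded below and downward finite because $N$ is BDF, so (GF1) holds, and for each $g \in \Gamma$ the sequence $(N_j)_g$ is zero already for $j \geq 1$, so (GF2) holds as well.

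Next I would apply Lemma \ref{lem: tensor product of complexes is gamma finite} to the two $\Gamma$-finite sequences $M_\bullet$ and $N_\bullet$. Because $N_j = 0$ for $j \geq 1$, every entry $M_i \otimes_R N_j$ of the resulting double complex vanishes off the bottom row $j = 0$, so the double complex collapses to the single row $\ldots \to M_1 \otimes_R N \to M_0 \otimes_R N \to 0$, which is precisely the complex whose $\Gamma$-finiteness we want to establish.

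It then remains to translate the double-complex conditions (DGF1) and (DGF2) furnished by the lemma into the single-complex conditions (GF1) and (GF2). For (GF1): since $M_i \otimes_R N_j = 0$ whenever $j \geq 1$, the union $\bigcup_{i,j} \Supp_\Gamma(M_i \otimes_R N_j)$ appearing in (DGF1) equals $\bigcup_i \Supp_\Gamma(M_i \otimes_R N)$, so (DGF1) gives (GF1) verbatim. For (GF2): fix $g \in \Gamma$ and take the $n \in \nn$ provided by (DGF2); specializing to the fixed index $j = 0 < n$ forces $(M_i \otimes_R N)_g = 0$ for all $i \geq n$, which is exactly the assertion that the $g$-graded piece of our surviving row is eventually zero.

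I do not anticipate any genuine obstacle here, since the entire content is carried by Lemma \ref{lem: tensor product of complexes is gamma finite} and this corollary is simply the standard device of viewing a module as a one-term complex. The only point meriting a moment's care is the direction of the implication (DGF2) $\Rightarrow$ (GF2): one must observe that (DGF2) guarantees vanishing whenever \emph{either} index exceeds $n$, and it is precisely the clause ``$i \geq n$'' (with $j = 0$ held fixed) that delivers the eventual vanishing along the one row that survives.
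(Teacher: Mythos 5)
Your proposal is correct and is exactly the paper's argument: the paper's proof of this corollary consists of the single sentence that it follows by setting $N_0 = N$ and $N_i = 0$ for $i \geq 1$ in Lemma \ref{lem: tensor product of complexes is gamma finite}. You have merely spelled out the routine verification (that the one-term complex is $\Gamma$-finite and that (DGF1), (DGF2) specialize to (GF1), (GF2) along the surviving row) which the paper leaves implicit.
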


\begin{proof}
This follows by setting $N_0 = N$ and $N_i = 0$ for $i \geq 1$ in Lemma \ref{lem: tensor product of complexes is gamma finite}. 
\end{proof}

\begin{lemma}\label{lem: hilbert series of translated ring}
If $M$ is a BDF $\Gamma$-graded $R$-module and $g \in \Gamma$, then $\mathcal{H}_\Gamma( M(-g)) = \textbf{t}^g \mathcal{H}_\Gamma(M)$. 
\end{lemma}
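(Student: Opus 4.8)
The plan is to prove the identity by comparing coefficients in $\zz[[Q]]\{\Gamma\}$. First I would check that the right-hand side even makes sense as a product in this ring. The series $\textbf{t}^g$ has support equal to the singleton $\{ g \}$, which is finite and therefore both downward finite and finitely bounded below; hence $\textbf{t}^g \in \zz[[Q]]\{\Gamma\}$. Since $M$ is BDF, $\mathcal{H}_\Gamma(M) \in \zz[[Q]]\{\Gamma\}$ as well, so the product $\textbf{t}^g \mathcal{H}_\Gamma(M)$ is defined by Equation \ref{eq: multiplication of power series with finitely bounded downward finite support} and, by Lemma \ref{lem: product of BDF hilbert series is BDF}, again lies in $\zz[[Q]]\{\Gamma\}$.

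Next I would unwind the definition of the shifted grading. By definition of the shift, $M(-g)_h = M_{h - g}$ for every $h \in \Gamma$, so $\dim_k(M(-g)_h) = \dim_k(M_{h - g})$. Consequently the $h$-coefficient of $\mathcal{H}_\Gamma(M(-g))$ is precisely $\dim_k(M_{h - g})$. I would then compute the $h$-coefficient of the other side. Writing $\textbf{t}^g$ with coefficients $(\textbf{t}^g)_a = 1$ when $a = g$ and $(\textbf{t}^g)_a = 0$ otherwise, the multiplication formula gives that the $h$-coefficient of $\textbf{t}^g \mathcal{H}_\Gamma(M)$ equals $\sum_{a + b = h} (\textbf{t}^g)_a \dim_k(M_b)$, in which the only surviving summand is the one with $a = g$ and $b = h - g$, yielding $\dim_k(M_{h - g})$.

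Since the two series have the same coefficient $\dim_k(M_{h - g})$ in every degree $h \in \Gamma$, they are equal, which is the claim. I expect no real obstacle here: this is a direct coefficient computation. The only points requiring any care are confirming at the outset that $\textbf{t}^g$ is an admissible element so that the product is genuinely defined, and correctly identifying the single nonzero term $a = g$, $b = h - g$ in the convolution sum.
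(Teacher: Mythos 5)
Your proof is correct and is essentially the paper's own argument: both amount to the observation that $M(-g)_h = M_{h-g}$ together with the definition of the product in $\zz[[Q]]\{\Gamma\}$, differing only in that you compare coefficients on both sides while the paper performs a change of variables $h \mapsto h+g$ inside the sum. Your extra check that $\textbf{t}^g$ lies in $\zz[[Q]]\{\Gamma\}$ so the product is defined is a harmless (and slightly more careful) addition.
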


\begin{proof}
We compute
\begin{align*}
\mathcal{H}_\Gamma(M(-g)) &= \sum_{h \in \Gamma} \dim( M(-g)_h ) \textbf{t}^h = \sum_{h \in \Gamma} \dim( M_{h - g} ) \textbf{t}^h \overset{(1)}{=} \sum_{h \in \Gamma} \dim( M_{h} ) \textbf{t}^{h+ g} \\&\overset{(2)}{=} \textbf{t}^g \sum_{h \in \Gamma} \dim( M_{h} ) \textbf{t}^{h}  = \textbf{t}^g \mathcal{H}_\Gamma( M). 
\end{align*}
Here (1) holds by the change of variables $h \mapsto h + g$ and (2) follows from the definition of the product in $\zz[[Q]]\{ \Gamma \}$.
\end{proof}

\begin{lemma}\label{lem: hilbert series of free tensor k}
If $(g_i \mid i \in I)$ is a BDF family of elements of $\Gamma$ and $F = \bigoplus_{i \in I} R(-g_i)$, then $F \otimes_R k \cong \bigoplus_{i \in I} k(-g_i)$. As a consequence, $\mathcal{H}_\Gamma(F \otimes_R k) = \sum_{g \in \Gamma} \# \{i \in I \mid g_i = g\} \textbf{t}^g$. 
\end{lemma}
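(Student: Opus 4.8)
The plan is to convert the tensor product into a quotient and then exploit that both quotients and grading shifts commute with direct sums. First I would apply Lemma \ref{lem: iso between tensoring with k and modding out by M plus} to $M = F$, obtaining a graded isomorphism $F \otimes_R k \cong F / R_+ F$. The task then reduces to identifying $F/R_+ F$ with $\bigoplus_{i \in I} k(-g_i)$.

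For that, I would observe that the $R$-action on $F = \bigoplus_{i \in I} R(-g_i)$ is componentwise, so $R_+ F = \bigoplus_{i \in I} R_+ R(-g_i)$, and since quotients commute with direct sums,
\[ F / R_+ F \cong \bigoplus_{i \in I} R(-g_i) / R_+ R(-g_i). \]
Next I would note that, as a graded submodule of $R(-g_i)$, the degree-$h$ piece of $R_+ R(-g_i)$ is $R_+ \cap R_{h - g_i} = (R_+)_{h-g_i}$, which is exactly the degree-$h$ piece of $R_+(-g_i)$; hence $R_+ R(-g_i) = R_+(-g_i)$ and therefore $R(-g_i)/R_+ R(-g_i) = (R/R_+)(-g_i)$, since shifting a grading commutes with taking quotients. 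Finally, Lemma \ref{lem: unique graded maximal ideal} gives $R/R_+ \cong k$ as graded modules, so $(R/R_+)(-g_i) \cong k(-g_i)$, and assembling these summands yields $F \otimes_R k \cong \bigoplus_{i \in I} k(-g_i)$.

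For the Hilbert-series consequence, I would first note that $F \otimes_R k$ is BDF, being the tensor product of the BDF modules $F$ (Proposition \ref{prop: certain direct sums of BDF modules are BDF}) and $k$, by Proposition \ref{prop: tensor product grading}; thus its Hilbert series lies in $\zz[[Q]]\{\Gamma\}$ and is the sum of the contributions of the summands. Using $\mathcal{H}_\Gamma(k) = \textbf{t}^0 = 1$ together with Lemma \ref{lem: hilbert series of translated ring} applied to $M = k$, each summand contributes $\mathcal{H}_\Gamma(k(-g_i)) = \textbf{t}^{g_i}$, so $\mathcal{H}_\Gamma(F \otimes_R k) = \sum_{i \in I} \textbf{t}^{g_i}$. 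Regrouping this sum by the value of the exponent, the coefficient of $\textbf{t}^g$ becomes $\#\{ i \in I \mid g_i = g \}$, giving the stated formula.

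The step I expect to require the most care is the bookkeeping that makes the infinite regrouping legitimate and the Hilbert series well-defined: one must use that each fiber $\{ i \in I \mid g_i = g \}$ is finite (part of the definition of a BDF family) and that the set $\{ g_i \mid i \in I \}$ is downward finite and finitely bounded below, so that every graded piece of $\bigoplus_{i \in I} k(-g_i)$ is finite-dimensional and the series $\sum_{i \in I} \textbf{t}^{g_i}$ indeed determines an element of $\zz[[Q]]\{\Gamma\}$ with the claimed coefficients.
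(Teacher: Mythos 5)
Your proof is correct, but it reaches the isomorphism $F \otimes_R k \cong \bigoplus_{i \in I} k(-g_i)$ by a genuinely different route than the paper. The paper first distributes $- \otimes_R k$ across the direct sum, writing $F \otimes_R k \cong \bigoplus_{i \in I} \bigl( R(-g_i) \otimes_R k \bigr)$, and then identifies each summand with $k(-g_i)$ by invoking the general graded identity $R(-g) \otimes_R M \cong M(-g)$, which it cites from the literature (Hazrat). You instead convert the tensor product into a quotient at the outset via Lemma \ref{lem: iso between tensoring with k and modding out by M plus} (so $F \otimes_R k \cong F/R_+F$), distribute the quotient across the direct sum, and identify each summand through the explicit computations $R_+F = \bigoplus_{i \in I} R_+R(-g_i)$ and $R_+R(-g_i) = R_+(-g_i)$, finishing with $R/R_+ \cong k$ from Lemma \ref{lem: unique graded maximal ideal}. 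Your argument is entirely self-contained in the paper's own machinery and makes the graded structure of each step explicit, at the cost of the extra bookkeeping with $R_+$; the paper's argument is a one-liner but leans on an external reference for the shift-tensor identity. The Hilbert-series half of your argument coincides with the paper's: both compute $\mathcal{H}_\Gamma(k(-g_i)) = \textbf{t}^{g_i}$ from Lemma \ref{lem: hilbert series of translated ring} and then count dimensions degree by degree, with the finiteness of the fibers $\{ i \in I \mid g_i = g \}$ (part of the BDF-family hypothesis) guaranteeing that each coefficient is finite — a point you rightly flag as the place requiring care, since the infinite sum $\sum_{i \in I} \textbf{t}^{g_i}$ only makes sense coefficientwise at this stage of the paper (its convergence in the $Z$-adic topology is only established later, in Lemma \ref{lem: sum of linear terms}).
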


\begin{proof}
We have that $F \otimes_R k \cong  \bigoplus_{i \in I} \left( R(-g_i) \otimes_R k \right) \cong \bigoplus_{i \in I} k(-g_i)$, where the first isomorphism follows because tensor product commutes with direct sum, and the second follows because for any $\Gamma$-graded $R$-module $M$ and any $g \in \Gamma$ we have $R(-g) \otimes_R M \cong M(-g)$ (see \cite{hazrat2016graded} Section 1.2.6 Equation 1.22). Since $\mathcal{H}_\Gamma(k) = 1$, it follows from Lemma \ref{lem: hilbert series of translated ring} that $\mathcal{H}_\Gamma(k(-g_i)) = \textbf{t}^{g_i}$. Therefore counting dimensions at each $g \in \Gamma$ yields $\mathcal{H}_\Gamma(F \otimes_R k) = \sum_{g \in \Gamma} \# \{i \in I \mid g_i = g\} \textbf{t}^{g}$.
\end{proof}

\begin{lemma}\label{lem: when is th gth part of a free module zero}
Let $(g_i \mid i \in I)$ be a BDF family of elements of $\Gamma$ and let $F = \bigoplus_{i \in I} R(-g_i)$ the associated BDF $\Gamma$-graded free module. Then $F_g = \bigoplus_{i \in I, g_i \leq_Q g } R(-g_i)_g$. In particular $F_g = 0$ if and only if $\{ i \in I \mid g_i \leq_Q g \}$ is empty. 
\end{lemma}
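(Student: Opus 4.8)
The plan is to reduce everything to the definition of the grading on a direct sum together with the definition of the shift functor. First I would observe that the degree-$g$ component of a direct sum of $\Gamma$-graded modules is the direct sum of the degree-$g$ components of the summands, so that $F_g = \bigoplus_{i \in I} R(-g_i)_g$ with no restriction on $i$. This is simply the fact that the grading on $\bigoplus_{i \in I} R(-g_i)$ is defined componentwise, exactly as it was in Proposition \ref{prop: certain direct sums of BDF modules are BDF}.

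Next I would unwind the shift: by the definition $M(h)_g = M_{h+g}$, we have $R(-g_i)_g = R_{g - g_i}$. The crux is then to decide when this summand vanishes. Since $R$ is a domain, Proposition \ref{prop: domain implies image is monoid} gives $\Supp_\Gamma(R) = Q$, so $R_{g - g_i} \neq 0$ if and only if $g - g_i \in Q$. By the very definition of the partial order $\leq_Q$ (Proposition \ref{prop: partial order on pointed cancelative commutative monoids}), the condition $g - g_i \in Q$ is equivalent to $g_i \leq_Q g$. Hence the summand $R(-g_i)_g$ is nonzero precisely when $g_i \leq_Q g$, and the indices $i$ with $g_i \not\leq_Q g$ contribute only zero summands, which may be deleted from the direct sum. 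This yields $F_g = \bigoplus_{i \in I,\, g_i \leq_Q g} R(-g_i)_g$, as desired.

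For the final ``in particular'' statement I would argue both directions. If $\{ i \in I \mid g_i \leq_Q g \}$ is empty, then every summand in the displayed expression is absent and $F_g = 0$. Conversely, if there is some $i$ with $g_i \leq_Q g$, then $g - g_i \in Q = \Supp_\Gamma(R)$, so by the definition of the support $R_{g - g_i} = R(-g_i)_g \neq 0$, and this nonzero space sits as a direct summand of $F_g$, forcing $F_g \neq 0$.

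I do not expect a genuine obstacle here, as the whole argument is definitional once the identification $\Supp_\Gamma(R) = Q$ is available. The only point requiring slight care is to invoke the domain hypothesis (via Proposition \ref{prop: domain implies image is monoid}) in order to guarantee that the nonvanishing of $R_{g - g_i}$ is governed exactly by membership in $Q$, rather than in some larger submonoid generated by the support; without the domain assumption the support need not be closed under addition, and the clean equivalence $R_{g - g_i} \neq 0 \iff g_i \leq_Q g$ could break down.
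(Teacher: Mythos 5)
Your proposal is correct and follows essentially the same route as the paper's proof: decompose $F_g = \bigoplus_{i \in I} R_{g - g_i}$, note that $R_{g-g_i} \neq 0$ exactly when $g_i \leq_Q g$, and discard the zero summands. The only cosmetic difference is that you explicitly cite Proposition \ref{prop: domain implies image is monoid} for the identification $\Supp_\Gamma(R) = Q$, which the paper uses implicitly via its standing assumption that $R$ is a domain; making that dependence visible is a reasonable touch, not a divergence.
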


\begin{proof}
By definition $F_g = \bigoplus_{i \in I} R(-g_i)_g = \bigoplus_{i \in I} R_{g - g_i}$ and $R_{g - g_i} \neq 0$ if and only if $0 \leq_Q g - g_i$ which is equivalent to $g_i \leq_Q g$. Therefore the direct sum can be taken over the set $\{ i \in I \mid g_i \leq_Q g \}$, so we obtain the first desired result. If $\{ i \in I \mid g_i \leq_Q g \}$ is empty then obviously $F_g = 0$ since it is equal to a direct sum over the empty set. Conversely, if there exists $i \in I$ with $g_i \leq_Q g$ then $R(-g_i)_g \neq 0$ so $F_g = \bigoplus_{i \in I, g_i \leq_Q g } R(-g_i)_g \neq 0$.  
\end{proof}

\begin{lemma}\label{lem: free zero if and only if tensor with k is zero}
Let $F$ be a free BDF $\Gamma$-graded $R$-module and let $g \in \Gamma$. Then $F_g = 0$ if and only if $[F \otimes_R k]_{\leq g} = 0$ (where we define $M_{\leq g} := \bigoplus_{h \leq_Q g} M_h$ for any $\Gamma$-graded $R$-module $M$). 
\end{lemma}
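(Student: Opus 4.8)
The plan is to reduce both sides of the biconditional to a single combinatorial condition and then observe that each side is equivalent to it. Since $F$ is a free BDF $\Gamma$-graded $R$-module, the corollary following Theorem \ref{thm: projective implies free} lets me write $F \cong \bigoplus_{i \in I} R(-g_i)$ for some BDF family $(g_i \mid i \in I)$. I will show that both ``$F_g = 0$'' and ``$[F \otimes_R k]_{\leq g} = 0$'' are equivalent to the statement that the set $T_g := \{ i \in I \mid g_i \leq_Q g \}$ is empty, after which the lemma follows immediately.

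For the first condition, Lemma \ref{lem: when is th gth part of a free module zero} gives directly that $F_g = 0$ if and only if $T_g = \emptyset$, so nothing further is needed there.

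For the second condition, I would invoke Lemma \ref{lem: hilbert series of free tensor k} to identify $F \otimes_R k \cong \bigoplus_{i \in I} k(-g_i)$. Since $k$ is concentrated in degree $0$ (Example \ref{exam: k is pdcf}), each summand $k(-g_i)$ is concentrated in degree $g_i$, so $(F \otimes_R k)_h \neq 0$ precisely when $h = g_i$ for some $i \in I$; equivalently $\Supp_\Gamma(F \otimes_R k) = \{ g_i \mid i \in I \}$. Unwinding the definition $[F \otimes_R k]_{\leq g} = \bigoplus_{h \leq_Q g} (F \otimes_R k)_h$, this direct sum vanishes exactly when no $h \leq_Q g$ lies in the support, i.e. when there is no $i \in I$ with $g_i \leq_Q g$, which is again the condition $T_g = \emptyset$.

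Combining the two equivalences finishes the argument. There is essentially no obstacle here; the only point requiring care is the support computation for $F \otimes_R k$, and in particular noticing that no finiteness of $I$ is needed, since nonvanishing of a single graded piece $(F \otimes_R k)_h$ requires only one index $i$ with $g_i = h$. One could alternatively read the support off the Hilbert series $\mathcal{H}_\Gamma(F \otimes_R k) = \sum_{g \in \Gamma} \#\{ i \in I \mid g_i = g \} \, \textbf{t}^g$ supplied by the same lemma.
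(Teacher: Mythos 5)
Your proof is correct and follows essentially the same route as the paper: both reduce the two conditions to emptiness of $\{ i \in I \mid g_i \leq_Q g \}$ using Lemma \ref{lem: when is th gth part of a free module zero} and Lemma \ref{lem: hilbert series of free tensor k}, the only cosmetic difference being that you read the second equivalence off the support of $\bigoplus_{i \in I} k(-g_i)$ while the paper phrases it in terms of the vanishing of Hilbert series coefficients. Your explicit appeal to the corollary of Theorem \ref{thm: projective implies free} to obtain the decomposition $F \cong \bigoplus_{i \in I} R(-g_i)$ is a slight gain in rigor over the paper, which takes that decomposition as implicit.
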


\begin{proof}
By Lemma \ref{lem: when is th gth part of a free module zero} we have that $F_g = 0$ if and only if $\{ i \in I \mid g_i \leq_Q g \} = \emptyset$. But by Lemma \ref{lem: hilbert series of free tensor k}, this is true if and only if $\mathcal{H}_\Gamma(F \otimes_R k)_h = 0$ for all $h \leq_Q g$ which is true if and only if $[F \otimes_R k]_{\leq g} = 0$. 
\end{proof}

\begin{theorem}[BDF Hilbert Syzygy Theorem]\label{thm: BDF modules have gamma finite free resolutions}
Let $R$ be an PDCF $\Gamma$-graded $k$-algebra. Then for any BDF $\Gamma$-graded $R$-module $M$ with the property that $R_+$ is generated by a homogenous regular sequence, there exists a BDF $\Gamma$-finite free resolution of $M$. 
\end{theorem}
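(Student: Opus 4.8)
The plan is to take a minimal BDF free resolution $F_\bullet \to M$ (which exists by Proposition \ref{prop: existence of minimal gamma graded free resolutions}) and prove directly that it is $\Gamma$-finite, transferring the $\Gamma$-finiteness of the Koszul resolution of $k$ across the symmetry of $\Tor$. First I would record that the given homogeneous regular sequence $r_1, r_2, \ldots$ generating $R_+$ is a BDF family: its degrees lie in $Q \setminus \{0\}$, hence are bounded below by $0$ and form a downward finite set by Lemma \ref{lem: closure of fbb and df sets under taking subsets}, and since the $r_i$ descend to a $k$-linearly independent family in the BDF module $R_+ / R_+^2$, only finitely many of them can share a given degree. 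Thus Proposition \ref{prop: Koszul complex of a regular sequence is a gamma finite free resolution} applies, and since $R / (r_1, r_2, \ldots) = R / R_+ \cong k$ by Lemma \ref{lem: unique graded maximal ideal}, the Koszul complex $K_\bullet$ is a $\Gamma$-finite BDF free resolution of $k$.

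The key computation is to evaluate $\Tor^R_i(M, k)$ in two ways. On one hand, because $F_\bullet$ is minimal, the induced maps on $F_\bullet \otimes_R k$ all vanish by Lemma \ref{lem: minimal free resolution equiv cond}, so $\Tor^R_i(M, k) \cong F_i \otimes_R k$ as $\Gamma$-graded modules. On the other hand, using the graded symmetry of $\Tor$ (Proposition \ref{prop: symmetry of graded tor}) and computing $\Tor^R_i(k, M)$ from the Koszul resolution, we obtain graded isomorphisms $F_i \otimes_R k \cong \Tor^R_i(M, k) \cong \Tor^R_i(k, M) = H_i(K_\bullet \otimes_R M)$. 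In particular $\Tor^R_i(k, M)$ is a graded subquotient of $K_i \otimes_R M$, and by Corollary \ref{cor: tensoring a gamma finite resolution gives another gamma finite resolution} the complex $K_\bullet \otimes_R M$ is $\Gamma$-finite.

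From here I would verify the two defining conditions of a $\Gamma$-finite sequence for $F_\bullet$. For (GF1), writing $F_i = \bigoplus_j R(-g_{ij})$ with $\{g_{ij}\}_j = \Supp_\Gamma(F_i \otimes_R k)$, minimality gives $\{g_{ij}\}_j = \Supp_\Gamma(\Tor^R_i(k, M)) \subseteq \bigcup_n \Supp_\Gamma(K_n \otimes_R M) =: W$, which is finitely bounded below and downward finite; hence so is $G := \bigcup_{i, j} \{g_{ij}\}$ by Lemma \ref{lem: closure of fbb and df sets under taking subsets}, and therefore $\bigcup_i \Supp_\Gamma(F_i) = Q + G$ is finitely bounded below and downward finite by Lemma \ref{lem: sum of fbb df is too}. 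For (GF2), fix $g \in \Gamma$; by Lemma \ref{lem: free zero if and only if tensor with k is zero} it suffices to show $(F_i \otimes_R k)_h = 0$ for all $h \leq_Q g$ once $i$ is large, and via the isomorphism above this amounts to the vanishing of $\Tor^R_i(k, M)_h$. Since $W$ is downward finite, the set $\{ h \in W \mid h \leq_Q g \}$ is finite; applying (GF2) for $K_\bullet \otimes_R M$ to each such $h$ and taking the maximum of the finitely many resulting bounds produces a single $N$ with $(K_i \otimes_R M)_h = 0$ for all $i \geq N$ and all $h \leq_Q g$, whence $\Tor^R_i(k, M)_h = 0$ and $(F_i)_g = 0$ for $i \geq N$.

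I expect the main obstacle to be the passage from the ``top-level'' information that minimality provides (vanishing of the maps on $F_\bullet \otimes_R k$, i.e.\ control only of the generators recorded by $F_i \otimes_R k$) to the genuinely $\Gamma$-finite conclusion about each individual graded piece $(F_i)_g$. These are not the same: the degree-$g$ part of a free module sees \emph{all} generators of degree $\leq_Q g$, so the bridge is Lemma \ref{lem: free zero if and only if tensor with k is zero}, and the argument for (GF2) works only because downward finiteness of $W$ collapses the down-set $\{ h \leq_Q g \}$ to finitely many relevant degrees, allowing the otherwise pointwise-in-$h$ vanishing to be made uniform by a finite maximum.
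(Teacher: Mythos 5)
Your proposal follows the same route as the paper's own proof: the Koszul complex $K_\bullet$ of the regular sequence as a $\Gamma$-finite free resolution of $k$, a minimal resolution $F_\bullet$ of $M$ whose differentials die after applying $-\otimes_R k$, the graded symmetry of $\Tor$ (Proposition \ref{prop: symmetry of graded tor}), and Lemma \ref{lem: free zero if and only if tensor with k is zero} to transfer the vanishing back to the graded pieces of $F_\bullet$. You are in fact more careful than the paper in two places: you verify (GF1) for $F_\bullet$ explicitly (the paper never addresses it), and you correctly treat the fact that Lemma \ref{lem: free zero if and only if tensor with k is zero} characterizes $(F_i)_g = 0$ by the vanishing of $(F_i \otimes_R k)_h$ over the whole down-set $h \leq_Q g$, not just at $g$ itself; your finite-maximum argument over $\{ h \in W \mid h \leq_Q g \}$ is exactly the right repair of the paper's degreewise shortcut.

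The one genuine soft spot is your very first step: you assert that the regular sequence generating $R_+$ descends to a $k$-linearly independent family in $R_+ / R_+^2$, and deduce from this that it is a BDF family. That assertion is the classical statement that a regular sequence generating the maximal ideal is a \emph{minimal} generating set, which in the Noetherian local case is proved via dimension theory; it is true here as well, but it is not available off the shelf in this non-Noetherian setting and you give no argument for it, so as written this is an unjustified step. (To be fair, the paper has the same issue in mirror image: Proposition \ref{prop: Koszul complex of a regular sequence is a gamma finite free resolution} takes BDF-ness of the sequence as a hypothesis, and the paper's proof of the theorem silently assumes it.) Fortunately the weaker fact you actually need, namely that only finitely many $r_i$ share any given degree $g$, has a short direct proof: if infinitely many $r_i$ had degree $g$, then since $\dim_k R_g < \infty$ one could find $j$ with $r_j \in \Span_k(r_i \mid i < j,\ \deg r_i = g) \subseteq (r_1, \ldots, r_{j-1})$; then $r_j$ annihilates $\bar{1} \neq 0$ in $R / (r_1, \ldots, r_{j-1})$, which is a nonzero ring because the ideal lies in $R_+$, contradicting regularity of the sequence. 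Substituting this argument for the $R_+/R_+^2$ claim makes your proof complete.
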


\begin{proof}
If $(r_i \mid i \in \nn)$ is a homogeneous regular sequence on $R$ which generates $R_+$ then by Proposition \ref{prop: Koszul complex of a regular sequence is a gamma finite free resolution}, the Koszul complex $K = K(r_1, r_2, \ldots)$ is a $\Gamma$-finite free resolution of $R / R_+ \cong k$. Tensoring the Koszul complex with $M$ we get a BDF $\Gamma$-graded complex $K_\bullet \otimes_R M$ which is $\Gamma$-finite by Corollary \ref{cor: tensoring a gamma finite resolution gives another gamma finite resolution}, and the $i$th homology of which is $\Tor^R_i(k , M)$. By Proposition \ref{prop: existence of minimal gamma graded free resolutions}, there exists a minimal BDF $\Gamma$-graded free resolution $F_\bullet$ of $M$. Tensoring with $k$ gives a complex $F_\bullet \otimes_R k$ the $i$th homology of which is $\Tor^R_i(M, k)$. But by Proposition \ref{prop: symmetry of graded tor}, we have that $\Tor^R_i(M, k)_g \cong \Tor^R_i(k, M)_g$ for all $g \in \Gamma$, hence the former is zero if and only if the latter is. Since $K_\bullet \otimes_R M$ is $\Gamma$-finite, for all $g \in \Gamma$ there exists $n \in \nn$ such that $[K_i \otimes_R M]_g = 0$, and hence $\Tor^R_i(k , M) = 0$, and hence $\Tor^R_i(M, k)_g = 0$ for all $i \geq n$. 

Because the resolution $F_\bullet$ of $M$ is minimal, the differentials of the complex $F_\bullet \otimes_R k$ are all $0$ by Lemma \ref{lem: minimal free resolution equiv cond}, and hence $\Tor^R_i(M, k)_g = (F_i \otimes_R k)_g$. But by Lemma \ref{lem: free zero if and only if tensor with k is zero}, $(F_i \otimes_R k)_g = 0$ if and only if $(F_i)_g = 0$, therefore $F_\bullet$ is a $\Gamma$-finite free resolution of $M$ as desired. 
\end{proof}


\section{Projective $\Gamma$-Dimension of a BDF Module}

Although we will not need it in the sequel, we give an application of the previous section to defining the graded projective dimension of a BDF module and we show that it is equal to a certain graded torsion dimension, just as in the ungraded finitely generated case (see \cite{eisenbud2013commutative} Corollary 19.5). 

Let $M$ be a BDF $\Gamma$-graded $R$-module. The \emph{projective $\Gamma$-dimension} of $M$ is the function $\text{pd}_R^\Gamma(M) : \Gamma \to \zz_{\geq 0} \cup \{ \infty \}$ which sends $g \in \Gamma$ to the minimum $n$ such that there exists a BDF projective resolution $P_\bullet$ of $M$ with $(P_n)_g \neq 0$ but $(P_{i})_g = 0$ for all $i > n$. The \emph{torsion $\Gamma$-dimension} of $M$ is the function $\text{td}_R^\Gamma(M) : \Gamma \to \zz_{\geq 0} \cup \{ \infty \}$ which sends $g \in \Gamma$ to the minimum $n$ such that $\Tor^R_n(k , M)_{\leq g} \neq 0$ but $\Tor^R_i(k , M)_{\leq g} = 0$ for all $i > n$. In \cite{miller2005combinatorial} the $i$th graded Betti number of $M$ in degree $g$ is defined to be $\beta_{i , g}(M) = \dim_k \Tor^R_i(k , M)_{g}$, so in this language the torsion dimension $\text{td}_R^\Gamma(M)(g)$ is just the first $n$ such that all Betti numbers $\beta_{n , h}(M)$ for $h \leq_Q g$ vanish. 

\begin{proposition}
If $M$ is a BDF $\Gamma$-graded $R$-module, then for each $g \in \Gamma$, we have $\text{pd}_R^\Gamma(M)(g) = \text{td}_R^\Gamma(M)(g)$. 
\end{proposition}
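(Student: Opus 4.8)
The plan is to prove the two inequalities $\text{pd}_R^\Gamma(M)(g) \leq \text{td}_R^\Gamma(M)(g)$ and $\text{td}_R^\Gamma(M)(g) \leq \text{pd}_R^\Gamma(M)(g)$ separately, in both cases reducing the degree-$g$ vanishing of a free module to the $\leq_Q g$ vanishing of $\Tor$ via Lemma \ref{lem: free zero if and only if tensor with k is zero}, together with the graded symmetry $\Tor^R_i(M,k) \cong \Tor^R_i(k,M)$ of Proposition \ref{prop: symmetry of graded tor}. The crucial bridge throughout is that every projective BDF module is free (Theorem \ref{thm: projective implies free}), so that the free-module lemmas of the previous sections apply to every term of every projective resolution, minimal or not.

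For $\text{pd} \leq \text{td}$ I would use a minimal free resolution $F_\bullet$ of $M$, which exists by Proposition \ref{prop: existence of minimal gamma graded free resolutions}. By Lemma \ref{lem: minimal free resolution equiv cond} the differentials of $F_\bullet \otimes_R k$ all vanish, so $\Tor^R_i(M,k) = F_i \otimes_R k$ as graded modules, whence $\Tor^R_i(M,k)_{\leq g} = [F_i \otimes_R k]_{\leq g}$. Lemma \ref{lem: free zero if and only if tensor with k is zero} then gives that $(F_i)_g = 0$ if and only if $[F_i \otimes_R k]_{\leq g} = 0$ if and only if $\Tor^R_i(M,k)_{\leq g} = 0$, and by Proposition \ref{prop: symmetry of graded tor} the last condition is equivalent to $\Tor^R_i(k,M)_{\leq g} = 0$. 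Thus the set of indices $i$ with $(F_i)_g \neq 0$ coincides with the set of $i$ with $\Tor^R_i(k,M)_{\leq g} \neq 0$; in particular their largest element is exactly $\text{td}_R^\Gamma(M)(g)$, so this single resolution already witnesses $\text{pd}_R^\Gamma(M)(g) \leq \text{td}_R^\Gamma(M)(g)$ (the value $\infty$ corresponding to the case where infinitely many of these indices are nonzero).

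For the reverse inequality I would take an arbitrary BDF projective resolution $P_\bullet$ realizing $\text{pd}_R^\Gamma(M)(g)$, so that $(P_i)_g = 0$ for all $i > \text{pd}_R^\Gamma(M)(g)$; each $P_i$ is free by Theorem \ref{thm: projective implies free}. The key observation is that Lemma \ref{lem: free zero if and only if tensor with k is zero} upgrades the single-degree vanishing $(P_i)_g = 0$ to the truncated vanishing $[P_i \otimes_R k]_{\leq g} = 0$, i.e. $(P_i \otimes_R k)_h = 0$ for every $h \leq_Q g$. Since homology of a complex of graded modules with graded differentials is computed degreewise, for each fixed $h \leq_Q g$ the complex $(P_\bullet \otimes_R k)_h$ is zero in all positions above $\text{pd}_R^\Gamma(M)(g)$, so $\Tor^R_i(M,k)_h = H_i(P_\bullet \otimes_R k)_h = 0$ there; summing over $h \leq_Q g$ gives $\Tor^R_i(M,k)_{\leq g} = 0$ for all $i > \text{pd}_R^\Gamma(M)(g)$, and by Proposition \ref{prop: symmetry of graded tor} the same holds for $\Tor^R_i(k,M)_{\leq g}$. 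Hence $\text{td}_R^\Gamma(M)(g) \leq \text{pd}_R^\Gamma(M)(g)$.

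The main obstacle is this reverse inequality, precisely because it must control an arbitrary projective resolution rather than the well-understood minimal one: a priori a nonminimal resolution could carry trivial summands $R(-h) \xrightarrow{\id} R(-h)$ that are invisible to $\Tor$, and one must rule out the possibility that these let the degree-$g$ support terminate earlier than $\text{td}$. Lemma \ref{lem: free zero if and only if tensor with k is zero} is exactly what prevents this: it shows that vanishing in the single degree $g$ already forces vanishing of $F \otimes_R k$ throughout the down-set $\{h \mid h \leq_Q g\}$, which is what couples the degree-$g$ behavior of a resolution to the truncated $\Tor$ groups. Once that lemma is in hand, the remaining steps—interchanging $(-)_{\leq g}$ with homology, and the bookkeeping for the $\infty$ case—are routine.
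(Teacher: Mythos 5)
Your proof is correct and is essentially the paper's own argument: $\text{pd}_R^\Gamma(M)(g) \leq \text{td}_R^\Gamma(M)(g)$ is obtained from a minimal free resolution (whose differentials vanish after applying $- \otimes_R k$), the reverse inequality from a resolution realizing $\text{pd}_R^\Gamma(M)(g)$, and in both directions Lemma \ref{lem: free zero if and only if tensor with k is zero} converts degree-$g$ vanishing of a free module into vanishing of $\Tor$ in all degrees $\leq_Q g$. The only difference is presentational: you spell out the appeals to Theorem \ref{thm: projective implies free} (to replace projective terms by free ones) and to the graded symmetry of $\Tor$ (Proposition \ref{prop: symmetry of graded tor}), both of which the paper uses implicitly.
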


\begin{proof}
First we show that $\text{pd}_R^\Gamma(M)(g) \geq \text{td}_R^\Gamma(M)(g)$. This is trivially true if $\text{pd}_R^\Gamma(M)(g) = \infty$, so suppose that $\text{pd}_R^\Gamma(M)(g) = n \in \zz_{\geq 0}$. Then there exits a BDF free resolution $F_\bullet$ of $M$ such that $(F_i)_g = 0$ for all $i > n$. Tensoring with $k$ yields a complex $F_\bullet \otimes_R k$ which, by Lemma \ref{lem: free zero if and only if tensor with k is zero}, has the property that $(F_i \otimes_R k)_{\leq g} = 0$ for all $i > n$ so $\Tor^R_i(k , M)_{\leq g} = 0$ for all $i > n$. 

Next we show that $\text{pd}_R^\Gamma(M)(g) \leq \text{td}_R^\Gamma(M)(g)$. Again this is trivial if $\text{td}_R^\Gamma(M)(g) = \infty$ so suppose that $\text{td}_R^\Gamma(M)(g) = n \in \zz_{\geq 0}$. By Proposition \ref{prop: existence of minimal gamma graded free resolutions}, there exists a minimal BDF $\Gamma$-graded free resolution $F_\bullet$ of $M$. Tensoring with $k$ gives a complex $F_\bullet \otimes_R k$ whose differentials are all $0$. It follows that $\Tor^R_i(k , M)_g = (F_i \otimes_R k)_g$. Note that by Lemma \ref{lem: free zero if and only if tensor with k is zero}, $(F_i \otimes_R k)_{\leq g} = 0$ if and only if $(F_i)_g = 0$, and for $i > n$ we have that the former is $0$ so the latter is too, implying that $\text{pd}_R^\Gamma(M)(g) \leq n$. 
\end{proof}


\section{$K$-series of Graded Modules}

The rest of the paper will be dedicated to defining the Grothendieck group of our abelian category $R\text{-}\textbf{Mod}_{\Gamma}^{\text{BDF}}$ and proving it is isomorphic to $\zz[[Q]\{\Gamma\}$. This isomorphism is obtained by computing the $K$-series of a BDF module, which is a slightly altered version of the Hilbert series. In this section we will introduce $K$-series and, by turning the set of such into a linear topological module, show that a free BDF module is determined by both its Hilbert series and its $K$-series


\subsection{Definition of $K$-series} 

\begin{lemma}
Let $R$ be a PDCF $\Gamma$-graded $k$-algebra. Then $\mathcal{H}_\Gamma(R) \in \zz[[Q]]\{\Gamma\}$ is a unit and $\mathcal{H}_\Gamma(R)^{-1} \in \zz[[Q]]$.
\end{lemma}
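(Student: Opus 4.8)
The plan is to invert $\mathcal{H}_\Gamma(R)$ by a geometric series argument, exploiting connectedness to get constant term $1$ and downward finiteness to guarantee convergence. Since $R$ is connected we have $R_0 = k$, so $\dim_k(R_0) = 1$ and hence $\mathcal{H}_\Gamma(R) = 1 + \sum_{q >_Q 0} \dim_k(R_q)\textbf{t}^q$. Writing $h := -\sum_{q >_Q 0} \dim_k(R_q)\textbf{t}^q$, this element is supported on the set $Q_+ := \{ q \in Q \mid q >_Q 0 \}$, and $\mathcal{H}_\Gamma(R) = 1 - h$. Since $Q$ is downward finite, any subset of $Q$ is downward finite (Lemma \ref{lem: closure of fbb and df sets under taking subsets}) and is bounded below by $\{0\}$, so $h \in \zz[[Q]] \subseteq \zz[[Q]]\{\Gamma\}$ and each power $h^n$ is a well-defined element of the ring $\zz[[Q]]$ by Lemma \ref{lem: product of BDF hilbert series is BDF}. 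The natural candidate for the inverse is $b := \sum_{n = 0}^\infty h^n$, and I would first show this series converges coefficientwise to an element of $\zz[[Q]]$, then verify $(1 - h)b = 1$.

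The hard part will be establishing the coefficientwise convergence, i.e. that for each fixed $u \in Q$ only finitely many powers $h^n$ contribute a nonzero coefficient at $\textbf{t}^u$. The key observation is a strict chain argument: if $[\textbf{t}^u] h^n \neq 0$, then $u = q_1 + \cdots + q_n$ with each $q_i \in Q_+$, so the partial sums $s_0 = 0, s_1 = q_1, \ldots, s_n = u$ all lie in the interval $\{ q \in Q \mid q \leq_Q u \}$ (because $u - s_i = q_{i+1} + \cdots + q_n \in Q$), and they are strictly increasing since $s_i - s_{i-1} = q_i \neq 0$. Thus $s_0 <_Q s_1 <_Q \cdots <_Q s_n$ is a chain of $n + 1$ distinct elements in $\{ q \in Q \mid q \leq_Q u \}$, which is finite by downward finiteness. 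This forces $n + 1 \leq |\{ q \in Q \mid q \leq_Q u \}|$, giving a uniform bound $N(u)$ on the exponents $n$ that matter at degree $u$. Hence $[\textbf{t}^u]b = \sum_{n = 0}^{N(u)} [\textbf{t}^u] h^n$ is a finite sum, $b$ is well-defined, and its support lies in $Q$, so $b \in \zz[[Q]]$.

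Finally I would verify $(1 - h)b = 1$ degree by degree. Fixing $u \in Q$, the coefficient $[\textbf{t}^u]\bigl((1-h)b\bigr)$ is a sum over the finitely many pairs $p + q = u$ with $p, q \in Q$ (finite by downward finiteness), and for each such $q$ the value $b_q$ agrees with the coefficient of the finite truncation $\sum_{n=0}^{N} h^n$ once $N \geq N(q)$. Choosing $N$ larger than $N(u)$ and all the relevant $N(q)$, the finite telescoping identity $(1 - h)\sum_{n=0}^{N} h^n = 1 - h^{N+1}$ holds in $\zz[[Q]]$, and $[\textbf{t}^u] h^{N+1} = 0$ by the bound above; therefore $[\textbf{t}^u]\bigl((1-h)b\bigr) = [\textbf{t}^u](1)$. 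As this holds for every $u$, we conclude $(1-h)b = \mathcal{H}_\Gamma(R)\, b = 1$, so $\mathcal{H}_\Gamma(R)$ is a unit in $\zz[[Q]]\{\Gamma\}$ with inverse $b \in \zz[[Q]]$, as claimed.
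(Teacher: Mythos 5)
Your proof is correct, but it takes a different route from the paper. The paper inverts $\mathcal{H}_\Gamma(R)$ by \emph{well-founded recursion}: writing $a = \mathcal{H}_\Gamma(R)$ with $a_0 = 1$, it uses the fact that downward finite sets are well-founded (Lemma \ref{lem: downward finite implies well-founded}) to recursively define $b_0 = 1$ and $b_g = -\sum_{u + v = g,\, u, v \in Q \smallsetminus \{0\}} b_u a_v$, then checks directly that $ab = 1$ degree by degree. You instead construct the inverse as the formal geometric series $b = \sum_{n \geq 0} h^n$ where $\mathcal{H}_\Gamma(R) = 1 - h$, and your key contribution is the chain-length bound: any representation $u = q_1 + \cdots + q_n$ with $q_i \in Q \smallsetminus \{0\}$ produces a strictly increasing chain of $n+1$ elements of the finite set $\{q \in Q \mid q \leq_Q u\}$ (strictness uses pointedness and cancellation in $\Gamma$), so only finitely many powers $h^n$ contribute at each degree. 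The two constructions yield the same element --- unrolling the paper's recursion reproduces your series --- but the proofs are organized differently: the paper's recursion is shorter and leans on the already-established well-foundedness lemma, while your argument gives an explicit closed-form candidate for the inverse and replaces recursion by a quantitative bound $N(u)$ on contributing exponents, at the cost of having to verify coefficientwise convergence and the telescoping identity $(1-h)\sum_{n=0}^{N} h^n = 1 - h^{N+1}$. One small point worth making explicit in your write-up: the containment $\Supp_\Gamma(h^n) \subseteq Q$ follows because $\Supp_\Gamma(ab) \subseteq \Supp_\Gamma(a) + \Supp_\Gamma(b)$ (shown inside the proof of Lemma \ref{lem: product of BDF hilbert series is BDF}) and $Q + Q = Q$, which is slightly more than the statement of that lemma gives you.
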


\begin{proof}
Let $\mathcal{H}_\Gamma(R) = a = \sum_{g \in Q} a_g \textbf{t}^g$, and note that we may take the sum over $Q$ instead of $\Gamma$ by definition of $Q$. Now because $R_0 = k$, it follows that $a_0 =  \dim_k(R_0) = 1$. Since $Q$ is well-founded by hypothesis, we can use well-founded recursion to define an element $b =  \sum_{g \in Q} a_g \textbf{t}^g \in \zz[[Q]]\{\Gamma\}$ which will be the inverse of $a$. Let $b_0 = 1$. Then suppose that for some $g \in Q$ we have defined $b_h$ for all $h \in Q$ with $h <_Q g$. Then we let 
\begin{equation*}
b_g = - \sum_{\substack{u , v \in Q \smallsetminus \{ 0 \} \\ u + v = g}} b_u a_v,
\end{equation*}
which makes sense because if $u, v \in Q \smallsetminus \{ 0 \}$ and $u + v = g$ then $u <_Q g$ hence $b_u$ is already defined by induction. 

Having defined $b$, we let $c = a b$ and we note first that $c_0 = \sum_{\substack{u , v \in Q \\ u + v = 0}} a_u b_v= a_0 b_0 = 1$ because $Q$ is pointed by hypothesis. For $h \in Q$ with $h \neq 0$, we have 
\begin{equation*}
c_h = \sum_{\substack{u , v \in Q \\ u + v = h}} a_u b_v = b_h a_0 + \sum_{\substack{u , v \in Q \smallsetminus \{ 0 \} \\ u + v = h}} b_u a_v = 0. 
\end{equation*}
Therefore $c = 1$ as desired. From the definition of $b$ it is clear that $\mathcal{H}_\Gamma(R)^{-1} \in \zz[[Q]]$ ($\zz[[Q]]$ denotes the Laurent series which are supported on $Q$). 
\end{proof}

\begin{definition}
The \emph{$K$-series} of a $\Gamma$-graded $R$-module $M$ is defined to be $\mathcal{K}_\Gamma(M) = \mathcal{H}_\Gamma(M) \cdot \mathcal{H}_\Gamma(R)^{-1}$. 
\end{definition}

An important property of $K$-series is that they are additive, as described in the following result. 

\begin{proposition}\label{prop: additivity of Hilbert series}
Hilbert series and $K$-series are additive i.e. if $0 \to L \to M \to N \to 0$ is an exact sequence of BDF $R$-modules then $\mathcal{H}_\Gamma(L) - \mathcal{H}_\Gamma(M) + \mathcal{H}_\Gamma(N) = 0$ and $\mathcal{K}_\Gamma(L) - \mathcal{K}_\Gamma(M) + \mathcal{K}_\Gamma(N) = 0$. 
\end{proposition}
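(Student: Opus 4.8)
The plan is to reduce both identities to the elementary additivity of dimension on short exact sequences of finite-dimensional $k$-vector spaces, and then to obtain the $K$-series statement from the Hilbert series statement by a single distributivity step. Throughout, write the given exact sequence as $0 \to L \overset{f}{\to} M \overset{h}{\to} N \to 0$.

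First I would show that in each degree $g \in \Gamma$ this restricts to a short exact sequence of $k$-vector spaces
\begin{equation*}
0 \to L_g \overset{f}{\to} M_g \overset{h}{\to} N_g \to 0.
\end{equation*}
Injectivity of $f$ on $L_g$ is immediate, and surjectivity of $h$ onto $N_g$ follows from the final statement of Proposition \ref{prop: ker and im are graded}: since $h$ is a graded surjection, every $y_g \in N_g = (\im h)_g$ is the image of some $x_g \in M_g$. For exactness in the middle, I would use that $\im f = \ker h$ is a graded submodule, so by Proposition \ref{prop: equivalent conditions of graded submodule} its degree-$g$ part is $(\im f)_g = \im f \cap M_g = \ker h \cap M_g = \ker(h|_{M_g})$. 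A short computation, decomposing any $\ell \in L$ with $f(\ell) \in M_g$ into homogeneous pieces and using that $f$ is graded and injective, shows $(\im f)_g = f(L_g)$, which is the remaining equality. Since $L, M, N$ are BDF, hence finite-dimensionally graded, all three spaces are finite-dimensional, so rank--nullity yields $\dim_k L_g - \dim_k M_g + \dim_k N_g = 0$ for every $g \in \Gamma$.

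Next I would assemble these scalar identities into the series identity. Addition in $\zz[[Q]]\{\Gamma\}$ is performed coefficient by coefficient, and all three Hilbert series lie in $\zz[[Q]]\{\Gamma\}$ because the modules are BDF, so the equalities $\dim_k L_g - \dim_k M_g + \dim_k N_g = 0$ for all $g$ are literally the statement that $\mathcal{H}_\Gamma(L) - \mathcal{H}_\Gamma(M) + \mathcal{H}_\Gamma(N) = 0$. Finally, for the $K$-series I would multiply through by the fixed unit $\mathcal{H}_\Gamma(R)^{-1}$: because $\zz[[Q]]\{\Gamma\}$ is a commutative ring, multiplication by this element distributes over the additive relation, giving
\begin{equation*}
\mathcal{K}_\Gamma(L) - \mathcal{K}_\Gamma(M) + \mathcal{K}_\Gamma(N) = \big( \mathcal{H}_\Gamma(L) - \mathcal{H}_\Gamma(M) + \mathcal{H}_\Gamma(N) \big) \cdot \mathcal{H}_\Gamma(R)^{-1} = 0.
\end{equation*}

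The only genuine content is the degreewise exactness; the main obstacle, really just a bookkeeping point, is verifying $(\im f)_g = f(L_g)$, i.e. that the degree-$g$ part of $\im f$ is hit by homogeneous degree-$g$ elements of $L$, which is exactly where gradedness and injectivity of $f$ are both used. Everything after that is linear algebra and ring distributivity.
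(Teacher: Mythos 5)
Your proof is correct and takes essentially the same route as the paper's: restrict the exact sequence to each degree $g$ (using that kernels and images of graded maps are graded) to get a short exact sequence of finite-dimensional $k$-vector spaces, read off the coefficientwise identity $\dim_k L_g - \dim_k M_g + \dim_k N_g = 0$, and multiply by the unit $\mathcal{H}_\Gamma(R)^{-1}$ to pass to $K$-series. You simply spell out the degreewise exactness (in particular the equality $(\im f)_g = f(L_g)$) in more detail than the paper, which leaves it to Proposition \ref{prop: ker and im are graded}.
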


\begin{proof}
If $0 \to L \to M \to N \to 0$ is an exact sequence of BDF $R$-modules then because kernels and images of graded module homomorphisms are graded submodules, it follows that $0 \to L_g \to M_g \to N_g \to 0$ is an exact sequence of $k$-vector spaces for all $g \in \Gamma$. This implies that $\dim_k(L_g) - \dim_k(M_g) + \dim_k(N_g) = 0$, which is exactly the coefficient of $\textbf{t}^g$ in $\mathcal{H}_\Gamma(L) - \mathcal{H}_\Gamma(M) + \mathcal{H}_\Gamma(N)$. Hence the latter is $0$. Multiplying by $\mathcal{H}_\Gamma(R)^{-1}$ gives the final desired result. 
\end{proof}


\subsection{Linear Topological Modules}

Here we give a brief summary of the basics of linear topological modules that we will need to define a topology on $\zz[[Q]]\{\Gamma\}$. A \emph{directed filtration} of an $R$-module $M$ consists of a non-empty directed set $D$ (i.e. a set with a reflexive and transitive binary operation where every pair of elements has an upper bound) together with an order-reversing function $F : D \to \Sub_R(M)$ (where $\Sub_R(M)$ denotes the set of $R$-submodules of $M$ and is a poset under inclusion). By definition, if $d , e \in D$ then there exists $t \in D$ with $d \leq t$ and $e \leq t$, and hence $F_d \supseteq F_t$ and $F_e \supseteq F_t$, implying that $F_t \subseteq F_d \cap F_e$. A \emph{filtered $R$-module} is an $R$-module with a directed filtration on it.

Recall that a \emph{topological $R$-module} is an $R$-module $M$ with a topology on it such that addition $M \times M \to M$ which sends $m , n \mapsto m + n$, and scalar multiplication $R \times M \to M$ which sends $r , m \mapsto r m$ is continuous (where $R$ is given the discrete topology). 

\begin{proposition}\label{prop: filtered modules give topological modules}
Let $(F , D)$ be a directed filtration of an $R$-module $M$. 

\begin{enumerate}

\item The set $B = \{ a + F_d \mid a \in \Gamma, d \in D \}$ is a base for a topology on $M$ which we call the \emph{$F$-adic} topology. 

\item The $F$-adic topology makes $M$ into a topological $R$-module. 

\end{enumerate}
\end{proposition}

\begin{proof}
\begin{enumerate}

\item Clearly $B$ covers $M$ since $D$ is non-empty and for any $x \in M$ and any $d \in D$, we have that $x \in x + F_d$. Next, given $x + F_d$ and $y + F_e$ with $x , y \in M$ and $d , e \in D$, if $z \in (x + F_d) \cap (y + F_e)$ then by hypothesis there exists $t \in D$ such that $F_t \subseteq F_d \cap F_e$. Clearly then we have $z + F_t \subseteq z + F_d = x + F_d$ and similarly $z + F_t \subseteq y + F_e$, hence $z + F_t \subseteq (x + F_d) \cap (y + F_e)$ as desired. 

\item We first show that addition $M \times M \to M$ is continuous. Suppose that $x + y \in z + F_d$. Then $x + F_d$ and $y + F_d$ are neighborhoods of $x$ and $y$ respectively and $(x + F_d) + (y + F_d) = x + y + F_d = z + F_d$, so $(x + F_d) \times (y + F_d)$ is a neighborhood of $(x , y)$ in $M \times M$ which maps into $z + F_d$ under $+$, hence indeed addition is continuous. Finally to show that scalar multiplication is continuous, suppose $rx \in z + F_d$. Then we have that $r (x + F_d) = rx + F_d = z + F_d$. Hence $\{ r \} \times (x + F_d)$ is a neighborhood of $(r , x)$ which maps into $z + F_d$ under scalar multiplication as desired.

\end{enumerate}
\end{proof}

\begin{definition}
A topological $R$-module $M$ is called a \emph{linear topological $R$-module} if there exists a directed filtration of $M$ whose associated topology agrees with that on $M$. 
\end{definition}

We now discuss convergence of series in a linear topological module. Let $(F , D)$ be a directed filtration of an $R$-module $M$. A net $(x_i \mid i \in I)$ in $M$ (i.e. a family of elements of $M$ indexed by a directed set $I$) converges to $x \in M$ if and only if for all $d \in D$ there exists $i \in I$ such that if $j \geq i$, then $x_j - x \in F_d$. If $(x_a \mid a \in A)$ is a family of elements of $M$, by definition the series $\sum_{a \in A}^\infty x_a$ converges to $x$ if its net of partial sums $( \sum_{b \in B} x_b \mid B \in \text{Fin}(A) )$ does, where $\text{Fin}(A)$ denotes the set of finite subsets of $A$ which is directed by containment. Explicitly this means for all $d \in D$ there exists a finite set $B \subseteq A$ such that if $B \subseteq C \subseteq A$ and $C$ is finite then $x- \sum_{c \in C} x_c \in F_d$. 

Later we will need to know that series in our linear topological modules converge to at most one point. The following result gives us a method of checking when this is true. 

\begin{proposition}\label{prop: hausdorff if and only if all nets converge}
Let $(F , D)$ be a directed filtration of an $R$-module $M$. Then the $F$-adic topology on $M$ is Hausdorff if and only if every net $(x_i \mid i \in I)$ in $M$ converges to at most one point. 
\end{proposition}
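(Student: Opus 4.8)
The statement is the net-theoretic characterization of the Hausdorff property, specialized to the $F$-adic topology. The plan is to prove the forward implication directly and the reverse implication by contraposition, in both cases exploiting the translation-invariant basic neighborhoods $a + F_d$ and the order-reversing nature of $F$. A useful preliminary observation, which I would record at the outset, is that since $0 \in F_d$ for every $d \in D$, a basic open set $a + F_d$ containing a point $x$ must in fact equal $x + F_d$ (because $x - a \in F_d$ forces $a + F_d = x + F_d$); hence the collection $\{ x + F_d \mid d \in D \}$ is a neighborhood base at $x$.

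For the forward direction I would assume the $F$-adic topology is Hausdorff and suppose, for contradiction, that some net $(x_i \mid i \in I)$ converges to two distinct points $x \neq y$. By the Hausdorff hypothesis there are disjoint open sets $U \ni x$ and $V \ni y$. By the preliminary observation, $U$ contains a basic neighborhood $x + F_d$ and $V$ contains $y + F_e$; feeding these into the convergence definition yields indices $i_1$ beyond which $x_j \in x + F_d \subseteq U$ and $i_2$ beyond which $x_j \in y + F_e \subseteq V$. Using that $I$ is directed, I choose $i_3 \geq i_1, i_2$, whence $x_{i_3} \in U \cap V = \emptyset$, the desired contradiction.

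For the reverse direction I would prove the contrapositive, building a single net that converges to two points under the assumption that the topology is not Hausdorff. Non-Hausdorffness furnishes distinct $x, y$ that cannot be separated, which — checking on the base — means $(x + F_d) \cap (y + F_e) \neq \emptyset$ for all $d, e \in D$; in particular $(x + F_d) \cap (y + F_d) \neq \emptyset$ for every $d$. Here the filtration structure pays off: rather than indexing by pairs of neighborhood filters, I can index a net directly by the directed set $D$, choosing $z_d \in (x + F_d) \cap (y + F_d)$ for each $d$. Because $F$ is order-reversing, for any target $e \in D$ and any $d \geq e$ we have $z_d - x \in F_d \subseteq F_e$, so the convergence definition is satisfied and $(z_d \mid d \in D)$ converges to $x$; the identical argument shows it also converges to $y$. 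As $x \neq y$, this net has two distinct limits, contradicting the hypothesis.

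The only real subtlety — the part I expect to be the main obstacle — is the reverse direction, specifically the passage from the abstract failure of the separation axiom to a concrete net. The clean move is to recognize that the order-reversing filtration lets $D$ itself serve as the index set of the witnessing net, so that the single family $(z_d)$ with $z_d \in (x + F_d) \cap (y + F_d)$ converges to both points. This avoids the more cumbersome product-of-neighborhood-filters construction needed in the general topological setting and reduces the verification of convergence to a one-line application of the implication $d \geq e \implies F_d \subseteq F_e$.
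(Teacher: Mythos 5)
Your proof is correct, but it takes a different route from the paper: the paper disposes of this proposition with a one-line citation to Kelley (\emph{General Topology}, Chapter 2, Theorem 3), invoking the fact that Hausdorffness is equivalent to uniqueness of net limits in \emph{any} topological space. You instead give a self-contained argument specialized to linear topologies, and the specialization genuinely pays off: your preliminary observation that any basic open set $a + F_d$ containing $x$ equals $x + F_d$ (so that $\{x + F_d \mid d \in D\}$ is a neighborhood base at $x$) also reconciles the paper's filtration-phrased definition of convergence with topological convergence, a point the paper leaves implicit; and in the reverse direction your choice of $z_d \in (x + F_d) \cap (y + F_d)$ indexed by $D$ itself, with convergence following from $d \geq e \implies F_d \subseteq F_e$, avoids the product-of-neighborhood-filters directed set that the general Kelley argument requires. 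What the paper's approach buys is brevity and generality; what yours buys is a complete proof inside the paper's own framework, with the translation-invariance and order-reversing structure doing the work that abstract neighborhood filters do in the general case. Both directions of your argument check out against the paper's definitions (directedness of $D$, $F_d$ a submodule, and the stated convergence criterion), so there is no gap.
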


\begin{proof}
This is true for general topological spaces, see \cite{kelley1975general} Chapter 2, Theorem 3. 
\end{proof}

Furthermore, for linear topological modules, the Hausdorff condition can be checked easily simply by taking the intersection of the defining filtration. 

\begin{proposition}\label{prop: hausdorff if and only if intersection of subspaces is zero}
If $(F , D)$ is a directed filtration of an $R$-module $M$ then the $F$-adic topology on $M$ is Hausdorff if and only if $\bigcap_{d \in D} F_d = 0$. 
\end{proposition}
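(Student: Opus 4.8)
The plan is to prove both implications directly from the definition of the base $B = \{ a + F_d \mid a \in M, d \in D \}$, using just two features of the filtration: each $F_d$ is an $R$-submodule of $M$ (hence contains $0$ and is closed under subtraction), and $D$ is directed, so that the translates $x + F_d$ really do form a neighborhood base at each point $x$ (as established in Proposition \ref{prop: filtered modules give topological modules}). I would argue one direction by contraposition and the other directly.

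For the direction ``Hausdorff $\implies \bigcap_{d \in D} F_d = 0$,'' I would prove the contrapositive. Suppose $\bigcap_{d \in D} F_d \neq 0$ and fix a nonzero $x$ in this intersection. Any open neighborhood $U$ of $0$ contains a basic neighborhood $0 + F_d = F_d$ for some $d \in D$, and since $x \in F_d$ by the choice of $x$, we get $x \in U$. Thus the nonzero point $x$ lies in \emph{every} neighborhood of $0$. Consequently, for any open $U \ni 0$ and any open $V \ni x$ we have $x \in U \cap V$, so $0$ and $x$ are distinct points that cannot be separated by disjoint open sets, and the topology fails to be Hausdorff.

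For the converse, suppose $\bigcap_{d \in D} F_d = 0$ and take distinct points $x, y \in M$, so $x - y \neq 0$. By hypothesis there exists $d \in D$ with $x - y \notin F_d$. I claim the basic open sets $x + F_d$ and $y + F_d$ are disjoint: if some $z$ lay in both, then $z = x + a = y + b$ with $a, b \in F_d$, whence $x - y = b - a \in F_d$ (using that $F_d$ is a submodule and hence closed under subtraction), contradicting the choice of $d$. Therefore $x$ and $y$ admit disjoint neighborhoods, and the topology is Hausdorff.

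The argument is essentially formal, so there is no serious obstacle; the only non-topological ingredient is that each $F_d$ is an $R$-submodule. This is precisely what is used in the converse to pass from ``$x - y \notin F_d$'' to the disjointness of $x + F_d$ and $y + F_d$. The one point requiring a little care is in the forward direction, where an arbitrary open neighborhood of $0$ must first be shrunk to a basic one $F_d$ before concluding $x \in U$; this is exactly where the base property from Proposition \ref{prop: filtered modules give topological modules} is invoked.
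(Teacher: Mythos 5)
Your proof is correct. Note, however, that the paper does not actually prove this proposition at all: its ``proof'' is a citation to \cite{matsumura1989commutative}, Section 8. So your argument is not a variant of the paper's — it is a genuine, self-contained replacement for an appeal to the literature, and it is the standard one: contraposition via a nonzero element of $\bigcap_{d \in D} F_d$ lying in every neighborhood of $0$, and, conversely, separation of $x \neq y$ by the translates $x + F_d$ and $y + F_d$ for any $d$ with $x - y \notin F_d$. This buys something concrete: the textbook treatment is usually stated for $\mathbb{N}$-indexed (e.g.\ ideal-adic) filtrations, whereas the paper needs the statement for filtrations indexed by an arbitrary directed set $D$; your argument covers that generality directly, using only that each $F_d$ is a subgroup and that $B = \{a + F_d \mid a \in M, d \in D\}$ is a base (Proposition \ref{prop: filtered modules give topological modules}). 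One small point of rigor worth tightening: in the forward direction, the basic open set inside a neighborhood $U$ of $0$ is a priori of the form $a + F_d$ with $0 \in a + F_d$, and one should say explicitly that $0 \in a + F_d$ forces $a \in F_d$ and hence $a + F_d = F_d$; this is the same translation argument you already use in the converse, so it is a one-line fix rather than a gap.
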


\begin{proof}
See \cite{matsumura1989commutative} Section 8. 
\end{proof}


\subsection{The Topology of $\zz[[Q]]\{\Gamma\}$}

 We now apply the theory of linear topological modules that we just developed to our study of $K$-series. First we note that there is an injective ring homomorphism $\zz[[Q]] \to \zz[[Q]] \{ \Gamma \}$ which makes $\zz[[Q]] \{ \Gamma \}$ into a $\zz[[Q]]$ module. Now, given any $g \in \Gamma$, define $Z_g$ to be the set of series $a \in \zz[[Q]]\{\Gamma\}$ with $\Supp_\Gamma(a) \cap (-\infty, g] = \emptyset$ where $(-\infty , g] = \{ h \in \Gamma \mid h \leq_Q g \}$. 

\begin{proposition}
$Z_g$ is a $\zz[[Q]]$ submodule of $\zz[[Q]]\{\Gamma\}$. 
\end{proposition}

\begin{proof}
It is obvious that $Z_g$ is a subgroup of $\zz[[Q]]\{\Gamma\}$. Given $a \in Z_g$ and $b \in \zz[[Q]]$, for any $u \in \Gamma$ with $u \leq_Q g$ by definition we have that $(ab)_u = \sum_{g + h = u} a_g b_h$. If $h \notin Q$ then $b_h = 0$ so $a_g b_h = 0$. But if $h \in Q$ then $g + h = u$ implies that $g \leq_Q u \leq_Q g$ and hence $a_g = 0$. Therefore $(ab)_u = 0$ and $ab \in Z_g$.
\end{proof}

Given any subset $U$ of $\Gamma$, define $(-\infty , U] = \{ g \in \Gamma \mid \exists u \in U, g \leq_Q u \}$. Then let $\mathcal{P}_\text{fin}(\Gamma)$ be the set of finite subsets $U \subseteq \Gamma$ with the relation $U \leq U'$ if and only if for all $u \in U$ there exists $u' \in U'$ such that $u \leq_Q u'$. For each $U \subseteq \Gamma$, define $Z_U$ to be the subset consisting of series $f \in \zz[[Q]]\{\Gamma\}$ with $\Supp_\Gamma(f) \cap (-\infty , U] = \emptyset$. Note that for any subset $U \subseteq \Gamma$ we have that $Z_{U} = \bigcap_{u \in U} Z_u$, hence $Z_U$ is a $\zz[[Q]]$-submodule of $\zz[[Q]]\{\Gamma\}$ for all $U \in \mathcal{P}_\text{fin}(\Gamma)$. 

\begin{proposition}
The set $\mathcal{P}_\text{fin}(\Gamma)$ is directed and the function $\mathcal{P}_\text{fin}(\Gamma) \to \Sub_{\zz[[Q]]}(\zz[[Q]]\{\Gamma\})$ sending $U \mapsto Z_U$ is a directed filtration.
\end{proposition}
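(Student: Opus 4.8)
The plan is to verify the two separate assertions in turn: that $(\mathcal{P}_\text{fin}(\Gamma), \leq)$ is a directed set in the sense defined above, and that $U \mapsto Z_U$ is order-reversing, so that together with the fact already established (that each $Z_U$ is a $\zz[[Q]]$-submodule) these combine into a directed filtration.

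First I would check that $\leq$ is a preorder on $\mathcal{P}_\text{fin}(\Gamma)$. Reflexivity $U \leq U$ follows by taking, for each $u \in U$, the witness $u' = u$ and invoking reflexivity of $\leq_Q$ from Proposition \ref{prop: partial order on pointed cancelative commutative monoids}. Transitivity of $\leq$ reduces directly to transitivity of $\leq_Q$: given $U \leq U'$ and $U' \leq U''$, for each $u \in U$ one chains the witnesses $u \leq_Q u' \leq_Q u''$ to produce a witness in $U''$. I would emphasize that the paper's definition of directed set requires only a reflexive and transitive relation, so antisymmetry is not needed; indeed $\leq$ is genuinely only a preorder here. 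To produce upper bounds, given $U, U' \in \mathcal{P}_\text{fin}(\Gamma)$ I would take $U \cup U'$, which is again finite and hence lies in $\mathcal{P}_\text{fin}(\Gamma)$; one sees $U \leq U \cup U'$ and $U' \leq U \cup U'$ at once, each element serving as its own witness. Non-emptiness holds since $\Gamma$ contains $0$, so $\{0\} \in \mathcal{P}_\text{fin}(\Gamma)$.

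For the order-reversing property, the key observation is that $U \mapsto (-\infty, U]$ is monotone: if $U \leq U'$ then $(-\infty, U] \subseteq (-\infty, U']$. Indeed, if $g \leq_Q u$ for some $u \in U$, I would pick $u' \in U'$ with $u \leq_Q u'$ and conclude $g \leq_Q u'$ by transitivity, so $g \in (-\infty, U']$. Since membership in $Z_U$ is exactly the condition $\Supp_\Gamma(f) \cap (-\infty, U] = \emptyset$, enlarging the set $(-\infty, U]$ can only make this harder to satisfy; concretely, from $f \in Z_{U'}$ we get $\Supp_\Gamma(f) \cap (-\infty, U] \subseteq \Supp_\Gamma(f) \cap (-\infty, U'] = \emptyset$, hence $f \in Z_U$. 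This gives $Z_{U'} \subseteq Z_U$ whenever $U \leq U'$, which is precisely the order-reversing condition.

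No step here presents a genuine obstacle: the whole argument is a transport of the reflexivity and transitivity of $\leq_Q$ through the definitions of $\leq$ and of $(-\infty, U]$. The only point worth flagging is conceptual rather than technical, namely that $\leq$ on $\mathcal{P}_\text{fin}(\Gamma)$ is merely a preorder (two distinct finite sets can be mutually $\leq$-comparable), so I would confirm that the paper's notions of directed set and directed filtration tolerate this — which they do, since both are phrased purely in terms of a reflexive, transitive relation admitting pairwise upper bounds.
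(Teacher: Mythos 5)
Your proof is correct and follows essentially the same route as the paper's: verify reflexivity and transitivity of $\leq$, use $U \cup U'$ as a finite upper bound, and deduce order-reversal from the inclusion $(-\infty, U] \subseteq (-\infty, U']$. The paper's own proof is just a terser version of yours; your extra observations (that $\leq$ is merely a preorder, which the paper's definition of directed set tolerates, and that $\mathcal{P}_\text{fin}(\Gamma)$ is non-empty) are accurate and harmless additions.
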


\begin{proof}
The relation is clearly reflexive and transitive, and for any such $U$ and $U'$, $U \cup U'$ is an upper bound for $U$ and $U'$ which is also finite. If $U \leq U'$, then clearly $(\infty , U] \subseteq (-\infty , U']$, so for any $f \in Z_{U'}$ we have that $\Supp_\Gamma(f) \cap (-\infty , U'] = \emptyset$ so certainly $\Supp_\Gamma(f) \cap (-\infty , U] = \emptyset$ and hence $f \in Z_U$. Hence the map is order-reversing. 
\end{proof}

\begin{proposition}\label{prop: z adic topology is hausdorff}
We have that $\bigcap_{U \in \mathcal{P}_\text{fin}(\Gamma)} Z_U = 0$. As a result the $Z$-adic topology on $\zz[[Q]]\{\Gamma\}$ is Hausdorff and nets in $\zz[[Q]]\{\Gamma\}$ converge to at most one point. 
\end{proposition}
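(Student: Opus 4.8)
The plan is to prove the containment $\bigcap_{U \in \mathcal{P}_\text{fin}(\Gamma)} Z_U = 0$ directly and then invoke the two preceding propositions to deduce the Hausdorff property and the uniqueness of limits of nets. The heart of the matter is the first claim, and the entire argument hinges on the reflexivity of the partial order $\leq_Q$ (Proposition \ref{prop: partial order on pointed cancelative commutative monoids}), which guarantees that every element $g \in \Gamma$ lies in its own down-set $(-\infty, g]$.

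First I would show that any nonzero element is excluded from the intersection. Suppose $a \in \zz[[Q]]\{\Gamma\}$ with $a \neq 0$. Then $\Supp_\Gamma(a)$ is nonempty, so we may choose some $g \in \Supp_\Gamma(a)$. The singleton $\{g\}$ is a finite subset of $\Gamma$, hence an element of $\mathcal{P}_\text{fin}(\Gamma)$, and since $g \leq_Q g$ we have $g \in (-\infty, \{g\}]$. Consequently $g \in \Supp_\Gamma(a) \cap (-\infty, \{g\}]$, so this intersection is nonempty, which by definition of $Z_{\{g\}}$ means $a \notin Z_{\{g\}}$. Therefore $a \notin \bigcap_{U \in \mathcal{P}_\text{fin}(\Gamma)} Z_U$. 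Since the zero series manifestly lies in every $Z_U$, this proves $\bigcap_{U \in \mathcal{P}_\text{fin}(\Gamma)} Z_U = 0$.

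Having established that the intersection of the defining filtration is trivial, the remaining conclusions follow formally. Applying Proposition \ref{prop: hausdorff if and only if intersection of subspaces is zero} to the directed filtration $U \mapsto Z_U$ of the $\zz[[Q]]$-module $\zz[[Q]]\{\Gamma\}$ shows that the $Z$-adic topology is Hausdorff. Then Proposition \ref{prop: hausdorff if and only if all nets converge} immediately yields that every net in $\zz[[Q]]\{\Gamma\}$ converges to at most one point, as desired.

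There is no genuine obstacle here: the argument is short once one observes that reflexivity forces each point of the support to obstruct membership in the corresponding singleton submodule $Z_{\{g\}}$. The only thing to be careful about is confirming that singletons are legitimate indices in $\mathcal{P}_\text{fin}(\Gamma)$ and that the topological consequences are exactly the content of the two cited propositions, rather than requiring any fresh topological input.
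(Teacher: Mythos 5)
Your proof is correct and is essentially the paper's argument: the paper simply runs it in the direct (rather than contrapositive) direction, noting that membership in each $Z_{\{g\}}$ forces $f_g = 0$ via the same reflexivity observation, and then cites the identical pair of propositions for the topological conclusions.
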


\begin{proof}
Suppose that $f \in \bigcap_{U \in \mathcal{P}_\text{fin}(\Gamma)} Z_U$. Then given any $g \in \Gamma$, we have that $f \in Z_g$ so in particular $f_g = 0$. Since $g$ was arbitrary, it follows that $f = 0$. The desired conclusions follow from Propositions \ref{prop: hausdorff if and only if all nets converge} and \ref{prop: hausdorff if and only if intersection of subspaces is zero}. 
\end{proof}

Our main application of this topology is to compute the $K$-series of a free BDF module. We begin with two technical lemmas regarding the multiplication of convergent series by elements of the ring, and then prove the desired resut over the course of the next three lemmas. 

\begin{lemma}\label{lem: multiplication by r is continuous}
If $(F , D)$ is a directed filtration of an $R$-module $M$ then for any $r \in R$, multiplication by $r$ gives a continuous function $M \to M$. 
\end{lemma}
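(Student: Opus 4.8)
The plan is to deduce this directly from the continuity of scalar multiplication already established in Proposition \ref{prop: filtered modules give topological modules}(2), either by a factorization argument or, more transparently, by a one-line neighborhood check. I will present the direct check, which mirrors the scalar-multiplication case of that proposition.

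First I would fix $r \in R$ and write $\mu_r : M \to M$ for the map $m \mapsto rm$; it suffices to verify continuity at an arbitrary point $m \in M$. So I would take a basic open set of the $F$-adic topology containing $\mu_r(m) = rm$, say $z + F_d$ with $rm \in z + F_d$ (where $z \in M$, $d \in D$), and produce a basic open neighborhood of $m$ that $\mu_r$ carries into it. The natural candidate is $m + F_d$.

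The key step is the computation $\mu_r(m + F_d) = rm + r F_d \subseteq rm + F_d$, where the containment $r F_d \subseteq F_d$ is exactly where I use that $F_d$ is an $R$-submodule of $M$ (which holds because a directed filtration takes values in $\Sub_R(M)$ by definition). Since $rm \in z + F_d$ gives $rm - z \in F_d$, and $F_d$ is in particular a subgroup, we have $rm + F_d = z + F_d$; combining, $\mu_r(m + F_d) \subseteq z + F_d$. This shows $m + F_d$ is a neighborhood of $m$ mapping into the chosen basic open set, so $\mu_r$ is continuous at $m$, and since $m$ was arbitrary, $\mu_r$ is continuous.

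I do not anticipate any real obstacle: the statement is a routine consequence of the module-filtration axioms. The only point requiring care — and the only place the hypotheses are genuinely used — is that each $F_d$ is closed under multiplication by $r$, i.e. $r F_d \subseteq F_d$, which is guaranteed because $F : D \to \Sub_R(M)$ lands in $R$-submodules rather than mere subgroups. An alternative, equally short route is to note that $\mu_r$ factors as $M \to R \times M \to M$, $m \mapsto (r,m) \mapsto rm$, with $R$ carrying the discrete topology so that $m \mapsto (r,m)$ is continuous, and the second map continuous by Proposition \ref{prop: filtered modules give topological modules}(2).
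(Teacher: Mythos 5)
Your proof is correct, but your primary argument takes a different (more hands-on) route than the paper. The paper's proof is exactly your closing ``alternative'': it simply observes that multiplication by $r$ factors as the composition $M \to R \times M \to M$, $m \mapsto (r,m) \mapsto rm$, where the first map is continuous ($R$ being discrete) and the second is continuous by Proposition \ref{prop: filtered modules give topological modules}(2), so no further work is needed. Your direct neighborhood check is also valid: given $rm \in z + F_d$, the computation $\mu_r(m + F_d) = rm + rF_d \subseteq rm + F_d = z + F_d$ is airtight, and it has the pedagogical merit of exposing precisely where the hypothesis that each $F_d$ is an $R$-submodule (so $rF_d \subseteq F_d$) is used --- in effect you are re-proving, for fixed $r$, the scalar-multiplication case of Proposition \ref{prop: filtered modules give topological modules}(2) rather than citing it. What the paper's route buys is brevity and reuse of an established fact; what yours buys is self-containment and transparency about the role of the filtration axioms. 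Either is acceptable.
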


\begin{proof}
Multiplication by $r$ is equal to the composition of the continuous functions $M \to R \times M$ given by $m \mapsto (r , m)$ and scalar multiplication $R \times M \to M$. 
\end{proof}

\begin{lemma}\label{lem: multiply a convergent series by an element}
If $(F , D)$ is a directed filtration of an $R$-module $M$ and a series $\sum_{a \in A}^\infty x_a$ in $M$ converges to $x$ then the series $\sum_{a \in A}^\infty r x_a$ converges to $rx$. 
\end{lemma}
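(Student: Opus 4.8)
The plan is to argue directly from the definition of convergence in the $F$-adic topology, using the crucial fact that each $F_d$ is an $R$-submodule of $M$ and is therefore closed under multiplication by $r$. First I would observe that for any finite subset $C \subseteq A$, additivity of multiplication by $r$ gives $\sum_{c \in C} r x_c = r \sum_{c \in C} x_c$, so the net of partial sums of $\sum_{a \in A}^\infty r x_a$ is obtained from that of $\sum_{a \in A}^\infty x_a$ by applying multiplication by $r$ term by term.

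Next, given any $d \in D$, I would invoke the hypothesis that $\sum_{a \in A}^\infty x_a$ converges to $x$ to obtain a finite set $B \subseteq A$ such that for every finite $C$ with $B \subseteq C \subseteq A$ we have $x - \sum_{c \in C} x_c \in F_d$. The key step is then simply that $F_d$, being an $R$-submodule of $M$ by the definition of a directed filtration, is closed under scalar multiplication; hence
\begin{equation*}
r\left(x - \sum_{c \in C} x_c\right) = rx - \sum_{c \in C} r x_c \in F_d.
\end{equation*}
This is precisely the assertion that, for the given $d$, the same finite set $B$ witnesses the convergence of $\sum_{a \in A}^\infty r x_a$ to $rx$. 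Since $d \in D$ was arbitrary, the series $\sum_{a \in A}^\infty r x_a$ converges to $rx$.

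I do not expect any genuine obstacle here: the entire argument reduces to the closure of the submodules $F_d$ under multiplication by $r$, which is immediate since $F$ takes values in $\Sub_R(M)$. One could alternatively phrase the proof abstractly by noting that multiplication by $r$ is continuous (Lemma \ref{lem: multiplication by r is continuous}) and that continuous maps send convergent nets to convergent nets, applied to the net of partial sums; but the direct filtration argument above is shorter and avoids appealing to the general net-continuity fact.
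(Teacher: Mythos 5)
Your proof is correct, but it takes a more elementary route than the paper. The paper's proof is the one you mention only in passing at the end: it invokes Lemma \ref{lem: multiplication by r is continuous} (multiplication by $r$ is continuous in the $F$-adic topology) together with the general topological fact that continuous maps preserve convergence of nets, applied to the net of partial sums, and then identifies $r \sum_{b \in B} x_b = \sum_{b \in B} r x_b$ for finite $B$. Your argument instead unwinds the definition of convergence directly: for each $d \in D$ the same finite witness $B$ works for the scaled series, because $F_d \in \Sub_R(M)$ is an $R$-submodule and hence $r\left(x - \sum_{c \in C} x_c\right) = rx - \sum_{c \in C} r x_c \in F_d$. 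The two proofs ultimately rest on the same fact --- closure of each $F_d$ under multiplication by $r$ is exactly what makes scalar multiplication continuous in Proposition \ref{prop: filtered modules give topological modules} --- but yours is self-contained and avoids both the continuity lemma and the net-continuity principle, while the paper's is shorter on the page given the lemma already proved and makes the structural reason (continuity) explicit. Either is acceptable; yours trades abstraction for transparency.
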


\begin{proof}
By Lemma \ref{lem: multiplication by r is continuous}, multiplication by $r$ is a continuous function $M \to M$, and continuous functions preserve convergence of nets, hence the net $( r \sum_{b \in B} x_b \mid B \in \text{Fin}(A) )$ converges to $rx$, but because each $B$ is a finite set, we obtain $r \sum_{b \in B} x_b = \sum_{b \in B} r x_b$, so this is actually the net of partial sums of the series $\sum_{a \in A}^\infty r x_a$. 
\end{proof}

\begin{lemma}\label{lem: k series distribute over free direct sums}
If $(g_i \mid i \in I)$ is a BDF family of elements of $\Gamma$ then $\mathcal{K}_\Gamma( \bigoplus_{i \in I} R(-g_i)) = \sum_{i \in I} \mathcal{K}_\Gamma(R(-g_i))$, i.e. the RHS converges to the LHS in the $Z$-adic topology. 
\end{lemma}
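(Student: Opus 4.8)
The plan is to compute both sides explicitly as elements of $\zz[[Q]]\{\Gamma\}$ and then check the convergence statement directly against the definition of the $Z$-adic topology. First I would evaluate each summand on the right: by Lemma \ref{lem: hilbert series of translated ring} we have $\mathcal{H}_\Gamma(R(-g_i)) = \textbf{t}^{g_i}\mathcal{H}_\Gamma(R)$, so by the definition of $K$-series, $\mathcal{K}_\Gamma(R(-g_i)) = \mathcal{H}_\Gamma(R(-g_i))\cdot \mathcal{H}_\Gamma(R)^{-1} = \textbf{t}^{g_i}$. Thus the right-hand side is the series $\sum_{i \in I}\textbf{t}^{g_i}$, whose finite partial sum over $C \subseteq I$ is $\sum_{c \in C}\textbf{t}^{g_c}$, the coefficient of $\textbf{t}^g$ being $\#\{c \in C \mid g_c = g\}$.

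Next I would identify the intended limit $\mathcal{K}_\Gamma(F)$, where $F = \bigoplus_{i \in I} R(-g_i)$. Using $F_h = \bigoplus_{i \in I} R_{h - g_i}$ from Lemma \ref{lem: when is th gth part of a free module zero}, one checks by comparing coefficients of $\textbf{t}^h$ that $\mathcal{H}_\Gamma(F) = \mathcal{H}_\Gamma(R)\cdot\left(\sum_{g \in \Gamma}\#\{i \in I \mid g_i = g\}\,\textbf{t}^g\right)$, since both coefficients equal the finite sum $\sum_{i \in I}\dim_k(R_{h-g_i})$ (finite because $F$ is BDF). The second factor lies in $\zz[[Q]]\{\Gamma\}$ precisely because $(g_i \mid i \in I)$ is a BDF family, so the product is legitimate by Lemma \ref{lem: product of BDF hilbert series is BDF}. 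Multiplying through by $\mathcal{H}_\Gamma(R)^{-1}$ yields $\mathcal{K}_\Gamma(F) = \sum_{g \in \Gamma}\#\{i \in I \mid g_i = g\}\,\textbf{t}^g$, which is exactly the formal coefficientwise value of $\sum_{i\in I}\textbf{t}^{g_i}$. (Note this recovers $\mathcal{K}_\Gamma(F) = \mathcal{H}_\Gamma(F \otimes_R k)$ of Lemma \ref{lem: hilbert series of free tensor k}.)

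Finally I would verify convergence in the $Z$-adic topology. Writing $s = \mathcal{K}_\Gamma(F)$, for finite $C \subseteq I$ the coefficient of $\textbf{t}^g$ in $s - \sum_{c\in C}\textbf{t}^{g_c}$ is $\#\{i \in I\smallsetminus C \mid g_i = g\}$, so $\Supp_\Gamma\!\left(s - \sum_{c\in C}\textbf{t}^{g_c}\right) \subseteq \{g_i \mid i \in I\smallsetminus C\}$. Given $U \in \mathcal{P}_\text{fin}(\Gamma)$, I would set $B = \{i \in I \mid g_i \leq_Q u \text{ for some } u \in U\} = \bigcup_{u \in U}\{i \in I \mid g_i \leq_Q u\}$. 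The key step is that $B$ is finite: it is a finite union (as $U$ is finite) of the sets $\{i \mid g_i \leq_Q u\}$, each of which is finite because $\{g_i \mid i \in I\}$ is downward finite and each fiber $\{i \mid g_i = g\}$ is finite — this is the BDF-family analog of Lemma \ref{lem: T is finite}, with $I$ in place of $\nn$ but using only the BDF-family hypotheses. Then for any finite $C$ with $B \subseteq C \subseteq I$, every $i \in I\smallsetminus C$ lies outside $B$, so $g_i \notin (-\infty, U]$; hence the support of the difference is disjoint from $(-\infty, U]$ and the difference lies in $Z_U$. This is exactly the condition for the net of partial sums to converge to $s$, and uniqueness of the limit follows from Proposition \ref{prop: z adic topology is hausdorff}.

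The routine part is the coefficient bookkeeping giving the identity $\mathcal{H}_\Gamma(F) = \mathcal{H}_\Gamma(R)\cdot\mathcal{K}_\Gamma(F)$; the only genuinely load-bearing point is the finiteness of $B$, which is where the downward-finiteness and finite-fiber conditions built into the BDF family are indispensable, and unwinding that membership of the tail in $Z_U$ reduces exactly to this finiteness is the step I would be most careful about.
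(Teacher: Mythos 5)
Your proof is correct, but it is organized differently from the paper's. The paper never computes either side in closed form: it first proves the Hilbert-series statement $\mathcal{H}_\Gamma(F) = \sum_{i \in I} \mathcal{H}_\Gamma(R(-g_i))$ (using Lemma \ref{lem: when is th gth part of a free module zero} to see that the partial sum over $C \supseteq B$ has the same graded pieces as $F$ on all of $(-\infty, U]$), and then transfers convergence to $K$-series by multiplying through by $\mathcal{H}_\Gamma(R)^{-1} \in \zz[[Q]]$, invoking Lemma \ref{lem: multiply a convergent series by an element}, i.e.\ continuity of multiplication by a fixed ring element. You instead evaluate both sides explicitly --- $\mathcal{K}_\Gamma(R(-g_i)) = \textbf{t}^{g_i}$ (the content of Lemma \ref{lem: k series of translated ring}) and $\mathcal{K}_\Gamma(F) = \sum_{g \in \Gamma} \#\{i \in I \mid g_i = g\}\,\textbf{t}^g$ via the coefficient identity $\mathcal{H}_\Gamma(F) = \mathcal{H}_\Gamma(R)\cdot s$ --- and then verify convergence of the resulting monomial series directly, which is essentially the paper's Lemma \ref{lem: sum of linear terms}. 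The load-bearing finiteness is identical in both arguments: the set $B$ of indices $i$ with $g_i \in (-\infty, U]$ is finite, by downward finiteness of $\{g_i \mid i \in I\}$ together with finiteness of the fibers $\{i \mid g_i = g\}$ (your observation that Lemma \ref{lem: T is finite} adapts verbatim with $I$ in place of $\nn$ and the family's own downward finiteness in place of that of $Q$ is right). What your route buys is the explicit formula for $\mathcal{K}_\Gamma(F)$ as a byproduct --- in the paper this is Proposition \ref{prop: k series of a free module}, which is deduced \emph{from} the present lemma, so it was important that you derived the formula independently by coefficient comparison rather than citing that proposition; you did, so there is no circularity. What the paper's route buys is the avoidance of all coefficient bookkeeping: the continuity lemma does the transfer from Hilbert series to $K$-series in one step, with no need to verify a product identity in $\zz[[Q]]\{\Gamma\}$.
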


\begin{proof}
Let $F  = \bigoplus_{i \in I} R(-g_i)$. We first show that $\mathcal{H}_\Gamma( F ) = \sum_{i \in I} \mathcal{H}_\Gamma(R(-g_i))$. Let $U \subseteq \Gamma$ be a finite set. Because $(g_i \mid i \in I)$ is a BDF family, the set $\{g_i \mid i \in I\}$ is downward finite, hence for each $u \in U$ the set $\{ g_i \mid g_i \leq_Q u \}$ is finite and furthermore for each $g_i$ in this set, the set $\{ j \in I \mid g_j = g_i \}$ is also finite. Since there are only finitely many elements of $U$, the set $B = \{ i \in I \mid \exists g_i \in (-\infty , U] \}$ is finite. If $C \subseteq I$ is finite and $B \subseteq C$, we claim that $\mathcal{H}_\Gamma(F)  - \sum_{i \in C} \mathcal{H}_\Gamma(R(-g_i)) \in Z_U$. First note that because $C$ is finite, we have that $\sum_{i \in C} \mathcal{H}_\Gamma(R(-g_i)) = \mathcal{H}_\Gamma( \bigoplus_{i \in C} R(-g_i) )$. 

Then if $g \in (-\infty , U]$, by Lemma \ref{lem: when is th gth part of a free module zero}, $\left( \bigoplus_{i \in I} R(-g_i)\right)_g =  \bigoplus_{i \in I, g_i \leq_Q g } R(-g_i)_g$, and similarly $\left(\bigoplus_{i \in C} R(-g_i)\right)_g = \bigoplus_{i \in C, g_i \leq_Q g } R(-g_i)_g$. But if $g_i \leq_Q g$ then $i \in B \subseteq C$, hence these two direct sums are taken over the same set, i.e. $\left(\bigoplus_{i \in I} R(-g_i)\right)_g = \left(\bigoplus_{i \in C} R(-g_i)\right)_g$. In particular their dimensions are the same. Since $g$ was an arbitrary element of $(-\infty , U]$, the Hilbert series $\mathcal{H}_\Gamma(F)$ and $\mathcal{H}_\Gamma( \bigoplus_{i \in C} R(-g_i) )$ agree on $(-\infty , U]$, so indeed their difference is in $Z_U$. It follows that $\mathcal{H}_\Gamma( F ) = \sum_{i \in I} \mathcal{H}_\Gamma(R(-g_i))$ as claimed. 

Finally, using the fact that $\mathcal{H}_\Gamma(R)^{-1} \in \zz[[Q]]$ and Lemma \ref{lem: multiply a convergent series by an element} in step (1), we can compute the desired result:
\begin{align*}
\mathcal{K}_\Gamma \left( \bigoplus_{i \in I} R(-g_i) \right) &= \mathcal{H}_\Gamma(R)^{-1} \mathcal{H}_\Gamma \left( \bigoplus_{i \in I} R(-g_i) \right)  = \mathcal{H}_\Gamma(R)^{-1} \sum_{i \in I} \mathcal{H}_\Gamma(R(-g_i)) 
\\&\overset{(1)}{=} \sum_{i \in I} \mathcal{H}_\Gamma(R)^{-1} \mathcal{H}_\Gamma(R(-g_i)) =  \sum_{i \in I} \mathcal{K}_\Gamma(R(-g_i)). 
\end{align*}
\end{proof}

\begin{lemma}\label{lem: k series of translated ring}
Let $M$ be a BDF $\Gamma$-graded $R$-module, and $g \in \Gamma$. Then $\mathcal{K}_\Gamma(M(-g)) = \textbf{t}^g \mathcal{K}_\Gamma(M)$ and in particular we have $\mathcal{K}_\Gamma( R(-g) ) = \textbf{t}^{g}$. 
\end{lemma}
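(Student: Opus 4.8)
The plan is to unwind the definition of the $K$-series and feed in the shift formula for Hilbert series that was just established. Recall that $\mathcal{K}_\Gamma(M) = \mathcal{H}_\Gamma(M) \cdot \mathcal{H}_\Gamma(R)^{-1}$, that this product takes place in the commutative ring $\zz[[Q]]\{\Gamma\}$, and that $\textbf{t}^g$ is itself an element of $\zz[[Q]]\{\Gamma\}$ since the singleton $\{g\}$ is trivially downward finite and finitely bounded below. The only nontrivial input I would need is Lemma \ref{lem: hilbert series of translated ring}, which states that $\mathcal{H}_\Gamma(M(-g)) = \textbf{t}^g \mathcal{H}_\Gamma(M)$.

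With these facts in hand, the general statement is a one-line computation. I would write
\begin{align*}
\mathcal{K}_\Gamma(M(-g)) = \mathcal{H}_\Gamma(M(-g)) \cdot \mathcal{H}_\Gamma(R)^{-1} = \textbf{t}^g \mathcal{H}_\Gamma(M) \cdot \mathcal{H}_\Gamma(R)^{-1} = \textbf{t}^g \mathcal{K}_\Gamma(M),
\end{align*}
where the first equality is the definition of the $K$-series, the middle equality is Lemma \ref{lem: hilbert series of translated ring}, and the final equality uses commutativity and associativity of multiplication in $\zz[[Q]]\{\Gamma\}$ together with the definition of $\mathcal{K}_\Gamma(M)$ once more.

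For the special case I would take $M = R$ and observe that $\mathcal{K}_\Gamma(R) = \mathcal{H}_\Gamma(R) \cdot \mathcal{H}_\Gamma(R)^{-1} = 1$, so that $\mathcal{K}_\Gamma(R(-g)) = \textbf{t}^g \cdot 1 = \textbf{t}^g$. There is no genuine obstacle here; the only point warranting a moment of care is the bookkeeping that every series appearing is honestly an element of $\zz[[Q]]\{\Gamma\}$, so that the products are well-defined and the ring axioms may legitimately be invoked. This is immediate from the lemma establishing that $\mathcal{H}_\Gamma(R)$ is a unit with $\mathcal{H}_\Gamma(R)^{-1} \in \zz[[Q]]$, together with the observation that Hilbert series of BDF modules lie in $\zz[[Q]]\{\Gamma\}$.
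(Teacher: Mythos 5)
Your proposal is correct and follows essentially the same route as the paper: apply Lemma \ref{lem: hilbert series of translated ring} to the definition $\mathcal{K}_\Gamma(M(-g)) = \mathcal{H}_\Gamma(M(-g))\mathcal{H}_\Gamma(R)^{-1}$, then specialize to $M = R$ using $\mathcal{K}_\Gamma(R) = 1$. The extra remark about all series lying in $\zz[[Q]]\{\Gamma\}$ so that the ring operations are legitimate is a fine (if implicit in the paper) bit of bookkeeping.
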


\begin{proof}
By Lemma \ref{lem: hilbert series of translated ring} we have that $\mathcal{H}_\Gamma(M(-g)) = \textbf{t}^g \mathcal{H}_\Gamma( M)$, so we compute: 
\begin{equation*}
\mathcal{K}_\Gamma(M(-g)) = \mathcal{H}_\Gamma(M(-g)) \mathcal{H}_\Gamma(R)^{-1} = \textbf{t}^g \mathcal{H}_\Gamma( M)\mathcal{H}_\Gamma(R)^{-1} = \textbf{t}^g \mathcal{K}_\Gamma(M) 
\end{equation*}
In particular this implies that $\mathcal{K}_\Gamma( R(-g) ) = \textbf{t}^{g} \mathcal{K}_\Gamma( R ) = \textbf{t}^{g} \mathcal{H}_\Gamma( R ) \mathcal{H}_\Gamma( R )^{-1}  = \textbf{t}^{g}$. 
\end{proof}

\begin{lemma}\label{lem: sum of linear terms}
If $(g_i \mid i \in I)$ is a BDF family of elements of $\Gamma$ then $\sum_{i \in I} \textbf{t}^{g_i} = \sum_{g \in \Gamma} \# \{ i \in I \mid g_i = g \} \textbf{t}^g$. 
\end{lemma}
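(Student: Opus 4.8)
The plan is to read the claimed equality as a statement of convergence in the $Z$-adic topology on $\zz[[Q]]\{\Gamma\}$, exactly as in Lemma \ref{lem: k series distribute over free direct sums}: I must show that the net of partial sums of $\sum_{i \in I} \textbf{t}^{g_i}$ converges to $x := \sum_{g \in \Gamma} \#\{i \in I \mid g_i = g\} \textbf{t}^g$. First I would check that $x$ is a legitimate element of $\zz[[Q]]\{\Gamma\}$: its support is exactly $\{g_i \mid i \in I\}$, which is downward finite and finitely bounded below since $(g_i \mid i \in I)$ is a BDF family, so $x \in \zz[[Q]]\{\Gamma\}$.

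Next, unwinding the definition of convergence, given a finite set $U \in \mathcal{P}_\text{fin}(\Gamma)$ I must produce a finite $B \subseteq I$ such that for every finite $C$ with $B \subseteq C \subseteq I$ one has $x - \sum_{i \in C} \textbf{t}^{g_i} \in Z_U$, i.e. the support of this difference misses $(-\infty , U]$. The natural choice is $B = \{ i \in I \mid g_i \in (-\infty , U] \}$. Since $C$ is finite, $\sum_{i \in C} \textbf{t}^{g_i} = \sum_{g \in \Gamma} \#\{ i \in C \mid g_i = g \} \textbf{t}^g$, so the coefficient of $\textbf{t}^g$ in $x - \sum_{i \in C} \textbf{t}^{g_i}$ is $\#\{ i \in I \setminus C \mid g_i = g \}$. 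For $g \in (-\infty , U]$, any index $i$ with $g_i = g$ lies in $B \subseteq C$, so this coefficient is zero; hence the difference lies in $Z_U$, as required.

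It remains to verify that $B$ is finite, and this is the one point that uses the full strength of the BDF hypothesis; the argument is the same as that of Lemma \ref{lem: T is finite}, now with an arbitrary index set in place of $\nn$. Writing $(-\infty , U] = \bigcup_{u \in U} \{ h \in \Gamma \mid h \leq_Q u \}$, downward finiteness of $\{ g_i \mid i \in I \}$ guarantees that only finitely many \emph{distinct} values $g_i$ lie below any fixed $u$, and the BDF condition that each fibre $\{ i \in I \mid g_i = g \}$ be finite then bounds the number of indices lying over each such value; since $U$ is finite, $B$ is a finite union of finite sets. Finally, because the $Z$-adic topology is Hausdorff (Proposition \ref{prop: z adic topology is hausdorff}), the limit is unique, which justifies writing the displayed equality. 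The main (and rather modest) obstacle is precisely the finiteness of $B$; everything else is bookkeeping of coefficients.
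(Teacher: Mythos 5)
Your proof is correct and follows essentially the same route as the paper's: read the equality as $Z$-adic convergence, take $B = \{ i \in I \mid g_i \in (-\infty , U] \}$, and check that the coefficients of the difference vanish on $(-\infty , U]$. You actually supply details the paper leaves implicit (the explicit finiteness argument for $B$ via downward finiteness plus finite fibres, and the appeal to Hausdorffness for uniqueness of the limit), which is a welcome tightening rather than a deviation.
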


\begin{proof}
Let $f_g = \# \{ i \in I \mid g_i = g \}$ so that $f = \sum_{g \in \Gamma} \# \{ i \in I \mid g_i = g \} \textbf{t}^g$. Given any finite set $U \subseteq \Gamma$, because $(g_i \mid i \in I)$ is a BDF family, the set $B = \{ i \in I \mid g_i \in (-\infty , U] \}$ is finite. If $C \subseteq \Gamma$ is finite and $B \subseteq C$ then we have that $f - \sum_{i \in C} \textbf{t}^{g_i} \in Z_{U}$ because for any $g \in (-\infty, U]$, we have that $f_g$ is the size of the set $\{ i \in I \mid g_i = g\}$, but this set is contained in $B$ and hence in $C$, so there are exactly $f_g$ terms of the sum $\sum_{i \in C} \textbf{t}^{g_i}$ equal to $\textbf{t}^g$.
\end{proof}

We are now able to compute the $K$-series of a free BDF module. 

\begin{proposition}\label{prop: k series of a free module}
If $(g_i \mid i \in I)$ is a BDF family of elements of $\Gamma$ then $\mathcal{K}_\Gamma( \bigoplus_{i \in I} R(-g_i)) = \sum_{g \in \Gamma} \# \{ i \in I \mid g_i = g \} \textbf{t}^g$.
\end{proposition}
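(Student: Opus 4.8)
The plan is to chain together the three preceding lemmas, using the fact (established in Proposition \ref{prop: z adic topology is hausdorff}) that nets in $\zz[[Q]]\{\Gamma\}$ converge to at most one point in the $Z$-adic topology in order to reconcile the two resulting convergence statements.

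First I would apply Lemma \ref{lem: k series distribute over free direct sums}, which says that the series $\sum_{i \in I} \mathcal{K}_\Gamma(R(-g_i))$ converges in the $Z$-adic topology to $\mathcal{K}_\Gamma(\bigoplus_{i \in I} R(-g_i))$. By Lemma \ref{lem: k series of translated ring} each term equals $\textbf{t}^{g_i}$, and since this is a term-by-term equality the series $\sum_{i \in I} \mathcal{K}_\Gamma(R(-g_i))$ and $\sum_{i \in I} \textbf{t}^{g_i}$ have the same net of partial sums; hence the latter series also converges to $\mathcal{K}_\Gamma(\bigoplus_{i \in I} R(-g_i))$.

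On the other hand, Lemma \ref{lem: sum of linear terms} asserts exactly that $\sum_{i \in I} \textbf{t}^{g_i}$ converges to $\sum_{g \in \Gamma} \# \{ i \in I \mid g_i = g \} \textbf{t}^g$. Thus the net of partial sums of $\sum_{i \in I} \textbf{t}^{g_i}$ has two purported limits, and since by Proposition \ref{prop: z adic topology is hausdorff} such a net converges to at most one point, these limits must agree, giving the desired identity $\mathcal{K}_\Gamma(\bigoplus_{i \in I} R(-g_i)) = \sum_{g \in \Gamma} \# \{ i \in I \mid g_i = g \} \textbf{t}^g$.

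I do not expect any substantive obstacle: the result is a formal consequence of the three lemmas. The only point needing care --- and the sole place where the topological machinery of the section is genuinely invoked --- is that the various ``equalities of series'' here denote convergence of partial-sum nets rather than equalities of finite sums, so the two convergence statements must be matched up through uniqueness of limits rather than by naive substitution into an already-finished equation.
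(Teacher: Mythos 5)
Your proof is correct and follows essentially the same route as the paper, which chains the identical three lemmas (Lemma \ref{lem: k series distribute over free direct sums}, Lemma \ref{lem: k series of translated ring}, and Lemma \ref{lem: sum of linear terms}) in the same order. Your explicit appeal to Proposition \ref{prop: z adic topology is hausdorff} to reconcile the two convergence statements via uniqueness of limits is a point the paper leaves implicit in its chain of equalities, but it is the same argument.
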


\begin{proof}
We compute: 
\begin{equation*}
\mathcal{K}_\Gamma( \bigoplus_{i \in I} R(-g_i)) \overset{(1)}{=} \sum_{i \in I} \mathcal{K}_\Gamma(R(-g_i)) \overset{(2)}{=} \sum_{i \in I} \textbf{t}^{g_i}  \overset{(3)}{=} \sum_{g \in \Gamma} \# \{ i \in I \mid g_i = g \} \textbf{t}^g
\end{equation*}
Here (1) follows from Lemma \ref{lem: k series distribute over free direct sums}, (2) follows from Lemma \ref{lem: k series of translated ring}, and (3) follows from Lemma \ref{lem: sum of linear terms}. 
\end{proof} 

As a consequence, we obtain that free BDF modules are determined by their $K$-series. 

\begin{corollary}\label{cor: same hilbert series implies isomorphic}
If $F$ and $F'$ are free BDF $R$-modules then the following are equivalent: 
\begin{enumerate}

\item $\mathcal{H}_\Gamma(F) = \mathcal{H}_\Gamma(F')$

\item $\mathcal{K}_\Gamma(F) = \mathcal{K}_\Gamma(F')$

\item $F \cong F'$

\end{enumerate}  
\end{corollary}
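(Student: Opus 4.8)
The plan is to exploit the explicit formula for the $K$-series of a free module from Proposition \ref{prop: k series of a free module}, which shows that $\mathcal{K}_\Gamma$ records precisely the multiset of degrees appearing in a homogeneous basis. First I would dispose of the equivalence (1) $\iff$ (2), which requires essentially no work: by definition $\mathcal{K}_\Gamma(M) = \mathcal{H}_\Gamma(M)\cdot \mathcal{H}_\Gamma(R)^{-1}$, and since $\mathcal{H}_\Gamma(R)^{-1}$ is a unit in $\zz[[Q]]\{\Gamma\}$, multiplication by it is a bijection on $\zz[[Q]]\{\Gamma\}$. Hence $\mathcal{H}_\Gamma(F) = \mathcal{H}_\Gamma(F')$ if and only if $\mathcal{K}_\Gamma(F) = \mathcal{K}_\Gamma(F')$.

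For (3) $\implies$ (1), I would recall that throughout $\cong$ denotes isomorphism in the graded category $R\text{-}\bMod_\Gamma^\text{BDF}$. A graded isomorphism $F \to F'$ restricts to a $k$-linear isomorphism $F_g \to F'_g$ for every $g \in \Gamma$, so $\dim_k(F_g) = \dim_k(F'_g)$ for all $g$, which is exactly the assertion $\mathcal{H}_\Gamma(F) = \mathcal{H}_\Gamma(F')$.

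The substance lies in (2) $\implies$ (3). Using the structure of free BDF modules established in the corollary following Theorem \ref{thm: projective implies free}, I may write $F = \bigoplus_{i \in I} R(-g_i)$ and $F' = \bigoplus_{j \in J} R(-h_j)$ for BDF families $(g_i \mid i \in I)$ and $(h_j \mid j \in J)$. Proposition \ref{prop: k series of a free module} then gives $\mathcal{K}_\Gamma(F) = \sum_{g \in \Gamma} \#\{ i \in I \mid g_i = g\}\,\textbf{t}^g$, and likewise for $F'$. Equality of the two $K$-series therefore means that for every $g \in \Gamma$ the finite cardinalities $\#\{i \in I \mid g_i = g\}$ and $\#\{j \in J \mid h_j = g\}$ coincide (finiteness being part of the BDF-family hypothesis). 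Choosing for each $g$ a bijection between these two equinumerous index sets and assembling them produces a degree-preserving bijection $\phi : I \to J$ with $h_{\phi(i)} = g_i$. I would then define the graded isomorphism by sending the basis element $e_i$ of $F$ to the basis element $e_{\phi(i)}$ of $F'$.

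The only point requiring care — and the closest thing to an obstacle — is checking that $e_i \mapsto e_{\phi(i)}$ genuinely assembles to a morphism in $R\text{-}\bMod_\Gamma^\text{BDF}$ that is degree-preserving; but this is immediate, since $R(-g_i) = R(-h_{\phi(i)})$ as graded modules and $\deg(e_i) = g_i = h_{\phi(i)} = \deg(e_{\phi(i)})$. The genuine content, namely that the $K$-series detects exactly the degree multiset of a homogeneous basis, has already been extracted in Proposition \ref{prop: k series of a free module}, so this final step is essentially bookkeeping.
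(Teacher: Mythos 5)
Your proposal is correct and follows essentially the same route as the paper's proof: (1) $\iff$ (2) via multiplication by the unit $\mathcal{H}_\Gamma(R)^{\pm 1}$, (3) $\implies$ (1) by comparing dimensions of graded pieces, and (2) $\implies$ (3) by invoking Proposition \ref{prop: k series of a free module} to read off the degree multisets and assembling degree-preserving bijections on basis elements. The only difference is that you spell out the bookkeeping (the bijection $\phi$ and the map $e_i \mapsto e_{\phi(i)}$) that the paper leaves implicit.
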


\begin{proof}
One obtains (1) $\iff$ (2) by multiplying by $\mathcal{H}_\Gamma(R)$ or $\mathcal{H}_\Gamma(R)^{-1}$. Also (3) $\implies$ (1) is obvious. We now show $(2) \implies(3)$. Given BDF $\Gamma$-graded free modules $F = \bigoplus_{i \in I} R(-g_i)$ and $F' = \bigoplus_{j \in J} R(-h_j)$ with $\mathcal{K}_\Gamma(F) = \mathcal{K}_\Gamma(F')$, by Proposition \ref{prop: k series of a free module}, we have that $\mathcal{K}_\Gamma(F) = \sum_{g \in \Gamma} \# \{ i \in I \mid g_i = g \} \textbf{t}^g$ and $\mathcal{K}_\Gamma(F') = \sum_{g \in \Gamma} \# \{ j \in J \mid h_j = g \} \textbf{t}^g$, and hence for each $g \in \Gamma$, we have $\# \{ i \in I \mid g_i = g \} = \# \{ j \in J \mid h_j = g \}$, so we obtain a bijection between the standard bases of $F$ and $F'$, yielding $F \cong F'$. 
\end{proof}


\section{The Grothendieck Group of a Graded Ring}

For the category of finitely generated ungraded or graded modules over a fixed ring there are two typical ways to define the Grothendieck group of a commutative ring $R$ which agree in favorable circumstances, for example when the ring is a Noetherian regular local ring, or a polynomial ring in finitely many variables. One the one hand, $K_0(R)$ (or $K_{\Gamma, 0}(R)$ in the graded case) is defined to be the Grothendieck group of the commutative monoid of isomorphism classes of finitely generated projective (graded) $R$-modules with addition given by $[P] + [Q] = [P \oplus Q]$. On the other hand, $G_0(R)$ (or $G_{\Gamma, 0}(R)$ in the graded case) is defined to be the quotient of the free abelian group on the set of isomorphism classes of all finitely generated (graded) $R$-modules modulo the subgroup generated by expressions of the form $[L] - [M] + [N]$ when there exists a short exact sequence $0 \to L \to M \to N \to 0$. (See \cite{weibel2013k} for details). In this section we give analogs of these definitions for BDF $\Gamma$-graded modules and show that they agree for PDCF $\Gamma$-graded $k$-algebras whose ideal maximal graded ideal $R_+$ is generated by a regular sequence.


\subsection{The Group $K^{\text{BDF}}_{\Gamma , 0}(R)$ of a PDCF $\Gamma$-Graded $k$-Algebra}

We begin by recalling the Grothendieck group of a commutative monoid. If $Q$ is a commutative monoid, then there exists an abelian group $K$ (unique up to a unique isomorphism) and a monoid homomorphism $i : Q \to K$ which satisfies the following universal property: For all abelian groups $A$ and all monoid homomorphisms $\varphi: Q \to A$ there exists a unique group homomorphism $\tilde{\varphi} : K \to A$ such that $\varphi = \tilde{\varphi} \circ i$. Then $K$ is called the \emph{Grothendieck group} of the commutative monoid $Q$. 

One construction of the Grothendieck group of $Q$ is to define an equivalence relation $\sim$ on $Q \times Q$ by letting $(q_1, q_2) \sim (p_1, p_2)$ if and only if there exists $g \in Q$ such that $q_1 + p_2 + g = p_1 + q_2 + g$. The equivalence class of $(q, p) \in Q \times Q$ is denoted by $[q , p]$, and $K$ is defined to be the set of equivalence classes $Q \times Q / \sim$ together with the binary operation $[q , p] + [q' , p'] = [q + q' , p + p']$. 

We will need the following proposition which describes when the Grothendieck group of a commutative monoid is trivial. 

\begin{proposition}\label{prop: triviality of the grothendieck group}
Let $Q$ be a commutative monoid. Then its Grothendieck group $K$ is trivial if and only if for all $p , q \in Q$ there exists $g \in Q$ such that $p + g = q + g$. 
\end{proposition}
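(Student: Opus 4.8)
The plan is to work directly with the explicit construction of the Grothendieck group as $K = (Q \times Q)/\sim$ given just above the statement, and to translate the condition ``$K$ is trivial'' into a statement about the relation $\sim$.

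First I would identify the identity element of $K$. Since the operation is $[q,p] + [q',p'] = [q+q', p+p']$, the class $[0,0]$, where $0$ denotes the identity of $Q$, is the identity of $K$, because $[0,0] + [q,p] = [q,p]$. Then I observe that, $K$ being a group, it is trivial if and only if every one of its elements equals the identity $[0,0]$; and since every element of $K$ has the form $[q,p]$ for some $q,p \in Q$, this is precisely the assertion that $[q,p] = [0,0]$ for all $q,p \in Q$.

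Next I would unwind the equation $[q,p] = [0,0]$ using the definition of $\sim$. Taking $(q_1,q_2) = (q,p)$ and $(p_1,p_2) = (0,0)$ in the defining relation $q_1 + p_2 + g = p_1 + q_2 + g$, we get that $(q,p) \sim (0,0)$ if and only if there exists $g \in Q$ with $q + 0 + g = 0 + p + g$, i.e. with $q + g = p + g$. Combining this with the previous paragraph, $K$ is trivial if and only if for all $q,p \in Q$ there exists $g \in Q$ with $q + g = p + g$, which is exactly the claimed condition (after relabeling $p$ and $q$).

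I do not expect any real obstacle here; the entire content lies in carefully matching the definition of $\sim$ against the assertion. The only points needing a little care are verifying that $[0,0]$ is genuinely the identity and that every element of $K$ is represented by some pair $(q,p)$, so that the triviality of $K$ can be tested on elements of this form.
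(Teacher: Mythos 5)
Your proposal is correct and follows essentially the same route as the paper's proof: identify $K$ with the explicit quotient $(Q\times Q)/\sim$, note that the identity is $[0,0]$, and unwind the relation $(q,p)\sim(0,0)$ into the existence of $g$ with $q+g=p+g$. The extra care you take in noting that every element of $K$ is of the form $[q,p]$ is implicit in the paper's argument but is exactly the right point to check.
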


\begin{proof}
Since the Grothendieck group of $Q$ is unique up to a unique isomorphism we may assume that $K = Q \times Q / \sim$, and we note that the identity of $K$ is $[0 , 0]$. We have that $K$ is trivial if and only if for all $p , q \in Q$, $(p, q) \sim (0 , 0)$ which is true if and only if for all $p , q \in Q$ there exists $g \in Q$ such that $p + g = q + g$. 
\end{proof}


We define $R\text{-}\bPMod_\Gamma^\text{BDF}$ to be the category of \textbf{projective} BDF $\Gamma$-graded $R$-modules. By Theorem \ref{thm: projective implies free}, every such module is actually free. Now define $K^{\text{BDF}}_{\Gamma , 0}(R)$ to be the Grothendieck group of the commutative monoid consisting of the set of isomorphism classes of objects in $R\text{-}\bPMod_\Gamma^\text{BDF}$, together with the operation $[M] + [N] = [ M \oplus N ]$. We note that by Proposition \ref{prop: triviality of the grothendieck group} this group is not trivial because $R$ and $R \oplus R$ are both free BDF $\Gamma$-graded $R$-modules, if there existed an object $M$ of $R\text{-}\bPMod_\Gamma^\text{BDF}$ with $R \oplus M \cong R \oplus R \oplus M$ as $R$-modules then we would have $R_0 \oplus M_0 \cong R_0 \oplus R_0 \oplus M_0$ as $k$-vector spaces which is impossible as $R_0$ and $M_0$ are both finite-dimensional and $\dim_k(R_0) = 1$. Recall that the famous Eilenberg swindle shows that the Grothendieck group of the monoid consisting of \textbf{all} projective $R$-modules is trivial. We have avoided this by considering only finite-dimensionally graded modules. 

\begin{note}
The category $R\text{-}\bPMod_\Gamma^\text{BDF}$ is split exact (which can be embedded in the abelian category $R\text{-}\bMod_\Gamma^\text{BDF}$), and therefore we could have equivalently defined $K^{\text{BDF}}_{\Gamma , 0}(R)$ to be the free abelian group on the set of isomorphism classes of elements of $R\text{-}\bPMod_\Gamma^\text{BDF}$ modulo the subgroup generated by sums of the form $[L] -[M]+ [N]$ whenever there is a short exact sequence $0 \to L \to M \to N \to 0$ (see \cite{weibel2013k} Example 7.1.2). 
\end{note}

\begin{proposition}\label{prop: how to express every element of K}
Every element of $K^{\text{BDF}}_{\Gamma , 0}(R)$ can be written as $[F] - [F']$ where $F$ and $F'$ are free BDF $R$-modules. 
\end{proposition}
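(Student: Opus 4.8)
The plan is to unwind the explicit construction of the Grothendieck group recalled earlier in this section and then invoke Theorem \ref{thm: projective implies free}. Recall that $K^{\text{BDF}}_{\Gamma , 0}(R)$ is defined as the Grothendieck group of the commutative monoid $Q$ whose elements are the isomorphism classes of objects of $R\text{-}\bPMod_\Gamma^\text{BDF}$ under direct sum, and that one concrete model for this group is $Q \times Q / \sim$, where $[P , P']$ denotes the class of the pair $(P , P')$ and the canonical map $i : Q \to K$ sends the class of a module $P$ to $[P , 0]$.

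First I would observe that in this model every element of $K^{\text{BDF}}_{\Gamma , 0}(R)$ has the form $[P , P']$ for two projective BDF $\Gamma$-graded $R$-modules $P$ and $P'$. Using the group operation $[P , P'] + [P'' , P'''] = [P \oplus P'' , P' \oplus P''']$ together with the fact that the identity is $[0 , 0]$, one computes $[P , P'] = [P , 0] + [0 , P'] = i([P]) - i([P'])$. Writing $[M]$ for the image $i([M])$ of the isomorphism class of a module $M$ (as is done throughout the section), this says precisely that an arbitrary element of $K^{\text{BDF}}_{\Gamma , 0}(R)$ equals $[P] - [P']$ with $P$ and $P'$ projective BDF modules.

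Finally I would apply Theorem \ref{thm: projective implies free}, which asserts that every projective BDF $\Gamma$-graded $R$-module is free. Hence $P$ and $P'$ are themselves free BDF $R$-modules, and setting $F := P$ and $F' := P'$ gives the desired expression $[F] - [F']$.

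The argument is essentially an unwinding of definitions, so there is no real obstacle in this statement itself; all of the substance resides in the already-established Theorem \ref{thm: projective implies free}. The only point requiring a sentence of care is verifying that the class $[P , P']$ decomposes as $i([P]) - i([P'])$ in the monoid-completion model, which follows immediately from the definition of addition of classes and the identification of the identity element with $[0 , 0]$.
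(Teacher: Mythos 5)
Your proof is correct and takes essentially the same approach as the paper: both arguments simply unwind the group-completion construction (you via the explicit pair model $Q \times Q / \sim$, the paper by writing an arbitrary element as a formal $\zz$-linear combination of generators and merging the positive and negative parts using $[A] + [B] = [A \oplus B]$) and then invoke the fact that every projective BDF module is free (Theorem \ref{thm: projective implies free}), which the paper has already built into its definition of $R\text{-}\bPMod_\Gamma^\text{BDF}$.
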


\begin{proof}
By definition every element of $K^{\text{BDF}}_{\Gamma , 0}(R)$ is of the form $[F_1] + \ldots + [F_n] -  [F'_1] - \ldots - [F'_m]$ for $F_i$ and $F'_j$ free BDF $R$-modules. By definition of addition in the Grothendieck group, this element is equal to $[F] - [F']$ where $F = F_1 \oplus \ldots \oplus F_n$ and $F' = F'_1 \oplus \ldots \oplus F'_m$. Again $F$ and $F'$ are free BDF $R$-modules, as desired. 
\end{proof}

\begin{theorem}\label{thm: isomorphism of projective grothendieck group and group ring}
Let $R$ be a PDCF $\Gamma$-graded $k$-algebra. There is an isomorphism $K^{\text{BDF}}_{\Gamma , 0}(R) \overset{\sim}{\to} \zz[[Q]]\{\Gamma\}$ given by sending $[F] \mapsto \mathcal{K}_\Gamma(F)$. 
\end{theorem}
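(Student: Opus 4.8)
The plan is to build the map as a group homomorphism via the universal property of the Grothendieck group, and then obtain bijectivity from the results already established for $K$-series of free modules. First I would check that the assignment $F \mapsto \mathcal{K}_\Gamma(F)$ is a monoid homomorphism from the monoid of isomorphism classes of free BDF $\Gamma$-graded $R$-modules (under $\oplus$) to the additive group of $\zz[[Q]]\{\Gamma\}$. Isomorphic modules have equal Hilbert series and hence equal $K$-series, so the assignment is well-defined on isomorphism classes, and for any free BDF modules $F, F'$ the split short exact sequence $0 \to F \to F \oplus F' \to F' \to 0$ together with Proposition~\ref{prop: additivity of Hilbert series} gives $\mathcal{K}_\Gamma(F \oplus F') = \mathcal{K}_\Gamma(F) + \mathcal{K}_\Gamma(F')$. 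Since $\zz[[Q]]\{\Gamma\}$ is an abelian group under addition, the universal property of the Grothendieck group then yields a unique group homomorphism $\Phi : K^{\text{BDF}}_{\Gamma,0}(R) \to \zz[[Q]]\{\Gamma\}$ with $\Phi([F]) = \mathcal{K}_\Gamma(F)$.

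For surjectivity, given $a = \sum_{g \in \Gamma} a_g \textbf{t}^g \in \zz[[Q]]\{\Gamma\}$, I would split its support by the sign of the coefficients. Let $P = \{g \mid a_g > 0\}$ and $N = \{g \mid a_g < 0\}$; both are subsets of $\Supp_\Gamma(a)$, which is finitely bounded below and downward finite, so by Lemma~\ref{lem: closure of fbb and df sets under taking subsets} each of $P$ and $N$ is finitely bounded below and downward finite. Let $F$ be the direct sum of $a_g$ copies of $R(-g)$ over all $g \in P$, and $F'$ the direct sum of $-a_g$ copies of $R(-g)$ over all $g \in N$; since each multiplicity is a finite nonnegative integer, the underlying degree families are BDF families, so $F$ and $F'$ are free BDF modules. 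By Proposition~\ref{prop: k series of a free module} we get $\mathcal{K}_\Gamma(F) = \sum_{g \in P} a_g \textbf{t}^g$ and $\mathcal{K}_\Gamma(F') = \sum_{g \in N} (-a_g) \textbf{t}^g$, hence $\Phi([F] - [F']) = \mathcal{K}_\Gamma(F) - \mathcal{K}_\Gamma(F') = a$.

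For injectivity, every element of $K^{\text{BDF}}_{\Gamma,0}(R)$ is of the form $[F] - [F']$ with $F, F'$ free BDF modules by Proposition~\ref{prop: how to express every element of K}. If $\Phi([F] - [F']) = 0$ then $\mathcal{K}_\Gamma(F) = \mathcal{K}_\Gamma(F')$, so Corollary~\ref{cor: same hilbert series implies isomorphic} gives $F \cong F'$, whence $[F] = [F']$ and the element is zero. Thus $\Phi$ has trivial kernel and is injective.

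Most of the substance is carried by earlier results: injectivity reduces to the Corollary that a free BDF module is determined up to isomorphism by its $K$-series, and surjectivity to the explicit formula of Proposition~\ref{prop: k series of a free module}. The only genuine care needed is the bookkeeping in surjectivity—verifying that the sign-split support sets $P$ and $N$ still give rise to BDF families—which I expect to be the main (though minor) obstacle and which is dispatched by the closure of finitely-bounded-below and downward-finite sets under passing to subsets.
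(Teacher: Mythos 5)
Your proposal is correct and follows essentially the same route as the paper's proof: existence of the homomorphism via the universal property together with additivity of $K$-series, injectivity via Proposition~\ref{prop: how to express every element of K} and Corollary~\ref{cor: same hilbert series implies isomorphic}, and surjectivity by realizing a nonnegative-coefficient series as the $K$-series of an explicit free BDF module built from Proposition~\ref{prop: k series of a free module}. Your sign-splitting of the support into $P$ and $N$ is just an explicit instance of the paper's final step (writing an arbitrary series as $a - a'$ with $a_g, a'_g \geq 0$), so the two arguments coincide in substance.
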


\begin{proof}
By the universal property of Grothendieck groups, to define a group homomorphism $K^{\text{BDF}}_{\Gamma , 0}(R) \to \zz[[Q]]\{\Gamma\}$ it suffices to define a map from the commutative monoid of isomorphism classes of free $\Gamma$-graded BDF $R$-modules to $\zz[[Q]]\{\Gamma\}$ that respects direct sum, and by Proposition \ref{prop: additivity of Hilbert series} we have $\mathcal{K}_\Gamma(F \oplus F') = \mathcal{K}_\Gamma(F) +  \mathcal{K}_\Gamma(F')$ so such a map does indeed exist. To show injectivity, we note that if $[F] - [F'] \mapsto 0$ then $\mathcal{K}_\Gamma(F) =  \mathcal{K}_\Gamma(F')$ hence by Corollary \ref{cor: same hilbert series implies isomorphic}, $F \cong F'$ and so $[F] = [F']$ in $K^{\text{BDF}}_{\Gamma , 0}(R)$. 

For surjectivity, let $a = \sum_{g \in \Gamma} a_g \textbf{t}^g \in \zz[[Q]]\{\Gamma\}$ be such that $a_g \geq 0$ for all $g \in \Gamma$ and define $I$ to be the subset of $\Supp_\Gamma(a) \times \nn$ containing the elements $(g , n)$ such that $n \leq a_g$. Define a function $h : I \to \Gamma$ by $(g , n) \mapsto g := h_{(g , n)}$, so that we get a family $(h_i \mid i \in I)$ of elements of $\Gamma$. We claim that this is a BDF family. Indeed, for each $g \in \Gamma$, we have $\# \{ i \in I \mid h_i = g\} = a_g$ and the set $\{ h_i \mid i \in I \}$ is exactly $\Supp_\Gamma(a)$ which is finitely bounded below and downward finite by definition of $\zz[[Q]]\{\Gamma\}$. Therefore by Proposition \ref{prop: certain direct sums of BDF modules are BDF}, $F := \bigoplus_{i \in I} R(-h_i)$ is a BDF $\Gamma$-graded $R$-module. 

Then under our map $K^{\text{BDF}}_{\Gamma , 0}(R) \to \zz[[Q]]\{\Gamma\}$, we have that $[F] \mapsto \mathcal{K}_\Gamma(F)$ and the latter is equal to $\sum_{g \in \Gamma} \# \{ i \in I \mid h_i = g \} \textbf{t}^g$ by Proposition \ref{prop: k series of a free module}. Since $a = \sum_{g \in \Gamma} \# \{ i \in I \mid h_i = g\}$ as well, we have $\mathcal{K}_\Gamma(F) = a$. 
 
Surjectivity follows since every element of $\zz[[Q]]\{\Gamma\}$ can be written as $a - a'$ where $a_g , a'_g \geq 0$. 
\end{proof}


\subsection{The Group $G^{\text{BDF}}_{\Gamma , 0}(R)$ of a Graded Ring}

Define $G^{\text{BDF}}_{\Gamma , 0}(R)$ to be the free abelian group on the set of isomorphism classes of objects in $R\text{-}\bMod_\Gamma^\text{BDF}$, modulo the subgroup generated by expressions of the form $[L] -[M]+ [N]$ whenever there is a short exact sequence $0 \to L \to M \to N \to 0$. We now prove a series of lemmas which culminate in the result that $K^{\text{BDF}}_{\Gamma , 0}(R)$ and $G^{\text{BDF}}_{\Gamma , 0}(R)$ are isomorphic. We begin by defining the map that will become this isomorphism.

\begin{lemma} \label{lem: homomorphism from K to G}
Let $R$ be a PDCF $k$-algebra. There exists a group homomorphism $\Phi:  K^{\text{BDF}}_{\Gamma , 0}(R) \to G^{\text{BDF}}_{\Gamma , 0}(R)$ which sends the isomorphism class $[F]$ in $K^{\text{BDF}}_{\Gamma , 0}(R)$ of a free module $F$ to the residue of $[F]$ in the quotient $G^{\text{BDF}}_{\Gamma , 0}(R)$. 
\end{lemma}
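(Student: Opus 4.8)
The plan is to construct $\Phi$ via the universal property of the Grothendieck group of a commutative monoid recalled at the start of this section. Write $N$ for the commutative monoid of isomorphism classes of objects of $R\text{-}\bPMod_\Gamma^\text{BDF}$ under direct sum, so that $K^{\text{BDF}}_{\Gamma , 0}(R)$ is by definition the Grothendieck group of $N$, equipped with its structure map $i : N \to K^{\text{BDF}}_{\Gamma , 0}(R)$. Since $G^{\text{BDF}}_{\Gamma , 0}(R)$ is an abelian group, it suffices to produce a monoid homomorphism $\varphi : N \to G^{\text{BDF}}_{\Gamma , 0}(R)$; the universal property then yields a unique group homomorphism $\tilde\varphi$ with $\tilde\varphi \circ i = \varphi$, and I would set $\Phi := \tilde\varphi$.

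I would define $\varphi$ by sending the isomorphism class of a projective (hence free, by Theorem \ref{thm: projective implies free}) BDF module $F$ to the residue of its isomorphism class in the quotient $G^{\text{BDF}}_{\Gamma , 0}(R)$. Since every projective BDF module is in particular a BDF module, its isomorphism class is one of the free generators of the free abelian group defining $G^{\text{BDF}}_{\Gamma , 0}(R)$, so this residue makes sense, and it depends only on the isomorphism class of $F$; thus $\varphi$ is well-defined on $N$.

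The only substantive point is that $\varphi$ is a monoid homomorphism. For the identity, the zero module fits into the short exact sequence $0 \to 0 \to 0 \to 0 \to 0$, so its residue is $0$ in $G^{\text{BDF}}_{\Gamma , 0}(R)$, i.e. $\varphi$ preserves the identity. For additivity, given projective BDF modules $F$ and $F'$, the canonical split short exact sequence $0 \to F \to F \oplus F' \to F' \to 0$ (whose maps are graded) yields the relation $[F] - [F \oplus F'] + [F'] = 0$ in $G^{\text{BDF}}_{\Gamma , 0}(R)$, whence the residue of $[F \oplus F']$ equals the sum of the residues of $[F]$ and $[F']$. Hence $\varphi$ is a monoid homomorphism, and the induced $\Phi$ satisfies $\Phi([F]) = \varphi([F]) = $ the residue of $[F]$, as required.

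I do not anticipate any real obstacle: the statement is essentially a formal consequence of the universal property together with the splitting of the direct-sum short exact sequence, which converts the additive structure of $N$ (direct sum) into the additive structure of $G^{\text{BDF}}_{\Gamma , 0}(R)$ (coming from short exact sequences). The only mildly delicate point is keeping straight the two meanings of the symbol $[F]$ — its class in $K^{\text{BDF}}_{\Gamma , 0}(R)$ versus its residue in $G^{\text{BDF}}_{\Gamma , 0}(R)$ — and confirming that these two notions of additivity agree precisely because projective modules are free and direct sums split.
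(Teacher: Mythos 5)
Your proposal is correct and follows essentially the same route as the paper: define the monoid map sending $[F]$ to its residue in $G^{\text{BDF}}_{\Gamma , 0}(R)$, verify additivity via the split exact sequence $0 \to F \to F \oplus F' \to F' \to 0$, and invoke the universal property of the Grothendieck group. The extra check that the identity is preserved is a harmless (and correct) addition the paper leaves implicit.
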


\begin{proof}
There is a function $\varphi$ from the set of isomorphism classes of objects in $R\text{-}\bPMod_\Gamma^\text{BDF}$ to $G^{\text{BDF}}_{\Gamma , 0}(R)$ which sends an isomorphism class $[F]$ to its residue in the quotient $G^{\text{BDF}}_{\Gamma , 0}(R)$. This is well-defined because $R\text{-}\bPMod_\Gamma^\text{BDF}$ is a subcategory of $R\text{-}\bMod_\Gamma^\text{BDF}$, so if modules are isomorphic in the former, they are also isomorphic in the latter. Furthermore, it is a monoid homomorphism because given free modules $F$ and $F'$ in $R\text{-}\bPMod_\Gamma^\text{BDF}$, we have that $\varphi([F]+ [F'])= \varphi([F \oplus F']) = [F \oplus F']$ and since there is an exact sequence $0 \to F \to F \oplus F' \to F' \to 0$ in $R\text{-}\bMod_\Gamma^\text{BDF}$ we have that $[F \oplus F'] = [F] + [F']$ in $G^{\text{BDF}}_{\Gamma , 0}(R)$, and the latter is equal to $\varphi([F]) + \varphi([F'])$. Therefore the universal property of the Grothendieck group of a monoid gives the desired group homomorphism $K^{\text{BDF}}_{\Gamma , 0}(R) \to G^{\text{BDF}}_{\Gamma , 0}(R)$. 
\end{proof}

Next we give a technical lemma, followed by a lemma which allows us to express classes of modules $[M] \in G^{\text{BDF}}_{\Gamma , 0}(R)$ in terms of free modules using $\Gamma$-finite free resolutions. 

\begin{lemma}\label{lem: closure under direct sums of terms from Gamma-finite sequences}
If $\ldots \to M_2 \to M_1 \to M_0 \to 0$ is a $\Gamma$-finite sequence in $R\text{-}\bMod_\Gamma^\text{BDF}$ then for any collection of submodules $Q_n \subseteq M_n$ any subset $U \subseteq \zz_{\geq 0}$, $\bigoplus_{n \in U} Q_n$ is also in $R\text{-}\bMod_\Gamma^\text{BDF}$. 
\end{lemma}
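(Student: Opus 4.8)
The plan is to verify directly that $\bigoplus_{n \in U} Q_n$ satisfies the three defining conditions of a BDF $\Gamma$-graded $R$-module: finitely bounded below, downward finite, and finite-dimensionally graded. First I would observe that, since each $Q_n$ is a graded submodule of $M_n$, it is itself a $\Gamma$-graded $R$-module (and in fact BDF by Lemma \ref{lem: graded submodules and quotients of BDF are BDF}), so the direct sum carries the grading $\left(\bigoplus_{n \in U} Q_n\right)_g = \bigoplus_{n \in U} (Q_n)_g$ and is genuinely a $\Gamma$-graded $R$-module. It is worth stating explicitly that we read the $Q_n$ as \emph{graded} submodules, since this is precisely what makes the direct sum an object of the graded category in the first place.

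For the support conditions, I would note that $\Supp_\Gamma\left(\bigoplus_{n \in U} Q_n\right) = \bigcup_{n \in U} \Supp_\Gamma(Q_n)$, and since $Q_n \subseteq M_n$ we have $\Supp_\Gamma(Q_n) \subseteq \Supp_\Gamma(M_n)$. Hence the whole support is contained in $\bigcup_{n=0}^\infty \Supp_\Gamma(M_n)$, which is finitely bounded below and downward finite by condition (GF1) of $\Gamma$-finiteness. By Lemma \ref{lem: closure of fbb and df sets under taking subsets}, any subset of a finitely bounded below and downward finite set retains both properties, so $\bigoplus_{n \in U} Q_n$ is finitely bounded below and downward finite.

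The crux is finite-dimensional gradedness, and this is exactly where (GF2) is needed to tame the potentially infinite direct sum. Fixing $g \in \Gamma$, condition (GF2) provides $N \in \nn$ with $(M_n)_g = 0$ for all $n \geq N$; since $(Q_n)_g \subseteq (M_n)_g$, this forces $(Q_n)_g = 0$ for $n \geq N$ as well. Therefore $\left(\bigoplus_{n \in U} Q_n\right)_g = \bigoplus_{n \in U,\, n < N} (Q_n)_g$ is a \emph{finite} direct sum, and each summand is a subspace of the finite-dimensional space $(M_n)_g$ (finite-dimensional because $M_n$ is BDF). Hence every graded piece is finite-dimensional, so $\bigoplus_{n \in U} Q_n$ is finite-dimensionally graded. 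Combining the three properties yields the claim.

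I do not anticipate a serious obstacle: the argument is a straightforward unwinding of the definitions. The one conceptual point is that (GF2) is exactly the hypothesis converting an infinite direct sum into one with finite-dimensional graded pieces — without it the statement would fail — so the proof should foreground that the finite truncation $n < N$ is what makes each $(\,\cdot\,)_g$ finite-dimensional.
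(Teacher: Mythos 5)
Your proof is correct and follows essentially the same route as the paper's: both use (GF1) together with Lemma \ref{lem: closure of fbb and df sets under taking subsets} to handle the support conditions, and (GF2) with the inclusion $(Q_n)_g \subseteq (M_n)_g$ to reduce each graded piece to a finite direct sum of finite-dimensional spaces. Your explicit remark that the $Q_n$ must be read as \emph{graded} submodules (so that the direct sum is a legitimate object of the graded category) is a point the paper leaves implicit, and is a welcome clarification rather than a deviation.
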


\begin{proof}
First of all if $Q_n \subseteq M_n$ then we have that $\Supp_\Gamma(\bigoplus_{n \in U} Q_n) \subseteq \Supp_\Gamma(\bigoplus_{n \in U} M_n)$. Furthermore we also have $\Supp_\Gamma(\bigoplus_{n \in U} M_n) \subseteq \bigcup_{n = 0}^\infty \Supp_\Gamma((M_n)_g)$ and the latter is finitely bounded below and downward finite by (GF1), hence the former is as well by Lemma \ref{lem: closure of fbb and df sets under taking subsets}. Furthermore, each $Q_n$ is finite-dimensionally graded, hence for all $g \in \Gamma$, $(Q_n)_g$ is a finite-dimensional $k$-vector space. By definition $(\bigoplus_{n \in U} (Q_n)_g = \bigoplus_{n \in U} (Q_n)_g$, but by (GF2) there are only finitely many $n \in \zz_\geq 0$ for which $(M_n)_g$ is nonzero hence the same is true for $(Q_n)_g$ since $(Q_n)_g \subseteq (M_n)_g$. Therefore this direct sum is also finite-dimensional and we obtain that $\bigoplus_{n \in U} Q_n$ is finite-dimensionally graded as desired. 
\end{proof}

\begin{lemma}\label{lem: expression of every BDF module as a sum of frees}
If $M$ is a BDF $\Gamma$-graded $R$-module and $F_\bullet$ is a $\Gamma$-finite free resolution of $M$, then in $G^{\text{BDF}}_{\Gamma , 0}(R)$ we have $[M] = [ \bigoplus_{n = 0}^\infty F_{2n}]- [\bigoplus_{n = 0}^\infty F_{2n + 1}]$.
\end{lemma}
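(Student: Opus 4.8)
The plan is to mimic the classical telescoping argument that computes $[M]$ from a \emph{finite} free resolution, but to circumvent the non-termination of our resolution by bundling the syzygy modules into two infinite direct sums whose BDF-ness is guaranteed by $\Gamma$-finiteness. First I would introduce the syzygy modules $Z_n := \ker\varphi_n \subseteq F_n$ for $n \geq 0$. By Proposition \ref{prop: ker and im are graded} each $Z_n$ is a graded submodule of the BDF module $F_n$, hence is itself BDF by Lemma \ref{lem: graded submodules and quotients of BDF are BDF}, and exactness of $F_\bullet$ gives $\im\varphi_n = \ker\varphi_{n-1} = Z_{n-1}$. This produces the short exact sequences
\begin{equation*}
0 \to Z_0 \to F_0 \xrightarrow{\varphi_0} M \to 0, \qquad 0 \to Z_n \to F_n \xrightarrow{\varphi_n} Z_{n-1} \to 0 \quad (n \geq 1)
\end{equation*}
in $R\text{-}\bMod_\Gamma^\text{BDF}$.

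The key step is to take the direct sum of these sequences over the even and over the odd indices separately. Since the coproduct of a family of short exact sequences of $R$-modules is again exact, and since $F_\bullet$ is $\Gamma$-finite, Lemma \ref{lem: closure under direct sums of terms from Gamma-finite sequences} (applied once with $Q_n = Z_n$ and once with $Q_n = F_n$) guarantees that every infinite direct sum appearing below is a BDF module, so the following are honest short exact sequences in our category:
\begin{equation*}
0 \to \bigoplus_{n=0}^\infty Z_{2n} \to \bigoplus_{n=0}^\infty F_{2n} \to M \oplus \bigoplus_{n=0}^\infty Z_{2n+1} \to 0,
\end{equation*}
\begin{equation*}
0 \to \bigoplus_{n=0}^\infty Z_{2n+1} \to \bigoplus_{n=0}^\infty F_{2n+1} \to \bigoplus_{n=0}^\infty Z_{2n} \to 0.
\end{equation*}
Here the even sum uses the $n=0$ sequence (contributing the quotient $M$) together with the sequences indexed by $n = 2, 4, \dots$ (whose quotients are the odd-index syzygies $Z_1, Z_3, \dots$), while the odd sum uses $n = 1, 3, \dots$ (whose quotients are the even-index syzygies).

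Passing to $G^{\text{BDF}}_{\Gamma , 0}(R)$ and using additivity of the class function on short exact sequences together with $[X \oplus Y] = [X] + [Y]$, these two sequences yield
\begin{align*}
\Big[\bigoplus_{n=0}^\infty F_{2n}\Big] &= \Big[\bigoplus_{n=0}^\infty Z_{2n}\Big] + [M] + \Big[\bigoplus_{n=0}^\infty Z_{2n+1}\Big], \\
\Big[\bigoplus_{n=0}^\infty F_{2n+1}\Big] &= \Big[\bigoplus_{n=0}^\infty Z_{2n+1}\Big] + \Big[\bigoplus_{n=0}^\infty Z_{2n}\Big].
\end{align*}
Subtracting the second relation from the first, the four syzygy classes cancel in pairs, leaving exactly $[\bigoplus_{n=0}^\infty F_{2n}] - [\bigoplus_{n=0}^\infty F_{2n+1}] = [M]$, as desired.

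The main obstacle is not the bookkeeping of the telescoping, which is formal once the sequences are assembled, but rather the verification that the infinite direct sums of syzygy modules are genuine BDF objects of the category, so that the additive relations in $G^{\text{BDF}}_{\Gamma , 0}(R)$ may legitimately be invoked; this is precisely where $\Gamma$-finiteness of the resolution is indispensable, via Lemma \ref{lem: closure under direct sums of terms from Gamma-finite sequences}. One should also confirm that taking the coproduct of the family of short exact sequences preserves exactness, but this holds at the level of $R$-modules because coproducts are exact there, and the BDF-ness of all three resulting terms then upgrades each to a short exact sequence in $R\text{-}\bMod_\Gamma^\text{BDF}$.
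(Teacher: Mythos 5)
Your proof is correct, and at its core it is the same telescoping argument as the paper's: both hinge on bundling the even- and odd-indexed terms into infinite direct sums and invoking Lemma \ref{lem: closure under direct sums of terms from Gamma-finite sequences} to certify those sums as BDF, which is exactly where $\Gamma$-finiteness enters. The difference is in the packaging. The paper forms a single graded map $\varphi = \bigoplus_{n=0}^\infty \varphi_{2n} : \bigoplus_n F_{2n} \to M \oplus \bigoplus_n F_{2n+1}$, writes down the resulting four-term exact sequence with kernel $\bigoplus_n \ker\varphi_{2n}$ and cokernel $\bigoplus_n F_{2n+1}/\im\varphi_{2n+2}$, reads off the alternating-sum relation in $G^{\text{BDF}}_{\Gamma,0}(R)$, and then cancels kernel against cokernel via the isomorphism $F_{2n+1}/\ker\varphi_{2n+1} \cong \im\varphi_{2n+1} = \ker\varphi_{2n}$. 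You instead split the resolution into its syzygy short exact sequences and take direct sums over each parity class, obtaining two short exact sequences whose classes you subtract; the paper's kernel--cokernel identification is precisely your odd-indexed syzygy sequence in disguise. Your route has a small technical advantage: the defining relations of $G^{\text{BDF}}_{\Gamma,0}(R)$ come only from short exact sequences, so the paper's alternating-sum relation for a four-term exact sequence strictly requires splitting it into two short exact sequences anyway — a step your argument performs explicitly from the start. Conversely, the paper's version requires checking kernel and cokernel of only one map. Your attention to the two side conditions (exactness of direct sums of short exact sequences, and BDF-ness of the summed syzygy modules via $Q_n = Z_n \subseteq F_n$ in the closure lemma) covers everything needed.
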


\begin{proof}
If $\ldots \to F_1 \overset{\varphi_1}{\to} F_0 \overset{\varphi_0}{\to} M \to 0$ is a $\Gamma$-finite free resolution of $M$, consider the map $\varphi := \bigoplus_{n = 0}^\infty \varphi_{2n} : \bigoplus_{n = 0}^\infty F_{2n} \to M \oplus \bigoplus_{n = 0}^\infty F_{2n + 1}$. Note that both the domain and codomain of $\varphi$ are BDF $\Gamma$-graded $R$-modules by Lemma \ref{lem: closure under direct sums of terms from Gamma-finite sequences}. The kernel of this map is $\bigoplus_{n = 0}^\infty \ker \varphi_{2n}$ and the cokernel is $\bigoplus_{n = 0}^\infty F_{2n + 1} / \im \varphi_{2n + 2}$. Therefore we have the following exact sequence. 
\begin{equation*}
0 \to \bigoplus_{n = 0}^\infty \ker \varphi_{2n} \to \bigoplus_{n = 0}^\infty F_{2n} \overset{\varphi}{\to} M \oplus \bigoplus_{n = 0}^\infty F_{2n + 1} \to \bigoplus_{n = 0}^\infty F_{2n + 1} / \im \varphi_{2n + 2} \to 0
\end{equation*}
So in $G^{\text{BDF}}_{\Gamma , 0}(R)$ we have 
\begin{equation}\label{eq: grothendieck group sum}
[\bigoplus_{n = 0}^\infty F_{2n + 1} / \im \varphi_{2n + 2}] - [M \oplus \bigoplus_{n = 0}^\infty F_{2n + 1}] + [ \bigoplus_{n = 0}^\infty F_{2n}] - [\bigoplus_{n = 0}^\infty \ker \varphi_{2n}] = 0. 
\end{equation}
However, we compute: 
\begin{equation*}
\bigoplus_{n = 0}^\infty F_{2n + 1} / \im \varphi_{2n + 2} = \bigoplus_{n = 0}^\infty F_{2n + 1} / \ker \varphi_{2n + 1} \cong \bigoplus_{n = 0}^\infty \im \varphi_{2n + 1} = \bigoplus_{n = 0}^\infty \ker \varphi_{2n}. 
\end{equation*}
Therefore the first and last terms of Equation \ref{eq: grothendieck group sum} cancel giving $[M \oplus \bigoplus_{n = 0}^\infty F_{2n + 1}] = [ \bigoplus_{n = 0}^\infty F_{2n}]$. The LHS of this equation is equal to $[M] \oplus [\bigoplus_{n = 0}^\infty F_{2n + 1}]$, therefore subtraction yields $[M] =  [ \bigoplus_{n = 0}^\infty F_{2n}]- [\bigoplus_{n = 0}^\infty F_{2n + 1}]$. 

\end{proof}

Finally we arrive at the isomorphism between the two Grothendieck groups. 

\begin{theorem}\label{thm: two defs of grothendieck groups are the same}
Let $R$ be a PDCF $\Gamma$-graded $k$-algebra with the property that $R_+$ is generated by a homogeneous regular sequence. The group homomorphism $\Phi: K^{\text{BDF}}_{\Gamma , 0}(R) \to G^{\text{BDF}}_{\Gamma , 0}(R)$ described in Lemma \ref{lem: homomorphism from K to G} is an isomorphism. 
\end{theorem}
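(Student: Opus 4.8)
The plan is to avoid constructing an explicit inverse to $\Phi$ directly (which would force us to prove that the alternating class of a $\Gamma$-finite free resolution is independent of the resolution chosen) and instead to factor $\Phi$ through the $K$-series map into $\zz[[Q]]\{\Gamma\}$. First I would define a group homomorphism $\chi : G^{\text{BDF}}_{\Gamma , 0}(R) \to \zz[[Q]]\{\Gamma\}$ by sending the class $[M]$ of a BDF module to $\mathcal{K}_\Gamma(M)$. To see this is well-defined, I extend $[M] \mapsto \mathcal{K}_\Gamma(M)$ linearly to the free abelian group on isomorphism classes and observe that each defining relation $[L]-[M]+[N]$ coming from a short exact sequence $0 \to L \to M \to N \to 0$ is sent to $\mathcal{K}_\Gamma(L) - \mathcal{K}_\Gamma(M) + \mathcal{K}_\Gamma(N)$, which vanishes by the additivity of $K$-series (Proposition \ref{prop: additivity of Hilbert series}). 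Hence $\chi$ descends to the quotient $G^{\text{BDF}}_{\Gamma , 0}(R)$.

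Next I would establish injectivity of $\Phi$. On a free BDF module $F$ the composite $\chi \circ \Phi$ sends the class $[F]$ to $\chi([F]) = \mathcal{K}_\Gamma(F)$, which is precisely the image of $[F]$ under the isomorphism $K^{\text{BDF}}_{\Gamma , 0}(R) \overset{\sim}{\to} \zz[[Q]]\{\Gamma\}$ of Theorem \ref{thm: isomorphism of projective grothendieck group and group ring}. Since by Proposition \ref{prop: how to express every element of K} the classes of free modules generate $K^{\text{BDF}}_{\Gamma , 0}(R)$, the two homomorphisms $\chi \circ \Phi$ and the isomorphism of Theorem \ref{thm: isomorphism of projective grothendieck group and group ring} agree on a generating set and therefore agree everywhere. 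Thus $\chi \circ \Phi$ is an isomorphism, and in particular $\Phi$ is injective.

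For surjectivity I would invoke the regular-sequence hypothesis, which is exactly what powers the BDF Hilbert Syzygy Theorem. It suffices to show that the residue $[M]$ of each BDF module lies in $\im \Phi$, since these residues generate $G^{\text{BDF}}_{\Gamma , 0}(R)$. Given a BDF module $M$, Theorem \ref{thm: BDF modules have gamma finite free resolutions} provides a $\Gamma$-finite free resolution $F_\bullet$. Setting $P = \bigoplus_{n=0}^\infty F_{2n}$ and $P' = \bigoplus_{n=0}^\infty F_{2n+1}$, Lemma \ref{lem: closure under direct sums of terms from Gamma-finite sequences}, together with the fact that a direct sum of free modules is free with a homogeneous basis, shows that $P$ and $P'$ are free BDF modules, so $[P] - [P']$ is a genuine element of $K^{\text{BDF}}_{\Gamma , 0}(R)$. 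Applying Lemma \ref{lem: expression of every BDF module as a sum of frees} gives $[M] = [P] - [P'] = \Phi([P] - [P'])$ in $G^{\text{BDF}}_{\Gamma , 0}(R)$, so $[M] \in \im \Phi$. Hence $\Phi$ is surjective, and combined with injectivity it is an isomorphism.

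I expect the main obstacle along this route to be organizational rather than computational: one must confirm that the infinite even and odd direct sums $P$ and $P'$ genuinely land back in $R\text{-}\bMod_\Gamma^\text{BDF}$, so that their classes exist in $K^{\text{BDF}}_{\Gamma , 0}(R)$, and this is precisely where the $\Gamma$-finiteness conditions (GF1) and (GF2), and the regular-sequence hypothesis guaranteeing a $\Gamma$-finite resolution at all, do the essential work. The point I would verify most carefully is that each graded piece of $P$ and $P'$ is finite-dimensional, i.e. that for each fixed $g \in \Gamma$ only finitely many $F_n$ contribute in degree $g$; this is exactly (GF2) and is subsumed by Lemma \ref{lem: closure under direct sums of terms from Gamma-finite sequences}. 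The advantage of factoring through $\chi$ is that independence of $[P]-[P']$ from the chosen resolution is never needed as a separate step: it is a free consequence of injectivity of $\Phi$ together with the identity $[M] = \Phi([P]-[P'])$.
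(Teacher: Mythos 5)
Your proposal is correct and takes essentially the same route as the paper's own proof: surjectivity via the BDF Hilbert Syzygy Theorem combined with Lemma \ref{lem: expression of every BDF module as a sum of frees} (including the same appeal to Lemma \ref{lem: closure under direct sums of terms from Gamma-finite sequences} to keep the even and odd direct sums inside the BDF category), and injectivity by factoring through the $K$-series map $G^{\text{BDF}}_{\Gamma , 0}(R) \to \zz[[Q]]\{\Gamma\}$, which is exactly the paper's $\Psi$, together with Theorem \ref{thm: isomorphism of projective grothendieck group and group ring}. The only cosmetic difference is that the paper packages the injectivity step as exhibiting $\Theta \circ \Psi$ as an explicit left inverse of $\Phi$, whereas you note that $\chi \circ \Phi$ agrees with the known isomorphism on the generating classes $[F]$; these are the same argument.
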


\begin{proof}
Given $[M] \in G^{\text{BDF}}_{\Gamma , 0}(R)$, by the BDF Hilbert Syzygy Theorem (Theorem \ref{thm: BDF modules have gamma finite free resolutions}), there exists a BDF $\Gamma$-finite free resolution $\ldots \to F_1 \to F_0 \to M \to 0$ of $M$, and then by Lemma \ref{lem: expression of every BDF module as a sum of frees} we have that $[M] =  [F] - [F']$ in $G^{\text{BDF}}_{\Gamma , 0}(R)$, where $F = \bigoplus_{n = 0}^\infty F_{2n}$ and $F' = \bigoplus_{n = 0}^\infty F_{2n + 1}$. Since $F$ and $F'$ are both free, $[F] - [F']$ is an element of $K^{\text{BDF}}_{\Gamma , 0}(R)$ and we have that $\Phi(  [ F ] - [F'] ) = [M]$ so $\Phi$ is surjective. 

On the other hand, because $K$-series are additive (Corollary \ref{prop: additivity of Hilbert series}), the function which sends the isomorphism class of a BDF $\Gamma$-graded $R$-module $M$ to its $K$-series $\mathcal{K}_\Gamma(M)$ defines, via the universal property of $G^{\text{BDF}}_{\Gamma , 0}(R)$, a group homomorphism $\Psi: G^{\text{BDF}}_{\Gamma , 0}(R) \to \zz[[Q]]\{\Gamma\}$. Composing this with the isomorphism $\Theta: \zz[[Q]]\{\Gamma\} \to K^{\text{BDF}}_{\Gamma , 0}(R)$ of Theorem \ref{thm: isomorphism of projective grothendieck group and group ring}, we obtain a group homomorphism $G^{\text{BDF}}_{\Gamma , 0}(R) \to K^{\text{BDF}}_{\Gamma , 0}(R)$ which is a left-inverse of $\Phi$, since the composition $\Theta \circ \Psi \circ \Phi$ sends $[F] \in K^{\text{BDF}}_{\Gamma , 0}(R)$ to $[F] \in G^{\text{BDF}}_{\Gamma , 0}(R)$ to $\mathcal{K}_\Gamma([F]) \in \zz[[Q]]\{\Gamma\}$ back to $[F] \in K^{\text{BDF}}_{\Gamma , 0}(R)$. Since $\Phi$ has a left inverse, it is is injective. 
\end{proof}

\begin{definition}
We call the group $K^{\text{BDF}}_{\Gamma , 0}(R) \cong G^{\text{BDF}}_{\Gamma , 0}(R)$ the \emph{BDF $\Gamma$-graded Grothendieck group} of $R$. 
\end{definition}


\section{Ring Structure on the Grothendieck Group}

In this section we define the product which makes the Grothendieck group into a ring. We begin with two preliminary results which lead to the definition of this product on $K^{\text{BDF}}_{\Gamma , 0}(R)$, then we give a BDF version of Serre's formula to define the product on $G^{\text{BDF}}_{\Gamma , 0}(R)$.

\begin{lemma}\label{lem: sum of bdf families is bdf}
If $(g_i \mid i \in I)$ and $(h_j \mid j \in J)$ are BDF families of elements of $\Gamma$ then $( g_i + h_j \mid (i , j) \in I \times J )$ is also a BDF family. 
\end{lemma}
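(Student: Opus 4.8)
The plan is to verify the three defining conditions of a BDF family directly for the family $(g_i + h_j \mid (i,j) \in I \times J)$, using the two input families and the lemmas already established about finitely bounded below and downward finite sets. First I would observe that the underlying set $\{ g_i + h_j \mid (i,j) \in I \times J \}$ is contained in the sumset $\{ g_i \mid i \in I \} + \{ h_j \mid j \in J \}$. Since both of these sets are finitely bounded below and downward finite by hypothesis, Lemma \ref{lem: sum of fbb df is too} tells us that their sum is also finitely bounded below and downward finite, and then Lemma \ref{lem: closure of fbb and df sets under taking subsets} gives the same conclusion for the subset $\{ g_i + h_j \mid (i,j) \in I \times J \}$. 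This handles the downward finiteness and finite-boundedness-below requirements at once.

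The remaining condition is that for each fixed $u \in \Gamma$, the fiber $\{ (i,j) \in I \times J \mid g_i + h_j = u \}$ is finite. This is the step I expect to be the main (though still mild) obstacle, since it requires combining the finite-fiber property of each family with the finiteness provided by Lemma \ref{lem: finitely bounded below and downward finite sets have the finite sum property}. Here is how I would argue it. Applying Lemma \ref{lem: finitely bounded below and downward finite sets have the finite sum property} to $V = \{ g_i \mid i \in I \}$, $W = \{ h_j \mid j \in J \}$, and the element $u$ (using $\leq_Q$ reflexivity so that $g_i + h_j = u$ in particular satisfies $g_i + h_j \leq_Q u$), I get that the sets $U_1 = \{ g \in V \mid \exists h \in W,\ g + h \leq_Q u \}$ and $U_2 = \{ h \in W \mid \exists g \in V,\ g + h \leq_Q u \}$ are both finite. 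Consequently only finitely many distinct values of $g_i$ and of $h_j$ can possibly appear in a pair summing to $u$.

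I would then translate these finitely many \emph{values} back to finitely many \emph{indices}. Because $(g_i \mid i \in I)$ is a BDF family, the index set $\{ i \in I \mid g_i \in U_1 \}$ is a finite union (over the finitely many values in $U_1$) of the finite fibers $\{ i \in I \mid g_i = g \}$, hence finite; likewise $\{ j \in J \mid h_j \in U_2 \}$ is finite. Any pair $(i,j)$ with $g_i + h_j = u$ must have $i$ in the first set and $j$ in the second, so the fiber over $u$ is contained in a finite product of finite sets and is therefore finite. This verifies the last condition, and together with the first paragraph establishes that $(g_i + h_j \mid (i,j) \in I \times J)$ is a BDF family, completing the proof.
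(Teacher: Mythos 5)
Your proposal is correct and follows essentially the same route as the paper: both parts use Lemma \ref{lem: sum of fbb df is too} (with Lemma \ref{lem: closure of fbb and df sets under taking subsets}) for the underlying set, and Lemma \ref{lem: finitely bounded below and downward finite sets have the finite sum property} plus the finite-fiber hypothesis of each family for the fibers. The only cosmetic difference is that you bound the fiber over $u$ by a product of two finite index sets obtained from the projections $U_1$ and $U_2$, whereas the paper writes it as a finite union, over the finitely many pairs $(g,h)$ with $g+h=u$, of products of fibers; the two bookkeeping schemes are interchangeable.
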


\begin{proof}
Let $U = \{g_i \mid i \in I\}$ and $V = \{h_j \mid j \in J\}$. We have that $\{ g_i + h_j \mid (i , j) \in I \times J \} = U + V$ and because $U$ and $V$ are finitely bounded below and downward finite by hypothesis, by Lemma \ref{lem: sum of fbb df is too} the LHS is too. Given any $c \in \Gamma$ the set $T = \{ (g , h) \in U \times V \mid g + h = c \}$ is finite by Lemma \ref{lem: finitely bounded below and downward finite sets have the finite sum property}. Then we compute: 
\begin{align*}
\{ (i , j) \in I \times J \mid g_i + h_j = c \} &= \bigcup_{(g , h) \in T} \{ (i , j) \in I \times J \mid (g_i , h_j ) = (g , h) \} 
\\&= \bigcup_{(g , h) \in T} \{ i \in I \mid g_i = g \} \times \{ j \in J \mid h_j = h \}. 
\end{align*}
By hypothesis, the sets $\{ i \in I \mid g_i = g \}$ and $\{ j \in J \mid h_j = h \}$ are finite, and since $T$ is finite, this union is finite as well. 
\end{proof}

\begin{proposition}
If $F$ and $F'$ are free BDF $\Gamma$-graded $R$-modules then $\mathcal{K}_\Gamma(F \otimes_R F') = \mathcal{K}_\Gamma(F) \mathcal{K}_\Gamma(F')$
\end{proposition}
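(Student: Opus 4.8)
The plan is to reduce everything to the explicit formula for the $K$-series of a free module (Proposition \ref{prop: k series of a free module}) together with the definition of multiplication in $\zz[[Q]]\{\Gamma\}$. First I would use the fact that a free BDF $\Gamma$-graded $R$-module has a homogeneous basis (the corollary following Theorem \ref{thm: projective implies free}) to write $F = \bigoplus_{i \in I} R(-g_i)$ and $F' = \bigoplus_{j \in J} R(-h_j)$ for BDF families $(g_i \mid i \in I)$ and $(h_j \mid j \in J)$ of elements of $\Gamma$.

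Next I would identify $F \otimes_R F'$ as a free BDF module. Since tensor product commutes with direct sums and there is a graded isomorphism $R(-g) \otimes_R M \cong M(-g)$ (the same one already used in Lemma \ref{lem: hilbert series of free tensor k}), taking $M = R(-h_j)$ gives $R(-g_i) \otimes_R R(-h_j) \cong R(-(g_i + h_j))$ compatibly with the grading of Proposition \ref{prop: tensor product grading}. Hence $F \otimes_R F' \cong \bigoplus_{(i,j) \in I \times J} R(-(g_i + h_j))$. By Lemma \ref{lem: sum of bdf families is bdf} the family $(g_i + h_j \mid (i,j) \in I \times J)$ is itself BDF, so this really is a free BDF module and Proposition \ref{prop: k series of a free module} applies, yielding $\mathcal{K}_\Gamma(F \otimes_R F') = \sum_{u \in \Gamma} \#\{ (i,j) \in I \times J \mid g_i + h_j = u \}\, \textbf{t}^u$.

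Finally I would compute the right-hand side and match coefficients. By Proposition \ref{prop: k series of a free module} we have $\mathcal{K}_\Gamma(F) = \sum_{g \in \Gamma} a_g \textbf{t}^g$ with $a_g = \#\{ i \in I \mid g_i = g \}$, and likewise $\mathcal{K}_\Gamma(F') = \sum_{h \in \Gamma} b_h \textbf{t}^h$ with $b_h = \#\{ j \in J \mid h_j = h \}$. Both series lie in $\zz[[Q]]\{\Gamma\}$, so their product is well-defined (the defining sums being finite by Lemma \ref{lem: finitely bounded below and downward finite sets have the finite sum property}), with coefficient $\sum_{g + h = u} a_g b_h$ at each $u \in \Gamma$. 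The disjoint-union decomposition $\{ (i,j) \mid g_i + h_j = u \} = \bigsqcup_{g + h = u} \{ i \in I \mid g_i = g \} \times \{ j \in J \mid h_j = h \}$ then gives $\#\{ (i,j) \mid g_i + h_j = u \} = \sum_{g + h = u} a_g b_h$, so the two series agree term by term and the claimed equality follows.

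I do not expect a serious obstacle; the only points needing care are verifying that the isomorphism $F \otimes_R F' \cong \bigoplus R(-(g_i + h_j))$ is genuinely graded with respect to the tensor grading of Proposition \ref{prop: tensor product grading}, and confirming that the combinatorial partition of index pairs is exactly the convolution defining multiplication in $\zz[[Q]]\{\Gamma\}$. Both are routine given the machinery already in place.
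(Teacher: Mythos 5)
Your proposal is correct and follows essentially the same route as the paper's own proof: decompose $F \otimes_R F'$ as $\bigoplus_{(i,j) \in I \times J} R(-(g_i + h_j))$, invoke Lemma \ref{lem: sum of bdf families is bdf} and Proposition \ref{prop: k series of a free module}, and match coefficients via the disjoint-union bijection $\{(i,j) \mid g_i + h_j = u\} = \bigsqcup_{g+h=u} \{i \mid g_i = g\} \times \{j \mid h_j = h\}$. Your added care about the gradedness of the isomorphism $R(-g_i) \otimes_R R(-h_j) \cong R(-(g_i+h_j))$ is a welcome refinement of a step the paper treats as immediate, but it does not change the argument.
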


\begin{proof}
If $F = \bigoplus_{i \in I} R(-g_i)$ and $F = \bigoplus_{j \in J} R(-h_j)$, then because tensor product commutes with direct sum, we can compute 
\begin{equation*}
F \otimes_R F' = \left( \bigoplus_{i \in I} R(-g_i) \right) \otimes_R \left( \bigoplus_{j \in J} R(-h_j) \right) \cong \bigoplus_{i \in I , j \in J} R(-g_i) \otimes_R R(-h_j) \cong \bigoplus_{i \in I , j \in J} R(-(g_i + h_j))
\end{equation*}

The family $( g_i + h_j \mid (i , j) \in I \times J )$ of elements of $\Gamma$ is BDF by Lemma \ref{lem: sum of bdf families is bdf}, so by Proposition \ref{prop: k series of a free module}, we have that $\mathcal{K}_\Gamma( F \otimes_R F' ) = \sum_{g \in \Gamma} \# \{ (i, j) \in I \times J \mid g_i + h_j = g \} \textbf{t}^g$. On the other hand, again by Proposition \ref{prop: k series of a free module} we also have that $\mathcal{K}_\Gamma( F) = \sum_{g \in \Gamma} \# \{ i \in I \mid g_i = g \} \textbf{t}^g$ and that $\mathcal{K}_\Gamma( F' ) = \sum_{g \in \Gamma} \# \{ j \in J \mid h_j = g \} \textbf{t}^g$, so by definition $\mathcal{K}_\Gamma(F) \mathcal{K}_\Gamma(F') = \sum_{g \in \Gamma} \left( \sum_{a + b = g} \# \{ j \in J \mid g_i = a \} \cdot \# \{ j \in J \mid h_j = b \} \right) \textbf{t}^g$. But there is an obvious bijection $\{ (i , j) \in I \times J \mid g_i + h_j = g \} \leftrightarrow \bigsqcup_{a + b = g} \{ i \in I \mid g_i = a \} \times \{ j \in J \mid h_j = b \}$, so we obtain the desired result.   
\end{proof}

\begin{corollary}\label{cor: grothendieck ring}
The binary operation $[F] , [F'] \mapsto [F \otimes_R F']$ gives a well-defined commutative ring structure on $K^{\text{BDF}}_{\Gamma , 0}(R)$ with multiplicative identity equal to $[R]$ and the isomorphism $K^{\text{BDF}}_{\Gamma , 0}(R) \to \zz[[Q]]\{\Gamma \}$ given in Theorem \ref{thm: isomorphism of projective grothendieck group and group ring} is a ring map. 
\end{corollary}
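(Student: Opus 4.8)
The plan is to avoid checking the ring axioms by hand and instead to transport the commutative ring structure of $\zz[[Q]]\{\Gamma\}$ along the group isomorphism of Theorem \ref{thm: isomorphism of projective grothendieck group and group ring}, which I will denote $\Xi : K^{\text{BDF}}_{\Gamma , 0}(R) \to \zz[[Q]]\{\Gamma\}$, $[F] \mapsto \mathcal{K}_\Gamma(F)$. Recall from the discussion following Lemma \ref{lem: product of BDF hilbert series is BDF} that $\zz[[Q]]\{\Gamma\}$ is already a commutative ring with identity $\textbf{t}^0$. Since $\Xi$ is a group isomorphism, defining $\alpha \cdot \beta := \Xi^{-1}\!\left( \Xi(\alpha)\, \Xi(\beta) \right)$ automatically endows $K^{\text{BDF}}_{\Gamma , 0}(R)$ with a commutative ring structure for which $\Xi$ is, by construction, a ring isomorphism. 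Associativity, commutativity, and distributivity are inherited for free from $\zz[[Q]]\{\Gamma\}$, so no computation is required for the ring axioms themselves.

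Having set up the structure this way, the remaining work is to identify the transported product and the identity concretely. For the identity, I would compute $\Xi([R]) = \mathcal{K}_\Gamma(R) = \mathcal{H}_\Gamma(R)\,\mathcal{H}_\Gamma(R)^{-1} = 1 = \textbf{t}^0$; since $\textbf{t}^0$ is the identity of $\zz[[Q]]\{\Gamma\}$ and $\Xi$ is a ring isomorphism, $[R]$ is the multiplicative identity of $K^{\text{BDF}}_{\Gamma , 0}(R)$. To show the transported product is the tensor product on classes of free modules, let $F, F'$ be free BDF $\Gamma$-graded $R$-modules. By the preceding Proposition, $\mathcal{K}_\Gamma(F \otimes_R F') = \mathcal{K}_\Gamma(F)\,\mathcal{K}_\Gamma(F')$, which says precisely $\Xi([F \otimes_R F']) = \Xi([F])\,\Xi([F'])$. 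Here $F \otimes_R F'$ is again free BDF --- in the proof of that Proposition it is exhibited as $\bigoplus_{i,j} R(-(g_i + h_j))$, with $(g_i + h_j)$ a BDF family by Lemma \ref{lem: sum of bdf families is bdf} and hence free BDF by Proposition \ref{prop: certain direct sums of BDF modules are BDF} --- so $[F \otimes_R F']$ is a genuine element of $K^{\text{BDF}}_{\Gamma , 0}(R)$. Applying $\Xi^{-1}$ then gives $[F \otimes_R F'] = \Xi^{-1}(\Xi([F])\,\Xi([F'])) = [F] \cdot [F']$.

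Finally I would note that this settles well-definedness: by Proposition \ref{prop: how to express every element of K} every element of $K^{\text{BDF}}_{\Gamma , 0}(R)$ is a difference $[F] - [F']$ of classes of free modules, and the transported product, being a ring multiplication, is biadditive and therefore determined by its values on such classes. Since those values coincide with $[F \otimes_R F']$, the naive tensor-product rule $[F], [F'] \mapsto [F \otimes_R F']$ extends uniquely and consistently to all of $K^{\text{BDF}}_{\Gamma , 0}(R)$ and agrees with the manifestly well-defined transported product. The main subtlety --- and the reason I route through $\Xi$ rather than defining the product directly --- is exactly this well-definedness: defining $[F] \cdot [F'] = [F \otimes_R F']$ on generators and extending to formal differences would a priori require checking independence of the chosen representatives of each difference, whereas transporting along the established isomorphism makes well-definedness and all ring axioms automatic and reduces the entire proof to the single identity $\mathcal{K}_\Gamma(F \otimes_R F') = \mathcal{K}_\Gamma(F)\,\mathcal{K}_\Gamma(F')$ already in hand.
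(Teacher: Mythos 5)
Your proposal is correct and follows essentially the same route as the paper: the paper's proof also transports the ring structure along the $K$-series group isomorphism, using the preceding Proposition $\mathcal{K}_\Gamma(F \otimes_R F') = \mathcal{K}_\Gamma(F)\,\mathcal{K}_\Gamma(F')$ to conclude well-definedness of the tensor-product operation, and identifies $[R]$ as the identity via $\mathcal{K}_\Gamma(R) = 1$. Your write-up merely makes explicit the biadditivity/representative-independence argument that the paper leaves implicit in the phrase ``it follows that this binary operation is well-defined.''
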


\begin{proof}
In fact because the map $K^{\text{BDF}}_{\Gamma , 0}(R) \to \zz[[Q]]\{\Gamma \}$ given by $K$-series is a group isomorphism, it follows that this binary operation is well-defined and indeed gives a commutative ring structure on $K^{\text{BDF}}_{\Gamma , 0}(R)$. Since $\mathcal{K}_\Gamma(R) = 1$, $[R]$ is the multiplicative identity of $K^{\text{BDF}}_{\Gamma , 0}(R)$. 
\end{proof}


\subsection{BDF Serre's Formula}

To make sense of Serre's formula in the BDF setting, we must place a linear topology on $G^{\text{BDF}}_{\Gamma , 0}(R)$ that corresponds to the $Z$-adic topology on $\zz[[Q]]\{ \Gamma \}$. Given a finite set $U \subseteq \Gamma$, define $Z^G_U$ to be the subgroup of $G^{\text{BDF}}_{\Gamma , 0}(R)$ generated by the set of all $[M] \in G^{\text{BDF}}_{\Gamma , 0}(R)$ such that $\Supp_\Gamma(M) \cap [U , -\infty) = \emptyset$. This gives a directed filtration $Z^G$ on $G^{\text{BDF}}_{\Gamma , 0}(R)$ which corresponds exactly to the $Z$ filtration on $\zz[[Q]]\{ \Gamma \}$ under the isomorphism $G^{\text{BDF}}_{\Gamma , 0}(R) \to \zz[[Q]]\{ \Gamma \}$. Hence the $Z^G$-adic topology is also Hausdorff so all series converge to at most one element. 

If $R$ is a Noetherian ring, it is well-known that if $M_\bullet$ is a \textbf{bounded} chain complex of finitely generated modules, then the Euler characteristic of the complex in $G_0(R)$ depends only on its homology, i.e. $\sum_i (-1)^i [M_i] = \sum_i (-1)^i [H_i(M)]$ (see \cite{weibel2013k}, Chapter II, Proposition 6.6). The following lemma is a BDF version of that result for \textbf{$\Gamma$-finite} complexes. 

\begin{lemma}\label{lem: convergence of sequences in G}
If $\ldots \to M_2 \overset{\varphi_2}{\to} M_1 \overset{\varphi_1}{\to} M_0 \overset{\varphi_0}{\to} 0$ is a $\Gamma$-finite chain complex of BDF $\Gamma$-graded $R$-modules then $\sum_{i = 0}^\infty (-1)^i [M_i]$ and $\sum_{i = 0}^\infty (-1)^i [H_i(M_\bullet)]$ both converge to $[\bigoplus_{i = 0}^\infty M_{2i}] - [\bigoplus_{i = 0}^\infty M_{2i + 1}]$ in the $Z^G$-adic topology on $G^{\text{BDF}}_{\Gamma , 0}(R)$.
\end{lemma}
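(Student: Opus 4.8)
The plan is to isolate one general convergence statement valid for an arbitrary $\Gamma$-finite sequence and then reduce both halves of the lemma to it. Write $x = [\bigoplus_{i=0}^\infty M_{2i}] - [\bigoplus_{i=0}^\infty M_{2i+1}]$; both direct sums are BDF by Lemma \ref{lem: closure under direct sums of terms from Gamma-finite sequences}, so $x$ is a genuine element of $G^{\text{BDF}}_{\Gamma,0}(R)$, and recall the $Z^G$-adic topology is Hausdorff, so limits are unique. The first assertion will be the general claim applied to $(M_i)$, and the second will be the general claim applied to the homology sequence followed by an Euler-characteristic identity identifying its limit with $x$.

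The central claim I would prove is: for any $\Gamma$-finite sequence $(N_i \mid i \geq 0)$, the series $\sum_{i=0}^\infty (-1)^i[N_i]$ converges to $[\bigoplus_i N_{2i}] - [\bigoplus_i N_{2i+1}]$. Fix a finite $U \subseteq \Gamma$. Here both $\Gamma$-finite conditions enter: by (GF1) and downward finiteness the set $(-\infty,U] \cap \bigcup_n \Supp_\Gamma(N_n)$ is finite (a finite union over $u \in U$ of downward-finite slices), and by (GF2) each of its finitely many degrees $g$ satisfies $(N_i)_g = 0$ for $i$ large; taking the maximum over these finitely many degrees produces a single $N \in \nn$ with $\Supp_\Gamma(N_i) \cap (-\infty,U] = \emptyset$ for all $i \geq N$. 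Thus for $i \geq N$ the class $[N_i]$ is a generator of $Z^G_U$, and similarly $[\bigoplus_{2i \geq N} N_{2i}]$ and $[\bigoplus_{2i+1 \geq N} N_{2i+1}]$ lie in $Z^G_U$ because their supports avoid $(-\infty,U]$. Splitting each infinite direct sum into its head ($<N$) and tail ($\geq N$) via the short exact sequence $0 \to (\text{tail}) \to (\text{whole}) \to (\text{head}) \to 0$ and using finite additivity of $[\,\cdot\,]$, I get that $[\bigoplus N_{2i}] - [\bigoplus N_{2i+1}]$ differs from $\sum_{j<N}(-1)^j[N_j]$ by an element of $Z^G_U$; since any partial sum over finite $C \supseteq \{0,\dots,N-1\}$ also differs from $\sum_{j<N}(-1)^j[N_j]$ only by terms $[N_c]$ with $c \geq N$, convergence follows directly from the definition of the $Z^G$-adic topology. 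Applying this to $(M_i)$ settles the first series.

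For the second series I would apply the claim to $(H_i(M_\bullet))$ with zero differentials: its supports lie in $\bigcup_n \Supp_\Gamma(M_n)$ and each $(H_i)_g$ is a subquotient of $(M_i)_g$, so $(H_i)$ is $\Gamma$-finite and $\sum(-1)^i[H_i]$ converges to $[\bigoplus H_{2i}] - [\bigoplus H_{2i+1}]$. It then remains to identify this with $x$. Put $Z_i = \ker\varphi_i$ and $B_i = \im\varphi_{i+1}$, which are graded submodules by Proposition \ref{prop: ker and im are graded} with $B_i \subseteq Z_i$ since $M_\bullet$ is a complex; the sequences $(Z_i),(B_i),(H_i)$ are all $\Gamma$-finite, so every infinite direct sum below is BDF. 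Summing the short exact sequences $0 \to Z_i \to M_i \to B_{i-1} \to 0$ and $0 \to B_i \to Z_i \to H_i \to 0$ separately over even and over odd indices (using exactness of $\bigoplus$ and that direct summing $B_{i-1}$ over a fixed parity of $i$ reindexes the $B$'s to the opposite parity) gives relations in $G^{\text{BDF}}_{\Gamma,0}(R)$ whose $B$-contributions cancel in the alternating combination, leaving $[\bigoplus M_{2i}] - [\bigoplus M_{2i+1}] = [\bigoplus H_{2i}] - [\bigoplus H_{2i+1}] = x$. This is the same bookkeeping used in Lemma \ref{lem: expression of every BDF module as a sum of frees}, now run on a general complex.

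The main obstacle is entirely the legitimacy of the infinite-direct-sum manipulations inside $G^{\text{BDF}}_{\Gamma,0}(R)$: every sum over even/odd (or head/tail) indices must be verified to remain BDF --- which is exactly what Lemma \ref{lem: closure under direct sums of terms from Gamma-finite sequences} together with the stability of $\Gamma$-finiteness under passage to the subquotients $Z_i,B_i,H_i$ provides --- so that each short exact sequence genuinely contributes a relation in $G^{\text{BDF}}_{\Gamma,0}(R)$. Once that is secured, the cancellation of the $B$-terms and the head/tail splitting are exact identities of honest group elements, requiring no limiting argument; the only true convergence input is the uniform bound $N$ extracted from (GF1) and (GF2).
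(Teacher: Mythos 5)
Your proof is correct, and it splits into a half that matches the paper and a half that takes a genuinely different route. For $\sum_{i=0}^\infty(-1)^i[M_i]$ your head/tail splitting is essentially the paper's own argument: the paper proves the same identity
$[\bigoplus_{j}M_{2j}]-[\bigoplus_{j}M_{2j+1}]-\sum_{j=0}^{i}(-1)^j[M_j]=[\bigoplus_{j\geq (i+1)/2}M_{2j}]-[\bigoplus_{j\geq (i+1)/2}M_{2j+1}]$
and observes that both tails lie in $Z^G_U$ once $i$ exceeds the bound extracted from (GF1) and (GF2); your version, quantified over arbitrary finite $C\supseteq\{0,\ldots,N-1\}$, is if anything slightly more faithful to the net-of-partial-sums definition of convergence. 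Where you genuinely diverge is the homology series. The paper never forms infinite direct sums of kernels, images, or homology modules: it compares partial sums of the two series directly, telescoping
$\sum_{j=0}^{m}(-1)^j[H_j(M_\bullet)]=\sum_{j=0}^{m}(-1)^j[M_j]-(-1)^m[\im\varphi_{m+1}]$
out of the finite relations $[H_j]=[\ker\varphi_j]-[\im\varphi_{j+1}]$ and $[\im\varphi_{j+1}]=[M_{j+1}]-[\ker\varphi_{j+1}]$, and then notes $[\im\varphi_{m+1}]\in Z^G_U$ for $m$ large, so the two series must share a limit. You instead apply your general convergence claim to the $\Gamma$-finite sequence $(H_i(M_\bullet))$ and then identify its limit with $x$ through the exact, non-limiting identity obtained by summing $0\to Z_i\to M_i\to B_{i-1}\to 0$ and $0\to B_i\to Z_i\to H_i\to 0$ over each parity and cancelling the $B$-terms. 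Both arguments are valid. Yours is more modular: the central claim is reusable for any $\Gamma$-finite sequence and cleanly separates the topological input from the algebraic identity. The price is the extra verification that the infinite direct sums of the subquotients $Z_i$, $B_i$, $H_i$ remain BDF so that each summed short exact sequence really contributes a relation in $G^{\text{BDF}}_{\Gamma,0}(R)$ --- a point you correctly discharge via Lemma \ref{lem: closure under direct sums of terms from Gamma-finite sequences} together with the observation that subquotients inherit $\Gamma$-finiteness; the paper's telescoping avoids that bookkeeping entirely, at the cost of a more computational, single-purpose argument.
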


\begin{proof}
First of all, because the sequence is $\Gamma$-finite, in particular $V := \bigcup_{i = 0}^\infty \Supp_\Gamma(M_i) \subseteq \Gamma$ is finitely bounded below and downward finite. Hence given $U \in \mathcal{P}_\text{fin}(\Gamma)$, there are only finitely many $g \in (-\infty , U] \cap V$. Therefore again since the sequence is $\Gamma$-finite, there exists $n \in \nn$ such that $(M_i)_g = 0$ for all $g \in (-\infty , U]$ and $i \geq n$. We may assume without loss of generality that $i$ is odd, the even case being similar. In $G^{\text{BDF}}_{\Gamma , 0}(R)$ the following holds. 
\begin{equation*}
[\bigoplus_{j = 0}^\infty M_{2j}] - [\bigoplus_{j = 0}^\infty M_{2j + 1}] - \sum_{j = 0}^i (-1)^j [M_j] = [\bigoplus_{j = (i + 1)/2}^\infty M_{2j}] - [\bigoplus_{j = (i + 1) / 2 }^\infty M_{2j + 1}]
\end{equation*}

However, since $i \geq n$, it follows that $\left(\bigoplus_{j = (i + 1)/2}^\infty M_{2j}\right)_g = \bigoplus_{j = (i + 1)/2}^\infty (M_{2j})_g = 0$, hence $\bigoplus_{j = (i + 1)/2}^\infty M_{2j} \in Z^G_U$. Similarly $\bigoplus_{j = (i + 1) / 2 }^\infty M_{2j + 1} \in Z^G_U$, so their difference is in $Z^G_U$ as desired. 

Now given $U \in \mathcal{P}_\text{fin}(\Gamma)$, if we choose $n$ such that $(M_i)_g = 0$ for all $g \in (-\infty , U]$ and $m \geq n$, then we compute:
\begin{align*}
\sum_{j = 0}^m (-1)^j [H_j(M_\bullet)] &= \sum_{j = 0}^m (-1)^j [\ker \varphi_j / \im \varphi_{j + 1}] = \sum_{j = 0}^m (-1)^j ( [ \ker \varphi_j ] - [\im \varphi_{j + 1}] ) 
\\&= \sum_{j = 0}^m (-1)^j ( [ \ker \varphi_j ] - [M_{j + 1} / \ker \varphi_{j + 1}] ) = \sum_{j = 0}^m (-1)^j ( [\ker \varphi_j] - [M_{j + 1}] + [\ker \varphi_{j + 1}] ) 
\\&\overset{(1)}{=} [M_0] - [M_1] + [M_2] - \ldots - (-1)^m [M_{m + 1}] + (-1)^m [\ker \varphi_{m + 1}] 
\\&= [M_0] - [M_1] + [M_2] - \ldots - (-1)^m [\im \varphi_{m + 1}] 
\end{align*}
Here (1) follows from cancelation and the fact that $\ker \varphi_0 = M_0$. Therefore we have that $\sum_{j = 0}^m (-1)^j [M_j] - \sum_{j = 0}^m (-1)^j [H_j(M_\bullet)] = (-1)^m [\im \varphi_{m + 1}]$. But because $m \geq n$ it follows that $(M_m)_g = 0$, and hence $(\im \varphi_{m + 1})_g = 0$, for all $g \in (-\infty , U]$, so $[\im \varphi_{m + 1}] \in Z^G_U$. Therefore the difference of the series $\sum_{i = 0}^\infty (-1)^i [M_i]$ and $\sum_{i = 0}^\infty (-1)^i [H_i(M_\bullet)]$ converges to $0$, so they converge to the same element in the $Z^G_U$-adic topology. 
\end{proof}

\begin{lemma}
Let $M$ and $N$ be BDF $\Gamma$-graded $R$-modules. If $F_\bullet$ and $G_\bullet$ are $\Gamma$-finite BDF free resolutions of $M$ and $N$ respectively, then $\sum_{i = 0}^\infty (-1)^i [ \Tor_i(M , N)]$ converges to $[\bigoplus_{i , j = 0}^\infty (F_{2i } \otimes_R G_{2j}) \oplus (F_{2i + 1} \otimes_R G_{2j + 1}) ] - [\bigoplus_{i , j = 0}^\infty (F_{2i + 1} \otimes_R G_{jm}) \oplus (F_{2i} \otimes_R G_{2j + 1})]$ in the $Z^G$-adic topology on $G^{\text{BDF}}_{\Gamma , 0}(R)$. 
\end{lemma}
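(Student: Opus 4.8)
The plan is to recognise the asserted limit as the alternating sum of the \emph{total complex} of the double complex $F_\bullet \otimes_R G_\bullet$ from Figure \ref{fig: tenor product of complexes}, and then to invoke Lemma \ref{lem: convergence of sequences in G}. Accordingly, I would first set $T_n := \bigoplus_{i + j = n} F_i \otimes_R G_j$, with total differential assembled in the usual way (with signs) from the horizontal maps $\varphi_i \otimes \id$ and the vertical maps $\id \otimes \psi_j$; all of these are graded by Proposition \ref{prop: tensor product of graded maps is graded}, so $T_\bullet$ is a graded chain complex. For each fixed $n$ the index set $\{ (i,j) \mid i + j = n,\ i,j \geq 0 \}$ is finite, so $T_n$ is a \emph{finite} direct sum of the modules $F_i \otimes_R G_j$, each BDF by Proposition \ref{prop: tensor product grading}; hence every $T_n$ is BDF.

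Next I would verify that $T_\bullet$ is $\Gamma$-finite. Condition (GF1) is immediate since $\bigcup_n \Supp_\Gamma(T_n) \subseteq \bigcup_{i,j} \Supp_\Gamma(F_i \otimes_R G_j)$, and the right-hand side is finitely bounded below and downward finite by (DGF1) of Lemma \ref{lem: tensor product of complexes is gamma finite}. For (GF2), fix $g \in \Gamma$ and use (DGF2) to obtain $N \in \nn$ with $(F_i \otimes_R G_j)_g = 0$ whenever $i \geq N$ or $j \geq N$; then for any $n \geq 2N$ every pair $(i,j)$ with $i + j = n$ satisfies $i \geq N$ or $j \geq N$, so $(T_n)_g = \bigoplus_{i+j=n}(F_i \otimes_R G_j)_g = 0$.

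The crux is to identify $H_n(T_\bullet)$ with $\Tor^R_n(M,N)$ as graded modules. The augmentation $G_\bullet \to N$ induces a graded chain map $T_\bullet \to F_\bullet \otimes_R N$ (the latter being the complex computing $\Tor^R_\bullet(M,N)$ by definition). I would argue this is a quasi-isomorphism: since each $F_i$ is free, hence flat, the $i$-th column $\cdots \to F_i \otimes_R G_1 \to F_i \otimes_R G_0 \to F_i \otimes_R N \to 0$ is exact, being $F_i$ tensored with the resolution $G_\bullet \to N$; the standard comparison (acyclic assembly) argument for first-quadrant double complexes then gives $H_n(T_\bullet) \cong H_n(F_\bullet \otimes_R N) = \Tor^R_n(M,N)$. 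Because the comparison map is itself a graded chain map, Proposition \ref{prop: ker and im are graded} shows the induced isomorphism on homology is automatically graded (no zigzag as in Proposition \ref{prop: symmetry of graded tor} is needed here, precisely because we compare along a single honest chain map). This homological step is the principal obstacle; the remaining work is bookkeeping.

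Finally I would apply Lemma \ref{lem: convergence of sequences in G} to the $\Gamma$-finite complex $T_\bullet$. It yields that $\sum_{i=0}^\infty (-1)^i [H_i(T_\bullet)] = \sum_{i=0}^\infty (-1)^i [\Tor^R_i(M,N)]$ converges, in the (Hausdorff) $Z^G$-adic topology, to $[\bigoplus_n T_{2n}] - [\bigoplus_n T_{2n+1}]$, these two direct sums being BDF by Lemma \ref{lem: closure under direct sums of terms from Gamma-finite sequences}. It then remains only to rewrite the totals by parity: $i + j$ is even exactly when $i,j$ are both even or both odd, so $\bigoplus_n T_{2n} \cong \bigoplus_{i,j} (F_{2i} \otimes_R G_{2j}) \oplus (F_{2i+1} \otimes_R G_{2j+1})$, and likewise $i + j$ odd gives $\bigoplus_n T_{2n+1} \cong \bigoplus_{i,j} (F_{2i+1} \otimes_R G_{2j}) \oplus (F_{2i} \otimes_R G_{2j+1})$. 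This is precisely the claimed limit (the term written $G_{jm}$ in the statement being $G_{2j}$).
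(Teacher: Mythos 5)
Your proof is correct, but it takes a genuinely different route from the paper's. You work with the total complex $T_\bullet$ of the double complex $F_\bullet \otimes_R G_\bullet$: after checking it is a $\Gamma$-finite complex of BDF modules (your parity/degree bookkeeping and your use of (DGF1)/(DGF2) from Lemma \ref{lem: tensor product of complexes is gamma finite} are both sound), you identify $H_n(T_\bullet) \cong \Tor^R_n(M,N)$ as graded modules via the standard balancing/acyclic-assembly quasi-isomorphism $T_\bullet \to F_\bullet \otimes_R N$, and then apply Lemma \ref{lem: convergence of sequences in G} once to $T_\bullet$ and regroup by parity. The paper instead never forms a total complex and never invokes any quasi-isomorphism: it applies Lemma \ref{lem: convergence of sequences in G} directly to $F_\bullet \otimes_R N$ (whose homology is $\Tor^R_\bullet(M,N)$ by definition), obtaining the limit $[\bigoplus_i F_{2i} \otimes_R N] - [\bigoplus_i F_{2i+1} \otimes_R N]$, and then rewrites each of these two classes using Lemma \ref{lem: expression of every BDF module as a sum of frees}, applied to the $\Gamma$-finite free resolutions obtained by summing the even-indexed (resp.\ odd-indexed) columns of the double complex --- exactness of those columns coming only from flatness of the $F_i$. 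The paper's route thus stays entirely within its own toolkit and is more elementary; your route is conceptually cleaner (a single Euler-characteristic argument) but imports a nontrivial external ingredient, the first-quadrant acyclic assembly lemma, whose graded compatibility you do address correctly: a graded chain map that is a quasi-isomorphism induces graded isomorphisms on homology by Proposition \ref{prop: ker and im are graded}, with no zigzag as in Proposition \ref{prop: symmetry of graded tor} required. Both arguments reach the identical limit, including your correct reading of the statement's typo $G_{jm}$ as $G_{2j}$.
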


\begin{proof}
We form the double complex $(F_\bullet \to M \to 0) \otimes_R (G_\bullet \to N \to 0)$ as in Figure \ref{fig: tenor product of complexes} and note, as in the proof of Proposition \ref{prop: symmetry of graded tor}, each column of this complex other than the right-most is exact, and is therefore a free resolution of its bottom term. Taking the direct sum of all columns with bottom term $F_{2i} \otimes_R N$ we obtain an exact sequence $\ldots \to \bigoplus_{i = 0}^\infty F_{2i} \otimes_R G_1 \to \bigoplus_{i = 0}^\infty F_{2i} \otimes_R G_0 \to \bigoplus_{i = 0}^\infty F_{2i} \otimes_R N \to 0$ whose terms are BDF by Lemma \ref{lem: closure under direct sums of terms from Gamma-finite sequences} because the rows of the double complex are BDF $\Gamma$-finite, and which is $\Gamma$-finite itself because by Lemma \ref{lem: tensor product of complexes is gamma finite}, the double complex is $\Gamma$-finite. Similarly the direct sum of the odd columns also gives a $\Gamma$-finite exact sequence $\ldots \to \bigoplus_{i = 0}^\infty F_{2i + 1} \otimes_R G_1 \to \bigoplus_{i = 0}^\infty F_{2i + 1} \otimes_R G_0 \to \bigoplus_{i = 0}^\infty F_{2i + 1} \otimes_R N \to 0$. Since these are in fact free resolutions, by Lemma \ref{lem: expression of every BDF module as a sum of frees} we have $[\bigoplus_{i = 0}^\infty F_{2i} \otimes_R N] = [\bigoplus_{i, j = 0}^\infty F_{2i} \otimes_R G_{2j} ] - [\bigoplus_{i, j = 0}^\infty F_{2i} \otimes_R G_{2j + 1}]$ and similarly $[\bigoplus_{i = 0}^\infty F_{2i + 1} \otimes_R N] = [\bigoplus_{i, j = 0}^\infty F_{2i + 1} \otimes_R G_{2j} ] - [\bigoplus_{i, j = 0}^\infty F_{2i + 1} \otimes_R G_{2j + 1}]$. Therefore by Lemma \ref{lem: convergence of sequences in G} we can compute as follows.   
\begin{align*}
&\sum_{i = 0}^\infty (-1)^i [ \Tor_i(M , N)] = \sum_{i = 0}^\infty (-1)^i [H_i(F_\bullet \otimes_R N)] 
\\&= \sum_{i = 0}^\infty (-1)^i [ F_i \otimes_R N ]
\\&= [\bigoplus_{i = 0}^\infty F_{2i} \otimes_R N] - [\bigoplus_{i = 0}^\infty F_{2i + 1} \otimes_R N]
\\&= \left( [\bigoplus_{i, j = 0}^\infty F_{2i} \otimes_R G_{2j} ] - [\bigoplus_{i, j = 0}^\infty F_{2i} \otimes_R G_{2j + 1}]  \right)  - \left( [\bigoplus_{i, j = 0}^\infty F_{2i + 1} \otimes_R G_{2j} ] - [\bigoplus_{i, j = 0}^\infty F_{2i + 1} \otimes_R G_{2j + 1}] \right)
\\&= [ \bigoplus_{i , j = 0}^\infty (F_{2i } \otimes_R G_{2j}) \oplus (F_{2i + 1} \otimes_R G_{2j + 1}) ] - [\bigoplus_{i , j = 0}^\infty (F_{2i + 1} \otimes_R G_{jm}) \oplus (F_{2i} \otimes_R G_{2j + 1})]. 
\end{align*}
\end{proof}

\begin{corollary}[BDF Serre's Formula]\label{cor: ring structure on G}
The binary operation $[M] \cdot [N] = \sum_{i = 0}^\infty (-1)^i [\Tor_i(M , N)]$ gives a well-defined commutative ring structure on $G^{\text{BDF}}_{\Gamma , 0}(R)$ with multiplicative identity equal to $[R]$ and the isomorphism $K^{\text{BDF}}_{\Gamma , 0}(R) \overset{\sim}{\to} G^{\text{BDF}}_{\Gamma , 0}(R)$ given in Theorem \ref{thm: two defs of grothendieck groups are the same} is a ring map.  
\end{corollary}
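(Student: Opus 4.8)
The plan is to obtain the ring structure on $G^{\text{BDF}}_{\Gamma,0}(R)$ by transport of structure along the isomorphism $\Phi$ of Theorem \ref{thm: two defs of grothendieck groups are the same}, and only afterward identify the transported product with Serre's formula. By Corollary \ref{cor: grothendieck ring}, $K^{\text{BDF}}_{\Gamma,0}(R)$ is already a commutative ring under tensor product with identity $[R]$, and $\Phi : K^{\text{BDF}}_{\Gamma,0}(R) \to G^{\text{BDF}}_{\Gamma,0}(R)$ is a group isomorphism. I would therefore define a product on $G^{\text{BDF}}_{\Gamma,0}(R)$ by $x \ast y := \Phi\big(\Phi^{-1}(x)\cdot\Phi^{-1}(y)\big)$. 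This is automatically a well-defined, commutative, associative product that distributes over addition, has identity $\Phi([R]) = [R]$, and makes $\Phi$ a ring isomorphism. Every ring axiom is inherited from $K^{\text{BDF}}_{\Gamma,0}(R)$; the only genuine work left is to check that $\ast$ coincides with the Serre formula on the classes $[M]$ of actual BDF modules.

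For that verification I would take BDF modules $M$ and $N$, choose $\Gamma$-finite free resolutions $F_\bullet$ and $G_\bullet$ (which exist by the BDF Hilbert Syzygy Theorem, Theorem \ref{thm: BDF modules have gamma finite free resolutions}), and use Lemma \ref{lem: expression of every BDF module as a sum of frees} to write $\Phi^{-1}([M]) = [F]-[F']$ and $\Phi^{-1}([N]) = [G]-[G']$ in $K^{\text{BDF}}_{\Gamma,0}(R)$, where $F = \bigoplus_n F_{2n}$, $F' = \bigoplus_n F_{2n+1}$, and similarly for $G,G'$. Expanding the product in $K^{\text{BDF}}_{\Gamma,0}(R)$ bilinearly and using that tensor product commutes with direct sums gives
\begin{align*}
\Phi^{-1}([M])\cdot\Phi^{-1}([N]) &= [F\otimes_R G] - [F\otimes_R G'] - [F'\otimes_R G] + [F'\otimes_R G'].
\end{align*}
Applying $\Phi$ and collecting direct summands via the relation $[A]+[B] = [A\oplus B]$ in $G^{\text{BDF}}_{\Gamma,0}(R)$ (which holds because $0 \to A \to A\oplus B \to B \to 0$ is exact) yields exactly
\[
[\textstyle\bigoplus_{i,j}(F_{2i}\otimes_R G_{2j})\oplus(F_{2i+1}\otimes_R G_{2j+1})] - [\textstyle\bigoplus_{i,j}(F_{2i+1}\otimes_R G_{2j})\oplus(F_{2i}\otimes_R G_{2j+1})],
\]
which is precisely the element that the preceding lemma shows $\sum_{i=0}^\infty(-1)^i[\Tor_i(M,N)]$ converges to. Hence $[M]\ast[N] = \sum_{i=0}^\infty(-1)^i[\Tor_i(M,N)]$ for all module classes, and the heavy analytic content (convergence in the $Z^G$-adic topology, independence of the choice of resolution) is entirely subsumed into that lemma, so no new estimates are required.

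Finally I would settle the well-definedness assertion. Since the classes $[M]$ of BDF modules generate $G^{\text{BDF}}_{\Gamma,0}(R)$ and $\ast$ is biadditive, the equality just established shows that the Serre formula, \emph{a priori} only prescribed on pairs of module classes, agrees with the biadditive operation $\ast$ on generators; as two biadditive maps out of the free abelian group on isomorphism classes that agree on generators must coincide, the bilinear extension of Serre's formula descends through the defining relations and equals $\ast$. This simultaneously furnishes the commutative ring structure, the identity $[R]$, and the fact that $\Phi$ is a ring map. The one genuinely delicate point is exactly this descent: one must verify that matching $\ast$ with Serre's formula only on the generating module classes suffices to conclude the biadditive extension is unambiguous — and this works precisely because $\ast$ is already a bona fide well-defined biadditive operation and the module classes span $G^{\text{BDF}}_{\Gamma,0}(R)$.
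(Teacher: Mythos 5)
Your proposal is correct and follows essentially the same route as the paper: both transport the tensor-product ring structure from $K^{\text{BDF}}_{\Gamma,0}(R)$ across the isomorphism of Theorem \ref{thm: two defs of grothendieck groups are the same}, and both identify the transported product with Serre's formula by expanding $([F]-[F'])\cdot([G]-[G'])$ bilinearly and invoking the preceding lemma, which shows $\sum_{i=0}^\infty(-1)^i[\Tor_i(M,N)]$ converges to exactly that difference of direct sums. The only difference is presentational: you make the transport-of-structure step explicit before matching it against Serre's formula, whereas the paper runs the same computation in the opposite direction.
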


\begin{proof}
Under the isomorphism $\Theta \circ \Psi: G^{\text{BDF}}_{\Gamma , 0}(R) \to K^{\text{BDF}}_{\Gamma , 0}(R)$ given in the proof of Theorem \ref{thm: two defs of grothendieck groups are the same}, we have that $\sum_{i = 0}^\infty (-1)^i [\Tor_i(M , N)] = [\bigoplus_{i , j = 0}^\infty (F_{2i} \otimes_R G_{2j}) \oplus (F_{2i + 1} \otimes_R G_{2j + 1}) ] - [\bigoplus_{i , j = 0}^\infty (F_{2i + 1} \otimes_R G_{jm}) \oplus (F_{2i} \otimes_R G_{2j + 1})]$ maps to the class in $K^{\text{BDF}}_{\Gamma , 0}(R)$ with the same representatives. But in $K^{\text{BDF}}_{\Gamma , 0}(R)$, using the ring structure defined in Corollary \ref{cor: grothendieck ring}, we compute: 
\begin{align*}
&[\bigoplus_{i , j = 0}^\infty (F_{2i} \otimes_R G_{2j}) \oplus (F_{2i + 1} \otimes_R G_{2j + 1}) ] - [\bigoplus_{i , j = 0}^\infty (F_{2i + 1} \otimes_R G_{2j}) \oplus (F_{2i} \otimes_R G_{2j + 1})] 
\\&= \left( [\bigoplus_{i = 0}^\infty F_{2i}] - [\bigoplus_{i = 0}^\infty F_{2i + 1}] \right) \cdot \left( [\bigoplus_{i = 0}^\infty G_{2i}] - [\bigoplus_{i = 0}^\infty G_{2i + 1}] \right)
\end{align*}
The latter is product of the images of $[M]$ and $[N]$ under $\Theta \circ \Psi$. Since this is an isomorphism, the binary operation $[M] \cdot [N] = \sum_{i = 0}^\infty (-1)^i [\Tor_i(M , N)]$ does indeed give a ring structure on $G^{\text{BDF}}_{\Gamma , 0}(R)$ and $\Theta \circ \Psi$ is an isomorphism of rings. 
\end{proof}

\begin{definition}
$K^{\text{BDF}}_{\Gamma , 0}(R)$ together with the ring structure given by Corollary \ref{cor: grothendieck ring}, or equivalently $G^{\text{BDF}}_{\Gamma , 0}(R)$ together with the ring structure given by Corollary \ref{cor: ring structure on G}, is called the \emph{BDF $\Gamma$-graded Grothendieck ring} of $R$.
\end{definition}


\bibliographystyle{alpha}

\bibliography{The_Grothendieck_Ring_of_Certain_Non-Noetherian_Group-Graded_Algebras_via_K-Series}

\begin{thebibliography}{{Sta}21}

\bibitem[Eis13]{eisenbud2013commutative}
David Eisenbud.
\newblock {\em Commutative algebra: with a view toward algebraic geometry},
  volume 150.
\newblock Springer Science \& Business Media, 2013.

\bibitem[Gal21]{gallup2021well}
Nathaniel Gallup.
\newblock Well-ordered flag spaces as functors of points.
\newblock {\em arXiv preprint arXiv:2112.00327}, 2021.

\bibitem[Haz16]{hazrat2016graded}
Roozbeh Hazrat.
\newblock {\em Graded rings and graded Grothendieck groups}, volume 435.
\newblock Cambridge University Press, 2016.

\bibitem[HK00]{hu2000mori}
Yi~Hu and Sean Keel.
\newblock Mori dream spaces and git.
\newblock {\em Michigan Mathematical Journal}, 48(1):331--348, 2000.

\bibitem[Kel75]{kelley1975general}
John~L Kelley.
\newblock general topology springer-verlag.
\newblock {\em New York-Berlin}, 1975.

\bibitem[Mat89]{matsumura1989commutative}
Hideyuki Matsumura.
\newblock {\em Commutative ring theory}.
\newblock Number~8. Cambridge university press, 1989.

\bibitem[MM11]{minamoto2011structure}
Hiroyuki Minamoto and Izuru Mori.
\newblock The structure of as-gorenstein algebras.
\newblock {\em Advances in Mathematics}, 226(5):4061--4095, 2011.

\bibitem[MS05]{miller2005combinatorial}
Ezra Miller and Bernd Sturmfels.
\newblock {\em Combinatorial commutative algebra}, volume 227.
\newblock Springer, 2005.

\bibitem[NVO04]{nastasescu2004methods}
Constantin Nastasescu and Freddy Van~Oystaeyen.
\newblock {\em Methods of graded rings}.
\newblock Springer, 2004.

\bibitem[Qui76]{quillen1976projective}
Daniel Quillen.
\newblock Projective modules over polynomial rings.
\newblock {\em Inventiones mathematicae}, 36(1):167--171, 1976.

\bibitem[Ser00]{serre2000local}
Jean-Pierre Serre.
\newblock Local algebra, translated from the french by cheewhye chin and
  revised by the author.
\newblock {\em Springer Monographs in Mathematics, Springer-Verlag, Berlin},
  pages 1957--1958, 2000.

\bibitem[{Sta}21]{stacks-project}
The {Stacks project authors}.
\newblock The stacks project.
\newblock \url{https://stacks.math.columbia.edu}, 2021.

\bibitem[Sus76]{suslin1976projective}
Andrei Suslin.
\newblock Projective modules over polynomial rings are free.
\newblock In {\em Soviet mathematics-Doklady}, volume~17, pages 1160--1165,
  1976.

\bibitem[Wei94]{weibel1994introduction}
Charles~A Weibel.
\newblock {\em An introduction to homological algebra}.
\newblock Number~38. Cambridge university press, 1994.

\bibitem[Wei13]{weibel2013k}
Charles~A Weibel.
\newblock {\em The K-book: An introduction to algebraic K-theory}, volume 145.
\newblock American Mathematical Society Providence, 2013.

\end{thebibliography}

\end{document}